\documentclass[11pt]{amsart}
\usepackage[T1]{fontenc} 
\usepackage[latin1]{inputenc}
\usepackage{a4}
\usepackage{setspace}
\usepackage{xypic}
\usepackage{amssymb}
\usepackage{amsthm}
\usepackage[english]{babel}
\usepackage{latexsym}
\date{}

\usepackage{srcltx}

\newtheorem{thm}{Theorem}[section]

\newtheorem{example}[thm]{Example}
\newtheorem{definition}[thm]{Definition}
\newtheorem{rem}[thm]{Remark}
\newtheorem{conj}[thm]{Conjecture}

\newtheorem{conjL'}[thm]{Conjecture $\textbf{L}(\overline{\mathcal{X}}_{et},d)_{\geq0}$}

\newtheorem{prop}[thm]{Proposition}
\newtheorem{prop-definition}[thm]{Proposition--Definition}

\newtheorem{lem}[thm]{Lemma}
\newtheorem{cor}[thm]{Corollary}

\newenvironment{f-proof}[1][\sc Proof.]{\begin{trivlist}
\item[\hskip \labelsep {\bfseries #1}]}{\hfill{$\square$}\end{trivlist}}

\newcommand{\fonc}[5]{
 \begin{array}{cccc}
 #1: & #2 & \longrightarrow & #3\\
     & #4 & \longmapsto & #5
 \end{array}
}

\newcommand{\G}{\mbox{$\Gamma$}}

%%%%%%%%%%%%%%%%%%%%%%%%%%%%%%%%%%%%%%%%%%%%%%%%%%%%%%%%%%%%%%%%%%%%%%%%%%%%

\begin{document}
\title[Quadratic structure of torsors]{On the Quadratic Structure of Torsors over Affine Group Schemes}

\author[Ph. Cassou-Nogu\`es]{Ph. Cassou-Nogu\`es}
\address{Philippe Cassou-Nogu\`es, IMB\\ Univ. Bordeaux 1\\  33405 Talence, France.\\}
 
 \email{Philippe.Cassou-Nogues@math.u-bordeaux1.fr} 

\author[M. Taylor]{M. J. Taylor}
\address{Martin J. Taylor, Merton College \\ 
Oxford OX1 4JD, U.K.}
\email{martin.taylor@merton.ox.ac.uk}

\maketitle
\centerline {(with an appendix by Dajano Tossici)}
\begin{abstract}
Let $\mathcal{G}=\mathrm{Spec}(A)$ be a finite and flat group scheme over the ring of algebraic integers $R$ of a number field $K$ and suppose that the generic fiber of $\mathcal{G}$ is the constant group scheme over $K$ for a finite group $G$. Then the $R$-dual $A^D$of $A$ identifies as a Hopf $R$-order in the group algebra $K[G]$. If $B$ is a principal homogeneous space for $A$, then it is known that $B$ is a locally free $A^D$-module. By multiplying the trace form of $B_K/K$ by a certain scalar we obtain a $G$-invariant form $Tr'_B$ which provides a non-degenerate 
$R$-form on $B$. If $G$ has odd order,  we show that the $G$-forms $(B, Tr'_B)$ and  $(A, Tr'_A)$ are locally isomorphic and we study the question of when they are   globally isomorphic.

Suppose now that $K$ is a finite extension of $\mathbb Q_p$ with  valuation ring $R$.  In the course of our study we are led to consider the extension of scalars map 
$\varphi_K: G_0(A^D)\rightarrow G_0(A^D_K)=G_0(K[G])$. When $A^D$ is the group ring $R[G]$, Swan showed that $\varphi_K$ is an isomorphism. Jensen and Larson proved that $\varphi_K$ is also an isomorphism for any  Hopf $R$-order $A^D$ of $K[G]$ when $G$ is abelian and $K$ is large enough. Here we prove that $\ker \varphi_K$ is at most a finite abelian $p$-group.   However, numerous examples lead us to conjecture that Swan's result extends to all Hopf $R$-orders in $K[G]$, i.e.  $\ker \varphi_K$ is always trivial.

\end{abstract}

\section {Introduction} 

\subsection {Notation}
 If $K$ is a number field, we write $R=O_{K}$  for its ring of integers and  we
refer to $K$ as a global field. If $K$ is a finite extension of a $p$-adic
field $\mathbb{Q}_{p}$,  then we write $R$ for the valuation ring of $K$; 
here we  refer to $K$ as a local field. $G$ is a finite group, $A$ is
an $R$-Hopf order in the Hopf $K$-algebra $A_{K}=A\otimes _{R}K=\mathrm{Map}(G, K) $ and $A^{D}$ is the $R$-dual Hopf order of $A$ in $K[G]$. Note that $R[G] $ is the minimal Hopf order
in $A_{K}^{D}=A^{D}\otimes _{R}K=K[G] $ and that $R[G]
\subset A^{D}.$ Let $x\rightarrow \overline{x}\ $\ denote the standard
involution (antipode) of $A^{D}$ given on $A_{K }^{D}$ by extension by $K$-linearity from inversion of elements of $G$.

Recall that an $R$-order $B$ with an action by $A^{D}$ is a principal
homogeneous space for $A$ if there is an isomorphism of $\ B-A^{D}$ algebras 
\begin{equation}\label{phs}
B\otimes _{R}B\cong B\otimes _{R}A.
\end{equation}
Here the $B$-algebra structure of both terms comes through the two left
hand terms and the $A^{D}$-action derives from the $A^{D}$-action on the two
right hand terms and both these actions respect their $R$-algebra
structures. In other words $B$ is the algebra of a $\mathcal{G}$-torsor where $\mathcal{G}=\mathrm{Spec}(A)$. We let $\mathrm{PH}( A)$ denote  the set of
isomorphism classes of principal homogeneous spaces for $A$.  We
know that any principal homogeneous space is a rank one locally free $A^{D}$-module. In the global situation $\mathrm{Cl}
( A^{D}) $ denotes the classgroup of locally free $A^{D}$-modules and we have the usual class invariant map 
\begin{equation}\label{init1}
\psi :\mathrm{PH}( A) \rightarrow \mathrm{Cl}( A^{D}) 
\end{equation}
which maps the principal homogeneous space $B$ to the $A^{D}$-class of $B$
minus the isomorphism class of the principal $A^{D}$-class of $A$.

 In \cite{CCMT15} the
foundations were laid for the study of quadratic forms over principal homogeneous spaces.  In this set-up,  a $G$-form or a $G$-quadratic space is a pair $(M, q)$ where $M$ is a finitely generated locally free 
$A^D$-module and $q: M\times M\rightarrow R$  is an $R$-bilinear, symmetric and non-degenerate form  such that $q(gm_1, gm_2)=q(m_1, m_2)$ for all $g\in G$ and $m_1, m_2 \in M$. 
\textit{Inter alia\ }the paper \cite{CCMT15}  provides a framework which enables us
to study  arithmetic questions of the quadratic structure of principal
homogeneous spaces for $R$-Hopf orders. 

 We let $\varepsilon_A$ be the counit of $A$. In Section 2.1 we shall define the ideal of integrals $I(A)$ of $A$ and we shall  prove  that,  when $G$ is of odd order, then $\Lambda=\varepsilon_A (I(A)) $ is the square of an ideal of $R$.  Throughout this paper we shall suppose:\medskip

\textbf{Hypothesis}. {\it The $R$-ideal $\Lambda:=\varepsilon _{A}( I( A))$  is the square of a principal $R$-ideal so that we may write $\Lambda=\alpha_A^2R$ with $\alpha_A\in R$; we set  $\lambda
_{A}=\alpha_A^2$ and, when $A$ is clear from the context, we simply write $\lambda$. By \cite{CCMT15} Proposition 3.2 we know that $D_{A/R}^{-1}=\lambda_A^{-1} A$.  }
\vskip 0.1 truecm

When $G$ is a group of odd order and $R$ is a principal ideal domain, then  this hypothesis is satisfied (see Proposition \ref{H1}). 
 
 The trace of $A_K/K$ induces by linearity  a non-degenerate symmetric $
G $-invariant $R$-pairing
\begin{equation*}
Tr_{A_K/K}:D_{A/R}^{-1}=\lambda^{-1} A\times A\rightarrow R
\end{equation*}
and hence the form $Tr_{A}^{\prime }=\lambda^{-1} Tr_{A_K/K}$ is a non-degenerate $R$%
-pairing 
\begin{equation}
Tr_{A}^{\prime }:A\times A\rightarrow R.
\end{equation}
Because $ B$ is finite and flat over $R$, we know that   $D_{B/R}^{-1}=\Lambda^{-1}B=
\lambda^{-1} B$ (see \cite{CCMT15} Corollary 3.3), and again we see that, if we set $Tr_{B}^{\prime }=\lambda^{-1}
Tr_{B_K/K}$,  then we have a non-degenerate $G$-invariant $R$-pairing 
\begin{equation}
Tr_{B}^{\prime }:B\times B\rightarrow R.
\end{equation}

\begin{rem} It is important to note that the $G$-forms  considered in this article  depend upon  the choice of the generator $\lambda$  of $I(A)$. However, for the choice of $\lambda$  prescribed under  our hypothesis,  the forms are independant of this choice,  up to isometry. \end{rem}

In this paper all rings are unital and, unless indicated the contrary, modules over a ring are always taken to be left modules. 

\subsection{Results.}  
We begin by setting the scene for the presentation
of our results by explaining successively: what happens over a field $K$ for
the Hopf algebra $\mathrm{Map}(G,K)$; what happens when $
A=\mathrm{Map}( G,R)$;  and then we move on to present our
results for general Hopf orders $A$ in $\mathrm{Map}(G, K)$. 

We therefore begin by first recalling some key-results in the case when we
work over a  field $K$ and the Hopf algebras $A_K= \mathrm{Map}
(G, K) $ and $A_K^{D}$ is $K[G] $. We attach to the   field $K$ and  the  group $G$ the unit form;  this is the $G$-form $(K[G], \kappa)$ such that $\kappa(g, h)=\delta_{g, h}$  (Kronecker symbol) for $g, h\in G$. Then a Galois
extension $N/K$ with $G=\mathrm{Gal}(N/K) $ is a principal homogeneous
space for $A_K$ and the quadratic spaces  $( N,Tr_{N/K}) $ and 
$( K[ G] , \kappa) $ are isomorphic if, and only if, 
$N$ has a self-dual normal basis $n,\ $in the sense that $n$ is a normal
basis for $N/K$ with 
\[
Tr_{N/K}( g( n) .h( n)) =\delta _{g,h}.
\]

One fundamental piece of work which underlies this article\ comes from [2] \
- from which we quote the following particular case:

\begin{thm}\cite{Bayer90} Let $N/K$ be a finite Galois extension with Galois group $G$. \begin{enumerate}
\item If $K$ has characteristic different from  $2$ and  $G$ is of odd order,
then there exists a self-dual normal basis of $N/K$. 

Suppose now that $G$ is an \textbf{abelian} group:

\item If $\ K\ $has characteristic different from  $2$, then $N$ has a
self-dual normal basis if, and only if, $G$ has odd order.

\item If $\ K\ $has characteristic $2$, then $N$ has a self-dual normal basis
if, and only if, the exponent of $G$ is not divisible by $4$.
\end{enumerate}
\end{thm}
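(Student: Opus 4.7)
My plan is to reduce the statement to a question about isomorphism of $G$-quadratic spaces and then to treat the three cases with Galois cohomology for the odd-order statement (1) and explicit invariants for the even-order obstructions in (2) and (3). The first common step is a reformulation: given any normal basis element $n\in N$, one has a $K[G]$-module isomorphism $\varphi_n\colon K[G]\to N$ sending $g\mapsto g(n)$, and a direct check shows that $n$ is a self-dual normal basis if and only if $\varphi_n$ is an isometry from $(K[G],\kappa)$ onto $(N,\mathrm{Tr}_{N/K})$. Hence the theorem is equivalent to deciding when these two $G$-forms of dimension $|G|$ over $K$ are isometric.

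For part (1), I would classify the twists of $(K[G],\kappa)$ by $H^1(K,\mathbf{U}_\kappa)$, where $\mathbf{U}_\kappa$ is the algebraic group of $G$-equivariant isometries of the unit form. Using the Wedderburn decomposition of $\overline{K}[G]$ and the standard involution $g\mapsto g^{-1}$, the group $\mathbf{U}_\kappa$ becomes a product of classical unitary groups attached to the simple factors. For $|G|$ odd and characteristic $\neq 2$ no symplectic factor appears, and the Gram matrix of $\mathrm{Tr}_{N/K}$ in a normal basis is represented by a symmetric invertible element $c\in K[G]$ fixed by the involution; we must write $c=u\bar{u}$ for some $u\in K[G]^{\times}$. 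The odd-order hypothesis provides this representation via a componentwise square-root/averaging argument in the Wedderburn decomposition, which yields the sought isometry and hence a self-dual normal basis.

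For the converse directions in parts (2) and (3), I would construct explicit obstructions in the Witt group. In characteristic $\neq 2$ with $G$ abelian of even order, $N$ contains a quadratic subextension $L=K(\sqrt{d})$; the discriminant of $\mathrm{Tr}_{L/K}$ is $d$ modulo squares, and by multiplicativity in the tower this propagates to a discriminant obstruction for $\mathrm{Tr}_{N/K}$ which the unit form $\kappa$, having trivial discriminant, cannot accommodate. In characteristic $2$ the analogous role is played by the Arf invariant of the quadratic refinement of $\mathrm{Tr}$; cyclic subextensions of degree divisible by $4$ contribute a non-trivial Arf invariant to $N$ that cannot be cancelled inside an abelian tower, while in exponent dividing $2$ this obstruction vanishes and one can build a self-dual basis by a direct averaging argument adapted to characteristic $2$.

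I expect the main obstacle to be the unitary-norm argument at the heart of (1), namely showing that every symmetric invertible element $c\in K[G]$ has the form $u\bar{u}$ when $|G|$ is odd. This is where the odd-order hypothesis is essential, and it genuinely requires the vanishing of the relevant Galois cohomology in each Wedderburn factor; the obstruction-based directions of (2) and (3) are comparatively routine once the correct invariant (discriminant or Arf) has been identified, and the sufficiency in (3) under the exponent condition can then be deduced from (1) applied to an odd-order subextension together with an explicit construction in the elementary abelian $2$-part.
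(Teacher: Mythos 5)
This statement is not proved in the paper at all: it is quoted verbatim from Bayer--Fluckiger and Lenstra \cite{Bayer90}, and the framework you sketch (twists of the unit $G$-form classified by $H^1(K,\mathbf{U}_\kappa)$, with $(N,\mathrm{Tr}_{N/K})$ the image of the class of $N$ under $H^1(K,G)\to H^1(K,\mathbf{U}_\kappa)$) is precisely the framework the paper later borrows for its integral analogues in Section 3. So your proposal has to supply the actual content of \cite{Bayer90}, and at its central step it does not. Writing the Gram element $c=\sum_g \mathrm{Tr}_{N/K}(n\cdot gn)\,g$ as $u\bar u$ is equivalent to the vanishing of one particular class in $H^1(K,\mathbf{U}_\kappa)$; it is emphatically not implied by the absence of symplectic Wedderburn factors together with a ``componentwise square-root/averaging argument''. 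The orthogonal and second-kind unitary groups occurring in the components have highly nontrivial $H^1$ over a general field of characteristic $\neq 2$ (they classify quadratic, resp.\ hermitian, forms of the given rank), and not every symmetric unit of $K[G]$ is a norm $u\bar u$ --- already for $G$ trivial your claim would say every element of $K^{\times}$ is a square. What makes the specific class coming from $H^1(K,G)$ die is an odd-degree descent theorem of Springer type for $G$-forms (hermitian categories): the restriction $H^1(K,\mathbf{U}_\kappa)\to H^1(L,\mathbf{U}_\kappa)$ is injective for $[L:K]$ odd, applied to an odd-degree extension over which $(N,\mathrm{Tr}_{N/K})$ becomes the unit form. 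This is exactly the theorem of \cite{Bayer90} that the paper invokes (see the proof of Lemma 3.9, quoting \cite{Bayer90} Theorem 2.1), and your proposal contains no substitute for it; flagging it as ``the main obstacle'' does not discharge it.

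The obstruction arguments you call routine are also not correct as stated. In part (2) the discriminant does not detect the failure: for $G=(\mathbb{Z}/2)^2$ every nontrivial element acts on a normal basis as a product of two transpositions, so $\mathrm{disc}(\mathrm{Tr}_{N/K})$ is a square, matching the unit form, yet no self-dual normal basis exists; moreover trace forms in towers are Scharlau transfers, not products, so the claimed ``multiplicativity'' propagation is unfounded. The standard argument instead pushes a self-dual normal basis down to a quadratic quotient: if $n$ is self-dual for $N/K$ and $H\le G$ has index $2$, then $m=\sum_{h\in H}h(n)$ is a self-dual normal basis of $N^{H}/K$, and for a quadratic extension $\mathrm{Tr}(m\cdot\sigma m)=2\,N_{N^{H}/K}(m)\neq 0$ in characteristic $\neq 2$, a contradiction. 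In part (3) the trace form in characteristic $2$ is a symmetric bilinear form, not a quadratic form, so ``the Arf invariant of the quadratic refinement'' needs a precise definition before it can serve as an obstruction, and the sufficiency direction cannot be deduced from part (1), whose hypothesis excludes characteristic $2$; the odd-order characteristic-$2$ case is covered only by the separate results of \cite{Bayer90} and \cite{Serre14} quoted as Theorem 1.2 and Corollary 1.4.
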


The situation in characteristic 2 for arbitrary (not necessarily abelian) $%
G\ $is completely settled by the following result of Serre in \cite{Serre14}:

\begin{thm}\label{Ser2}
For a Galois extension of fields $N/K$ with $G=\mathrm{Gal}\left( N/K\right)
,\ $if $K\ $has characteristic $2$, then $N$ has a self-dual normal basis if,
and only if, $G$ is generated by elements of odd order and elements of order
$2$.
\end{thm}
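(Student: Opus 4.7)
My plan is to follow a cohomological strategy in the style of Bayer--Lenstra, adapted to characteristic $2$. The first step is to translate the existence of a self-dual normal basis into a Galois-cohomological statement: after base change to a separable closure, $(N, Tr_{N/K})$ becomes isometric to $(K[G], \kappa)$ as a $G$-form (using the standard isomorphism $N \otimes_K K^{\mathrm{sep}} \cong \mathrm{Map}(G, K^{\mathrm{sep}})$), so the existence of an SDN basis over $K$ is equivalent to the vanishing of the twisting class of $(N, Tr_{N/K})$ in $H^1(K, \mathrm{O}(\kappa))$, where $\mathrm{O}(\kappa)$ is the orthogonal group scheme of the unit form (equivariant for the $G$-action by right translation).

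For the implication $(\Leftarrow)$ I would reduce to two building blocks by considering the characteristic subgroup $H \subseteq G$ generated by all odd-order elements; it is normal, and under our hypothesis the quotient $G/H$ is generated by involutions. In the tower $K \subseteq N^H \subseteq N$, the top extension $N/N^H$ has Galois group $H$ of odd order, for which an SDN basis is constructed directly by averaging any normal basis against the idempotent $|H|^{-1}\sum_{h\in H} h$ (well-defined since $|H|$ is invertible, and giving a positive-definite-like form even in characteristic $2$). For the bottom extension $N^H/K$, the Galois group $G/H$ is generated by involutions, and one builds an SDN basis by decomposing the trace form into rank-one pieces indexed by a generating set of involutions; each involution $\sigma$ contributes a summand where the quadratic form is diagonal of the form $\langle 1\rangle$. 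Finally, one assembles an SDN basis for $N/K$ from the two SDN bases by taking a suitable product, using the standard fact that SDN bases are compatible with Galois towers.

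For $(\Rightarrow)$ I would argue contrapositively: if some quotient $G/H$ fails the hypothesis, then any SDN basis of $N/K$ restricts to one of $N^H/K$, so it suffices to rule out SDN bases for $N^H/K$. Every such quotient admits in turn a further quotient isomorphic to $\mathbb{Z}/4$ which is not contained in the subgroup generated by its involutions and odd-order elements; so the minimal obstruction reduces to the case $G = \mathbb{Z}/4$ in characteristic $2$. For a cyclic degree-$4$ extension in characteristic $2$, one computes the Arf (equivalently Dickson) invariant of the trace form directly and verifies that it is non-trivial, whereas the unit form has trivial Arf invariant; this shows that the two forms are not isometric and hence no SDN basis can exist.

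The main obstacle is the explicit computation of the Arf invariant of $(N, Tr_{N/K})$ and the verification that its vanishing is controlled precisely by the $2$-torsion structure of $G$ in the manner asserted. This is the technical heart of Serre's argument and also the deeper reason why the characteristic-$2$ criterion is so much more permissive than the criterion in characteristic $\neq 2$, where the odd-order assumption on $G$ is essential in the abelian case.
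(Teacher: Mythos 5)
You should first note that the paper does not prove this statement at all: it is quoted verbatim from Serre \cite{Serre14}, whose argument runs through the (non-smooth) unitary group scheme attached to $(K[G],\,g\mapsto g^{-1})$ in characteristic $2$, its decomposition $U\cong U'\times\mu_2$ with $U'$ smooth, and vanishing/connectedness theorems for $H^1$ of the relevant connected groups -- exactly the ingredients this paper borrows in Section 3. So the comparison is with Serre's proof, and your sketch is not a repair or simplification of it: several of its steps are genuinely false. In the necessity direction, the reduction to a $\mathbb{Z}/4$ quotient fails: the quaternion group $Q_8$ is not generated by its odd-order and order-$2$ elements (these generate only the centre $\{\pm 1\}$), yet $Q_8$ has no quotient isomorphic to $\mathbb{Z}/4$ because $Q_8^{\mathrm{ab}}\cong(\mathbb{Z}/2)^2$; so your ``minimal obstruction'' never sees this case, and the theorem's necessity claim for $Q_8$-extensions is left unproved. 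Moreover ``an SDN basis of $N/K$ restricts to one of $N^{H}/K$'' is not available as stated: for $x,y\in N^{H}$ one has $Tr_{N/K}(xy)=|H|\,Tr_{N^{H}/K}(xy)$, which is identically zero in characteristic $2$ as soon as $|H|$ is even, so the trace form of the quotient extension is not a restriction of the trace form of $N/K$ and the passage to quotient extensions requires a real (descent-type) argument.

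The sufficiency direction has analogous gaps. The subgroup $H$ generated by the odd-order elements of $G$ need not have odd order (take $G=A_4$ or $A_5$), so the top layer of your tower is not covered by the odd-order theorem of Bayer--Lenstra. The case of a group generated by involutions -- which is precisely the new content of Serre's theorem -- is only asserted: ``each involution contributes a summand $\langle 1\rangle$'' is not an argument, and diagonalization is in general unavailable for symmetric bilinear forms in characteristic $2$. The ``standard fact that SDN bases are compatible with Galois towers'' is false: in characteristic $2$ a $\mathbb{Z}/4$-extension $N/K$ has both of its quadratic layers self-dual normal (an Artin--Schreier generator $x$ with $x^2+x=a$ satisfies $Tr(x^2)=1$, $Tr(x\sigma(x))=0$), yet $N/K$ itself has no self-dual normal basis by part (3) of the Bayer--Lenstra theorem quoted just before the statement. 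Finally, ``the Arf invariant of $(N,Tr_{N/K})$'' is not defined without choosing a quadratic refinement of the bilinear trace form, since the natural candidate $x\mapsto Tr(x^2)=(Tr\,x)^2$ is the square of a linear form; the $\mathbb{Z}/4$ case should instead simply be cited from Bayer--Lenstra. In short, both directions contain steps that would fail, and the obstruction in the non-abelian case (e.g. $Q_8$) cannot be reached by dévissage to cyclic quotients; one really needs the cohomological analysis of the characteristic-$2$ unitary groups as in \cite{Serre14}.
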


In this paper we shall be almost exclusively interested in the situation
where $G$ has odd order and so we note:

\begin{cor}\label{SN}
For a Galois extension of fields $N/K$ with $G=\mathrm{Gal}( N/K)$ having odd order and $K\ $having arbitrary characteristic, then 
$N$ always has a self-dual normal basis.
\end{cor}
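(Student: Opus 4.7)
The plan is to split into cases according to the characteristic of $K$ and in each case cite one of the two preceding theorems; no new work should be required.

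First I would handle the case $\mathrm{char}(K)\neq 2$. Here the hypothesis of Theorem 1.1(1) (with $G$ of odd order) is satisfied, so the existence of a self-dual normal basis follows immediately from that statement. Note that part (2) of Theorem 1.1 is not needed and in any case only applies to abelian $G$; the odd-order assumption is what makes the general, non-abelian part (1) applicable.

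Next I would handle the case $\mathrm{char}(K)=2$. Here I would invoke Theorem \ref{Ser2} and observe that its hypothesis is automatically satisfied: since $|G|$ is odd, Lagrange's theorem forces every element of $G$ to have odd order, so in particular $G$ is generated by its elements of odd order (trivially, by all of its elements). Hence Serre's criterion applies and $N/K$ has a self-dual normal basis.

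Combining the two cases covers arbitrary characteristic, which is the statement of the corollary. There is no substantive obstacle: the only thing to check is that the odd-order condition on $G$ implies the generation hypothesis in Theorem \ref{Ser2}, which is immediate from Lagrange.
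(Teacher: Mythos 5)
Your proposal is correct and is exactly the argument the paper intends: the corollary is stated as an immediate consequence of Theorem 1.1(1) in characteristic different from $2$ and of Serre's Theorem \ref{Ser2} in characteristic $2$, where the odd order of $G$ trivially guarantees generation by elements of odd order. Nothing further is needed.
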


\bigskip

We now replace the category of Galois \ field extensions $N/K$ with Galois
group $G$ by the larger category of $G$-Galois $K$-algebras  $B_K/K$; such $B_K$  are the principal homogeneous spaces for $A_K=%
\mathrm{Map}\left( G,K\right)$. This change of category enables us to
state results in a way that fits optimally with our other results.

In this regard it is interesting to mention the following  theorem  from [4].

\begin{thm}\label{basep}
Let $\ K$ be a number field and let $G\ $denote a finite group. With the
above notation,  if $\left( B_{K_{v}},Tr_{B_{Kv}/K_{v}}\right) $ is isomorphic to 
$\left( A_{K_{v}}, Tr_{A_{Kv}/K_{v}}\right) $ as a $G$-quadratic space for all
places $v$ of $K,\ $then $\left( B_{K,}Tr_{B_{K}/K}\right) $ is isomorphic
to $\left( A_{K},Tr_{A_K/K}\right) $ as a $G$-quadratic space.\bigskip
\end{thm}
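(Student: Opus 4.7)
The plan is to recast the statement as a Hasse principle in non-abelian Galois cohomology, then invoke classical local-global principles for classical groups. First, after base change to an algebraic closure $\overline{K}$, every $G$-Galois $\overline{K}$-algebra is split and isomorphic to $\mathrm{Map}(G, \overline{K})$ equipped with its trace form, which is the standard unit form. Hence $(B_K, Tr_{B_K/K})$ and $(A_K, Tr_{A_K/K})$ become isomorphic as $G$-quadratic spaces over $\overline{K}$. Denote by $\mathbf{H}$ the affine $K$-group scheme of automorphisms of $(A_K, Tr_{A_K/K})$ as a $G$-quadratic space. Then $(B_K, Tr_{B_K/K})$ determines a canonical class $\xi_B \in H^1(K, \mathbf{H})$, which is trivial if and only if $(B_K, Tr_{B_K/K}) \cong (A_K, Tr_{A_K/K})$ as $G$-quadratic spaces. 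The hypothesis says $\xi_B$ vanishes locally at every place, so the theorem is equivalent to showing that the kernel of $H^1(K, \mathbf{H}) \to \prod_v H^1(K_v, \mathbf{H})$ is trivial.

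The next step is to identify $\mathbf{H}$ explicitly. Since $A_K$ is $G$-isomorphic to the regular representation $K[G]$, its $G$-equivariant $K$-linear endomorphisms form $K[G]$ acting by right multiplication. Writing $Tr_{A_K/K}$ in the canonical basis of $K[G]$, one finds that it corresponds to the pairing $(x, z) \mapsto [x\overline{z}]_1$, where $\overline{\ }$ is the antipode $g \mapsto g^{-1}$ and $[\cdot]_1$ extracts the coefficient of the identity. A direct calculation then shows that right multiplication by $y \in K[G]$ is an isometry if and only if $y\overline{y} = 1$. Hence $\mathbf{H}$ is the unitary $K$-group scheme $\mathbf{U}(K[G], \overline{\ })$ associated to the algebra with involution $(K[G], \overline{\ })$.

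To conclude, we invoke the Hasse principle for $\mathbf{H}$. Decompose $K[G] = \prod_i S_i$ into simple factors via Wedderburn; the involution $\overline{\ }$ either preserves each $S_i$ or permutes some of them in pairs. Correspondingly, $\mathbf{H}$ factors as a product of classical $K$-group schemes: general-linear factors arising from swapped pairs of $S_i$, for which $H^1$ vanishes by Hilbert 90; orthogonal or symplectic factors arising from self-conjugate $S_i$ with a first-kind involution; and unitary factors arising from self-conjugate $S_i$ with a second-kind involution. For each such factor the Hasse principle is classical: Hasse--Minkowski in the orthogonal case, triviality of $H^1$ in the symplectic case, and Landherr's theorem (together with its extension by Bayer-Fluckiger and Serre) in the unitary case. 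Assembling these yields the required triviality of $\ker(H^1(K, \mathbf{H}) \to \prod_v H^1(K_v, \mathbf{H}))$.

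The main obstacle lies in this last step, specifically in carefully tracking the Wedderburn decomposition of $(K[G], \overline{\ })$ — including the type (orthogonal, symplectic, or unitary) of each simple factor and any intervening quadratic extensions in the unitary case — and in invoking the Landherr-type Hasse principles for Hermitian forms over division algebras. The first two paragraphs of the plan are essentially formal once one accepts that $H^1(K, \mathbf{H})$ classifies the twisted $G$-quadratic spaces in question; the real input is the classical local-global theory for Hermitian forms over number fields.
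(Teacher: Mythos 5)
Note first that the paper does not prove this statement: Theorem \ref{basep} is quoted from the work of Bayer-Fluckiger, Parimala and Serre on the Hasse principle for $G$-trace forms, so there is no internal proof to compare against. Your reduction is in fact the same strategy that underlies the cited result and the rest of this paper: identify the automorphism group of $(A_K,Tr_{A_K/K})$ as a $G$-quadratic space with the unitary group of $(K[G],\,x\mapsto\overline{x})$ (this is exactly the identification made in Lemma \ref{aut} and Section 3), observe that $(B_K,Tr_{B_K/K})$ is a twist becoming trivial over a separable closure, and reduce the theorem to the injectivity of the kernel of $H^1(K,\mathbf{H})\rightarrow \prod_v H^1(K_v,\mathbf{H})$, which one checks factor by factor in the Wedderburn decomposition of $(K[G],\overline{\phantom{x}})$. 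Up to this point your argument is correct and essentially the intended one.

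There is, however, a concrete gap in your case analysis of the simple factors. The theorem is stated for an arbitrary finite group $G$, so the $c$-stable simple components of $K[G]$ are of the form $M_n(D)$ where $D$ may be a quaternion division algebra (e.g. for $G=Q_8$), and a first-kind involution on such a factor does \emph{not} give an orthogonal or symplectic group of a quadratic or alternating form over $K$. In the symplectic-type quaternionic case the relevant group is the unitary group of a hermitian form over $(D,\text{canonical involution})$, and $H^1$ of this group is \emph{not} trivial (at real places quaternionic hermitian forms carry signatures), so ``triviality of $H^1$ in the symplectic case'' does not apply; in the orthogonal-type quaternionic case one is dealing with skew-hermitian forms over $D$, for which Hasse--Minkowski says nothing. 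What is needed in these two cases is the classical local--global theory of (skew-)hermitian forms over quaternion division algebras over number fields (Kneser, Landherr; see Scharlau, Ch.~10), which does hold, so the gap is reparable, but as written your middle paragraph only covers the split factors $D=K$ together with the second-kind (Landherr) factors and the swapped pairs. Your closing remark about ``hermitian forms over division algebras'' points in the right direction, but the trichotomy orthogonal/symplectic/unitary with the specific theorems you invoke is not sufficient to cover all factors that actually occur.
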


We should also mention that in [5]\ \ a full set of cohomological conditions are
given for $\left( B_{K,}Tr_{B_{K}/K}\right) $ to be isomorphic to $\left(
A_{K},Tr_{A_K/K}\right) $ as a $G$-quadratic space in the situation where $K\ $%
is local. \bigskip 

We shall now start to consider some integral results when$\ K\ $is\ a number
field with ring of integers $R\ $and $A=\mathrm{Map}\left( G,R\right) .\ \ $%
In this situation a principal homogeneous space for \ $A=\mathrm{Map}
\left( G,R\right) $ is the ring of integers of a non-ramified $G$-Galois $K$%
-algebra\ $N$. From \cite{CPT} we have:\medskip 

\begin{thm}\label{CPTA}
The class of $O_{N}$ as a locally free $O_{K}\left[ G\right] $-module in $%
\mathrm{Cl}\left( O_{K}\left[ G\right] \right) $ is annihilated by the
exponent of $G^{ab}.$
\end{thm}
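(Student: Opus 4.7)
The natural framework for attacking this statement is Fr\"ohlich's Hom-description of the locally free classgroup: one has an isomorphism
$$\mathrm{Cl}(O_K[G]) \;\cong\; \frac{\mathrm{Hom}_{\Omega_K}\bigl(R_G,\, J(K^c)\bigr)}{\mathrm{Hom}_{\Omega_K}\bigl(R_G,\, (K^c)^{\times}\bigr)\cdot \mathrm{Det}\bigl(U(O_K[G])\bigr)},$$
where $R_G$ is the ring of virtual $K^c$-characters of $G$, $\Omega_K=\mathrm{Gal}(K^c/K)$, and $J(K^c)$ denotes the id\`ele group. Since $N/K$ is unramified it is in particular tame, so by Noether's theorem $O_N$ is locally free of rank one over $O_K[G]$, and the class $[O_N]$ is represented under the above isomorphism by the resolvent homomorphism $\rho_N:\chi\mapsto (c\mid\chi)$ attached to an idelic choice of normal basis generator $c$. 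Setting $m=\exp(G^{\mathrm{ab}})$, the goal is then to show that $\rho_N^{\,m}$ lies in the denominator.

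Next I would split $R_G$ by character type via Artin--Brauer induction. Any $\chi\in R_G$ can be written, up to a denominator coprime to $|G|$, as a $\mathbb{Z}$-linear combination of characters induced from one-dimensional characters of subgroups $H\le G$. Using the functoriality of resolvents under induction and restriction, together with the fact that every intermediate extension $N^H/K$ remains unramified, I would reduce the problem to checking that $\rho_N^{\,m}$ is trivial modulo the denominator on the sublattice $X(G)\subset R_G$ generated by the one-dimensional characters of $G$, namely those characters factoring through $G^{\mathrm{ab}}$.

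For such a linear $\chi$, the order of $\chi$ divides $\exp\bigl(\widehat{G^{\mathrm{ab}}}\bigr)=\exp(G^{\mathrm{ab}})=m$, so $\chi^{\otimes m}$ is the trivial character. Hence $(c\mid\chi)^{m}$ can be reinterpreted as a resolvent attached to a trivial character, pulled back from the cyclic unramified subextension $N^{\ker\chi}/K$; class field theory applied to this cyclic quotient then identifies the corresponding id\`ele class with a determinant of units of $O_K[G]$, placing it in the denominator. Combined with the reduction of the previous paragraph this gives $m\cdot[O_N]=0$ in $\mathrm{Cl}(O_K[G])$.

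The main obstacle will be the induction-theoretic step: carefully arranging the resolvent bookkeeping so that the non-linear character contributions are absorbed into the denominator. This requires a precise local analysis at the primes dividing $|G|$, and it is at these primes that the non-ramification hypothesis is indispensable, as it trivialises the Galois Gauss sums that would otherwise obstruct the reduction to the abelianised situation.
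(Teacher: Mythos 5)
Your strategy breaks down at the reduction step, and the breakdown is exactly where the specific bound $\exp(G^{ab})$ has to come from. Brauer induction expresses an arbitrary character as an integral combination of characters induced from \emph{one-dimensional characters of subgroups} $H\leq G$, so after the usual resolvent bookkeeping you are left with linear characters of subgroups, whose orders divide $\exp(H^{ab})$ -- and $\exp(H^{ab})$ need not divide $\exp(G^{ab})$ (for the quaternion group of order $8$, or the extraspecial group of order $27$ and exponent $9$ if you want an odd-order example, $\exp(G^{ab})$ is $2$ resp.\ $3$, while a cyclic subgroup carries a linear character of order $4$ resp.\ $9$). So this route can at best give annihilation by $\exp(G)$, not by $\exp(G^{ab})$. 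The further claim that it suffices to check $\rho_N^{\,m}$ on the sublattice spanned by the one-dimensional characters of $G$ itself is also not tenable: an $\Omega_K$-equivariant homomorphism on $R_G$ is not determined by its restriction to characters factoring through $G^{ab}$ (for $G$ perfect that sublattice is $\mathbb{Z}\cdot\varepsilon_G$, while the resolvent class is in general nontrivial), and in any case $(c\mid\chi)^m$ is not literally a resolvent of the trivial character. Finally, the closing appeal to class field theory is not the tool that makes such arguments close up.

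For context: the present paper does not prove Theorem \ref{CPTA}; it quotes it from \cite{CPT}. But the mechanism that yields precisely the exponent of $G^{ab}$ is visible in Section 6, and it avoids Brauer induction altogether. The class is represented locally by determinants of resolvents $\mathrm{Det}(r(b_{\mathfrak p}))$, and the identity $r(hx)=hr(x)$ of (\ref{act}) shows that changing the generator by $h\in G$ multiplies $\mathrm{Det}(r(b_{\mathfrak p}))(\chi)$ by $\mathrm{Det}(h)(\chi)=\det\chi(h)$, which is an \emph{abelian} character of $G$ for every irreducible $\chi$, linear or not, and hence has order dividing $e=\exp(G^{ab})$. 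Therefore the $e$-th powers of these resolvent determinants are invariant under the relevant (Galois-twisted) $G$-action, exactly as in (\ref{clecle}); since $N/K$ is unramified, the Fixed Point Theorem for determinants over unramified extensions (Theorem \ref{fix}) descends these invariant determinants to $\mathrm{Det}(O_{K_{\mathfrak p}}[G]^{\times})$ (with the corresponding statement over $K$ globally), so the $e$-th power of the representing id\`ele-valued determinant lies in the denominator of the Hom-description and $e\cdot[O_N]=0$. It is this uniform use of the determinant character $\det\chi$, combined with the fixed point theorem rather than class field theory, that produces the bound $\exp(G^{ab})$ which your reduction cannot reach.
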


For a Galois extension of number fields$\ N/K$ with $G=\mathrm{Gal}(N/K)$ by the main
Theorem in \cite{ET} we have the following integral quadratic result:

\begin{thm}\label{ETA}
If\ $N/K\ $is at most tamely ramified and if $G$ has odd order, then the
inverse different $D_{N/K}^{-1}$ is the square of a fractional $O_{N}$%
-ideal, denoted $D_{N/K}^{-1/2},$ and the restriction to $\mathbb{Z}[G]$ of   the quadratic modules  $(D_{N/K}^{-1/2},Tr_{N/K})$ and$\ (O_K[G], \kappa)$ have the same class in the Grothendieck group of
locally free $\mathbb{Z}\left[ G\right] $-modules supporting a $G$-invariant
non-degenerate $\mathbb{Q}$-form.\bigskip 
\end{thm}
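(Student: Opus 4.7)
The plan is to proceed in three stages: first construct the square root $D_{N/K}^{-1/2}$, then establish local isometry with the unit form at every place, and finally globalise via a Hasse-type principle.

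For the first stage, tame ramification gives $v_{\mathfrak{P}}(D_{N/K}) = e(\mathfrak{P}/\mathfrak{p}) - 1$ at every prime $\mathfrak{P}$ of $N$ over $\mathfrak{p}$ of $K$. The odd-order hypothesis forces each $e(\mathfrak{P}/\mathfrak{p})$, being a divisor of $|G|$, to be odd, so $e(\mathfrak{P}/\mathfrak{p}) - 1$ is even. Hence $D_{N/K}$ is the square of a unique fractional $O_N$-ideal, and one defines $D_{N/K}^{-1/2}$ as the inverse of that ideal. Because $D_{N/K}^{-1/2}\cdot D_{N/K}^{-1/2}=D_{N/K}^{-1}$ and the trace pairing $O_N\times D_{N/K}^{-1}\to O_K$ is perfect by definition of the inverse different, $Tr_{N/K}$ restricts to a non-degenerate $G$-invariant $O_K$-bilinear form on $D_{N/K}^{-1/2}$.

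For the second stage, I would complete at each prime $v$ of $K$ and show the $G$-quadratic space $(D_{N/K}^{-1/2}\otimes_{O_K} O_{K_v},\, Tr_{N/K})$ is isometric to $(O_{K_v}[G],\kappa)$. At an unramified prime, Noether's theorem produces a normal integral basis of $O_{N_v}=D_{N/K_v}^{-1/2}$, and one verifies that a suitable choice makes the trace pairing the unit form. At a tame ramified prime, one constructs an explicit generator of $D_{N/K_v}^{-1/2}$ over $O_{K_v}[G]$ using a uniformiser of $N_v$ raised to the power $(e-1)/2$; the verification that its $G$-translates form a self-dual basis for $Tr_{N/K}$ reduces, via a Hensel-type lifting, to the corresponding statement over the residue field extension, which is supplied by Corollary~\ref{SN}.

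For the third stage, restriction of scalars from $O_K$ to $\mathbb{Z}$ yields locally free $\mathbb{Z}[G]$-modules equipped with the $G$-invariant $\mathbb{Q}$-form induced by $Tr_{N/K}$, and the local isometries of stage two persist after restriction. In the Grothendieck group under consideration, local isometry at every rational prime forces equality of classes by a Hasse-type principle for $G$-quadratic forms that, for $|G|$ odd, refines Theorem~\ref{basep} to the integral setting; the odd-order hypothesis is what kills the relevant local-to-global obstruction.

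The hardest step will be the local analysis at a tame ramified prime: one must exhibit a generator $\theta$ of $D_{N/K_v}^{-1/2}$ as an $O_{K_v}[G]$-module such that $Tr_{N/K}(g\theta\cdot h\theta)=\delta_{g,h}$ holds exactly (not merely modulo a uniformiser). The correct choice of power of the uniformiser and the Hensel reduction to the residue-field self-dual normal basis theorem are both essential; once this local analysis is in place, the unramified case and the local-to-global comparison are comparatively routine.
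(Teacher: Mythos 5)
Your stage one is correct and standard: tameness gives $v_{\mathfrak{P}}(D_{N/K})=e-1$, odd order forces $e$ odd, so $D_{N/K}^{-1}$ has a unique square root which is self-dual under the trace pairing. Stage two is plausible but already nontrivial: at a tamely ramified place the existence of a generator $\theta$ of $D_{N/K_v}^{-1/2}$ over $O_{K_v}[G]$ with $Tr_{N/K}(g\theta\cdot h\theta)=\delta_{g,h}$ is asserted rather than proved; "a uniformiser to the power $(e-1)/2$ plus Hensel plus Corollary~\ref{SN}" is a hope, not an argument, since the trace form on the ramified part is not obtained from the residue field situation by a naive reduction. This local isometry is true for odd tame extensions, but it requires genuine work (it is itself a result of Erez), so you should either prove it or cite it.

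The fatal gap is stage three. There is no ``Hasse-type principle'' asserting that two locally free $\mathbb{Z}[G]$-modules with $G$-invariant non-degenerate $\mathbb{Q}$-forms which are isometric at every completion have the same class in the Grothendieck group in question, any more than everywhere-local freeness forces triviality in $\mathrm{Cl}(\mathbb{Z}[G])$: the class of such a quadratic module is an adelic (Hermitian) invariant, and the discrepancy between a rational comparison isometry and the local integral isometries lives in a generally nontrivial quotient. Indeed, the present paper exists precisely because local isometry does not globalize cheaply; the obstruction is measured by the unitary class group $\mathrm{CU}(A^D)$, and even with $|G|$ odd the authors obtain only statements about orthogonal multiples such as $(\ \cdot\ )^{\perp 2e}$, over non-totally-real fields, via strong approximation and determinant fixed-point theorems. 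Theorem~\ref{basep} is a field-level Hasse principle and does not refine to the integral setting. The theorem you are proving is quoted in the paper from \cite{ET} without proof, and the actual content of Erez--Taylor is exactly the step you wave at: one computes the class (discriminant) of $(D_{N/K}^{-1/2},Tr_{N/K})$ in Fr\"ohlich's Hermitian classgroup by resolvents and Galois Gauss sums and proves it agrees with that of the unit form using Adams operations. Your outline assumes this away, so the proposal does not constitute a proof.
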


Our aim here is to examine the questions raised by the results above in a more general framework, where we consider  quadratic spaces  over rings instead of fields  and  a Hopf $R$-order 
$A$ in $
\mathrm{Map}( G,K) $ so that $A^{D}$ is a Hopf $R$-order\ in $K
[ G]$  which is not necessarily equal to $R[G]$.

 In his doctoral thesis [23], Paul Lawrence considered
the quadratic structure of $\left( B,Tr_{B_K}\right) $ when $A$ is a Hopf
order in $\mathrm{Map}( G,K) $ for a number field $K$ and when $\ B$
is a principal homogeneous space for $A$. In particular he asked   whether $\left( B,Tr_{B_K}\right) $ and $\left( A,Tr_{A_K}\right) $, considered as locally free $A^D$-modules endowed with  $G$-invariant non-degenerate $K$-forms,  are isomorphic when they are locally
isomorphic  and when $\ B$ is a free $A^{D}$-module.
He showed that, \ when$\ G$ is abelian, then the question may be resolved by
restriction to the $2$-Sylow subgroup of $G$.

 Let $PH'(A)$ denote the subset of $PH(A)$ of isomorphism classes of principal homogenous spaces $B$ of $A$ which have the property that $(B_{\frak{p}}, Tr'_{B_{\frak{p}}})$ and 
$(A_{\frak{p}}, Tr'_{A_{\frak{p}}}) $ are  isometric for all prime ideals of $R=O_K$. In this article (see (\ref{CU})) we construct a refinement $CU(A^D)$ of the locally free class group $Cl(A^D)$ and a map $\phi: PH'(A)\rightarrow CU(A^D)$ so that the class map $\psi$ of (\ref{init1}) factors through $\phi$.

In the following theorems $B$ is a principal homogeneous space of $A$. We start by considering the abelian case. 
\begin{thm}\label{totab} Suppose that $G$ is an abelian group of odd order and $K$ is a number field. Then $(B, Tr'_B)$ and $(A, Tr'_A)$ are isomorphic $G$-quadratic spaces if and only if $B$ is a free $A^D$-module. 
\end{thm}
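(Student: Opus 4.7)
The forward direction would be essentially formal: any isometry $(B, Tr'_B) \cong (A, Tr'_A)$ of $G$-quadratic spaces is in particular an isomorphism of underlying $A^D$-modules, and since $A$, being the trivial principal homogeneous space for itself, has trivial class in $\mathrm{Cl}(A^D)$, and $A^D$ is commutative (as $G$ is abelian), a rank-one stably free $A^D$-module is free, so $A$, and therefore $B$, is free of rank one over $A^D$.

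For the converse assume $B$ is $A^D$-free. My plan would proceed in three stages. First, the odd-order local isomorphism result announced in the abstract (and proved earlier in the paper) places $B$ in $\mathrm{PH}'(A)$, so that $\phi(B) \in CU(A^D)$ is defined. Since $B$ is free one has $\psi(B) = 0$, and the factorisation $\psi = \pi \circ \phi$ yields $\phi(B) \in \ker \pi$. Second, over the field $K$ I would invoke Corollary \ref{SN}, applied to the $G$-Galois $K$-algebra $B_K$ decomposed as a product of Galois field extensions of odd degree, to produce a self-dual normal basis of $B_K/K$, hence an isometry $(B_K, Tr'_{B_K}) \cong (A_K, Tr'_{A_K})$ of $G$-quadratic spaces over $K$ (the common scalar $\lambda^{-1}$ being harmless).

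The third and crucial stage is to combine these two inputs to deduce $\phi(B) = 0$ in $CU(A^D)$. Unpacking the construction (\ref{CU}) of the refinement, the class $\phi(B)$ should be represented by the adelic discrepancy cocycle that compares the rational isometry of the second stage with the family of local integral isometries of the first, with values in the orthogonal group $\mathrm{Aut}_G(A, Tr'_A)$. Because $G$ is abelian, this automorphism group is commutative and its arithmetic is transparent enough that a Hilbert 90 style argument should show that any class in $\ker \pi$ realised by such a cocycle is trivial, whence $\phi(B) = 0$. This last step is where I expect the main obstacle to lie: it requires a careful dissection of $\ker \pi$, and essentially uses the abelian hypothesis to ensure that this kernel is detected purely by local data, which has already been trivialised at the first stage. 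Once $\phi(B) = 0$ is established, the defining property of $\phi$ supplies the desired global isometry $(B, Tr'_B) \cong (A, Tr'_A)$.
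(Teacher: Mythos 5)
The forward direction and the first two stages of your converse are fine (one small repair: the freeness of $A$ over $A^D$ comes from the standing Hypothesis via Proposition \ref{eg}, i.e.\ $\theta=\lambda l$ is an $A^D$-basis, not from the assertion that the trivial torsor has ``trivial class'' -- in the paper's normalisation $\psi$ is defined relative to $A$, so that statement is circular). The genuine gap is your third stage. Saying that ``a Hilbert 90 style argument should show that any class in $\ker\xi$ realised by such a cocycle is trivial'' is not an argument, and the statement it aims at is not what is true: $\ker\xi$ lives inside the adelic quotient $\Omega(G)$ of Proposition \ref{exp}, which is in general a nontrivial elementary $2$-group and is not a Galois cohomology set to which Hilbert 90 applies. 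Likewise, the claim that the kernel is ``detected purely by local data, which has already been trivialised'' misreads what $\mathrm{CU}(A^D)$ measures: the local isometries are precisely the input used to define $\phi(B)$, and its vanishing is the global-versus-local discrepancy that still has to be proved.

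What the paper actually proves is that $\phi(B)$ is killed by two coprime integers. First, $\phi(B)\in\ker\xi$ and $\ker\xi$ has exponent dividing $2$ (Proposition \ref{exp}), which in turn rests on the unitary determinant theorems $\mathrm{Det}(U(\cdot))=\mathrm{Det}(\cdot)_-$ globally and locally for odd $G$ (Theorems \ref{fglob} and \ref{fund}). Second, $\phi(B)^{e}=1$ for $e$ the (odd) exponent of $G$: one represents $\theta_{\mathfrak{p}}=\phi_0^{-1}\circ\phi_{\mathfrak{p}}$ by resolvents, $\theta_{\mathfrak{p}}=(\lambda^{-1}r(b_{\mathfrak{p}}))(\lambda^{-1}r(b_0))^{-1}$, uses Proposition \ref{expli} to see that $\lambda^{-1}r(b_{\mathfrak{p}})$ is unitary in $B_{\mathfrak{p}}\otimes A^D_{\mathfrak{p}}$, and uses the Galois action formula (\ref{act}) to see that the $e$-th powers of these determinants are $G$-fixed; since $G$ is abelian the fixed points are computed by (\ref{isab}) and Theorem 2.18 to be $\mathrm{Det}(U(A_K^D))$ and $\mathrm{Det}(U(A_{\mathfrak{p}}^D))$, which is exactly (\ref{clecle}) and gives $\phi(B)^e=1$ in $\mathrm{CU}(A^D)$. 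Coprimality of $2$ and $e$ then forces $\phi(B)=1$; none of this is supplied by your sketch. Finally, your closing sentence also hides a step: $\phi(B)=1$ only records that a determinant lies in the denominator of (\ref{CU}), and one must still conclude that the double coset $\alpha_1(B)$ in $c(R,U_{A^D})$ is trivial to obtain an actual isometry. In the abelian case this works because $\mathrm{Det}$ is injective on $K[G]^\times$; in the nonabelian theorems this is precisely where Kneser's strong approximation is needed, so the step cannot be waved through by ``the defining property of $\phi$''.
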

In this article we wish   to consider a number of other more general situations.    Kneser's Strong Approximation Theorem provides the key tool to deal with these different cases. Three such situations arise in higher rank when we consider $(B, Tr'_B)^{\perp n } $ and $(A, Tr'_A)^{\perp n }$,  where for a given positive integer $n$ we let $\left( B,Tr_{B}^{\prime }\right)
^{\perp n}$ denote the orthogonal sum of $\ n$ copies of $\left(
B,Tr_{B}^{\prime }\right)$.

\begin{thm} \label{choc}Let $G$ be a finite group of odd order and let $K$ be a non-totally real number field. Suppose that $\phi(B)^n=1$. Then 
$(B, Tr')^{\perp{m_n}}$ and  $(A, Tr')^{\perp{m_n}}$ are isomorphic $G$-quadratic spaces,  where $m_n=1$ (resp. $2n$) if $n=1$ (resp. $n>1$).
\end{thm}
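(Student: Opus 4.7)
The plan is to combine two ingredients: an unpacking of the refined class group relation $\phi(B)^n = 1$ into a stable $G$-quadratic identity, and Kneser's Strong Approximation Theorem, whose hypotheses are secured by the non-totally real condition on $K$.

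By the construction of $CU(A^D)$ and of the class map $\phi: PH'(A) \to CU(A^D)$, the vanishing $\phi(B)^n = 1$ translates into a stable $G$-isometry
$$
(B, Tr'_B)^{\perp n} \perp H \;\cong\; (A, Tr'_A)^{\perp n} \perp H,
$$
where $H$ may be taken to be an orthogonal sum of copies of $(A, Tr'_A)$. Next, observe that $(B, Tr'_B)$ and $(A, Tr'_A)$ are locally isometric at every place of $K$: at each finite prime this is the defining condition $B \in PH'(A)$, and at each archimedean prime it follows from Corollary \ref{SN}, using that $G$ has odd order. Consequently $(B, Tr'_B)^{\perp m}$ and $(A, Tr'_A)^{\perp m}$ lie in the same genus of $G$-quadratic $A^D$-lattices for every $m \geq 1$. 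Because $K$ is not totally real, it admits a complex place $v$ at which the spin group of $(A, Tr'_A)_{K_v}$ is automatically non-compact, and this non-compactness is precisely the input required for Kneser's Strong Approximation Theorem in its cancellation form.

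For $n = 1$, the theorem applies directly to the stable isomorphism: cancelling $H$ yields $(B, Tr'_B) \cong (A, Tr'_A)$, which is the case $m_1 = 1$. For $n > 1$, the rank constraint in Kneser's theorem (together with the potential spinor-genus obstruction at the $n$-fold level) is handled by a doubling step. Orthogonally summing the stable isomorphism with $(A, Tr'_A)^{\perp n}$ in one instance and with $(B, Tr'_B)^{\perp n}$ in another, and using the commutativity of $\perp$ to identify the two sums obtained, one derives the stable identity
$$
(B, Tr'_B)^{\perp 2n} \perp H \;\cong\; (A, Tr'_A)^{\perp 2n} \perp H,
$$
to which Kneser's cancellation applies at the safe ambient rank $2n|G| + \mathrm{rk}(H)$, yielding $(B, Tr'_B)^{\perp 2n} \cong (A, Tr'_A)^{\perp 2n}$, which is the case $m_n = 2n$. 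The principal obstacle is the precise verification of the rank and spinor-norm hypotheses of Kneser's theorem in the $G$-equivariant setting, which should reduce to a classical quadratic-form statement over the fixed subring of the $A^D$-action; the doubling is exactly what is needed to push this reduction into the stable range.
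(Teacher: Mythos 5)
Your opening step is not justified, and it effectively assumes the hard part of the theorem. The group $\mathrm{CU}(A^{D})$ is a quotient of $\mathrm{Det}(U(\mathbb{A}_{K}[G]))$, so the hypothesis $\phi(B)^{n}=1$ only says that the id\`elic determinant $\prod_{\mathfrak{p}}\mathrm{Det}(\theta_{\mathfrak{p}})^{n}$ factors as a global unitary determinant times local integral unitary determinants. Converting this determinantal identity into an actual isometry --- even a stable one of the shape $(B,Tr'_{B})^{\perp n}\perp H\cong (A,Tr'_{A})^{\perp n}\perp H$ --- is precisely the implication that has to be proved: it amounts to the injectivity (after stabilization) of the map from the double coset set $c(R,U_{m,A^{D}})=U(M_{m}(K[G]))\backslash U(M_{m}(\mathbb{A}_{K}[G]))/\prod_{\mathfrak{p}}U(M_{m}(A^{D}_{\mathfrak{p}}))$, which classifies forms in the genus of $(A,Tr'_{A})^{\perp m}$, to $\mathrm{CU}(A^{D})$, and that is exactly where strong approximation is needed. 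No Witt-type stability or cancellation theory for $G$-forms over the Hopf order $A^{D}$ is established in the paper or available off the shelf, so neither your stable isometry nor the subsequent ``cancellation form'' of Kneser's theorem can simply be quoted: Kneser's theorem is a density statement for simply connected groups, not a cancellation theorem for equivariant lattices, and cancellation at the rank relevant to the case $n=1$ is precisely where such statements are most delicate.

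The second gap is that the genuine obstruction is not a rank condition but the structure of the equivariant unitary group. One has $U_{m,G}\cong O_{m,K}\times\prod_{i}U_{A_{m,i}}\times\prod_{j}U_{B_{m,j}}$, and for $m>1$ the factor $SO_{m,K}$ coming from the trivial character is not simply connected, so $SU_{m,G}$ does \emph{not} have strong approximation. The paper's proof therefore normalizes the local comparison isometries so that $\varepsilon(\theta_{\mathfrak{p}})=1$, passes to the subgroup $SU^{\varepsilon}_{m,G}$ obtained by killing the trivial-character component (Theorem \ref{Kimp} and Proposition \ref{K1}, which is where the non-totally real hypothesis enters, via non-compactness at a complex place), and uses the squaring $\mathrm{Det}(\theta^{\perp 2n})=\mathrm{Det}(a)^{2}\mathrm{Det}(u)^{2}$, together with Lemma \ref{detred}, so that the component at the trivial character can be made literally trivial and the relevant adelic element lands in $SU^{\varepsilon}_{m,G}(\mathbb{A}_{K})$. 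This is the actual reason for $m_{n}=2n$ when $n>1$; it has nothing to do with reaching a ``safe ambient rank,'' and your doubling step does not address it. For $n=1$ the paper cancels nothing: it proves directly that $\mathrm{Det}\colon c(R,U_{A^{D}})\rightarrow \mathrm{CU}(A^{D})$ is injective using strong approximation for $SU_{G}$, where the orthogonal factor is harmless. Your closing sentence concedes that the verification of the approximation/cancellation hypotheses in the equivariant setting is left open; unfortunately that verification, together with the passage from $\phi(B)^{n}=1$ to any isometry at all, is the core of the proof.
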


We assume  $G$ to be  of odd order and we consider separately the three cases where: $G$ is abelian, $G$ is not abelian and finally the group scheme $\mathrm{Spec}(A)$ is constant. The following three theorems then follow in a relative straightforward manner. 
\begin{thm}\label{LA}
 If $G$ is an abelian  group of  odd order and $K$ is a non-totally real number field. If $e=e\left( G\right) $ is the exponent of $G,$
then $\left( B,Tr_{B}^{\prime }\right) ^{\perp 2e}$ and $\left(
A,Tr_{A}^{\prime }\right) ^{\perp 2e}\ $are\ isomorphic $G$-quadratic spaces.
\end{thm}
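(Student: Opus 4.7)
The plan is to derive Theorem \ref{LA} from Theorem \ref{choc} by verifying that the refined class invariant $\phi(B)$ is annihilated by $e = e(G)$ in $CU(A^D)$. Indeed, once we know $\phi(B)^{e} = 1$, applying Theorem \ref{choc} with $n = e$ yields the isometry of $(B, Tr'_B)^{\perp m_e}$ and $(A, Tr'_A)^{\perp m_e}$. Since we may assume $G$ non-trivial (the trivial case being immediate), $e > 1$ and $m_e = 2e$, which is exactly the statement of the theorem. Note also that $G$ being abelian places us squarely in the regime of Theorem \ref{choc}, since its hypotheses (odd order, non-totally real $K$) are inherited from those of Theorem \ref{LA}.

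To bound the order of $\phi(B)$ by $e$, I would adapt the strategy of Theorem \ref{CPTA} to the refined invariant $\phi$ and to an arbitrary abelian Hopf $R$-order $A^D$ inside $K[G]$. Because $G$ is abelian, $K[G]$ decomposes along the characters of $G$, and a Hom-description of $CU(A^D)$ in the style of Fr\"ohlich-McCulloh should express $\phi(B)$ by resolvents indexed by characters of $G$. Since every character of $G$ has order dividing $e$, raising a representing homomorphism to the $e$-th power should factor through the trivial character and become trivial modulo the subgroup cutting out $CU(A^D)$. The abelian hypothesis is precisely what permits this character-by-character analysis to proceed cleanly, and it is also what lets us reduce questions about $A^D$ to questions about a collection of rank-one $R$-orders in the character components of $K[G]$.

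The main obstacle I anticipate is verifying that the rigidification built into $CU(A^D)$, which encodes the quadratic form data beyond $Cl(A^D)$, is preserved under this $e$-th power operation; concretely, one must check that an element of $CU(A^D)$ which projects to an $e$-torsion element of $Cl(A^D)$ is itself $e$-torsion in $CU(A^D)$. I expect this to reduce to a local-at-each-prime computation using the explicit construction of $CU(A^D)$ from (\ref{CU}), combined with the Wedderburn decomposition of $K[G]$ by characters and the compatibility of the trace form $Tr'$ with this decomposition. Once this compatibility is in hand, Theorem \ref{choc} applied with $n = e$ completes the proof without further work, in line with the author's remark that the abelian, non-abelian and constant sub-cases all follow in a relatively straightforward manner from Theorem \ref{choc}.
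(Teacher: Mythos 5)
Your overall reduction agrees with the paper: one proves $\phi(B)^{e}=1$ in $\mathrm{CU}(A^{D})$ and then applies Theorem \ref{choc} with $n=e$, $m_{e}=2e$. The gap is in how you propose to obtain $\phi(B)^{e}=1$. The step you flag as your ``main obstacle'' --- that an element of $\mathrm{CU}(A^{D})$ whose image in $\mathrm{Cl}(A^{D})$ is $e$-torsion must itself be $e$-torsion --- is not a verification to be carried out; it fails as a general principle, and the paper never argues this way. All that is known about $\ker\bigl(\xi:\mathrm{CU}(A^{D})\rightarrow \mathrm{Cl}(A^{D})\bigr)$ is Proposition \ref{exp}: it is an elementary abelian $2$-group, possibly nontrivial. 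So from $\psi(B)^{e}=1$ (with $e$ odd) you could only conclude $\phi(B)^{2e}=1$, and Theorem \ref{choc} would then give $(B,Tr'_{B})^{\perp 4e}\cong (A,Tr'_{A})^{\perp 4e}$ --- essentially Theorem \ref{gen}, which is strictly weaker than the statement to be proved. Moreover the input $\psi(B)^{e}=1$ is itself not available: Theorem \ref{CPTA} is proved for $A^{D}=O_{K}[G]$ (rings of integers of unramified extensions), and no CPT-type annihilation of $\psi(B)$ is established, or used, for an arbitrary abelian Hopf order $A^{D}$.

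The idea you are missing is that the paper bounds the order of $\phi(B)$ \emph{directly} in $\mathrm{CU}(A^{D})$, never passing through $\mathrm{Cl}(A^{D})$. The local comparison isometries are represented by resolvents: by Proposition \ref{expli} one has $\theta_{\mathfrak p}=\phi_{0}^{-1}\circ\phi_{\mathfrak p}=(\lambda^{-1}r(b_{\mathfrak p}))\,(\lambda^{-1}r(b_{0}))^{-1}$ with $\lambda^{-1}r(b_{\mathfrak p})\in U(A^{D}_{\mathfrak p}\otimes B_{\mathfrak p})$. The Galois action formula (\ref{act}), $r(gx)=g\,r(x)$, shows that $\mathrm{Det}(\lambda^{-1}r(b_{\mathfrak p}))^{e}$ is $G$-fixed (the character twist $\chi(g)$ is killed by the $e$-th power), which yields (\ref{clecle}): $\phi(B)^{e}$ is represented by $G$-fixed unitary determinants with coefficients in $B_{\mathfrak p}$ and $B_{K}$. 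The descent to coefficients in $R_{\mathfrak p}$ and $K$ is then supplied, in the abelian case, by the fixed point theorem of Section 2.5 (where $\mathrm{Det}$ is an isomorphism for abelian $G$, so $G$-fixed determinants over $A^{D}\otimes B$ are determinants over $A^{D}$), combined with Theorems \ref{fglob} and \ref{fund} identifying $\mathrm{Det}(U(\cdot))$ with $\mathrm{Det}(\cdot)_{-}$; this gives $\mathrm{Det}(U(A^{D}_{\mathfrak p}\otimes B_{\mathfrak p}))^{G}=\mathrm{Det}(U(A^{D}_{\mathfrak p}))$ and the analogous global equality, whence $\phi(B)^{e}=1$ and Theorem \ref{choc} concludes. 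Your character-by-character heuristic is in the right spirit, but raising the resolvent representative to the $e$-th power produces a $G$-\emph{invariant} element, not a trivial one; without the resolvent representatives and this unitary fixed-point descent your argument does not close, and the detour through $\mathrm{Cl}(A^{D})$ cannot recover the factor of $2$.
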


\begin{thm}\label{gen}
Suppose that $G$ is a group of  odd order and $K$  is a  non-totally real number field.   If  $\psi(B)^m=1$,   then 
$(B,Tr_{B}^{\prime }) ^{\perp 4m}$ and $( A,Tr_{A}^{\prime })
^{\perp 4m} $ are isomorphic  $G$-quadratic spaces. 
\end{thm}

Finally, when $\ \mathrm{Spec}\left( A\right) $ is the constant group
scheme, we show:

\begin{thm}\label{const} Suppose that$\ G$ has odd order and that  $ \mathrm{Spec}( A) $ is a constant group
scheme. If $K/\mathbb{Q\,}\ $ is a non-totally real number field,  unramified at the primes dividing the order
of $G$, 
 then $( B,Tr_B^{\prime }) ^{\perp 2e^{ab}}$ and $
( A,Tr_{A}^{\prime }) ^{\perp 2e^{ab}} $ are isomorphic $G$-quadratic spaces, where $e^{ab}=e( G^{ab}) $
denotes the exponent of $G^{ab}$. 
\end{thm}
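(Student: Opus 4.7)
The plan is to imitate the strategy of Theorem \ref{LA}, replacing the abelian class-group bound by Theorem \ref{CPTA}. Since $\mathrm{Spec}(A)$ is the constant group scheme, $A = \mathrm{Map}(G, R)$, $A^{D} = R[G]$, and a direct calculation gives $\varepsilon_A(I(A)) = R$; hence $\lambda_A = 1$ and the primed traces agree with the usual ones. Moreover $B = O_N$ for an unramified $G$-Galois $K$-algebra $N$.

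The first step is to verify that $B \in PH'(A)$, i.e.\ that $(B_v, Tr'_B) \cong (A_v, Tr'_A)$ as $G$-quadratic spaces at every place $v$. At archimedean places this is Corollary \ref{SN}. At a finite place $v$, $N_v/K_v$ is unramified, so the different is trivial and $Tr_{N_v/K_v}$ is perfect on $O_{N_v}$. Applying Corollary \ref{SN} to the residue-field extension (which has the same Galois group, of odd order) yields a self-dual normal basis modulo $\mathfrak{p}$; since the residual form is non-degenerate and our forms are unimodular, Hensel's lemma lifts this to a self-dual normal integral basis of $O_{N_v}$, giving the desired local $G$-isometry.

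By Theorem \ref{CPTA}, $\psi(B)^{e^{ab}} = [O_N]^{e^{ab}} = 1$ in $\mathrm{Cl}(R[G])$, so $B^{\oplus e^{ab}}$ is stably free. Since $|G|$ is odd, $K[G]$ has no totally definite quaternion simple factor, so Eichler's condition holds and Jacobinski's cancellation theorem applies. By Feit--Thompson, any non-trivial odd-order $G$ is solvable, so $G^{ab}$ is non-trivial and $e^{ab} \geq 3$; cancellation then gives $B^{\oplus e^{ab}} \cong R[G]^{e^{ab}}$ as $R[G]$-modules. Thus, after doubling to rank $2e^{ab}$, we have two non-degenerate $G$-invariant forms $Tr'_B$ and $Tr'_A$ on one and the same free $R[G]$-module, locally isometric everywhere.

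The final step is to patch these local isometries into a single global one by Kneser's strong approximation theorem, applied component-by-component under the Wedderburn decomposition of $K[G]$. The non-totally-real hypothesis on $K$ furnishes an archimedean place at which each spinor group is non-compact, which is exactly the condition needed for strong approximation; the hypothesis that $K/\mathbb{Q}$ is unramified at the primes dividing $|G|$ ensures that the Wedderburn factors and their involutions are well-behaved at the relevant finite primes. The rank $2e^{ab}$ is chosen to meet Kneser's rank requirement in every Wedderburn factor. I expect the main technical obstacle to lie in this last step: one must carefully decompose the equivariant quadratic form along the Wedderburn decomposition of $K[G]$, verify that the local isometries constructed in Step 1 assemble into an adelic element of the equivariant orthogonal group of the form, and then apply strong approximation uniformly across the simple factors to produce a single global $G$-isometry.
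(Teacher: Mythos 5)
There is a genuine gap, and it sits exactly where you locate the ``main technical obstacle.'' Your argument controls the locally free class $\psi(B)\in\mathrm{Cl}(R[G])$ (via Theorem \ref{CPTA} plus Jacobinski cancellation), but what the strong approximation step actually requires is control of the \emph{unitary} class $\phi(B)\in\mathrm{CU}(A^D)$. After fixing local isometries, the object to be trivialized is a double coset in $U(M_m(K[G]))\backslash U(M_m(\mathbb{A}_K[G]))/\prod_{\mathfrak p}U(M_m(A^D_{\mathfrak p}))$; strong approximation applies only to the reduced-norm-one subgroup $SU^{\varepsilon}_{m,G}$ (note also that the factor $2$ in $2e^{ab}$ is not a ``rank requirement'' but is needed because the orthogonal factor at the trivial character is not simply connected, so one must project it away and square determinants), and the residual obstruction after strong approximation is precisely the class $\mathrm{Det}(\prod_{\mathfrak p}\varphi_0^{-1}\circ\varphi_{\mathfrak p})$ in $\mathrm{CU}(A^D)$. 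Knowing that $B^{\oplus e^{ab}}$ is a free $R[G]$-module, i.e.\ $\psi(B)^{e^{ab}}=1$, only tells you $\phi(B)^{e^{ab}}\in\ker\xi$, and since $\ker\xi$ is an elementary $2$-group (Proposition \ref{exp}) this yields $\phi(B)^{2e^{ab}}=1$ and hence, via Theorem \ref{choc}, only the weaker conclusion with exponent $4e^{ab}$ --- that is exactly Theorem \ref{gen}, not Theorem \ref{const}. To get $2e^{ab}$ the paper proves $\phi(B)^{e^{ab}}=1$ directly: the local comparison elements are written as resolvents, $\theta_{\mathfrak p}=(\lambda^{-1}r(b_{\mathfrak p}))(\lambda^{-1}r(b_0))^{-1}$, so that by the Galois action formula (\ref{act}) their $e^{ab}$-th determinant powers are $G$-fixed, and then Taylor's fixed point theorem $\mathrm{Det}(O_L[G]^\times)^{\Delta}=\mathrm{Det}(O_K[G]^\times)$ (Theorem \ref{fix}), combined with Theorems \ref{basieg} and \ref{fund} identifying unitary determinants with minus determinants, shows these $G$-fixed determinants already come from $U(A^D_{\mathfrak p})$ and $U(A^D_K)$. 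This is the one place where the hypothesis that $K/\mathbb{Q}$ is unramified at the primes dividing $|G|$ is genuinely used; your proposal never uses it in this way, and without it (or some substitute) the argument cannot close the gap between $\mathrm{Cl}$ and $\mathrm{CU}$.

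A secondary problem is your local step at residue characteristic $2$: lifting a residual self-dual normal basis by ``Hensel's lemma'' is not automatic, because the relevant unitary group scheme $U_{A^D}$ is not smooth when $2$ is not invertible, so formal smoothness arguments fail. This is exactly why the paper proves Theorem \ref{main}(2) by a separate argument (Serre's decomposition $U_{G,R}=U'_{G,R}\times\mu_2$ and the factorization through the smooth part), rather than by reduction and lifting; you should invoke Corollary \ref{locod} instead of Hensel.
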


\begin{rem} One can associate to any Hopf $R$-order $A$ of $\mathrm{Map}(G, K)$ a unitary form. This is the $G$-form $(A^D, \kappa_{A^D} )$, defined by $\kappa_{A^D}(x, y)=\lambda_Al(S^D(x)y)$ for $x, y\in A^D$ where 
$l\in \mathrm{Map}(G, K)$  is given  on elements of $G$ by $l(g)=1$ if $g$ is the unit element and $0$ otherwise. Moreover, for any $B\in PH(A)$,  we can introduce  the square root of the codifferent  $D^{-1}_{B/R}$ 
 by setting $D_{B/R}^{-1/2}=\alpha_A^{-1}B$. One easily checks (see \cite{CCMT15} proof of Proposition 5.1) that the maps 
$$A^D\rightarrow A,\ u\rightarrow u.(\lambda_Al)$$
and 
$$B\rightarrow D_{B/R}^{-1/2},\ b\rightarrow \alpha_A^{-1}b$$
induce   isomorphisms of $G$-forms:  
$$(A^D, \kappa_{A^D})\simeq (A, Tr'_A) \ \mathrm{and}\ (D_{B/R}^{-1/2}, Tr_B)\simeq (B, Tr'_B).$$
Therefore  in the above theorems, the $G$-forms $(A, Tr'_A)$ and $(B, Tr'_B)$ can be respectively replaced by $(A^D, \kappa_{A^D})$ and $(D_{B/R}^{-1/2}, Tr_B)$. In particular,  under the hypotheses of Theorem 1.12,  for any non ramified Galois extension  $N/K$,  where $G=\mathrm{Gal}(N/K)$ is a  group of odd order,  we obtain an isomorphism of $G$-forms 
$$(O_N, Tr_{N/K})^{\perp 2e^{ab}}\simeq (O_K[G], \kappa_{A^D})^{\perp 2e^{ab}}.$$ We note that this is  an isomorphism of  $G$-forms over $O_K$ whereas      Theorem \ref{ETA}  deals with their restrictions to $\mathbb Z$.
\end{rem}

\subsection{Methods} In the course of the paper we shall need\ to obtain a
number of results from both the cohomology of unitary groups and also from\
the representation theory of various Hopf algebras; in particular we shall
need to extend a number of results from the theory of integral
representations and modular representations of group rings.

We start by considering the cohomological results that we require. We
suppose now that $R$ is a henselian local ring with residue field $k$;
otherwise we maintain the above notation. We let $\mathcal{G}=\mathrm{Spec}%
\left( A\right) \ $so that $\mathcal{G}$ is a finite flat group scheme over $%
\mathrm{Spec}\left( R\right)$.   In Section 2.3 we introduce 
 the notion of the unitary group scheme $U_{A^D} $ of
the $R$-algebra with involution $(A^D, S^D)$. We shall see that
there is a natural morphism of group schemes $\mathcal{G\rightarrow }\ U_{
A^{D}} $ and this induces a map of pointed sets in \textit{fppf}
cohomology (see 2.3)%
\[
u:H^{1}\left( R,\mathcal{G}\right) \mathcal{\rightarrow } H^{1}(R,U_{
A^{D}}).
\]%
In Theorem 3.1 we shall show that the map $u$ is trivial when $G$ has odd
order and either $k$ has odd \ characteristic or $k$ has even
characteristic and $\mathcal{G}$ is generically constant. Since $H^{1}\left(
R,\mathcal{G}\right) $ classifies the twisted forms of $\mathcal{G}$ and
since $H^{1}(R,U_{ A^{D}} )$ classifies the
isomorphism  classes of quadratic ${G}$-spaces which become
isomorphic to $(A, Tr'_{A })$ over \ a finite flat cover of $\mathrm{Spec}( R)$, 
this result is of vital importance in understanding the  local quadratic
structure of the $G$-forms $(B, Tr'_B)$ when $B$ is a twist of $A$.
\medskip

We conclude this subsection by highlighting some results on the
representation theory that we \ require.  We begin by very briefly recalling a
number of group ring results, and then go on to describe the generalizations
that we shall need. 

Suppose now that $R$ is the valuation ring of a finite extension of $\mathbb{%
Q}_{p}$ with field of fractions $K$ and with residue field $k.$ Let $G$
again denote a finite group. For an arbitrary integral domain $S,\ $we let $%
G_{0}\left( S\left[ G\right] \right) $ denote the Grothendieck group,  for exact sequences, of  the category of 
finitely generated $S\left[ G\right] $-modules. Then we have the extension
of scalars map $\varphi _{R\left[ G\right] }:G_{0}\left( R\left[ G\right]
\right) \rightarrow G_{0}\left( K\left[ G\right] \right).$ By \ a theorem
of R. Swan (see \cite{CR2} Theorem 39.10)  we know that $\varphi _{R\left[ G\right]
}\ $is an isomorphism. Using Theorem 33 in \cite{Serrerep} one can easily show that
the reduction map $\delta _{R\left[ G\right] }:G_{0}\left( R\left[ G\right]
\right) \rightarrow G_{0}\left( k\left[ G\right] \right) \ $is surjective.

Suppose now that $A^{D}$ is any Hopf $R$-order in $K\left[ G\right] .\ $\ We
then again have extension of scalars and reduction maps 
\[
\varphi _{A^{D}}:G_{0}\left( A^{D}\right) \rightarrow G_{0}\left( K\left[ G%
\right] \right) ,\ \ \ \delta _{A^{D}}:G_{0}\left( A^{D}\right) \rightarrow
G_{0}\left( A^{D}\otimes _{R}k\right) 
\]%
and we shall need to know when $\varphi _{A^{D}}$ is an isomorphism and  to be able to estimate $\mathrm{coker}\  \delta _{A^{D}}$. When $%
\varphi _{A^{D}}\ $is an isomorphism, we shall say that $A^{D}$ has the {\it
Swan property}.

Let\ $e$ denote \ the exponent of $G.\ $We shall follow Serre and say that $K
$ is \textit{assez gros} for $G$ if $\ K$\ contains the group of $e$-th roots
of unity. A result of Jensen and Larsen \cite{JL} shows that $A^{D}$\ has the Swan property when 
$\ G$ is abelian and when$\ K\ $is assez gros for $\ G.$ In Proposition \ref{el2}
we shall  prove an analogue of their result for  $l$-elementary groups $G\ $for\ $l\neq p.\ $%
We are then able to use Brauer induction and the theory of Frobenius modules
to show that  $\mathrm{Ker}(\varphi_{A^{D}}) $ is a finite group of 
$p$-power order.

As we point out in Section 4.3, there are very many situations where $%
\phi _{A^{D}}$ is an isomorphism. Indeed, we are not aware of any situation
where $\phi _{A^{D}}$ is not an isomorphism, and this leads us to
formulate:\medskip 

\begin{conj} With the above notation we conjecture that ${A^{D}}$ has
the Swan property for any Hopf $R$-order in $K\left[ G\right] $.
\end{conj}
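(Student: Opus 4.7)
The strategy is to extend the authors' partial result, which establishes via Brauer induction and Proposition \ref{el2} that $\ker \varphi_{A^{D}}$ is a finite abelian $p$-group, in order to eliminate the remaining $p$-primary part. The first reduction is to a pure $p$-group setting: any $p$-torsion class in $\ker \varphi_{A^{D}}$ should, by the Brauer--Frobenius machinery already in play, be traceable to a contribution from a $p$-elementary subgroup, which has the shape $C \times P$ with $P$ a $p$-group and $|C|$ coprime to $p$. Decomposing the $C$-action into characters and using that $A^{D}$ respects this decomposition would reduce the conjecture to proving the Swan property for every Hopf $R$-order in $K[P]$ with $P$ a $p$-group.

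Once in the $p$-group case, I would exploit the inclusion $R[P] \hookrightarrow A^{D}$ together with Swan's theorem, which makes $\varphi_{R[P]}$ an isomorphism. Composing $\varphi_{R[P]}^{-1}$ with induction from $R[P]$ to $A^{D}$ provides a natural candidate for a section of $\varphi_{A^{D}}$; it then suffices to check that every class in $G_{0}(A^{D})$ is an alternating sum of classes of induced modules $A^{D} \otimes_{R[P]} L$ for $R[P]$-lattices $L$. In tandem, I would track the reduction map $\delta_{A^{D}}$ through the decomposition matrix of $A^{D} \otimes_{R} k$, hoping to pin down the potential $p$-torsion in $\ker \varphi_{A^{D}}$ precisely enough to rule it out.

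The main obstacle is that $A^{D}$ is generally not $R[P]$-projective, so induction from $R[P]$ need not be exact and need not generate $G_{0}(A^{D})$. A promising route is to interpolate by a chain of Hopf suborders $R[P] = A_{0}^{D} \subset A_{1}^{D} \subset \cdots \subset A_{n}^{D} = A^{D}$ in which successive refinements are of Larson type, and to verify the Swan property inductively along the chain via a Mayer--Vietoris or localization sequence for $G_{0}$ adapted to a single refinement step. The hardest part will almost certainly be the wild local case, where the classification of Hopf orders in $K[P]$ is not fully developed beyond low rank, where $K$ need not be ``assez gros'', and where ramification interacts subtly with the Hopf structure; making the one-step refinement completely explicit in this regime appears to require genuinely new input, consistent with the conjecture having resisted proof despite the many examples the authors have checked.
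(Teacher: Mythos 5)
You should note first that this statement is stated in the paper as a conjecture and is left open there: the paper only proves the weaker facts that $\mathrm{Ker}(\varphi_{A^{D}})$ is a finite abelian $p$-group with $\mathrm{Ker}(\varphi_{A^{D}})^{2}=0$ (Theorem \ref{ideal} and Theorem \ref{majeur}), and verifies the Swan property in special cases (group rings, maximal orders, connected Hopf orders, the Jensen--Larson abelian case, and $l$-elementary groups with $l\neq p$ via Proposition \ref{el2}). So there is no proof in the paper to compare against, and your proposal, by your own admission, does not close the conjecture either; it is a programme with genuine gaps.

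The central gap is in your first reduction. The Brauer induction used in the paper (Serre's form, giving $p^{m}\varepsilon_{G}=\sum_{H\in C(G)}\mathrm{Ind}_{H}^{G}(\theta_{H})$) deliberately runs over $l$-elementary subgroups with $l\neq p$, and the whole point of Proposition \ref{el2} is that for such $H$ one can prove $\mathrm{Im}(\psi_{\Lambda_{H}})=0$; this is what bounds $\mathrm{Ker}(\varphi_{A^{D}})$ by a $p$-power and nothing more. To "trace the remaining $p$-torsion to $p$-elementary subgroups" you would have to use the full Brauer theorem (coefficient $1$, all elementary subgroups) and then prove the analogue of Proposition \ref{el2} for $p$-elementary $H=C'\times P$ with $P$ a $p$-group --- but that analogue is exactly the open content of the conjecture, since every tool the paper has (Jensen--Larson, maximality of $R[L]$, splitness of $\widetilde{\Lambda}^{ss}$, the \'etale quotient of $\Lambda_{C}$) breaks precisely for Hopf orders in $K[P]$. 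Your reduction is therefore circular: it relocates the difficulty into the $p$-group case without supplying any mechanism to handle it, and the character decomposition of the $C'$-action is not available integrally over $R$ without splitting hypotheses that are not automatic.

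The second half of the proposal also does not repair this. Since $R[P]$ and $A^{D}$ are both lattices spanning $K[P]$, the quotient $A^{D}/R[P]$ is $R$-torsion, so $R[P]$ is never an $R$-direct summand of $A^{D}$ unless they coincide; the freeness argument of Proposition \ref{sous} (Nichols--Zoeller plus Nakayama) therefore does not apply, induction $A^{D}\otimes_{R[P]}-$ is not exact, and there is no reason induced classes generate $G_{0}(A^{D})$ --- you identify this obstacle yourself but the proposed fix (a chain of Hopf suborders with "Larson-type" one-step refinements and a Mayer--Vietoris or localization sequence for $G_{0}$ along each refinement) is not known to exist: no such chain is available for general Hopf orders in $K[P]$ in the wild case, and no such excision-type sequence for a pair of orders differing by one refinement is established in the paper or the literature you can invoke. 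As written, the argument establishes nothing beyond what Theorem \ref{majeur} already gives, which is consistent with the statement remaining a conjecture.
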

\bigskip 

\textbf{1.4 Structure of paper}. In Section 2 we recall a number of basic
notations and preliminary results that we shall require in this work. In
particular, we recall some standard Hopf theory and the basic theory of
quadratic forms over Hopf orders together with their associated unitary
groups. We also recall the construction of the locally free class group of
an order, and we go on to construct a quadratic generalization, the locally
free unitary class group, \ which classifies quadratic forms over Hopf orders.
Then, in Section 3, we introduce the cohomology groups that we require and
we prove the result mentioned above concerning the triviality of the image
of $H^{1}\left( R,\mathcal{G}\right) \mathcal{\ }$\ in$\ H^{1}(R,U_{
A^{D}}) .$

Next in Section 4 we obtain the Hopf representation theoretic results and
especially the modular representation theoretic results that we require. Here we
state our conjecture on the Swan property for local Hopf orders.  In order to be able to calculate with the unitary
class group of a Hopf order we need a good understanding of the determinants
of its unitary group, and we achieve  this in Section 5. We present the proofs of
our main results in Section 6 and conclude with an  appendix, due to D. Tossici,  which contains a generalisation of the Lang-Steinberg theorem for algebraic connected group schemes used in Section 3. 
\bigskip

\textbf{1.5 Acknowledgements}.\ It is a great pleasure to express our
deepest thanks to Eva Bayer, Dajano Tossici,\ Qing Liu and Jean-Pierre Serre for
their help and advice. Without their input this paper would never have seen
the light of day. 

We are also extremely grateful to both the Institute  of Mathematics of the
University of Bordeaux and Merton College, Oxford, for their generous
financial support for our research.

\section{Basic notions and preliminary results}
\subsection{Hopf algebras}
In this subsection we gather together  the various results that we shall
need on Hopf algebras and especially Hopf orders. Here we shall only require
that $R$ be a Dedekind domain with field of fractions $K$.

Recall that an $R$-Hopf order in a finite dimensional $K$-algebra $A_{K}$ is
an $R$-order $A$ endowed with a comultiplication $\Delta :A\rightarrow
A\otimes _{R}A,$ an antipode $ S:A\rightarrow A,$ and counit $\varepsilon
:A\rightarrow R$ satisfying the relations:
\begin{eqnarray*}
\left( \Delta \otimes \mathrm{id}\right) \circ \Delta &=&\left( \mathrm{id}%
\otimes \Delta \right) \circ \Delta \\
( \varepsilon \otimes \mathrm{id})\circ \Delta &=&\mathrm{id} \\
\left( S\otimes \mathrm{id}\right) \circ \Delta &=&\varepsilon .
\end{eqnarray*}
A right $A$-comodule $M$ is an  $R$-module endowed with a structure map 
$\rho :M\rightarrow M\otimes _{R}A$ such that $(\rho \otimes id)\circ \rho=(id\otimes \Delta)\circ \rho$  and $(id\otimes \varepsilon)\circ \rho=id$. A right $A$-comodule $M$ becomes a left $A^D$-module via 
\begin{equation}A^D\otimes_RM\overset{id \otimes \rho}\rightarrow A^D\otimes_RM\otimes_RA\overset{
T\otimes id}\rightarrow M\otimes_RA^D\otimes_RA\overset{id\otimes ev}\rightarrow M\otimes_RR=M
\end{equation}
where $ev:A^D\otimes _{R}A\rightarrow R$ is the evaluation map and $T: A^D\otimes_RM\rightarrow M\otimes_RA^D$ is the twist map. Note that $
A$ is an $A$-comodule via its comultiplication and is therefore naturally a
left $A^{D}$-module. To understand  the structure of $A$ and $A^{D}$ we
need:
 \begin{definition} The (left) integrals $I\left( A\right) \ $of $A$ are
defined as the $R$-module%
\begin{equation*}
I\left( A\right) =\{x\in A\mid a.x=\varepsilon _{A}\left( a\right).x ~\text{%
for all }a\in A\}.
\end{equation*}%
Similarly the (left) integrals$\ I\left( A^{D}\right)$ of $A^{D}$ are
defined as the $R$-module%
\begin{equation*}
I\left( A^{D}\right) =\{f\in A^{D}\mid u.f=\varepsilon _{A^{D}}\left(
u\right).f ~\text{for all }u\in A^{D}\}.
\end{equation*}%
\end{definition}
In the following proposition we state some of the key-results that we shall
require (see \cite{Ch} Section 3 and \cite{L72}):
\begin{prop}\label{H0}   The following properties hold:

\begin{enumerate}

\item $I\left( A\right) $ and $I\left( A^{D}\right) $ are both rank one
locally free $R$-modules.

\item $A=A^{D}I\left( A\right) $ and so $A$ is a rank one locally free $A^{D}$%
-module.

\item If $n=\dim _{K}\left( A_{K}\right) $ then $\varepsilon _{A^{D}}\left(
I\left( A^{D}\right) \right) .\varepsilon _{A}\left( I\left( A\right)
\right) =nR$.

\item If $C$ is an $R$-Hopf suborder of $A$ in $A_{K},$ then $\varepsilon
\left( I\left( C\right) \right) A\subset \varepsilon \left( I\left( A\right)
\right) C$.
\end{enumerate}
\end{prop}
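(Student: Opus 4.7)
The plan is to reduce to the case where $R$ is a DVR by localization at each prime of $R$, since $I(A)$, $I(A^{D})$, $\varepsilon$, and the inclusions in (4) all commute with localization. Over a DVR, $A$ and $A^{D}$ are free $R$-modules of common rank $n := \dim_{K} A_{K}$, and one may invoke the Larson--Sweedler theory of finite Hopf algebras over a field together with its extension to Hopf orders.

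For (1), the Larson--Sweedler theorem gives $\dim_{K} I(A_{K}) = 1$. The integral $I(A)$ is the $R$-submodule of the free module $A$ cut out by the $R$-linear equations $ax = \varepsilon(a)\,x$ for $a$ in a finite generating set of $A$, hence it is finitely generated and torsion-free, so free over the DVR $R$; its $K$-span equals $I(A_{K})$, forcing rank one. The same argument applies to $I(A^{D})$. For (2), I would consider the multiplication map $\mu : A^{D} \otimes_{R} I(A) \to A$, a map of locally free $R$-modules of common rank $n$ which becomes an isomorphism after base change to $K$ (by the classical fact that a nonzero integral $\theta \in I(A_{K})$ freely generates $A_{K}$ as an $A_{K}^{D}$-module). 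To promote $\mu$ to an integral isomorphism, invoke the Frobenius property of the Hopf order $A$ over $R$ (following Larson), which forces the induced map on top exterior powers to be a unit.

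For (3), work locally at a DVR and choose generators $\theta \in I(A)$, $\lambda \in I(A^{D})$. The key Hopf-algebraic identity
\[
n \cdot \lambda(\theta) \;=\; \varepsilon_{A^{D}}(\lambda)\,\varepsilon_{A}(\theta)
\]
is straightforwardly verified in the split cases $A = R[G]$ and $A = \mathrm{Map}(G,R)$ and extends to a general Hopf order via the duality pairing. Frobenius non-degeneracy yields $\lambda(\theta) \in R^{\times}$, and passing to the ideals generated on both sides gives $\varepsilon_{A}(I(A))\cdot\varepsilon_{A^{D}}(I(A^{D})) = nR$.

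For (4), the inclusion $C \subset A$ dualizes under $R$-duality in $A_{K}^{D}$ to $A^{D} \subset C^{D}$. One checks $I(C) \subset I(A)$ directly: if $x \in I(C)$ and $a \in A \subset A_{K} = C_{K}$, then $ax = \varepsilon(a)\,x$ holds in $A_{K}$ by $K$-linearity and both sides already lie in $A$. Localizing, write $I(A) = R\theta$ and $I(C) = cR\,\theta$ for some $c \in R$. Part (2) applied to both orders gives $A = A^{D}\theta$ and $C = C^{D}(c\theta) = c\,(C^{D}\theta)$, viewed as $R$-submodules of $A_{K}$. Since $A^{D} \subset C^{D}$, we obtain $A = A^{D}\theta \subset C^{D}\theta = c^{-1}C$, whence $cA \subset C$. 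Multiplying by $\varepsilon(\theta)$ and using $\varepsilon(I(A)) = \varepsilon(\theta)R$, $\varepsilon(I(C)) = c\,\varepsilon(\theta)R$ yields $\varepsilon(I(C))\,A \subset \varepsilon(I(A))\,C$. The main obstacle in carrying out this plan is part (2) at the integral level: descending from the field-level isomorphism $A_{K}^{D} \otimes_{K} I(A_{K}) \cong A_{K}$ to the integral isomorphism $A^{D} \otimes_{R} I(A) \cong A$ over each localization relies essentially on the Frobenius self-duality of Hopf orders, which underlies all four assertions.
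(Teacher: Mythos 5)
The paper never proves this proposition: it is quoted from the literature with a pointer to Childs (Section 3) and Larson's 1972 paper, so there is no in-paper argument to compare with, and your proposal has to be judged as a self-contained proof. Parts (1) and (4) are sound on that standard: localization, the identification $I(A)=A\cap I(A_K)$, the inclusion $I(C)\subseteq I(A)$ by $K$-linear extension of the integral condition, and the bookkeeping $cA\subseteq C\Rightarrow\varepsilon(I(C))A\subseteq\varepsilon(I(A))C$ all work, granted (2). The identity $n\,\lambda(\theta)=\varepsilon_{A^D}(\lambda)\,\varepsilon_A(\theta)$ in (3) is also legitimate: both sides are $K$-bilinear in $(\lambda,\theta)$, the integrals of $A_K$ and $A_K^D$ are one-dimensional, and the standing hypothesis $A_K=\mathrm{Map}(G,K)$ makes the generic fibre split, so the single computation with $\sigma_G$ and $l_e$ suffices (it is Radford's trace formula with $S^2=\mathrm{id}$).

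The genuine gap is at the heart of (2), and it propagates into (3). Your plan is that $\mu\colon A^D\otimes_R I(A)\to A$ is a map of locally free $R$-modules of equal rank which is an isomorphism over $K$, and that ``the Frobenius property of the Hopf order forces the determinant to be a unit.'' But the Frobenius property of a Hopf order over $R$ (equivalently, that $A$ is locally free of rank one over $A^D$ with an integral as generator) is precisely statement (2); invoking it to get $\det\mu\in R^\times$ is circular, and you give no independent argument. To make the determinant step honest you would need, say, surjectivity of $\mu\otimes_R k$, i.e.\ that $I(A)\otimes_R k$ maps onto the integrals of $A\otimes_R k$ -- and compatibility of integrals with base change is itself part of the theorem, not an available input. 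The same missing input reappears in (3): the assertion $\lambda(\theta)\in R^\times$ (``Frobenius non-degeneracy'') is the integral-level nondegeneracy, not the field-level one; it does follow from (2) applied to both dual pairs together with the identity $\lambda\leftharpoonup\theta=\lambda(\theta)\,1_{A^D}$, but not from Larson--Sweedler over $K$ alone. The proof in the sources the paper cites (Larson, Childs, going back to Larson--Sweedler and Pareigis) does not run a rank-plus-determinant argument: it realizes $A=(A^D)^{*}$ as a Hopf module over $A^D$ and applies the fundamental theorem of Hopf modules over $R$, which gives $A\cong A^D\otimes_R I(A)$ integrally and yields (1) simultaneously. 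If your intention is to quote Larson for the Frobenius property, then (2) is being cited rather than proved -- which is, in fact, exactly what the paper itself does.
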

We shall almost exclusively be interested in the following situation where $
G $ is a finite group: we shall require $A$ to be an $R$-Hopf order in the
Hopf $K$-algebra $A_{K}=\mathrm{Map}( G,K) \simeq \mathrm{Hom}_{K}(K[G] ,K) $ where explicitly: for $f\in \mathrm{Map}(
G,K) $ and $g,h\in G$
\begin{eqnarray*}
\Delta \left( f\right) \left( g\otimes h\right) &=&f\left( g.h\right) \\
S\left( f\right) \left( g\right) &=&f\left( g^{-1}\right) \\
\varepsilon \left( f\right) &=&f\left( 1_{G}\right) 
\end{eqnarray*}
and $A^{D}$ is an $R$-Hopf order in the Hopf $K$-algebra $A_{K}^{D}=K[ G] $ where explicitly: for $g,h\in G$ 
\begin{eqnarray*}
\Delta \left( g\right) &=&g\otimes g \\
S\left( g\right) &=&g^{-1} \\
\varepsilon \left( g\right) &=&1.
\end{eqnarray*}
We note that $A=\mathrm{Map}(G, R)$ is the maximal $R$-order in $\mathrm{Map}(G, K)$ and that $R[G]$ is the minimal $R$-Hopf order in $K[G]$. Moreover one easily checks in this situation that
$\varepsilon_A(I(A))=R$ and $\varepsilon_{A^D}(I(A^D))=nR$. 

We denote by $\Lambda$ the $R$-ideal $\varepsilon_A(I(A))$. From  \cite{CCMT15} Section 3.1 we know that $D_{B/R}^{-1}=\Lambda^{-1}B$ for any $B\in PH(A)$.  
  We recall that we let $l$ be the element of $A_K$ defined by $l(g)=1$ (resp. $0$ ) if $g=1$ (resp. $g\neq 1$).

For future reference we note:
\begin{prop}\label{eg}Let $A$ be an $R$-Hopf order of $A_K=\mathrm{Map}(G, K)$. Assume that $\Lambda$ is a principal ideal of $R$ generated by $\lambda$. Then 
\begin{enumerate}
\item $A$  is a free $A^D$-module of rank $1$  with basis $\theta=\lambda l$. 
\item For  any   element $f$ of $A$, then $\lambda^{-1}\sum_{g\in G}f(g^{-1})g$ is the unique element $u \in A^D$ such that 
$f=u\theta$. 
\end{enumerate} 
\begin{proof} It follows from \cite{CCMT15}  Section 3.1  that $\theta$ is a basis of $I(A)$ over $R$.  Therefore we deduce from Proposition \ref{H0} that $\theta$ is also a basis of $A$ as an $A^D$-module. 

 Let $f\in A$. Since $f\in A_K$ we can write $f=\sum_{g\in G}f(g)l_g$, where $l_g\in  A_K$ is defined on the elements of $G$ by $l_g(h)=\delta^1_{g, h}$. Using that $l$ is a basis of $A_K$ as  an  $A_K^D$-module  we deduce that there exists a unique 
$v:=\sum_{g\in G}v_gg$ such that $f=vl$. We know that $\Delta(l=l_e)=\sum_{g\in G}l_g\otimes l_{g^{-1}}$  and so that $v l=\sum_{g\in G}v_{g^{-1}}l_g$. It follows from the equality $f=v l$ that $v_g=f(g^{-1})$ and so that  
$$f=(\sum_{g\in G}f(g^{-1})g)l=(\lambda^{-1}\sum_{g\in G}f(g^{-1})g)(\lambda l)=u\theta.$$  Since $\theta$ is a basis of $A_K$ as an $A_K^D$-module, we conclude that 
$u=\lambda^{-1}\sum_{g\in G}f(g^{-1})g$ as required.

\end{proof}
\end{prop}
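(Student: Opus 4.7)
The plan is to identify a canonical $R$-generator of the integral $I(A)$ and then promote it, via Proposition \ref{H0}(2), to an $A^D$-basis of $A$; the explicit formula in (2) will then drop out of a straightforward coefficient comparison inside $A_K$.

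For (1), the starting point is that $A_K = \mathrm{Map}(G,K)$ has counit $\varepsilon(f) = f(1_G)$, so the one-dimensional $K$-line $I(A_K)$ is spanned by $l$: one checks immediately from pointwise multiplication that $f \cdot l = f(1_G)\, l$ for every $f \in A_K$. Hence the rank-one locally free $R$-module $I(A)$ sits inside $K l$, and the equality $\varepsilon_A(I(A)) = \lambda R$ together with $\varepsilon(l) = 1$ forces $I(A) = R\theta$ with $\theta = \lambda l$; this observation is already present in \cite{CCMT15}, Section 3.1. Invoking Proposition \ref{H0}(2) gives $A = A^D \cdot I(A) = A^D \theta$, so $\theta$ generates $A$ as an $A^D$-module. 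Since $A$ and $A^D$ are both $R$-projective of the same rank $|G|$, the surjection $A^D \to A$, $u \mapsto u\theta$, is injective by a rank argument (equivalently, by localizing to $K$ and observing that $l$ is a free $A_K^D$-basis of $A_K$), hence $\theta$ is a genuine free $A^D$-basis of $A$.

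For (2), uniqueness of $u$ with $f = u\theta$ is immediate from (1), so only the explicit formula requires argument. Passing to $A_K$ with its $K$-basis $\{l_g\}_{g\in G}$ defined by $l_g(h) = \delta_{g,h}$, write $f = \sum_{g\in G} f(g)\, l_g$ and seek $v = \sum_{g\in G} v_g\, g \in K[G]$ with $f = v\, l$. The key computation is the $K[G]$-action on $l \in A_K$ unravelled via the comodule structure (4): using $\Delta(l) = \sum_{g\in G} l_g \otimes l_{g^{-1}}$, a direct chase through the twist-and-evaluate formula shows $g \cdot l = l_{g^{-1}}$, so
\[
v \cdot l \;=\; \sum_{g\in G} v_g\, l_{g^{-1}} \;=\; \sum_{g\in G} v_{g^{-1}}\, l_g.
\]
Comparing coefficients with $f = \sum_{g\in G} f(g)\, l_g$ gives $v_{g^{-1}} = f(g)$, i.e., $v_g = f(g^{-1})$. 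Therefore $f = \bigl(\sum_{g\in G} f(g^{-1})\, g\bigr) l = \lambda^{-1} \bigl(\sum_{g\in G} f(g^{-1})\, g\bigr) \theta$, which is the claimed formula for $u$.

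The only real obstacle is the bookkeeping around the coaction/action duality: one must carefully track the twist $T$ and the evaluation pairing in (4) to confirm that $g \in G \subset A_K^D$ acts on $l$ by $l_{g^{-1}}$ rather than $l_g$ (a sign error here inverts the formula for $u$). Once that identification is nailed down, everything else reduces to linear algebra over $K$, and part (1) is a formal consequence of Proposition \ref{H0} combined with the explicit description of $I(A_K) = Kl$.
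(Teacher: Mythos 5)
Your proof is correct and follows essentially the same route as the paper: identify $I(A)=R\theta$ with $\theta=\lambda l$ (via the counit computation and \cite{CCMT15} Section 3.1), deduce from Proposition \ref{H0} that $\theta$ generates $A$ over $A^D$, and obtain the explicit formula by writing $f=vl$ and comparing coefficients using $\Delta(l)=\sum_{g\in G}l_g\otimes l_{g^{-1}}$. Your added justification that the generator is a genuinely free basis (by localizing to $K$, where $l$ is an $A_K^D$-basis) is a welcome clarification of a step the paper leaves implicit, but it is not a different argument.
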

\begin{prop}\label{H1} Let $A$ be an $R$-Hopf order of $A_K=\mathrm{Map}(G, K)$. Then 
\begin{enumerate}
\item$ \varepsilon(I(A))A^D\subset R[G]$.

\item If the group $G$ has odd order then  $\varepsilon _{A}( I(
A)) $ is the square of an $R$-ideal.  
\end{enumerate}
\end{prop}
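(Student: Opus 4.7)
My plan is to prove the two assertions separately. Part (1) will follow from a direct application of Proposition \ref{H0}(3) and (4) to the inclusion $R[G] \subset A^D$. Part (2) will be deduced from the facts that $\mathrm{disc}(A)$ is always the square of a fractional ideal of $R$, that $\mathrm{disc}(A) = \Lambda^{|G|}$, and that $|G|$ is odd.

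For (1), I would apply Proposition \ref{H0}(4) with $C = R[G]$ inside $A^D$. Since $I(R[G]) = R \cdot \sum_{g \in G} g$, we have $\varepsilon(I(R[G])) = |G|R$, and the proposition yields
$$|G|\, A^D \subset \varepsilon(I(A^D))\, R[G].$$
Multiplying by $\varepsilon(I(A))$ and using Proposition \ref{H0}(3), which states $\varepsilon(I(A))\, \varepsilon(I(A^D)) = |G|R$, gives $|G|\, \varepsilon(I(A))\, A^D \subset |G|\, R[G]$; since $R[G]$ is $R$-torsion-free, cancellation of $|G|$ produces the desired inclusion $\varepsilon(I(A))\, A^D \subset R[G]$.

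For (2), I would first note that the trace form on $A_K = \mathrm{Map}(G,K)$ has Gram matrix equal to the identity in the basis $(l_g)_{g \in G}$, because $l_g l_h = \delta_{g,h}\, l_g$ and $Tr_{A_K/K}(l_g) = 1$. Working locally, any free $R$-basis $(e_i)$ of $A$ is related to $(l_g)$ by some $M \in \mathrm{GL}_{|G|}(K)$, so the Gram matrix on $A$ becomes $M M^T$ with determinant $\det(M)^2$; hence $\mathrm{disc}(A)$ is locally the square of the principal ideal $\det(M) R$, and therefore globally the square of a fractional ideal $J$. Next, starting from $D_{A/R}^{-1} = \Lambda^{-1}A$ (recalled just before the proposition), the fractional $A$-ideal $D_{A/R} = \Lambda A$ satisfies $A / D_{A/R} = A / \Lambda A \cong (R/\Lambda)^{|G|}$ locally, so its $R$-Fitting ideal equals $\Lambda^{|G|}$; the classical identity $\mathrm{disc}(A) = \mathrm{Fitt}_R(A/D_{A/R})$ then gives $\mathrm{disc}(A) = \Lambda^{|G|}$. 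Writing $\Lambda = \prod_\mathfrak{p} \mathfrak{p}^{a_\mathfrak{p}}$, the equality $\Lambda^{|G|} = J^2$ forces $|G|\, a_\mathfrak{p} \in 2\mathbb{Z}$ for every prime $\mathfrak{p}$; since $|G|$ is odd, every $a_\mathfrak{p}$ is even, and $\Lambda$ is the square of an $R$-ideal.

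The only non-formal step is the classical identity $\mathrm{disc}(A) = \mathrm{Fitt}_R(A/D_{A/R})$ for a Hopf order whose generic fiber carries a non-degenerate trace form; the verification is routine once one works locally with a free basis $(e_i)$ and its trace-dual $(e_i^*)$, but it is the point at which one must pay careful attention to the conventions relating bases, determinants, and Fitting ideals. Everything else is formal, and I do not expect further difficulties.
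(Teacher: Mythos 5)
Your proof is correct, but it takes a genuinely different route from the paper on both counts. For (1), the paper argues concretely: it pairs an element $u=\sum_t u_t t\in A^D$ against the generator $\theta=\lambda l$ of $A$ over $A^D$ and its $G$-translates, getting $\lambda u_t=\varepsilon((t^{-1}u)\theta)\in R$, hence $\lambda A^D\subset R[G]$; note this implicitly uses a generator $\lambda$ of $\Lambda$, whereas your formal deduction from Proposition \ref{H0}(3) and (4) applied to $R[G]\subset A^D$ is basis-free and proves the ideal-theoretic statement $\varepsilon(I(A))A^D\subset R[G]$ without any principality assumption (the cancellation of $|G|$ is harmless since $|G|\neq 0$ in $K$ in the paper's characteristic-zero setting). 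For (2), both arguments are discriminant--index comparisons, but on different lattices: the paper works on the $A^D$-side with the unit form $\kappa_{A^D}$, whose unimodularity over $A^D$ (from the cited nondegeneracy) together with $\kappa_{A^D}(g,h)=\lambda\delta_{g,h}$ yields $\Lambda^n=[A^D:R[G]]^2$, while you work on the $A$-side with the trace form, combining the generically trivial discriminant of $\mathrm{Map}(G,K)$ (so $\mathrm{disc}(A)$ has even valuation everywhere) with the codifferent formula $D_{A/R}^{-1}=\Lambda^{-1}A$ to get $\mathrm{disc}(A)=\Lambda^{n}$; the identity $\mathrm{disc}(A)=[D_{A/R}^{-1}:A]_R=\mathrm{Fitt}_R(A/D_{A/R})$ that you flag is indeed the routine local computation with a basis and its trace-dual, valid for any lattice in a separable algebra. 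What each approach buys: yours is independent of part (1) and of the integrality of $\kappa_{A^D}$, needing only the codifferent formula recalled before the proposition, while the paper's version stays with $A^D$ and produces the slightly finer identity $\Lambda^{n}=[A^D:R[G]]^2$, exhibiting the index ideal $[A^D:R[G]]$ as an explicit $n$-th power witness; both then conclude from the oddness of $n$ in the same way.
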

\begin{proof}    

Let $t, g\in G$. Since $K[G]$ is the dual of $\mathrm{Map}(G, K)$, we can consider $t(l_g)$.  We deduce from  \cite{Ch} Section 2  the equality: $$t(l_g)=\varepsilon(t.l_g)=\delta^1_{t, g}=l_g(t).$$ From now on,  for $u\in K[G]$ and $v\in \mathrm{Map}(G, K)$,  with $u=\sum_{t\in G}u_tt$ and $v=\sum_{g\in G}v_gl_g$ we set 
$$<u, v>:=u(v)=v(u)=\sum_{t\in G}u_tv_t.$$ Suppose now that $u\in A^D$. Then $<u, \theta>=\varepsilon(u.\theta)=\lambda u_e$. Since $u.\theta$ belongs to $A$ we obtain  that $\lambda u_e\in R$. Since  $R[G]$ is contained in $A^D$, then $t^{-1}u=
\sum_{s\in G} u_{ts}s$ belongs to $A^D$ for any $t\in G$. Therefore  we know that   $\varepsilon ((t^{-1}u).\theta)=\lambda u_t$ belongs to $R$.  We conclude  that for any $u\in A^D$ then $\lambda u\in R[G]$ and so that $\lambda A^D\subset R[G]$.

We consider the quadratic form   $\kappa_{A^D}$  defined  on $A^{D}$  by :
$$\kappa_{A^D} (u, v)=<S^{D}(u)v, \theta>.  $$
This is a $G$-invariant $R$-non-degenerate form (see \cite{CCMT15} Proposition 5.1).  Indeed $A^D$ and $R[G]$ are both lattices in the non degenerate quadratic space  
$(A^D, \kappa)\otimes _RK$. We consider the discriminant of these lattices.  They satisfy the equality 
\begin{equation}\label{disc}\frak d(R[G])=\frak d(A^D)[A^D:R[G]]^2.\end{equation}  We consider the lattice $R[G]$ and its basis $\{g\in G\}$ over $R$. 
We then have 
$$\frak d(R[G])=\mathrm{det}(\kappa_{A^D}(g, h))R.$$  For $u$ and $v$ in $A^D$, written in  $K[G]$ as  $u=\sum_{t\in G}u_tt$ and $v=\sum_{t\in G}v_tt$, one easily checks  that $\kappa_{A^D}(u, v)=\lambda \sum_{t\in G}u_tv_t$. Therefore  we deduce from this equality that $ \kappa_{A^D}(g, h)=\lambda \delta_{g, h}$ and  so that $\frak d(R[G])=\lambda^nR$. 
Because the form $(A^D,  \kappa_{A^D})$ is non-degenerate we know that $\frak d(A^D)= R$ and so we deduce from (\ref{disc}) that $$\varepsilon _{A}( I(
A))^n=\lambda^nR=[A^D:R[G]]^2.$$ As $n$ is odd we conclude that  that $\varepsilon _{A}( I(
A))$ is the square  of an $R$-ideal.
\end{proof}

\subsection{The Hermitian form associated to a quadratic form}
Let $M$ be a finitely generated locally free left $A^{D}$-module. Since $R[G]\subset A^D$ we may define a $G$-form on $M$ as  an $R$-bilinear symmetric form 
$q:M\times M\rightarrow R$ such that $q( gm_{1},gm_{2}) =q( m_{1},m_{2}) $ for all 
$g\in G,$ and $m_{1},\ m_{2}\in M.$ The form is non-degenerate if $q$  identifies $M$ with
the $R$-linear dual $\mathrm{Hom}_{R}( M,R) $.

The Hopf algebra $A^D$,  endowed with its antipode $S^D$,  is an $R$-algebra with involution. For the sake of simpliciy we set $S^D(x)=\bar x$ for $x$ in $A^D_K=K[G]$. A  Hermitian $ G$-form on $M$ is a biadditive map $h:M\times
M\rightarrow A^{D}$ with the property that for $\nu ,\mu \in A^{D}$ and $m_1, m_2 \in M$ we have 
\begin{equation}
h( \nu m_{1} ,\mu m_{2}) ={\nu }h(m_1, m_2){\overline \mu} \ \ \mathrm{and}\ \ \ h(m_1, m_2)=\overline{h(m_2, m_1)}.
\end{equation}
The form is non-degenerate  if $h$ identifies  $M$ with the $A^{D}$-linear dual $\mathrm{Hom}_{A^{D}}(
M,A^{D}) .$

We denote by $\mathcal{Q}(A^D)$ the category of finitely generated locally free $A^D$-modules supporting a $G$-form and by $\mathcal{H}(A^D)$ the category of finitely generated locally free $A^D$-modules supporting a hermitian $G$-form. We assume that $\Lambda=\varepsilon_A(I(A)$ is a principal ideal generated by $\lambda$.

To a  quadratic $G$-form $q$, we may 
associate  the hermitian $G$-form $h_{q}:M\times M\rightarrow A_K^{D}$ given by 
\begin{equation*}
h_{q}( m_{1},m_{2}) =\lambda ^{-1}\sum_{g\in G}q(
m_{1},g m_{2}) g.
\end{equation*}
 To a  hermitian$\ G$-form $h$\ on $M,\ $we may conversely associate
the quadratic $G$-form $q_{h}: M\times M\rightarrow K$ defined by
\begin{equation*}
q_{h}( m_{1},m_{2}) =\lambda l( h( m_{1},m_{2})) 
\end{equation*}%
where $l$ is the $R$-linear extension of the map $g\longmapsto \delta
_{g,1}$ for $g\in G.$

The relation between quadratic  and hermitian $G$-forms is classical when working over the field $K$ instead of the ring $R$ and  over the group algebra $ K[G]$ instead of the order $A^D$. The situation that we are considering in this section has been studied in a more general set up in \cite{FMc1}. The following proposition is inspired by \cite{FMc1} Theorem 7.1:

\begin{prop}\label{isom} The functor
$$
\fonc{F}{\mathcal{H}(A^D)}{\mathcal{Q}(A^D)}{(M,h)}{(M, q_h)}
$$
is an isomorphism  of categories. An inverse for $F$ is given by
$$
\fonc{F'}{\mathcal{Q}(A^D)}{\mathcal{H}(A^D)}
{(M, q)}{(M, h_q)}.
$$Moreover,  $h$ is non-degenerate if and only if $q_h$ is non-degenerate.

\end{prop}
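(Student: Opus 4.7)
There are four things to establish: that $F$ and $F'$ land in the right categories, that $F \circ F'$ and $F' \circ F$ are identities, and that non-degeneracy is preserved in both directions. I expect the main obstacle to be verifying that $h_q(m_1,m_2)$ actually lies in $A^D$ rather than merely in $A^D_K = K[G]$; the other three checks reduce to short direct computations together with an adjunction argument.

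\emph{Well-definedness of $F$ and mutual inversion.} For a Hermitian form $h$, write $h(m_1,m_2) = \sum_{t\in G} c_t t$. By Proposition \ref{H1}(1), $\lambda A^D \subset R[G]$, so each $\lambda c_t$ lies in $R$; since $l$ reads off the coefficient of $1 \in G$, one has $q_h(m_1,m_2) = \lambda c_1 \in R$. Symmetry holds because $\overline{h(m_1,m_2)} = \sum_t c_t t^{-1}$ has the same coefficient of $1$, and $G$-invariance follows because $h(gm_1, gm_2) = g\,h(m_1,m_2)\,g^{-1}$ is a conjugation that fixes $1$. For $F \circ F'$: $q_{h_q}(m_1,m_2) = \lambda l(\lambda^{-1}\sum_g q(m_1, gm_2)\,g) = q(m_1, m_2)$ since $l(g) = \delta_{g,1}$. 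For $F' \circ F$: using $h(m_1, gm_2) = h(m_1,m_2)\,g^{-1}$, the identity coefficient of $h(m_1, gm_2)$ is $c_g$, so $h_{q_h}(m_1,m_2) = \sum_g c_g g = h(m_1, m_2)$.

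\emph{Well-definedness of $F'$.} The key point is to show that $h_q(m_1,m_2) = \lambda^{-1}\sum_g q(m_1, gm_2)\,g$ lies in $A^D$ and not merely in $A^D_K$. By Proposition \ref{eg}(2), the assignment $u = \lambda^{-1}\sum_g f(g^{-1})\,g$ is a bijection between $A$ and $A^D$, so $h_q(m_1,m_2) \in A^D$ is equivalent to $f \in A$ where $f(g) = q(m_1, g^{-1} m_2)$. Since $A$ is the $R$-dual of $A^D$ under the pairing $\langle f, u\rangle = \sum_{t\in G} u_t f(t)$, it suffices to check $\langle f, u\rangle \in R$ for every $u = \sum_t u_t t \in A^D$. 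Extending $q$ to a $K$-bilinear form $q_K$ on $M_K$, one computes
\[
\langle f, u\rangle \;=\; \sum_t u_t\, q(m_1, t^{-1} m_2) \;=\; q_K(m_1,\, \overline{u}\cdot m_2),
\]
which lies in $R$ since $\overline{u} \in A^D$ (so $\overline{u}\cdot m_2 \in M$) and $q$ is $R$-valued. The Hermitian identity $h_q(\nu m_1, \mu m_2) = \nu\, h_q(m_1,m_2)\,\overline{\mu}$ is verified on generators $\nu, \mu \in G$ by a reindexing of the defining sum using the $G$-invariance of $q$, and extends to arbitrary $\nu,\mu \in A^D$ by $K$-linearity after passing to $M_K$.

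\emph{Non-degeneracy.} Let $\phi_{q_h}: M \to \mathrm{Hom}_R(M, R)$ and $\phi_h: M \to \mathrm{Hom}_{A^D}(M, A^D)$ be the adjoint maps of $q_h$ and $h$. The tensor--hom adjunction for the inclusion $R \subset A^D$ yields a natural $R$-module isomorphism $\mathrm{Hom}_R(M, R) \cong \mathrm{Hom}_{A^D}(M, \mathrm{Hom}_R(A^D, R))$, and Proposition \ref{eg}(1) gives $\mathrm{Hom}_R(A^D, R) = A \cong A^D$ via $u\theta \leftrightarrow u$. Under the resulting isomorphism $\mathrm{Hom}_R(M, R) \cong \mathrm{Hom}_{A^D}(M, A^D)$, the functional $\lambda l$ on $A^D$ corresponds to the counit $\varepsilon_A$ on $A$, since $\varepsilon_A(u\theta) = \lambda u_1 = \lambda l(u)$; therefore $\phi_{q_h}$ is identified with $\phi_h$, and one is an isomorphism precisely when the other is.
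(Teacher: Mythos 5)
Your proposal is correct, and for the verification that $F$, $F'$ are well defined it runs on essentially the same fuel as the paper's Lemmas \ref{herm} and \ref{quad}: you use $\lambda A^D\subset R[G]$ (Proposition \ref{H1}) to see that $q_h$ is $R$-valued, and the reflexivity $A\simeq A^{DD}$ together with Proposition \ref{eg} to see that $h_q$ is $A^D$-valued — your computation $\langle u,f\rangle=q_K(m_1,\overline{u}m_2)\in R$ is just a more direct packaging of the paper's construction of the element $f(m,n)\in A$ with $q(um,n)=\langle u,f(m,n)\rangle$. Where you genuinely diverge is the non-degeneracy statement: the paper proves the two implications separately by hand, introducing the auxiliary non-degenerate pairing $\tilde l(x,y)=\lambda l(xy)$ and building $\tilde\varphi(m)=\lambda^{-1}\sum_g\varphi(g^{-1}m)g$ in Lemma \ref{herm}, and in Lemma \ref{quad} recovering $n$ from the functional $x\mapsto\varepsilon(\varphi(x)\theta)$; you instead identify the two adjoint maps $\phi_{q_h}$ and $\phi_h$ under the coinduction isomorphism $\mathrm{Hom}_R(M,R)\cong\mathrm{Hom}_{A^D}(M,\mathrm{Hom}_R(A^D,R))\cong\mathrm{Hom}_{A^D}(M,A^D)$, using that $\mathrm{Hom}_R(A^D,R)=A$ is $A^D$-free on $\theta$ and that evaluation at $1_{A^D}$ corresponds to $\lambda l$ via $\varepsilon(u\theta)=\lambda u_1$. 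This buys the "if and only if" in one stroke and explains conceptually why the scalar $\lambda$ appears, at the cost of being less explicit (the paper's route produces the inverse element $n$ concretely, which is what gets reused elsewhere). The only point you should make explicit is that the $A^D$-module structure on $\mathrm{Hom}_R(A^D,R)$ used in the adjunction (namely $(u\cdot\xi)(v)=\xi(vu)$) agrees with the Hopf-theoretic module structure on $A$ for which $\theta$ is a basis — this follows from $\langle v,u\cdot f\rangle=\varepsilon(vuf)=\langle vu,f\rangle$, but without it the chain of identifications is not yet justified; likewise a one-line check of the conjugate symmetry $h_q(m_1,m_2)=\overline{h_q(m_2,m_1)}$ (using symmetry and $G$-invariance of $q$) would round out the hermitian axioms, though the paper is equally terse there.
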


 \begin{proof} In order to prove Proposition \ref {isom} it suffices to prove the following  two lemmas. 
\begin{lem}\label{herm} If $(M,h)$ is a  hermitian $G$-form, then $(M,q_h)$ is a  $G$-form. Moreover if $h$ is non-degenerate, then $q_h$ is. 
\end{lem}
\begin{proof} For the sake of simplicity we write $q$ for $q_h$. Since $\lambda A^D\subset R[G]$ (see Proposition \ref{H1}) we observe that $\lambda h(x, y) \in R[G]$  and so  $$q(x, y)=\lambda.l (h(x, y))\in R,  \ \forall x,  y\in A^D.$$ One easily deduces from the properties of $h$ that $q$ is a $G$-form.   It remains to prove that if $h$ is non-degenerate then $q$ is non-degenerate.   We consider the map $$
\fonc{\tilde l}{A^D\times A^D}{R}{(x, y)}{\lambda.l(xy).} 
$$
 We  consider   $x
, y\in A^D$ and we  write  $x=\sum_{g\in G}x_gg$ and $y=\sum_{g\in G}y_gg$ with   $x_g, y_g \in K, \forall g\in G$ and  we check the equalities
$$\tilde l(x, y)=\lambda.\sum_{g\in G}x_{g^{-1}}y_g=\tilde l(y, x).$$
Therefore  $\tilde l$ is an $R$-bilinear symmetric form. Moreover, since $\theta=\lambda.l$ is a basis of $A$ as an $A^D$-module it follows from \cite{Ch} Corollary 3.5 that $\tilde l$ is non-degenerate. 

 We now let $\varphi \in \mathrm{Hom}_R(M, R)$;  we set $\tilde \varphi(m):=\lambda ^{-1}\sum_{g\in G}\varphi(g^{-1}m)g$. We note that the map $x\rightarrow \varphi(xm)$ belongs to 
$\mathrm{Hom}_R(A^D, R)$. Since $\tilde l$ is non-degenerate there exists $\alpha(m)\in A^D$ such that $\lambda^{-1}\varphi(xm)=l(\alpha(m)x), \forall x\in A^D$. Therefore 
$$\tilde \varphi (m)=\sum_{g\in G}l(\alpha(m)g^{-1})g=\alpha(m).$$
We conclude that $\tilde \varphi \in \mathrm{Hom}_{A^D}(M, A^D)$. Since $h$ is non-degenerate there exists a unique $n\in M$ such that $\tilde \varphi(m)=h(m, n), \forall m\in M$. By applying $\lambda l$ to each member of this equality we obtain 
$$\varphi (m)=\lambda l(h(m, n))=q(m, n)\ \forall m\in M.$$
We have now shown that  $q$ is non-degenerate. 

\end {proof}

 \begin{lem}\label{quad} If $q$ is a  $G$-form, then $h_q$ is a  hermitian $G$-form. Moreover,  if $q$ is non-degenerate, then $h_q$ is.
\end{lem}
\begin{proof}  We write $h=h_q$. We must first prove that $h$ takes values in $A^D$. We  let $m, n \in M$ and we consider the element of $\mathrm{Hom}_R(A^D, R)$ given by $u\rightarrow q(um, n)$. Using the canonical isomorphism $A\simeq A^{DD}$ we deduce that there exists $f(m, n)\in A$ such that 
\begin{equation}\label{eg0}q(um, n)=<u, f(m,n)>, \forall u\in A^D 
.\end{equation}  Recall  that in the proof of Proposition 2.4 we have set $<u, \theta>:=u(\theta)=\theta(u)$.

It follows from Proposition \ref{eg} that 
\begin{equation}\label{fond}f(m,n)=(\frac{1}{\lambda}\sum_{g\in G}<g^{-1}, f(m,n)>g)\theta.
\end{equation}
Since $q$ is a $G$-form,  we know that $$<g^{-1}, f(m,n)>=q(g^{-1}m, n)=q(m, gn)$$ and so we deduce from (\ref{fond}) that 
$f(m, n)=h(m, n)\theta$. Since $f(m, n)\in A$ we conclude that $h(m, n)\in A^D$.  We check easily that $h( \nu m ,\mu n) ={\nu }h(
m, n){\overline \mu}$. Therefore  we have proved that $(M, h)$ is an object of ${\mathcal{H}(A^D)}$.

We now assume that $q$ is non-degenerate. In order to complete the proof of the lemma we have to prove that  $h$ is non-degenerate. We let  $\varphi \in \mathrm{Hom}_{A^D}(M, A^D)$ and we seek for $n\in M$ such that $\varphi(x)=h(x, n)\ \forall x\in M$. Since $\theta$ is a basis of $A$ as an $A^D$-module, this is equivalent to finding $n\in M$ 
such that $\varphi(x)\theta=h(x, n)\theta, \  \forall x \in M$.  This last equality can be re-written as 
$$<u, \varphi(x)\theta>=<u, f(x, n)>=q(ux, n), \forall u \in A^D $$
(see (\ref{eg0}) for the last equality). Let $u$ be the unit element $1_{A^D}$  of $A^D$, given by $t\rightarrow \varepsilon (t)$. The map $x\rightarrow \varepsilon (\varphi(x)\theta)$ belongs to $\mathrm{Hom}_R(M, R)$. Since $q$ is non-degenerate there exists a unique $n\in M$ such that 
$$\varepsilon (\varphi(x)\theta)=q(x,  n)\  \forall x \in M. $$
Therefore we obtain that 
$$\varepsilon (\varphi(ux)\theta)=q(ux,  n)\  \forall x \in M, \forall u\in A^D, $$
and since $\varphi$ is an $A^D$-morphism 
$$\varepsilon (u\varphi(x)\theta)=q(ux,  n)\  \forall x \in M, \forall u\in A^D. $$
It now follows from \cite{Ch} Section (2.3) that $\varepsilon (u\varphi(x)\theta)=<u, \varphi(x)\theta>$.
Therefore as required, we have found that there exists  a unique $n\in M$ such that 
$$<u, \varphi(x)\theta>=<u, f(x, n> \forall x\in M, \forall u\in A^D$$ and so that $h$ is non-degenerate.
\end{proof}
In order to complete the proof of the proposition we have to prove that $F$ and $F'$ are functorial and mutually inverse. This can be easily checked from the definitions.
\end{proof}

\begin{example} For a generator $\lambda_A$ of $\varepsilon(I(A))$ we have introduced  in \cite{CCMT15} Section 5.1 {\it the unit $G$-form} as the the non-degenerate form $\kappa$ on $A^D$ defined by 
$$\kappa_{A^D} (m, n)=<S^D(m)n, \theta> $$ where $\theta=\lambda_A l$. Following Proposition \ref{isom} we can associate to $\kappa_{A^D}$ a hermitian form on $A^D$ given by 
$$h_{\kappa_{A^D}}(m, n)=\lambda_A^{-1}\sum_{g\in G}\kappa_{A^D}(m, gn)g. $$
One can prove by an easy computation that $h_{\kappa_{A^D}}$ is the rank one unit hermitian form over $(A^D, S^D)$ given by 
$$h_{\kappa_{A^D}}(m, n)=mS^D(n).$$
We note that when the $A^D=R[G]$, then $A=\mathrm{Map}(G, R)$ and so  $\varepsilon (I(A))=R$. Therefore we can take $\lambda_A=1$ and the unit form  $(R[G], \kappa_{A^D})$ is defined by 
$\kappa_{A^D}(g, h)=\delta_{g, h}$ for $g, h \in G$. 

\end{example}

\subsection{The unitary group of a form}
We consider $(A^D, S^D)$ as an $R$-algebra with involution;  we recall that we set $S^D(x)=\bar x$ for $x$ in $A^D_K=K[G]$.
\begin{definition}  For any $R$-algebra $S$,  we  set $$U(A^D\otimes_RS)=\{u\in
(A^{D}\otimes _{R}S)^\times\mid u\overline{u}=1\}.$$
\end{definition}
Let $\mathrm{Aut}( A\otimes _{R}S, Tr_{A\otimes S}^{\prime }) $ be 
 the group of automorphisms   of the $G$-form $(A\otimes_RS, Tr_{A\otimes S}^{\prime })$.  
Since $\theta=\lambda l$ is a basis of $A$ as an $A^D$-module,  any such automorphism $\psi$   is defined by $x_\psi \in A^{D \times}$ such that $\psi(a\theta)=ax_\psi\theta\  \forall  a\in A^D$. 

\begin{lem}\label{aut} The map $\psi\rightarrow x_{\psi}$ induces a group isomorphism
$$\mathrm{Aut}( A\otimes _{R}S, Tr_{A\otimes S}^{\prime })\simeq U(A^D\otimes_RS).$$
\end{lem}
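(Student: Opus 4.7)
The plan is to exploit the fact, due to Proposition \ref{eg}, that $A$ is free of rank one over $A^D$ on the basis $\theta = \lambda l$; base-changing along $R\to S$ shows that $A\otimes_RS$ is likewise free of rank one over $A^D\otimes_RS$ on $\theta$. Consequently every $(A^D\otimes_RS)$-linear endomorphism $\psi$ of $A\otimes_RS$ is determined by a unique $x_\psi\in A^D\otimes_RS$ via $\psi(a\theta)=ax_\psi\theta$, and $\psi$ is a module automorphism if and only if $x_\psi\in(A^D\otimes_RS)^\times$. This already produces a bijection between the group of $(A^D\otimes_RS)$-linear automorphisms of $A\otimes_RS$ and $(A^D\otimes_RS)^\times$, and I only need to single out those $\psi$ which in addition preserve the form $Tr'_{A\otimes S}$.

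To handle the isometry condition I would pass to the associated Hermitian form via Proposition \ref{isom}: preserving the $G$-form $Tr'_{A\otimes S}$ is the same as preserving the Hermitian form $h:=h_{Tr'_{A\otimes S}}$. The crucial computation is $h(\theta,\theta)=1$. Using the formula $g\cdot\theta=\lambda l_{g^{-1}}$ extracted from the proof of Proposition \ref{eg}, together with the elementary identity $Tr_{A_K/K}(l_hl_k)=\delta_{h,k}$ in $\mathrm{Map}(G,K)$, I would obtain $Tr'_A(\theta,g\theta)=\lambda\,\delta_{g,1}$, hence
\[
h(\theta,\theta)=\lambda^{-1}\sum_{g\in G}Tr'_A(\theta,g\theta)\,g=1_{A^D},
\]
and therefore $h(a\theta,b\theta)=a\bar b$ by sesquilinearity. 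This matches the unit rank-one Hermitian form computed in the example following Proposition \ref{isom}, which also confirms the identification $(A,Tr'_A)\simeq(A^D,\kappa_{A^D})$ mentioned in Remark 1.13.

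With this identification in hand, $\psi$ is an isometry precisely when
\[
ax_\psi\overline{bx_\psi}=h\bigl(\psi(a\theta),\psi(b\theta)\bigr)=h(a\theta,b\theta)=a\bar b
\]
for all $a,b\in A^D\otimes_RS$. Since $\overline{bx_\psi}=\overline{x_\psi}\bar b$, this reduces to $a\bigl(x_\psi\overline{x_\psi}\bigr)\bar b=a\bar b$; specializing $a=b=1$ forces $x_\psi\overline{x_\psi}=1$, i.e.\ $x_\psi\in U(A^D\otimes_RS)$. Conversely, any $x\in U(A^D\otimes_RS)$ produces an isometry by the same calculation. Hence $\psi\mapsto x_\psi$ restricts to the claimed bijection $\mathrm{Aut}(A\otimes_RS,Tr'_{A\otimes S})\to U(A^D\otimes_RS)$.

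Finally, for compatibility with the group laws, a direct calculation gives $(\psi_1\circ\psi_2)(\theta)=x_{\psi_2}x_{\psi_1}\theta$, so on the nose $\psi\mapsto x_\psi$ reverses multiplication; composing with the involution $x\mapsto\bar x=x^{-1}$ (which exchanges $U(A^D\otimes_RS)$ with its opposite group) yields the genuine group isomorphism asserted in the lemma. The main obstacle is the explicit evaluation $h(\theta,\theta)=1$, because it is where one must simultaneously control the $A^D$-module structure on $A$ (via $g\cdot l=l_{g^{-1}}$), the scaling factor $\lambda$, and the trace pairing on $\mathrm{Map}(G,K)$; everything else is a formal consequence of Propositions \ref{eg} and \ref{isom}.
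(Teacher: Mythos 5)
Your proof is correct and follows essentially the same route as the paper: both identify $A^D\otimes_RS$-module automorphisms with units via the free basis $\theta=\lambda l$ and then compute the form on that basis, your identity $h(a\theta,b\theta)=a\bar b$ being exactly the paper's $Tr'_{A}(a\theta,b\theta)=\kappa_{A^D}(a,b)=\lambda l(a\bar b)$ repackaged through Proposition \ref{isom}, after which $x_\psi\bar x_\psi=1$ follows in either formulation. Your closing remark that $\psi\mapsto x_\psi$ reverses composition and must be corrected by $x\mapsto\bar x=x^{-1}$ on $U(A^D\otimes_RS)$ is a legitimate refinement of a point the paper's proof leaves implicit.
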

\begin{proof}  The automorphism of $A^D\otimes_RS$-modules  of $A\otimes_RS$ given by $x$ is an automorphism of quadratic $G$-spaces iff 
$$Tr_{A\otimes S}^{\prime }( ax\theta,bx\theta)
=Tr_{A\otimes S}^{\prime }( a\theta,b\theta)\  \forall a,b\in
A^{D}\otimes _{R}S.$$
For the sake of simplicity we take $S=R$ and we set $x=x_{\psi}$.
For elements  $v=a\theta$ and $w=b\theta$ in $A$,  we have the equality
$$Tr'_A(v, w)=\lambda^{-1}\sum_{g\in G} g(a\theta)g(b\theta)=<\theta,\bar ab>=\kappa_{A^D} (a, b)$$ where $\kappa_{A^D}$ is the unit $G$-form.
We note that if   $a$ and $b$ are elements of $A^D\subset K[G]$ with $a=\sum_ta_tt$ and $y=\sum_ub_uu$, then  
$$l(\bar ab)=l(a\bar b)=\sum_va_vb_v.$$ a $\kappa_{A^D}(a, b)=\lambda l(\bar ab)=\lambda l(a\bar b)$.  Therefore $x$ defines an automorphism of the $G$-form $(A, Tr'_A)$ iff 
$$\kappa_{A^D}(ax, bx)=\lambda l(ax\bar x\bar b)=\lambda l(a\bar b)=\kappa_{A^D}(a, b)\  \forall a, b \in A^D$$
and so, since  $\kappa_{A^D}$ is non-degenerate,  iff $x\bar x=1$. Conversely for any $x\in U(A^D)$, the map  $a\theta\rightarrow ax\theta$ defines an automorphism  of the $G$-form $(A, Tr_{A}^{\prime })$.
\end{proof}
In the sequel we shall  frequently identify the groups $\mathrm{Aut}( A\otimes _{R}S, Tr_{A\otimes S}^{\prime }) $ and  $U(A^D\otimes_RS)$;  
we note that when replacing the basis $\theta$ of $A$ by $\theta'=u\theta$,  with $u\in A^{D \times}$, then $x_{\psi}$ is replaced by its conjugate 
$ux_\psi u^{-1}$. 

The functor $S\rightarrow U( A^{D}\otimes_{R}S)$ from the category of commutative $R$-algebras to the category of groups is the functor of points of a scheme over $R$ that we denote by $U_{A^D}$.  According to  Section 2.2,   this is the  group scheme of automorphisms of the rank one hermitian form of $(A^{D}, S^D)$. This is a finitely presented affine group scheme over $R$ which is smooth if $2$ is a unit in $R$ (see \cite {Bayer16}).  

We consider the group scheme $\mathcal{G}:=\mathrm{Spec}(A)$ and for any commutative algebra $S/R$ we identify the group $\mathcal{G}(S)$ of $S$-points of $\mathcal{G}$ with the group $\mathrm{Hom}^{alg}_R(A, S)$. Since $A$ is a finite and projective $R$-module we have an isomorphism of $R$ modules

$$\fonc{\nu}{A^D\otimes_RS}{\mathrm{Hom}_R(A, S)}
{f\otimes s}{(x\rightarrow sf(x)).}$$
By composing the canonical injection $\mathrm{Hom}^{alg}_R(A, S)\rightarrow \mathrm{Hom}_R(A, S)$ with $\nu^{-1}$  we obtain a map 
$u_S: \mathcal{G}(S)\rightarrow A^D\otimes_R S$.
\begin{lem}\label{ind}The map $u_S$ induces a morphism of group schemes 
$$u: \mathcal{G}\rightarrow U_{A^D}.$$
\end{lem}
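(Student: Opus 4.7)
The plan is to establish three things: (i) for every commutative $R$-algebra $S$, the element $u_S(\varphi)\in A^D\otimes_R S$ attached to $\varphi\in\mathcal{G}(S)$ actually lies in the unitary subgroup $U(A^D\otimes_R S)$; (ii) each $u_S$ is a group homomorphism; and (iii) the family $(u_S)_S$ is natural in $S$. Together these data produce a natural transformation between the functors of points of two affine $R$-group schemes, and by Yoneda this is induced by a morphism of $R$-schemes; by (ii) it is in fact a morphism of group schemes, which is the sought-after $u:\mathcal{G}\to U_{A^D}$.

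For step (i), the crucial observation is that under the $R$-module isomorphism $\nu$, the subset $\mathcal{G}(S)=\mathrm{Hom}^{alg}_R(A,S)$ of $\mathrm{Hom}_R(A,S)$ corresponds exactly to the set of group-like elements of $A^D\otimes_R S$: an $R$-linear map $\varphi:A\to S$ is multiplicative and unital if and only if the element $x:=\nu^{-1}(\varphi)$ satisfies $\Delta_{A^D}(x)=x\otimes x$ and $\varepsilon_{A^D}(x)=1$. This is a transposition argument, using that the multiplication of $A^D$ is dual to the comultiplication $\Delta_A$ and that the unit of $A^D$ is dual to the counit $\varepsilon_A$. For any such group-like $x$, the antipode axiom applied inside $A^D\otimes_R S$ gives $x\cdot S^D(x)=\varepsilon_{A^D}(x)\cdot 1=1$; in particular $x$ is a unit and $x\bar{x}=1$, so $u_S(\varphi)\in U(A^D\otimes_R S)$.

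For step (ii), the group law on $\mathcal{G}(S)$ is convolution coming from the Hopf structure of $A$, namely $(\varphi_1\varphi_2)(a)=\mu_S\circ(\varphi_1\otimes\varphi_2)\circ\Delta_A(a)$ for $a\in A$, where $\mu_S$ is the multiplication of $S$. Because the multiplication of $A^D$ is, by construction, the transpose of $\Delta_A$, the bijection $\nu^{-1}$ converts convolution of algebra maps into ordinary multiplication in $A^D\otimes_R S$; hence $u_S(\varphi_1\varphi_2)=u_S(\varphi_1)\,u_S(\varphi_2)$. Step (iii) is immediate: $\nu$ is visibly compatible with extension of scalars $S\to S'$, and the inclusion of algebra morphisms into all $R$-linear maps is preserved by base change.

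The only genuinely substantive ingredient is the dictionary in step (i) identifying algebra homomorphisms $A\to S$ with group-like elements of $A^D\otimes_R S$; once this is in place, unitarity drops out of the antipode axiom in one line, and the rest is formal. I therefore expect this identification, which rests on carefully tracking the duality between $(A,\Delta_A,\varepsilon_A)$ and $(A^D,S^D,\ldots)$, to be the main (but still standard) step of the argument.
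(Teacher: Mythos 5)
Your proposal is correct and follows essentially the same route as the paper: the paper verifies multiplicativity of $u_S$ by a direct convolution computation (the multiplication of $A^D$ being dual to $\Delta_A$), deduces unitarity from $fS^D(f)=S^D(f)f=\varepsilon$ together with $u_S(S^D(f))=\overline{u_S(f)}$, and then observes that the family $(u_S)_S$ is a morphism of functors, which is exactly your three steps with the group-like dictionary serving as a slightly more conceptual packaging of the same antipode argument. (Only a cosmetic remark: the duality you invoke in step (i) — multiplication of $A^D$ dual to $\Delta_A$ — is the one relevant to step (ii); the group-like characterization rests on the comultiplication and counit of $A^D$ being dual to the multiplication and unit of $A$.)
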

\begin{proof}We start by proving that $u_S$ induces a group homomorphism. We let $f$ and $ g \in \mathcal{G}(S)$, so that we may write 
$f=\nu(\sum_i f_i\otimes s_i)$ and $g=\nu(\sum_j g_j\otimes t_j)$. The product $f.g \in \mathcal{G}(S)$ is given by $x\rightarrow (f.g)(x)=\sum_{(x)} f(x_{(0)})g(x_{(1)})$ with 
$\Delta(x)=\sum_{(x)} x_{(0)}\otimes x_{(1)}$. Therefore we obtain 
$$(fg)(x)=\sum_{(x)}\sum_{i j}s_it_jf_i(x_{(0)})g_j(x_{(1)})=\sum_{i, j}(s_it_j)(f_ig_j)(x), \  \forall x\in A.$$
This implies that $f.g=\nu((\sum_if_i\otimes s_i)(\sum_jg_j\otimes s_j))$ and so that 
\begin{equation}\label{prod}u_S(fg)=u_S(f)u_S(g).\end{equation} It now  follows from the definition of $u$  that 
$$u_S(\varepsilon)=\varepsilon\otimes1=1_{U_{A^D(S)}}\ \mathrm{and}\ u_S(S^D(f ))=\overline{u_S(f)}.$$
Taking  any  $f\in \mathcal{G}(S)$,  then we know that $fS^D(f)=S^D(f)f=\varepsilon$.  Therefore,  applying $u_S$,  we obtain that 
\begin{equation}\label{inv}u_S(f)\overline{u_S(f)}=\overline{u_S(f)}u_S(f)=1. \end{equation} 
It follows from (\ref{prod}) and (\ref{inv}) that $u_S$ induces a group homomorphism,  as required. Moreover one easily checks that the family of morphisms ${u_S}$ defines a morphism of functors. We conclude that $u$ induces a morphism of group schemes when we use the identification $U(A^D\otimes_RS)=\mathrm{Aut}( A\otimes _{R}S, Tr_{A\otimes S}^{\prime })$

\end{proof}
 We now consider the cohomology sets $H_{fppf}^1(\mathrm{Spec}(R), \mathcal{G})$ and $H_{fppf}^1(\mathrm{Spec}(R), U_{A^D})$. For the sake  of simplicity we set $H^1(\ ,\ ):=H^1_{fppf}(, )$. 
 The morphism $u: \mathcal{G}\rightarrow U_{A^D}$ of  group schemes induces a morphism of pointed sets 
 $$H^1(\mathrm{Spec}(R), \mathcal{G})\rightarrow H^1(\mathrm{Spec}(R), U_{A^D})$$ that we also denote by $u$. The pointed set $H^1(\mathrm{Spec}(R),\mathcal{G})$ classifies the principal homogeneous spaces of $A$ and may therefore be identified with $\mathrm{PH}(A)$; hence we can attach to $B\in \mathrm{PH}(A)$ an element $[B]\in H^1(\mathrm{Spec}(R), \mathcal{G})$.
 
  The pointed set  $H^1(\mathrm{Spec}(R), U_{A^D})$ classifies the twists of the form $(A, Tr')$;  that is to say, non degenerate $G$-forms which are locally isomorphic to  $(A, Tr')$ in the fppf-topology.  Recall that we have seen that   a principal homogeneous space $B$ of $A$,  when endowed with the form $Tr_{B}^{\prime }$, 
is such a twist since the isomorphism  
\begin{equation*}
B\otimes _{R}B\simeq B\otimes _{R}A
\end{equation*}%
shows that $B\otimes_R( B,Tr_{B}^{\prime }) $ and $B\otimes_R(
A,Tr_{A}^{\prime })$ are isomorphic $G$-quadratic spaces
over $B$. We denote by  $[B, Tr_B']$  the element of $H^1(\mathrm{Spec}(R), U_{A^D})$ defined by the $G$-form $(B, Tr_B')$.
\begin{prop}\label{local} For a principal homogeneous space $B$ of $A$,  one has
$$u([B])=[B, Tr'_B].$$

\end{prop}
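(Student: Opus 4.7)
The plan is to choose a common fppf trivialization of the torsor $B$, read off the cocycles of $[B]$ in $\mathcal{G}$ and of $[B, Tr'_B]$ in $U_{A^D}$, and then check that they agree via $u_{S \otimes_R S}$. Concretely, I would pick an fppf cover $S/R$ together with an isomorphism $\phi: A \otimes_R S \xrightarrow{\sim} B \otimes_R S$ of $A^D \otimes_R S$-comodule algebras; one may take $S = B$ itself, with $\phi$ coming from the defining PHS identity $B \otimes_R A \simeq B \otimes_R B$. Letting $\phi_1, \phi_2$ denote the two pullbacks of $\phi$ along the two maps $S \rightrightarrows S \otimes_R S$, the class $[B] \in H^1(\mathrm{Spec}(R), \mathcal{G})$ is represented by the automorphism $\psi := \phi_2^{-1} \circ \phi_1$ of $A \otimes_R (S \otimes_R S)$, viewed as a point $c \in \mathcal{G}(S \otimes_R S)$ through the standard identification of $A^D$-comodule algebra automorphisms of the trivial torsor with $\mathcal{G}$-valued points.

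Because $\phi$ is in particular an $S$-algebra isomorphism respecting the $A^D$-comodule (hence $G$-action) structure, it preserves the trace of left multiplication; since the scalar $\lambda$ is intrinsic to $A$ and unchanged on both sides, $\phi$ is automatically an isometry between $(A, Tr'_A) \otimes_R S$ and $(B, Tr'_B) \otimes_R S$. Consequently $[B, Tr'_B]$ is represented by the same automorphism $\psi$, regarded now as an isometry of $(A, Tr'_A) \otimes_R (S \otimes_R S)$, which by Lemma \ref{aut} corresponds to an element $c' \in U(A^D \otimes_R (S \otimes_R S))$. What remains to be checked is the identity $c' = u_{S \otimes_R S}(c)$.

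This reduces to the following functorial statement, valid for any commutative $R$-algebra $T$ and any $\gamma \in \mathcal{G}(T)$: the translation automorphism of $A \otimes_R T$ induced by $\gamma$, namely $x \mapsto (\mathrm{id} \otimes \gamma) \circ \Delta(x)$, coincides with the $A^D \otimes_R T$-module automorphism sending the free generator $\theta$ to $u_T(\gamma) \cdot \theta$. Unwinding the $A^D$-module action inherited from the comodule structure $\Delta$, one has for $\xi = \sum_i f_i \otimes t_i \in A^D \otimes_R T$ with $\nu(\xi) = \gamma$ the computation
\[
\xi \cdot x \;=\; \sum_{(x)} x_{(0)} \otimes \sum_i t_i f_i(x_{(1)}) \;=\; \sum_{(x)} x_{(0)} \otimes \gamma(x_{(1)}),
\]
which matches the translation action exactly. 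Applying this with $T = S \otimes_R S$ and $\gamma = c$, and invoking Lemma \ref{aut}, gives $c' = u_{S \otimes_R S}(c)$, whence $u([B]) = [B, Tr'_B]$. The main (mild) obstacle is precisely this last identification: one must carefully coordinate the comodule description of the $\mathcal{G}$-action (as translation), the $A^D$-module description (as multiplication by $\nu^{-1}(\gamma)$) and the convention of Lemma \ref{aut} expressing automorphisms through the basis $\theta$, so as not to mislay a scalar or an antipode.
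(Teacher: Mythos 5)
Your overall strategy is the paper's: trivialize over $S=B$, represent both classes by the single automorphism $\psi=\phi_2^{-1}\circ\phi_1$ of $A\otimes_R(S\otimes_RS)$, and compare the two descriptions through the basis $\theta$. The gap is in your final identification. With the paper's (right-)comodule conventions, a comodule-algebra automorphism $\psi$ of $A\otimes_RT$ satisfies $\Delta\circ\psi=(\psi\otimes\mathrm{id})\circ\Delta$, and applying $\varepsilon\otimes\mathrm{id}$ gives $\psi=(\gamma\otimes\mathrm{id})\circ\Delta$ with $\gamma=\varepsilon\circ\psi$: the point of $\mathcal{G}(T)$ is plugged into the \emph{first} tensor factor of $\Delta$. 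By contrast, the $A^D\otimes_RT$-module action of $\xi=\nu^{-1}(\gamma)$ --- which is exactly what your displayed computation unwinds --- is $x\mapsto\sum_{(x)}x_{(0)}\,\gamma(x_{(1)})=(\mathrm{id}\otimes\gamma)\circ\Delta(x)$, i.e.\ $\gamma$ eats the \emph{second} factor. When $G$ is nonabelian, $\Delta$ is not cocommutative, so these two maps are genuinely different; moreover $(\mathrm{id}\otimes\gamma)\circ\Delta$ (left multiplication by $\xi$) is not $A^D_T$-linear in general, so it is not an automorphism to which Lemma \ref{aut} applies, and your claim that the module action ``matches the translation action exactly'' is not the statement you actually need.

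What is needed --- and is precisely the crux of the paper's argument --- is the equality of the two maps at the single element $\theta$: namely $(\gamma\otimes\mathrm{id})\Delta(\theta)=(\mathrm{id}\otimes\gamma)\Delta(\theta)=u_T(\gamma)\cdot\theta$. This is true but not formal: it uses $\Delta(\theta)=\lambda\sum_{g\in G}l_g\otimes l_{g^{-1}}$ together with the reindexing $g\mapsto g^{-1}$, i.e.\ the invariance of $\Delta(\theta)$ under swapping the factors combined with inversion; this is exactly the unnumbered lemma closing the paper's proof, asserting $\varphi(\theta)=(\varepsilon\circ\varphi)\cdot\theta$ for any $C$-algebra and $A^D_C$-module automorphism $\varphi$ of $A_C$. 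Once you supply that one computation, your argument closes: the cocycle automorphism $\psi$ is $A^D$-linear and an isometry, it sends $\theta$ to $u_{S\otimes_RS}(c)\,\theta$, and Lemma \ref{aut} then yields $c'=u_{S\otimes_RS}(c)$. For abelian $G$ your shortcut is harmless (the two translations coincide), but the proposition is stated and used for arbitrary $G$, so the missing step is essential.
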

\begin{proof} Let $B$ be a principal homogeneous space of $A$. As seen before, according to the terminology of Milne \cite{Mi}, then $B$,  as an $A^D$-module,  is a twisted-form of $A$ and $(B, {Tr}'_{B})$, as a $G$-form, is a  twisted-form of $(A, {Tr}'_{A})$. Moreover we observe that $\mathcal{U}:=(\mathrm{Spec}(B)\rightarrow \mathrm{Spec}(R))$ is a flat covering which trivializes simultaneously   $B$ as an algebra with an $A^D$-action  and $(B, {Tr}'_{B})$ as a $G$-form. Therefore $B$ (resp. $(B, {Tr}'_{B})$) defines a class in 
$H^1(\mathcal{U}/R, \mathcal{G})$  (resp. $H^1(\mathcal{U}/R, U_{A^D})$). 

We first recall how to describe  a $1$-cocycle representative of $[B] \in H^1(\mathcal{U}/R, \mathcal{G})$ (see \cite{CCMT15}). Since $B$ is a principal homogeneous space of $A$ there exists an isomorphism of $B$-algebras and $A^D$-modules 
$$\psi: B\otimes_RB\simeq B\otimes_RA. $$
We set $C:=B\otimes_RB$ and we denote by $i_1$ and  $i_2$ the morphisms of $R$-algebras $B \rightarrow C$ respectively defined by $(b \rightarrow b\otimes 1$) and  $(b \rightarrow 1\otimes b)$. We let $$\psi_j : C\otimes_RB\simeq C\otimes_RA$$ be the isomorphism of $C$-algebras and $A^D_C$-modules induced from $\psi$ by scalar extension along $i_j$. We  identify 
$\mathcal{G}(C)$ with $\mathrm{Hom}_C^{alg}(A_C, C)$. Then we know that   $[B]$ is represented by the $1$-cocycle $\varepsilon\circ (\psi_2\circ \psi_1^{-1})\in \mathcal{G}(C)$, where $\varepsilon$ is the counit of $A_C$. 

We now want to describe a representative of $[B, {Tr}'_{B}]\in H^1(\mathcal{U}/R, U_{A^D})$. 
Indeed $\psi_2\circ \psi_1^{-1}$is an automorphism of the $G$-form $(A_C, {Tr}'_{A_C})$. It follows from \cite{Mi} chapter III that it is an element of $U_{A^D}(C)$ which represents 
$[B, {Tr}'_{B}]$. We recall from Lemma 2.1 that we have identified   the groups  
$$U_{A^D}(C)\simeq \{ x\in A^{D \times}_C\\  \mid  x\bar x=1\}$$ 
under the map $\psi\rightarrow 
x_{\psi}$ where $x_{\psi}$ is the unique element of $A^D_C$ such that $\psi(\theta)=x_\psi. \theta$. Therefore,  in order to prove Proposition \ref{local}   it suffices to prove that 
$$\psi_2\circ \psi_1^{-1}(\theta)=(\varepsilon\circ (\psi_2\circ \psi_1^{-1})).\theta.$$
This follows from the next Lemma:
\begin{lem}Let $\varphi$ be an automorphism of $C$-algebras and $A^D_C$-modules of $A_C$, then 
$$\varphi(\theta)=(\varepsilon\circ\varphi).\theta.$$
\end{lem}
\begin{proof} Since  $\varphi$ is a morphism of $A_C$-comodules,  it satisfies the equality
  $$(\varphi\otimes \mathrm{id})\circ\Delta=\Delta\circ \varphi $$ where $\Delta$ is the comultiplication of $A_C$. 
  This implies that  for all  $x\in A_{C}$,
  $$\Delta(\varphi(x))=\sum_{(x)}\varphi(x_{0})\otimes x_{1} $$
  where $\Delta(x)=\sum_{(x)} x_{0}\otimes x_{1}$ and so $\varphi(x)=\sum_{(x)} (\varepsilon\circ\varphi)(x_{0})x_{1}$. We recall that $\theta=\lambda l$, therefore $\Delta(\theta)=
  \lambda\sum_{u\in G}l_u\otimes l_{u^{-1}}$ and so 
  $$\varphi(\theta)=\lambda\sum_{u\in G}(\varepsilon\circ\varphi)(l_u) l_{u^{-1}}=\lambda\sum_{u\in G}(\varepsilon\circ\varphi)( l_{u^{-1}})l_u=(\varepsilon\circ\varphi).\theta$$ as required.
\end{proof}

\end{proof}

\subsection{Explicit bases}
Let $\ R$ be either a local or global ring of integers with field of
fractions $K$. We let $\Sigma $ denote the trace element $\sum_{g\in G}g$ and let 
$l\in \mathrm{Map}( K[ G] ,K) $ be defined \ as the $K$-linear extension of the map $g\mapsto \delta _{g,1}.$ Let $I( A )
$ and $I( A^{D}) $ be the integrals of $A$ and $A^D$ defined in subsection 2.1.   We have seen that
there is an $R$-ideal $\Lambda $ such that $I( A) =\Lambda l$
with $\varepsilon (I( A)) =\Lambda \subset R$ where $
\varepsilon :A_{K}\rightarrow K$ is the counit given by $\varepsilon (
f) =f( 1_{G}) .$ By hypothesis we know that $\Lambda
=R\lambda $ and from \cite{CCMT15} Section 3 (see also Proposition \ref{eg}) we know that $\lambda l$ is a basis of $A$ as an $A^D$-module.
\begin{definition} For $x\in B_K$ we define the resolvent
$$r(x)=\sum_{g\in G} (gx)g^{-1} \in B_K[G].$$
\end{definition}
We note the following two facts for future use: 
\begin{equation}\label{act}
r(hx)=hr(x)\ \forall x\in B_K, h\in G
\end{equation} 
\begin{equation}
r(x)\bar r(x)=\sum_{g\in G}Tr(x(gx))g.
\end{equation} 
\begin{prop}\label{expli}
 Suppose  that $R$ is a ring of integers.\ Let $
 B$ be a principal homogeneous space for $A$. Suppose that $\mathfrak{p}$ is a
prime ideal of $R$  and  that we have an isomorphism
of quadratic $G$-spaces 
\begin{equation*}
\phi _{\mathfrak{p}}:( A_{\mathfrak{p}},Tr_{A_{\mathfrak{p}}}^{\prime
}) \cong ( B_{\mathfrak{p}},Tr_{B_{\mathfrak{p}}}^{\prime
}) .
\end{equation*}%
Set $b_{\mathfrak{p}}=\phi _{\mathfrak{p}}( \lambda l)$,  so that 
$b_{\mathfrak{p}}$ generates $B_{\mathfrak{p}}$ over $A_{\mathfrak{p}}^{D}.\ 
$Then $$r( \lambda ^{-1}b_{\mathfrak{p}}) \in U(B_{\mathfrak{p}} \otimes _{R_{\mathfrak{p}}} A_{\mathfrak{
p}}^{D}) .$$
\end{prop}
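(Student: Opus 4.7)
The assertion packages two independent claims: (a) the unitary relation $r(\lambda^{-1}b_{\mathfrak{p}})\,\overline{r(\lambda^{-1}b_{\mathfrak{p}})} = 1$, and (b) integrality, i.e.\ $r(\lambda^{-1}b_{\mathfrak{p}}) \in B_{\mathfrak{p}}\otimes_{R_{\mathfrak{p}}}A^D_{\mathfrak{p}}$. I plan to establish these separately. Together they give the statement, since the antipode stabilises the order $B_{\mathfrak{p}}\otimes_{R_{\mathfrak{p}}}A^D_{\mathfrak{p}}$, so (a) plus (b) forces the inverse to lie in the order as well.

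For (a), I specialise the identity $r(x)\bar{r}(x)=\sum_{g\in G}Tr_{B_K/K}(x(gx))g$ to $x=\lambda^{-1}b_{\mathfrak{p}}$, obtaining
\[
r(\lambda^{-1}b_{\mathfrak{p}})\,\overline{r(\lambda^{-1}b_{\mathfrak{p}})} = \sum_{g\in G}\lambda^{-1}\,Tr'_B(b_{\mathfrak{p}}, gb_{\mathfrak{p}})\,g.
\]
Because $\phi_{\mathfrak{p}}$ is a $G$-equivariant isometry, $Tr'_B(b_{\mathfrak{p}}, gb_{\mathfrak{p}}) = Tr'_B(\phi_{\mathfrak{p}}(\lambda l), \phi_{\mathfrak{p}}(g\lambda l)) = Tr'_A(\lambda l, g\lambda l)$. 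A direct calculation in $A_K=\mathrm{Map}(G,K)$ using the convention $(g\cdot l)(h)=l(hg)=\delta_{h,g^{-1}}$ from the proof of Proposition 2.4 gives $Tr_{A_K/K}(\lambda l\cdot g\lambda l)=\lambda^2\delta_{g,1}$, hence $Tr'_A(\lambda l, g\lambda l)=\lambda\delta_{g,1}$. The sum collapses to $1$.

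For (b), I exploit the principal homogeneous space structure. Let $\psi\colon B\otimes_R B \xrightarrow{\sim} B\otimes_R A$ be the trivialising isomorphism and set $\tilde{b} := \psi(1\otimes b_{\mathfrak{p}}) \in B_{\mathfrak{p}}\otimes_{R_{\mathfrak{p}}} A_{\mathfrak{p}}$. The latter is a Hopf $B_{\mathfrak{p}}$-order in $\mathrm{Map}(G, B_{K_{\mathfrak{p}}})$ whose principal integral ideal is still generated by $\lambda$, so Proposition 2.4 applies: there is a unique $u \in B_{\mathfrak{p}}\otimes_{R_{\mathfrak{p}}} A^D_{\mathfrak{p}}$ with $\tilde{b} = u\,\theta$, and $u = \lambda^{-1}\sum_{g\in G}\tilde{b}(g^{-1})\,g$. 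Here $\tilde{b}(g)$ denotes the $g$-coordinate of $\tilde b$ in $B_{K_{\mathfrak{p}}}\otimes_{K_{\mathfrak{p}}}A_{K_{\mathfrak{p}}} \cong \mathrm{Map}(G, B_{K_{\mathfrak{p}}})$.

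The crux is the identification $\tilde{b}(g) = g\cdot b_{\mathfrak{p}}$. Over the generic fibre, $B_K$ is a $G$-Galois $K$-algebra, and the trivialisation $B_K\otimes_K B_K \cong \mathrm{Map}(G, B_K)$ is classically given by $x\otimes y \mapsto (g\mapsto x\cdot g(y))$; $A^D$-equivariance under the convention above is a short check. Applying this to $1\otimes b_{\mathfrak{p}}$ yields $\tilde{b}(g)=g\cdot b_{\mathfrak{p}}$. Substituting and relabelling $g\mapsto g^{-1}$ in the formula for $u$ gives
\[
u = \lambda^{-1}\sum_{g\in G}(g^{-1}b_{\mathfrak{p}})\,g = \lambda^{-1}\sum_{h\in G}(hb_{\mathfrak{p}})\,h^{-1} = r(\lambda^{-1}b_{\mathfrak{p}}),
\]
so $r(\lambda^{-1}b_{\mathfrak{p}})$ lies in $B_{\mathfrak{p}}\otimes_{R_{\mathfrak{p}}}A^D_{\mathfrak{p}}$. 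The main obstacle is step (b), and specifically the bookkeeping needed to match the PHS trivialisation with the $A^D$-action conventions from Section 2.1 so that the coordinate function of $\tilde b$ is $g\mapsto g\cdot b_{\mathfrak{p}}$; once this is in hand, Proposition 2.4 delivers integrality essentially for free.
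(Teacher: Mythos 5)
Your proof is correct and matches the paper's argument in all essentials: the paper likewise establishes membership (indeed unit-ness) in $B_{\mathfrak{p}}\otimes_{R_{\mathfrak{p}}}A^{D}_{\mathfrak{p}}$ by composing the canonical torsor trivialisation with the isomorphism $A_{\mathfrak{p}}\rightarrow A^{D}_{\mathfrak{p}}$ of Proposition \ref{eg}, arriving at exactly your identity $(v\circ u)(1\otimes b_{\mathfrak{p}})=\lambda^{-1}r(b_{\mathfrak{p}})$ via the comodule structure map, and then verifies $x\overline{x}=1$ by the same resolvent--isometry computation as your part (a). The only cosmetic difference is that the paper obtains invertibility directly (the image of a basis under an isomorphism of free rank-one modules is a unit), whereas you deduce it from (a) plus (b); this is fine, since a one-sided inverse in these module-finite algebras is automatically two-sided.
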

\begin{proof} We start by proving that $r( \lambda ^{-1}b_{\mathfrak{p}})$ is a unit in $B_{\mathfrak{p}} \otimes _{R_{\mathfrak{p}}} A_{\mathfrak{
p}}^{D}$. We recall from  Proposition \ref{eg} that for  $f \in A_{\frak{p}}$, then $\lambda^{-1}\sum_{g\in G}f(g^{-1})g$ is the unique element $t \in A_{\frak{p}}^D$ such that $f=t(\lambda l)$ and so that  
 $$\fonc{v}{A_{\frak{p}}}{A_{\frak{p}}^D}
{f}{\lambda^{-1}\sum_{g\in G}f(g^{-1})g}
$$
is an isomorphism of $A^D_{\frak{p}}$-modules; we keep the notation  $v:B_{\mathfrak{p}} \otimes _{R_{\mathfrak{p}}} A_{\mathfrak{
p}}\rightarrow B_{\mathfrak{p}} \otimes _{R_{\mathfrak{p}}} A_{\mathfrak{
p}}^{D}$ for the isomorphism of $B_{\mathfrak{p}} \otimes _{R_{\mathfrak{p}}} A_{\mathfrak{
p}}^{D}$ modules  induced from $v$ by scalar extension. We  now consider the composition of morphisms 
 \[\xymatrix{
B_{\frak{p}}\otimes_{R_{\frak{p}}}B_{\frak{p}}\ar[r]^{u}&B_{\mathfrak{p}} \otimes _{R_{\mathfrak{p}}} A_{\mathfrak{
p}}\ar[r]^{v}
& B_{\mathfrak{p}} \otimes _{R_{\mathfrak{p}}} A_{\mathfrak{
p}}^{D}\\
}\] 
 where $u$ is the isomorphism of $B_{\mathfrak{p}} \otimes _{R_{\mathfrak{p}}} A_{\mathfrak{
p}}^{D}$-modules attached to the structure  of $B_{\frak{p}}$ as a principal homogeneous space of $A_{\frak{p}}$. Let $b$ be a basis of $B_{\frak{p}}$  as an $A_{\frak{p}}^D$-module. Since $v\circ u$ is an isomorphism of $B_{\mathfrak{p}} \otimes _{R_{\mathfrak{p}}} A_{\mathfrak{
p}}^{D}$ free modules of rank $1$, then $(v\circ u)(1\otimes b)$ is a basis of $B_{\mathfrak{p}} \otimes _{R_{\mathfrak{p}}} A_{\mathfrak{
p}}^{D}$ and therefore is a unit. In order to show that $\lambda^{-1}r(b)$ is a unit,  it suffices to prove the equality
\begin{equation}\label{resol}(v\circ u)(1\otimes b)=\lambda^{-1}r(b).\end{equation}
From the definition of $u$ we obtain that $u(1\otimes b)=\sum_{i\in I}b_i\otimes f_i$ with $f_i\in A_{\frak{p}}$ and $b_i\in B_{\frak{p}}$,  where 
$\Delta(b)=\sum_{i\in I}b_i\otimes f_i$, with $\Delta$ being the comodule structure map.  Therefore we have   
\begin{equation}\label{def}(v\circ u)(1\otimes b)=\lambda^{-1}\sum_{i\in I}\sum_{g\in G}b_i\otimes f_i(g^{-1})g=\lambda^{-1}\sum_{g\in G}(\sum_{i\in I}f_i(g^{-1})b_i)\otimes g.\end{equation}
Using that  $g^{-1}b=\sum_{i\in I}f_i(g^{-1})b_i$ we deduce  (\ref{resol}) from (\ref{def}). 

We now complete the proof of the proposition.  We know that $\theta:=\lambda l$ is a basis of $A$ as an  $A^D$-module and so a basis of $ A_{\frak p}$ as an $A^D_{\frak p}$-module for every $\frak{p}$. Suppose we have a local isomorphism of ${G}$-forms
$$\varphi_{\frak p}: (A_{\frak p}, {Tr}'_{A_{\frak p}})\simeq (B_{\frak p}, {Tr}'_{B_{\frak p}}). $$ 
We set $b_{\frak{p}}:= \varphi_{\frak p}(\theta)$. Then we obtain 
$$r(b_{\frak{p}})\bar r(b_{\frak{p}})=\sum_{g\in G} \mathrm{Tr}_{B_{\frak p}}(b_{\frak{p}}(gb_{\frak{p}}))g=\lambda\sum_{g \in G}\mathrm{Tr}'_{B_{\frak p}}(b_{\frak{p}}(gb_{\frak{p}}))g$$$$=\lambda \sum_{g\in G} \mathrm{Tr}'_{A_{\frak p}}(\theta(g\theta))g
=\sum_{g \in G}\mathrm{Tr}_{A_{\frak p}}(\theta(g\theta))g.$$ 
Since we know that $\mathrm{Tr}_{A_{\frak p}}(\theta(g\theta))=\lambda^2\mathrm{Tr}_{A_{\frak p}}(l(gl))$, we conclude that 
$$r(b_{\frak{p}})\bar r(b_{\frak{p}})=\lambda^2$$ and so  $r(\lambda^{-1}b_{\frak{p}})\in U(A^D_{\frak{p}}\otimes B_{\frak{p}})$. 
\end{proof}
\subsection{Determinants}
Here $K$ denotes a field of characteristic zero and $K^{c}$ is a chosen
separable closure of $K$.  For a given finite group $G$ we have the
Wedderburn decomposition
\begin{equation*}
K[ G] =\prod _{\chi }M_{n_{\chi }}( D_{\chi })
\end{equation*}%
where $\chi $ ranges over the irreducible $K$-characters of $G,$ and\ each $
D_{\chi }$ is a division algebra whose center is denoted by $Z_{\chi }.$ The
above decomposition then induces an isomorphism of $K^{c}$-algebras 

\begin{equation*}K^{c}\left[ G\right] =\prod _{\chi ^{\prime }}M_{n_{\chi ^{\prime }}}(
K^{c})
\end{equation*}
where now $\chi ^{\prime }$ ranges over the irreducible $K^{c}$-characters
of $G.$ The determinant induces a homomorphism of groups
\begin{equation}
\mathrm{Det}:K[ G]^{\times}=\oplus _{\chi }GL_{n_{\chi }}(D_{\chi }) \hookrightarrow \oplus _{\chi ^{\prime }}GL_{n_{\chi{\prime
}}}( K^{c}) \overset{\det }{\rightarrow }\oplus _{\chi ^{\prime}}K^{c\times }
\end{equation}
and similarly for each positive integer $m,\ $we have $\mathrm{Det}
:GL_{m}(K[ G]) \rightarrow \oplus _{\chi ^{\prime
}}K^{c\times }$ where explicitly  for $z\in GL_m(K[G])$ and for an irreducible $K^c$-representation $T_{\chi'}$ as above we extend to an algebra homomorphism
$$T_{\chi'}:M_m(K[G]) \rightarrow M_{mn_{\chi'}}(K^c)$$ and we set $\mathrm{Det}(z)(\chi')=\mathrm{det}(T_{\chi'}(z))$. We note that for $\omega\in \Omega_K=\mathrm{Gal}(K^c/K)$ we have 
$\mathrm{Det}(z)(\chi')^\omega=\mathrm{Det}(z)(\chi'^\omega)$.  If we so wish, we may view $\mathrm{Det}$ in $K$-theoretic terms as a homomorphism
\begin{equation*}
\mathrm{Det}:K_{1}(K[G]) \rightarrow K_{1}(
K^{c}[G]) =\oplus _{\chi ^{\prime }}K_{1}(
K^{c}) .
\end{equation*}
We write $\mathrm{Det}(R[G]^\times) $ for the image of $
\mathrm{Det}$ on $R[G] ^{\times }.$ Note that if $R$ is a
semi-local ring then by multiplying on the left and right by elementary matrices
we can always write $x\in GL_{n}(R[G]) $ in the
form $x=e_{1}\delta e_{2}$ where the $e_{i}$ are elementary matrices over $R[G] $ and $\delta $ is diagonal matrix with all but the leading
terms equal to 1; hence in the semi-local case we have shown
\begin{equation}\label{eqn}
\mathrm{Det}(GL_{n}(R[G])) =\mathrm{Det
}(R[G] ^{\times }) .
\end{equation}

Let us also note for future reference that,  if $G$ is abelian, then $\mathrm{
Det}$ is an isomorphism
\begin{equation}\label{isab}
\mathrm{Det}:K[G]^{\times }\rightarrow \mathrm{Det}(K[G] ^{\times }) .
\end{equation}

By the theorem of Hasse-Schilling-Mass \cite{R75} Theorem 33.15  we know:
\begin{thm}
\begin{enumerate}
\item If $K$ is a finite extension of the $p$-adic field $\mathbb{Q}_{p}$, 
 then $$\mathrm{Det}( GL_{m}(K[G]))\cong 
\oplus _{\chi }Z_{\chi }^{\times }$$.

\item If $\ K=\mathbb{R}$,  then 
\begin{equation*}
\mathrm{Det}(K[G]^\times) \cong\oplus _{\chi }Z_{\chi
}^{\times +}
\end{equation*}
where $Z_{\chi }^{\times +}=\mathbb{C}^{\times }$ if $Z_{\chi }=\mathbb{C}, 
Z_{\chi }^{\times +}=\mathbb{R}^{\times }$ if $Z_{\chi }=\mathbb{R}$, 
unless $\chi $ is symplectic in which case $Z_{\chi }^{\times +}=\mathbb{R}
^{\times +}$ the group of positive real numbers.

\item If $\ K=\mathbb{C}$,\ then $$\mathrm{Det}(K[G]^\times)
\cong \oplus _{\chi }\mathbb{C}^{\times }.$$

\item If $K$ is a finite extension of $\mathbb{Q}$ and if $G$
has odd order, then $$\mathrm{Det}(K[G]^\times) \cong \oplus
_{\chi }Z_{\chi }^{\times }.$$
\end{enumerate}
\end{thm}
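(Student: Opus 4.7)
The plan is to reduce via the Wedderburn decomposition $K[G]=\prod_\chi M_{n_\chi}(D_\chi)$ to identifying the image of the reduced norm $\mathrm{Nrd}:M_{n_\chi}(D_\chi)^\times\to Z_\chi^\times$ on each simple component: under the embedding $K[G]\hookrightarrow K^c[G]=\prod_{\chi'}M_{n_{\chi'}}(K^c)$ the restriction of $\mathrm{Det}$ to the $\chi$-block factors as $\mathrm{Nrd}$ followed by the diagonal embedding $Z_\chi\hookrightarrow\oplus_{\chi'\mid\chi}K^{c\times}$, so each part of the theorem amounts to computing $\mathrm{Nrd}(M_{n_\chi}(D_\chi)^\times)$. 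The passage from $K[G]^\times$ to $GL_m(K[G])$ is handled by the same elementary-matrix argument as in (2.5), so it is enough to work with $m=1$.

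Cases (1), (2), (3) are structural. For (1), over a $p$-adic field every central simple algebra contains an unramified maximal subfield and the norm from this subfield is surjective onto the center's units, giving surjectivity of $\mathrm{Nrd}$ (cf.\ \cite{R75} Theorem 33.4). For (3), $\mathbb{C}$ being algebraically closed forces every $D_\chi=\mathbb{C}$, and $\mathrm{Nrd}$ is the ordinary determinant. For (2), Frobenius's classification of real division algebras as $\mathbb{R}$, $\mathbb{C}$, or $\mathbb{H}$ matches the orthogonal, complex, and symplectic trichotomy for characters; in the first two cases $\mathrm{Nrd}$ surjects onto $\mathbb{R}^\times$ respectively $\mathbb{C}^\times$, while in the symplectic case the identity $\mathrm{Nrd}(a+bi+cj+dk)=a^2+b^2+c^2+d^2>0$ forces the image to be $\mathbb{R}^{\times+}$.

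The main obstacle is case (4), which rests on the Hasse--Schilling--Maass theorem: for a central simple algebra $A$ over a number field $Z$ the image of $\mathrm{Nrd}:A^\times\to Z^\times$ consists of precisely those $z\in Z^\times$ positive at every real place of $Z$ at which $A$ ramifies, i.e.\ where $A\otimes_Z Z_v\cong M_k(\mathbb{H})$. One then has to check that under the odd-order hypothesis no such ramified real place ever arises. For the trivial character the component is just $K$ and there is nothing to verify. For a non-trivial irreducible $\chi$, the odd-order hypothesis forces $\chi\neq\overline{\chi}$: if a non-identity conjugacy class $C$ were closed under inversion, the fixed-point-free involution $g\mapsto g^{-1}$ on $C$ (a fixed point would satisfy $g^2=1$, impossible for $|G|$ odd and $g\neq 1$) would make $|C|$ even, contradicting $|C|\mid|G|$. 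Hence no embedding $Z_\chi=K(\chi)\hookrightarrow\mathbb{R}$ can exist, since such an embedding would send $\chi$ to a real-valued irreducible character of the same degree, necessarily the trivial one. Thus $Z_\chi$ has no real places for any non-trivial $\chi$, all positivity conditions are vacuous, and $\mathrm{Det}(K[G]^\times)\cong\oplus_\chi Z_\chi^\times$.
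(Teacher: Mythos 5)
The paper does not actually prove this statement: it is quoted as the Hasse--Schilling--Maass theorem, with a citation of Reiner (\cite{R75}, Theorem 33.15) and of Fr\"ohlich's description of $\mathrm{Det}$, so your proposal is supplying the argument that the citation stands in for. Your reduction of $\mathrm{Det}$ on each Wedderburn block to the image of the reduced norm $\mathrm{Nrd}:M_{n_\chi}(D_\chi)^\times\to Z_\chi^\times$, the elementary-matrix reduction from $GL_m$ to units, and the treatment of cases (2), (3) and (4) follow the standard route and are correct. In particular, your key point for (4) --- that for $G$ of odd order no nontrivial irreducible character is real-valued, hence no $Z_\chi=K(\chi)$ with $\chi\neq 1$ admits a real embedding, so the positivity conditions in Hasse--Schilling--Maass are vacuous --- is exactly what makes the odd-order case work, and it is consistent with the paper's later remark that odd-order groups have no nontrivial orthogonal or symplectic absolutely irreducible representations. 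One small correction in case (1): the norm from an unramified maximal subfield of a central division algebra over a $p$-adic field is \emph{not} surjective onto $Z_\chi^\times$; it yields the valuation units together with the subgroup generated by the $n$-th power of a uniformizer, so this subfield alone does not give surjectivity of $\mathrm{Nrd}$. Full surjectivity over a non-archimedean local field uses in addition ramified maximal subfields (every extension of the center of degree equal to the index embeds), or one simply invokes Reiner's Theorem 33.4, which you cite and which is precisely this surjectivity statement; with that repair the argument is complete.
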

 Indeed,  from \cite{F83} II Section 2, if $K$ is a number field then 
$\mathrm{Det}(GL_m(K[G])$   may be described as the group of $\Omega_K$-equivariant maps from the virtual $K^c$-characters of $G$ to $K^{c\times}$ whose values on symplectic characters are real and positive at all real infinite places of $K$.  Note that for any extension $K'$ of $K$, with $K'$ assez gros for $G$  we have 
$$\mathrm{Hom}_{\Omega_K}(G_0(K^c[G]), K^{'\times})=\mathrm{Hom}_{\Omega_K}(G_0(K^c[G]), K^{c\times}).$$

For the remainder of this subsection we shall principally be interested in $
\mathrm{Det}(A^{D\times})$ when $R$ is the valuation ring
of a finite extension $K$ of the $p$-padic field $\mathbb{Q}_{p}$.

For our first result we suppose that\ $A^{D}$ is the group ring $R[G]$ and that $L\supseteq K$ are both finite non-ramified extensions of
the $p$-padic field $\mathbb{Q}_{p}.$ We set $\Delta =\mathrm{Gal}(
L/K)$. Then $\Delta$ acts on $\mathrm{Det}(O_{L}[G]^\times) $ by the rule that for $\delta \in \Delta $ and for $
\sum\nolimits_{g\in G}l_{g}g\in O_{L}\left[ G\right] ^{\times }$: 
\begin{equation*}
\mathrm{Det}(\sum_{g\in G}l_{g}g) ^{\delta }=\mathrm{
Det}( \sum_{g\in G}(\delta l _{g})g) .
\end{equation*}
From [T2] we have the Fixed Point Theorem:
\begin{thm}\label {fix}
\begin{equation*}
\mathrm{Det}(O_{L}[G]^\times) ^{\Delta }=\mathrm{Det}
(O_K[G]^{\times}) .
\end{equation*}
\end{thm}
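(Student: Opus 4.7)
The easy inclusion $\mathrm{Det}(O_K[G]^\times) \subseteq \mathrm{Det}(O_L[G]^\times)^\Delta$ is immediate, since every element of $O_K[G]^\times$ is already $\Delta$-fixed. The plan for the reverse inclusion is to lift a $\Delta$-fixed determinant class and then correct the lift by an element of $\ker(\mathrm{Det})$ to make it $\Delta$-fixed. More precisely, let $V := \ker\!\bigl(\mathrm{Det}\colon O_L[G]^\times \to \mathrm{Det}(K_1(K[G]))\bigr)$. Given $\alpha \in \mathrm{Det}(O_L[G]^\times)^\Delta$ and any lift $y \in O_L[G]^\times$, the cochain $\delta \mapsto \delta(y) y^{-1}$ takes values in $V$ (because $\mathrm{Det}(\delta(y)) = \delta(\alpha) = \alpha = \mathrm{Det}(y)$) and is visibly a $1$-cocycle. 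Thus if one knows that $H^1(\Delta, V) = 0$, then $\delta(y) y^{-1} = \delta(v) v^{-1}$ for some $v \in V$, and $v^{-1} y \in O_K[G]^\times$ has determinant $\alpha$.

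The remaining task is the vanishing of $H^1(\Delta, V)$. I would filter $V$ by intersection with the powers of the Jacobson radical $J = \mathrm{rad}(O_L[G])$, namely $V^{(n)} = V \cap (1 + J^n)$ for $n \geq 0$, and treat the semisimple quotient $V/V^{(1)}$ and the pro-$p$ graded pieces separately. Since $L/K$ is unramified, $O_L$ admits a normal integral basis over $O_K$, so $O_L \simeq O_K[\Delta]$ as $O_K[\Delta]$-modules, and hence $O_L[G] \simeq O_K[G] \otimes_{O_K} O_K[\Delta]$ as $O_K[G][\Delta]$-modules. Consequently $O_L[G]$, the radical $J$, and every graded piece $J^n/J^{n+1}$ are cohomologically trivial as $\Delta$-modules. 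For $n \geq 1$, the map $u \mapsto u - 1$ identifies $V^{(n)}/V^{(n+1)}$ with a sub-$\Delta$-module of $J^n/J^{n+1}$, giving the vanishing of $H^i(\Delta, V^{(n)}/V^{(n+1)})$ for $i \geq 1$. A passage to the inverse limit, using $J$-adic completeness of $O_L[G]$, then yields $H^1(\Delta, V^{(1)}) = 0$.

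For the semisimple quotient $V/V^{(1)}$ one lands in the kernel of the determinant on $(\overline{O_L}[G])^\times$, where $\overline{O_L}$ denotes the residue field of $O_L$. By Wedderburn, $\overline{O_L}[G]/\mathrm{rad}$ is a product of matrix algebras over finite extensions of $\overline{O_L}$, and the kernel of the classical determinant is a product of $SL_n$-groups; in particular, this kernel is the set of $\overline{O_L}$-points of a smooth connected algebraic group scheme over $\overline{O_K}$. The classical Lang--Steinberg theorem (or, to be safe in this possibly non-reductive context, the generalisation proved in Tossici's appendix invoked elsewhere in this paper) then gives $H^1(\Delta, V/V^{(1)}) = 0$. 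Combining this with the pro-$p$ case produces $H^1(\Delta, V) = 0$, which finishes the proof.

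The main obstacle is the bookkeeping in the filtration step: one must verify that the identification $u \mapsto u - 1$ is $\Delta$-equivariant on graded pieces, that the condition $\mathrm{Det}(u) = 1$ translates on $J^n/J^{n+1}$ to a usable additive condition (essentially a trace vanishing), and that the resulting sub-$\Delta$-module of $J^n/J^{n+1}$ inherits rather than loses cohomological triviality. As a technical variant, it may be cleanest to replace the naive $u \mapsto u - 1$ on the pro-$p$ part by an integral logarithm of Oliver type, so as to obtain a genuine $\Delta$-equivariant homomorphism and trade topological convergence arguments for cleaner algebraic formulas.
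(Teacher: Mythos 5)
Your reduction of the theorem to the vanishing of the nonabelian cohomology set $H^1(\Delta,V)$, with $V=\ker\bigl(\mathrm{Det}\colon O_L[G]^\times\to \mathrm{Det}(K_1(L[G]))\bigr)$, is sound, and the semisimple-quotient step can probably be made to work via Lang--Steinberg (though you would still have to identify $V/V^{(1)}$ with the full kernel of $\mathrm{Det}$ on the semisimplification; the surjectivity of $SL(\Lambda)\to SL(\widetilde{\Lambda}^{ss})$ is established in this paper only for odd residue characteristic, whereas the theorem is needed for all $p$). The genuine gap is in the pro-$p$ part of the filtration. From the fact that $V^{(n)}/V^{(n+1)}$ embeds $\Delta$-equivariantly into $J^n/J^{n+1}$ you cannot conclude anything about its cohomology: cohomological triviality is not inherited by submodules. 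A directly relevant example: if $p\mid[L:K]$ then $k_L$ is an induced, hence cohomologically trivial, $\Delta$-module, but the trace-zero hyperplane $T=\ker(\mathrm{Tr}_{k_L/k_K})\subset k_L$ satisfies $H^1(\Delta,T)\cong\mathrm{coker}\bigl(k_K\xrightarrow{[L:K]}k_K\bigr)=k_K\neq 0$, because the trace of a $\Delta$-fixed element is $[L:K]$ times it. Since $V^{(n)}/V^{(n+1)}$ is cut out of $J^n/J^{n+1}$ by exactly such character-trace conditions, the step you dismiss as ``bookkeeping'' is the whole content of the theorem: determining the image of $V\cap(1+J^n)$ in $J^n/J^{n+1}$ together with its $\Delta$-structure is equivalent to computing $\mathrm{Det}(1+J^n)$ modulo $\mathrm{Det}(1+J^{n+1})$, and that is precisely the hard integrality statement, not something you get for free from $O_L\cong O_K[\Delta]$.

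This is also why the paper offers no proof but cites Taylor: the known argument does not proceed by a naive radical d\'evissage but by Taylor's integral group logarithm, which converts the groups $\mathrm{Det}(1+J)$ into explicit $O_L$-lattices described by congruences involving the Adams operation $\Psi^p$, whose $\Delta$-fixed points can then be computed and descended. Your closing suggestion to ``replace the naive $u\mapsto u-1$ by an integral logarithm of Oliver type'' is in effect an appeal to exactly that machinery; as written, the proposal assumes the essential ingredient rather than supplying it, so the proof is incomplete at its decisive step.
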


Using (18) we also have the following very general fixed point theorem.

\begin{thm}
Suppose that $G$ is abelian and that $K$ is either a local or global field.
If $N/K$ is a Galois $\Delta$-algebra. 
Then the map $\mathrm{Det}:N[G]^\times  \rightarrow \mathrm{Det}(N[G]^\times)$ is an isomorphism. In particular, if $S$ is subring of $N$ then 
\begin{equation}
\mathrm{Det}(S[G]^\times) ^{\Delta}=(S[G]^\times)
^{\Delta}=S^{\Delta }[G]^\times =\mathrm{Det}(S^{\Delta }[G])^\times) .
\end{equation}
\end{thm}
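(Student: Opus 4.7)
The plan is to first establish that $\mathrm{Det}$ is injective, and hence an isomorphism onto its image, on $N[G]^{\times}$ when $G$ is abelian, and then to deduce the chain of equalities by a routine $\Delta$-equivariance argument.

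For the first step, I would observe that when $G$ is abelian every irreducible $K^{c}$-character $\chi'$ of $G$ has degree one, so $T_{\chi'}$ is simply a one-dimensional representation and $\mathrm{Det}(z)(\chi') = \chi'(z)$. The product of the $\chi'$ over all irreducible $K^{c}$-characters realises the canonical $K^{c}$-algebra decomposition $K^{c}[G] \cong \bigoplus_{\chi'} K^{c}$, which is injective when restricted to any $K$-subalgebra and in particular to $N[G]$. Hence $\mathrm{Det}$ is injective on $N[G]^{\times}$, and by definition surjects onto $\mathrm{Det}(N[G]^{\times})$. This is exactly the argument that produced (\ref{isab}) for $K$, applied verbatim with $N$ in place of $K$; no new ingredient is required.

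For the fixed-point consequence, I would exploit that the coefficient-wise action of $\Delta$ on $N[G]$ and the action on $\mathrm{Det}(N[G]^{\times})$ recalled just before Theorem~\ref{fix} make $\mathrm{Det}$ a $\Delta$-equivariant isomorphism on $N[G]^{\times}$. Its restriction to the subgroup $S[G]^{\times}$ is still a $\Delta$-equivariant isomorphism onto $\mathrm{Det}(S[G]^{\times})$, so taking $\Delta$-invariants yields the first equality $\mathrm{Det}(S[G]^{\times})^{\Delta} = (S[G]^{\times})^{\Delta}$. The middle equality $(S[G]^{\times})^{\Delta} = S^{\Delta}[G]^{\times}$ is tautological from the coefficient-wise description of the $\Delta$-action. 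The last equality $S^{\Delta}[G]^{\times} = \mathrm{Det}(S^{\Delta}[G]^{\times})$ is obtained by applying the isomorphism from the first step to the subring $S^{\Delta} \subseteq N$.

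There is no real obstacle. The only point that deserves attention is checking that the isomorphism established in the first step is $\Delta$-equivariant in the sense used in the fixed-point computation; this reduces to the observation that Galois conjugation of a one-dimensional character corresponds precisely to applying $\Delta$ coefficient-wise inside $N[G]$, which is immediate from $\chi'(\sum_{g} \delta(l_{g})\, g) = \sum_{g} \delta(l_{g})\chi'(g) = \delta\bigl(\chi'^{\delta^{-1}}(\sum_{g} l_{g} g)\bigr)$.
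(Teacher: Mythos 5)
Your proposal is correct and follows essentially the same route as the paper, which offers no proof beyond invoking the abelian-case isomorphism recorded at (\ref{isab}) and taking $\Delta$-invariants; your write-up just supplies the routine details (injectivity of $\mathrm{Det}$ from the degree-one characters, $\Delta$-equivariance, and the identification of fixed points). One small point of wording: a Galois $\Delta$-algebra $N$ need not embed in $K^{c}$ (it may be a product of fields), so instead of treating $N[G]$ as a $K$-subalgebra of $K^{c}[G]$ you should obtain the injectivity of the character map either componentwise over the field factors of $N$ or from the injection $N[G]\hookrightarrow (N\otimes_{K}K^{c})[G]\cong \prod_{\chi'}\left(N\otimes_{K}K^{c}\right)$, which yields the same conclusion.
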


We conclude this subsection by considering the determinants of unitary
groups. We state two theorems here; we then revisit the topic in much greater
detail in Section 5.

\begin{definition} For  $R$ either global or local we define the subgroup
of minus determinants of $\mathrm{Det}(A^{D\times })$ as:
\begin{equation}
\mathrm{Det} (A^{D\times})_{-}=\{\mathrm{Det}(x)
\in \mathrm{Det}(A^{D\times }) \mid  Det({x}
.\overline{x}) =1\}.\end{equation}
\end{definition}

Since $x\overline{x}=1$ for $x\in U(A^{D})$, it follows
immediately that 
\begin{equation}
\mathrm{Det}(U( A^{D})) \subset \mathrm{Det}(
A^{D\times})_{-}.
\end{equation}

In Section 5 we shall \textit{inter alia} show:

\begin{thm}\label{basieg}
\begin{enumerate}
\item If $K$ is a global field and if the group $G$ has odd order, then 
\begin{equation}
\mathrm{Det}(U( A_{K}^{D})) =\mathrm{Det}(
A_{K}^{D\times })_{-}.
\end{equation}

\item For $R$ local, if the group $G$ has odd order, then 
\begin{equation}
\mathrm{Det}(U( A^{D})) =\mathrm{Det}(
A^{D\times })_{-}.
\end{equation}
\end{enumerate}
\end{thm}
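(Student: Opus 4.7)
The inclusion $\mathrm{Det}(U(A^D)) \subseteq \mathrm{Det}(A^{D\times})_{-}$ is immediate from (22), so the substance of the theorem is the reverse inclusion. Given $x$ with $\mathrm{Det}(x\bar{x}) = 1$, the plan is to construct a unitary element $y$ with $\mathrm{Det}(y) = \mathrm{Det}(x)$ by working componentwise on the Wedderburn decomposition.

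The central observation is that, over $K^{c}$, the antipode-involution $S^{D}$ permutes the Wedderburn factors $M_{n_{\chi'}}(K^{c})$ according to $\chi' \leftrightarrow \overline{\chi'}$, where $\overline{\chi'}(g):=\chi'(g^{-1})$. A short calculation, using that $\rho_{\chi'}\circ S^{D}$ is conjugate via transpose to the contragredient of $\rho_{\chi'}$, yields the key identity
\[
\mathrm{Det}(\bar{z})(\chi') = \mathrm{Det}(z)(\overline{\chi'})
\]
for every $z \in A_{K}^{D\times}$ and every irreducible $K^{c}$-character $\chi'$. The odd-order hypothesis is decisive here: since in a group of odd order no non-identity element is self-inverse, the only real-valued irreducible character is the trivial one, so the trivial Wedderburn summand is fixed by $S^{D}$ (with the involution acting trivially on it), and every other summand is paired into a $S^{D}$-orbit of size two.

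Under this dictionary the hypothesis $\mathrm{Det}(x\bar{x}) = 1$ becomes the pointwise condition $\mathrm{Det}(x)(\chi') \cdot \mathrm{Det}(x)(\overline{\chi'}) = 1$ for every $\chi'$. I then build $y$ component by component. On the trivial summand we have $\mathrm{Det}(x)(1) = \pm 1$; taking $y_{1} = \mathrm{Det}(x)(1) \in R$ gives a tautologically unitary element with the correct determinant. For each paired orbit $\{\chi', \overline{\chi'}\}$, Hasse--Schilling--Maass supplies an element $y_{\chi'}$ in the $\chi'$-summand with $\det(y_{\chi'}) = \mathrm{Det}(x)(\chi')$; setting $y_{\overline{\chi'}} = \overline{y_{\chi'}}^{-1}$ and invoking the identity above gives $\det(y_{\overline{\chi'}}) = \mathrm{Det}(x)(\overline{\chi'})$, while the pair $(y_{\chi'}, y_{\overline{\chi'}})$ is visibly unitary for the swap involution. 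The resulting assembly is $\Omega_{K}$-stable, hence defined over $K$. For part (1), the crucial point is that groups of odd order admit no symplectic characters (any such would have to be self-dual), so the positivity constraints at real archimedean places in the global Hasse--Schilling--Maass theorem are vacuous, and we obtain $y \in U(A_{K}^{D})$ as required.

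The genuine obstacle arises in part (2): the element $y$ produced by the above pointwise Wedderburn construction lives a priori only in $K[G]$, whereas we need $y \in A^{D}$. The difficulty is that the central idempotents $e_{\chi'}$ typically do not lie in $A^{D}$, so the Wedderburn decomposition does not restrict to $A^{D}$. To address this I plan to combine three ingredients: first, the semi-locality of $A^{D}$ (valid since $R$ is local and $A^{D}$ is $R$-finite), which via the elementary-matrix reduction already used to establish (15) gives $\mathrm{Det}(GL_{n}(A^{D})) = \mathrm{Det}(A^{D\times})$ for every $n$, allowing us to work on $GL_{n}$ freely; second, the smoothness of the group scheme $U_{A^{D}}$ over $R$ (which holds because $2 \in R^{\times}$, automatic since $|G|$ is odd); and third, a Hensel-type lifting of a $K[G]$-level unitary preimage with prescribed determinant in $\mathrm{Det}(A^{D\times})$ into $U(A^{D})$. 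The explicit determinant calculations in Section 5, expressing determinants on the unitary group in terms of norms on the Wedderburn components, are precisely what make this integral lift feasible, and this is the step where the specific Hopf-order structure of $A^{D}$ enters essentially.
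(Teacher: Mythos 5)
Your argument for part (1) has a genuine gap at the involution-stable components of $K[G]$. It is true that for $G$ of odd order no nontrivial absolutely irreducible character is self-dual, so over $K^{c}$ the summands are paired by $\chi'\leftrightarrow\overline{\chi'}$; but the pair $\{\chi',\overline{\chi'}\}$ very often lies in a \emph{single} $\Omega_K$-orbit (already for $G$ cyclic of order $7$ and $K=\mathbb{Q}$, the unique nontrivial rational component is $\mathbb{Q}(\zeta_7)$, stable under the involution, which acts on it as complex conjugation). Over $K$ such a simple component $A_i$ is $c$-stable with a unitary involution of the second kind, exactly as in the decomposition (\ref{devis}) used in the paper. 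Your prescription $y_{\overline{\chi'}}=\overline{y_{\chi'}}^{-1}$, combined with the $\Omega_K$-equivariance needed to descend the assembled element to $K[G]$, forces $y_{\chi'}^{\omega}=\overline{y_{\chi'}}^{-1}$ whenever $\overline{\chi'}=\chi'^{\omega}$ — which is precisely the unitarity condition you are trying to manufacture, so the construction is circular on these components. The actual content of part (1) is the statement that on such a component every central element $d$ with $d\bar d=1$ is the reduced norm of a unitary element; this is Fr\"ohlich's theorem ([F84], Theorem 7, p.~104), which the paper does not reprove but simply invokes as Theorem \ref{fglob} (note $A_K^{D}=K[G]$). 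The trivial and swap components, which your argument does handle, are the easy part.

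For part (2) you offer a plan rather than a proof, and its pivotal step is unsupported: smoothness of $U_{A^D}$ plus Hensel lifting lets one lift points over the residue field, but it does not explain how to hit a \emph{prescribed} class of $\mathrm{Det}(A^{D\times})_{-}$ by the determinant of a unitary element, which is the whole difficulty. Moreover the parenthetical claim that $2\in R^{\times}$ is ``automatic since $|G|$ is odd'' is false: the residue characteristic of $R$ is unrelated to $|G|$, and the paper treats $p=2$ separately (there $A^{D}=R[G]$ is maximal and the equality is Taylor's result in [T89]). For $p$ odd the paper's route is quite different from yours: a filtration argument on $1+\mathcal{J}$ (with $\mathcal{J}$ the radical) showing $\mathrm{Det}(U(1+\mathcal{J}))=\mathrm{Det}(1+\mathcal{J})_{-}$ by extracting square roots in the odd-order abelian quotients $(1+\mathcal{J}^{n})/(1+\mathcal{J}^{n+1})$ (Proposition \ref{C1}, Corollary \ref{C2}); the analogous equality for the semisimple quotient $\widetilde{A}^{D\,ss}$ (Theorem \ref{ss}), which rests on the nontrivial fact that $\widetilde{A}^{D\,ss}$ has no symplectic components, proved in Section 4 via Brauer induction, the Frobenius-module formalism and the duality theorem for induction over Hopf orders (Theorems \ref{indu}, \ref{nosimp}, Corollary \ref{nonosimp}); and finally the three exact sequences of Theorem \ref{cle}, glued by the snake lemma using the surjectivity of $SL(\Lambda)\rightarrow SL(\widetilde{\Lambda}^{ss})$ (Lemma \ref{SLS}). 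Nothing in your outline substitutes for the no-symplectic-components input, which is precisely where the Hopf-order structure enters in the paper.
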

\subsection{ The locally free classgroup} 
Here we suppose that $R$ is the ring of integers $O_{K}$ of a number field $K$. Let $K_{0}(
A^{D})$ denote the Grothendieck group of finitely generated locally
free $A^{D}$-modules; as previously, $\mathrm{Cl}( A^{D})$
denotes the quotient of $K_{0}(A^{D})$ modulo the subgroup
generated by finitely generated free $A^{D}$-classes. This is a finite
abelian group and has a Fr\"{o}hlich description (see 52.17 in \cite{CR2}) 
\begin{equation}\label{cl}
\mathrm{Cl}(A^{D}) =\frac{\mathrm{Det}(\mathbb{A}_{K}[
G] ^{\times }) }{\mathrm{Det}(K[G] ^{\times
}) \cdot \mathrm{Det}(\prod_{\mathfrak{p}}A_{\mathfrak{
p}}^{D\times }) }
\end{equation}
where the product in the denominator extends over the maximal ideals of $R$
and where $\mathbb{A}_{K}$ is the restricted product $\prod_{\mathfrak{
p}}^{\prime }K_{\mathfrak{p}}$. 

{\bf Formation of classes.} Let $M$ be a locally free $A^{D}$-module of
rank $n$ and let $\phi _{\mathfrak{p}}:\oplus _{1}^{n}A_{\mathfrak{p}
}^{D}\cong M_{\mathfrak{p}}$ be isomorphisms of $A_{\mathfrak{p}}^{D}$
-modules for all $\mathfrak{p}\in \mathrm{Spec}(R)$ (which
therefore includes the prime ideal $( 0)$.  Let $\underline{e}=\{e_{1},\cdots ,e_{n}\}$ denote the standard $A^{D}$-basis of $\oplus
_{1}^{n}A^{D}$ and suppose that for each maximal $R$-ideal $\mathfrak{p}$ 
\begin{equation*}
\phi _{\mathfrak{p}}(\underline{e}) =\theta_{\mathfrak{p}}.\phi
_{0}(\underline{e}) \ \ \text{ with  }\theta _{\mathfrak{p}%
}\in GL_{n}(K_{\mathfrak{p}}[G]) .
\end{equation*}

Then the class of $M$ is represented under the above isomorphism  by 
\begin{equation}
\prod_{\mathfrak{p}}\mathrm{Det}(\theta _{\mathfrak{p}}) \in 
\mathrm{Det}(\mathbb{A}_{K}[G ]^{\times }) .
\end{equation}
{\bf Principal homogeneous spaces.} Since $A$ is a locally free rank one $A^{D}$-module, it follows from the defining isomorphism for a principal
homogeneous space $B$ of $A$ (see (\ref{phs})) that $B$ is locally free of rank one over $
A^{D}$.    We therefore have a map 
\begin{equation*}
\psi :\mathrm{PH}(A) \rightarrow \mathrm{Cl}(A^{D}) 
\end{equation*} from the set of isomorphism classes
of such principal homogeneous spaces, denoted $\mathrm{PH}(A)$, 
to $\mathrm{Cl}(A^{D})$ which maps the isomorphism class of $B$ to the $A^D$-class of $B$ minus the $A^D$-class of $A$. 

  From Waterhouse in \cite{w} we have:
\begin{thm}
If the group $G$ is abelian, then $\mathrm{PH}(A) $ is an
abelian group and the map $\psi$ is a group homomorphism.
\end{thm}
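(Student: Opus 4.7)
The plan is to prove the result in two steps, following Waterhouse: first, endow $\mathrm{PH}(A)$ with an abelian group structure via a contracted product of torsors; second, identify that operation with the tensor product over $A^D$ at the level of $A^D$-modules, which immediately yields the homomorphism property of $\psi$.

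For the first step, note that when $G$ is abelian the Hopf algebra $A$ is cocommutative, so $\mathcal{G}=\mathrm{Spec}(A)$ is a commutative affine flat $R$-group scheme, and dually $A^D$ is commutative. For $B_1,B_2\in\mathrm{PH}(A)$, define the contracted product $B_1*B_2$ as the fppf-sheaf quotient of $\mathrm{Spec}(B_1\otimes_R B_2)$ by the antidiagonal action of $\mathcal{G}$; equivalently, use the identification $\mathrm{PH}(A)=H^1_{fppf}(\mathrm{Spec}(R),\mathcal{G})$ and take the natural abelian group structure coming from the fact that $\mathcal{G}$ is commutative. Standard Hopf--Galois theory then ensures that $B_1*B_2$ is again a principal homogeneous space for $A$, that the operation is commutative and associative, that $[A]$ is the neutral element, and that inverses are provided by twisting the $A$-comodule structure via the antipode. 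Hence $\mathrm{PH}(A)$ is an abelian group.

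For the second step, since $A^D$ is commutative the locally free classgroup $\mathrm{Cl}(A^D)$ coincides with the Picard group $\mathrm{Pic}(A^D)$ of rank one locally free $A^D$-modules under $\otimes_{A^D}$. The key point is a canonical isomorphism of $A^D$-modules
\[
B_1*B_2\cong B_1\otimes_{A^D}B_2,
\]
obtained by sending the class of $b_1\otimes b_2$ in the contracted product to the simple tensor $b_1\otimes b_2$ on the right. The defining relations of the contracted product, which arise from the antidiagonal $\mathcal{G}$-action or equivalently, via the antipode, from a diagonal $A^D$-action, match exactly the defining relations of the tensor product over $A^D$. That both sides are locally free of rank one over $A^D$, and hence that the map is an isomorphism, can be checked locally after an fppf base change that trivializes both $B_i$. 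Applying $\psi$ then gives $\psi(B_1*B_2)=\psi(B_1)+\psi(B_2)$.

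The main obstacle lies in making the second step precise: verifying that $B_1*B_2\cong B_1\otimes_{A^D}B_2$ canonically as $A^D$-modules requires carefully matching the $A^D$-action on the contracted product with the standard action on the tensor product. This is the step where the commutativity of $A^D$, equivalently of $G$, enters decisively: in the non-commutative case the tensor product $B_1\otimes_{A^D}B_2$ does not carry a natural $A^D$-module structure at all, and simultaneously the contracted product of $\mathcal{G}$-torsors fails to be a $\mathcal{G}$-torsor. Both obstructions disappear together precisely under the hypothesis that $G$ is abelian.
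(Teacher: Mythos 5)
Your Step 1 is fine: since $G$ is abelian, $\mathcal{G}=\mathrm{Spec}(A)$ is commutative, $\mathrm{PH}(A)=H^1_{fppf}(\mathrm{Spec}(R),\mathcal{G})$ carries its usual abelian group structure, and this is realised by the contracted product of torsors. The gap is in Step 2, which is where all the content lies. First, the contracted product $B_1\ast B_2$ is the spectrum of the subring of antidiagonal $\mathcal{G}$-invariants of $B_1\otimes_R B_2$, not of a quotient ring; so "sending the class of $b_1\otimes b_2$ to the simple tensor $b_1\otimes b_2$" conflates points of the quotient scheme with elements of its coordinate ring and does not define a map of $A^D$-modules. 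If you replace it by the natural candidate, namely the composite $(B_1\otimes_R B_2)^{\mathcal{G}}\hookrightarrow B_1\otimes_R B_2\twoheadrightarrow B_1\otimes_{A^D}B_2$, it fails integrally: already for $B_1=B_2=A=\mathrm{Map}(G,R)$, $A^D=R[G]$, the invariant element $\sum_{x,y}h(xy)\,l_x\otimes l_y$ maps to $|G|$ times the corresponding element of $A\otimes_{A^D}A$, so the map is $|G|$ times an isomorphism and is not an isomorphism at the primes dividing $|G|$ — precisely the primes this paper cares about. Second, your verification strategy cannot close this gap: any two rank-one locally free $A^D$-modules become isomorphic after an fppf base change trivialising the torsors, so "checking locally" is vacuous unless you first exhibit a globally defined map; but the existence of a global isomorphism $B_1\ast B_2\cong B_1\otimes_{A^D}B_2$ is (given Step 1 and under the running Hypothesis that $A$ is $A^D$-free) equivalent to the statement $\psi(B_1\ast B_2)=\psi(B_1)+\psi(B_2)$ you are trying to prove. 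Note also that without that Hypothesis your claimed isomorphism is false as stated: taking $B_1=B_2=A$ it would force $A\cong A\otimes_{A^D}A$, i.e. $[A]=0$ in $\mathrm{Cl}(A^D)$, whereas in general one must compare $[B_1\ast B_2]-[A]$ with $([B_1]-[A])+([B_2]-[A])$.

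The paper does not prove this theorem directly: it is quoted from Waterhouse, and the Remark following it sketches the mechanism, which is the clean way to proceed. The morphism of commutative group schemes $\mathcal{G}\rightarrow U_{A^D}\rightarrow\pi_*(\mathbb{G}_{m,A^D})$ (with $\pi:\mathrm{Spec}(A^D)\rightarrow\mathrm{Spec}(R)$) induces $H^1(R,\mathcal{G})\rightarrow H^1(R,\pi_*(\mathbb{G}_{m,A^D}))\cong H^1(A^D,\mathbb{G}_{m})=\mathrm{Cl}(A^D)$, the last identification coming from the degenerate Leray spectral sequence and the exactness of $\pi_*$ for $\pi$ finite; being induced by a homomorphism of abelian sheaves, this map is automatically a group homomorphism, and one identifies it with $\psi$. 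If you want to salvage your route, you must build an honest global map before localising — for instance a map $B_1\otimes_{A^D}B_2\rightarrow(B_1\otimes_R B_2)^{\mathcal{G}}$ defined by the antidiagonal action of an integral of $A^D$ (in the split constant case this averaging map is an isomorphism, unlike the composite above) — but making this work for an arbitrary Hopf order, where the module of integrals is only locally free, is genuine work that your proposal does not carry out.
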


\noindent{\bf Remark.}  The construction of $\psi$ may also be deduced from the degeneration of the Leray spectral sequence associated to the morphism of sites  $\mathrm{Spec}(R)_{et}\rightarrow \mathrm{Spec}(A^D)_{et}$. Recall that $\mathcal{G}:=\mathrm{Spec}(A)$.  Since the group $G$   is abelian,  then  $A^D$ is a commutative Hopf $R$-algebra; we set $\pi: \mathrm{Spec}(A^D)\rightarrow \mathrm{Spec}(R)$.  We have a morphism of group schemes $\mathcal{G}\rightarrow \pi_*(\mathbb{G}_{m, A^D})$;  this is the morphism   $\rho: \mathcal{G}\rightarrow U_{A^D}$ (see Lemma \ref{ind}), followed   by $U_{A^D}\rightarrow \pi_*({\mathbb G}_{m, A^D})$. On  the group of points  this morphism   is given by composition of the group homomorphisms $$\mathcal{G}(S)\rightarrow U_{A^D}(S)\rightarrow (A^D\otimes_RS)^\times $$ for any commutative $R$-algebra $S$.  The morphism  $\mathcal{G}\rightarrow \pi_*(G_{m, A^D})$  induces a group homomorphism 
$H^1(R,  \mathcal{G})\rightarrow H^1(R,  \pi_*(\mathbb{G}_{m, A^D}))$. By using  the Leray spectral sequence  we obtain a group homomorphism $H^1(R, \pi_*(\mathbb{G}_{m, A^D}))\rightarrow H^1(A^D, \mathbb{G}_{m, A^D})$.  Since $\pi$ is finite the functor $\pi_{*}$ is exact and therefore  this is an isomorphism 
$$H^1(R, \pi_*(\mathbb{G}_{m, A^D}))\simeq H^1(A^D, \mathbb{G}_{m, A^D}).$$ 
Therefore  $\psi$ can be defined as the group  homomorphism: 
$$\mathrm{PH}(A)=H^1(R, \mathcal{G})\rightarrow H^1(R, \pi_*(\mathbb{G}_{m, A^D}))\simeq \mathrm{Cl}(A^D)$$ after identifying the groups 
$H^1(A^D, \mathbb{G}_{m, A^D})$ and $\mathrm{Cl}(A^D)$. 

We conclude with two results which provide some useful examples for
calculating such class invariants:

\begin{thm}
(See Theorem \ref{CPTA}  and \cite{CPT}.) If $A^{D}$ is the group ring $O_{K}[G] $, then the image of the class map $\psi :\mathrm{PH}(A)
\rightarrow \mathrm{Cl}(O_{K}[G]) $ is killed by
the exponent of $G^{ab}$. In particular, if $G$ is a perfect group, so
that $G^{ab}$ is trivial, then $\psi $ is the zero map.
\end{thm}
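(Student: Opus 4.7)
The plan is to reduce this directly to Theorem \ref{CPTA} by identifying the objects in the present framework with those of \cite{CPT}. When $A^D = O_K[G]$, the $R$-dual is $A = \mathrm{Map}(G,O_K)$, the maximal $O_K$-Hopf order in $\mathrm{Map}(G,K)$. A principal homogeneous space $B$ for $A$ is then precisely (the ring of integers of) an unramified $G$-Galois $O_K$-algebra: the defining isomorphism $B \otimes_R B \cong B \otimes_R A$ identifies $\mathrm{Spec}(B) \to \mathrm{Spec}(O_K)$ with an étale $\mathcal{G}$-torsor, and since $\mathcal{G} = \mathrm{Spec}(\mathrm{Map}(G,O_K))$ is the constant group scheme $\underline{G}$, this is exactly an unramified $G$-Galois algebra $N/K$ with $B = O_N$.

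Next I would unwind the class map $\psi$. By definition $\psi(B) = [B] - [A]$ in $\mathrm{Cl}(O_K[G])$, so I need to observe that $[A] = 0$. This is immediate because the standard basis $\{l_g\}_{g\in G}$ of $\mathrm{Map}(G, O_K)$ exhibits $A$ as a free rank-one $A^D = O_K[G]$-module: the $A^D$-action on $A$ is given by the comodule structure, and one checks that $l = l_{1_G}$ is a free generator (equivalently $\lambda = 1$ here since $\varepsilon(I(A)) = O_K$, so $\theta = l$ generates $A$ freely over $A^D$ by Proposition \ref{eg}). Consequently $\psi(B) = [O_N] \in \mathrm{Cl}(O_K[G])$, the usual class of the ring of integers of an unramified $G$-Galois algebra as a locally free $O_K[G]$-module.

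With this identification in place, the first assertion is a direct application of Theorem \ref{CPTA} (that is, the main theorem of \cite{CPT}), which states precisely that $[O_N]$ is annihilated by the exponent of $G^{ab}$. The second assertion is then immediate: if $G$ is perfect, $G^{ab}$ is trivial so its exponent is $1$, forcing $\psi(B) = 0$ for every $B \in \mathrm{PH}(A)$, i.e.\ $\psi$ is the zero map. The only non-trivial input here is Theorem \ref{CPTA} itself; the rest of the proof is a translation between the torsor/Hopf-order language and the Galois-theoretic language of \cite{CPT}, and the potential obstacle is simply making sure the identification $\mathrm{PH}(\mathrm{Map}(G,O_K)) \leftrightarrow \{\text{unramified } G\text{-Galois algebras}\}$ is carried out so that the two class maps agree.
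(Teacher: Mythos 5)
Your proposal is correct and matches the paper's treatment: the paper gives no independent argument here, simply invoking Theorem \ref{CPTA} (i.e.\ the main result of \cite{CPT}) after the standard identification of $\mathrm{PH}(\mathrm{Map}(G,O_K))$ with rings of integers of unramified $G$-Galois algebras, which is exactly your route. Your added checks (freeness of $A$ over $O_K[G]$ via $\theta=l$, so $\psi(B)=[O_N]$) are the same translation the paper leaves implicit.
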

From (\ref{cl}) we have:

\begin{thm}
If $G$ is an abelian group and if the Hopf order $A^{D}$ is the split
maximal order $\oplus _{\chi }O_{K} $  in $K[G]$,  then  
\begin{equation*}
\mathrm{Cl}(A^{D}) =\oplus _{\chi }\mathrm{Cl}(O_{K})
\end{equation*}
where\ the direct sum is over the abelian characters $\chi $ of $G.$
\end{thm}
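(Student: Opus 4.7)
\begin{f-proof}
My plan is to deduce the decomposition directly from the Fr\"ohlich adelic description (\ref{cl}), exploiting the hypothesis that $A^D$ is the split maximal order, which means it decomposes as an $O_K$-algebra as
\[
A^D \;=\; \bigoplus_{\chi} O_K\, e_\chi,
\]
where the $e_\chi$ are the primitive idempotents of $K[G]$ indexed by the abelian characters of $G$; the fact that these idempotents already lie in $A^D$ is precisely the split maximal hypothesis (together with the fact that $K[G]$ is semisimple and commutative since $G$ is abelian). Extending scalars gives $K[G] = \bigoplus_\chi K\,e_\chi$, and for every prime $\mathfrak{p}$ of $R=O_K$ we have $A^D_{\mathfrak{p}} = \bigoplus_\chi O_{K,\mathfrak{p}}\,e_\chi$ and $\mathbb{A}_K[G] = \bigoplus_\chi \mathbb{A}_K\,e_\chi$.

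Next I would use the isomorphism (\ref{isab}), which, since $G$ is abelian, says that $\mathrm{Det}$ identifies $S[G]^\times$ with its determinant image over any commutative ring $S$. Combined with the product decompositions above, this yields
\[
\mathrm{Det}(\mathbb{A}_K[G]^\times) = \bigoplus_\chi \mathbb{A}_K^\times, \quad \mathrm{Det}(K[G]^\times) = \bigoplus_\chi K^\times, \quad \mathrm{Det}\!\Bigl(\prod_{\mathfrak{p}} A_{\mathfrak{p}}^{D\times}\Bigr) = \bigoplus_\chi \prod_{\mathfrak{p}} O_{K,\mathfrak{p}}^\times.
\]
Substituting into (\ref{cl}) and observing that the quotient distributes over the direct sum indexed by $\chi$, I obtain
\[
\mathrm{Cl}(A^D) \;=\; \bigoplus_\chi \frac{\mathbb{A}_K^\times}{K^\times \cdot \prod_{\mathfrak{p}} O_{K,\mathfrak{p}}^\times} \;=\; \bigoplus_\chi \mathrm{Cl}(O_K),
\]
using the standard id\`elic description of the ideal class group of $O_K$.

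There is no real obstacle here: everything reduces to observing that the splitting of $A^D$ as a product of copies of $O_K$ is compatible with each piece of the adelic formula, and the commutativity of $G$ lets us bypass any subtlety in the determinant map. If one wished to avoid appealing to (\ref{cl}), an equally short alternative would be to decompose a locally free rank-one $A^D$-module $M$ via the idempotents as $M = \bigoplus_\chi e_\chi M$, check that each $e_\chi M$ is a locally free rank-one $O_K$-module, and read off the decomposition of $\mathrm{Cl}(A^D)$ from the resulting isomorphism of Grothendieck groups.
\end{f-proof}
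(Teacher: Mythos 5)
Your argument is correct and is exactly the route the paper takes: the theorem is stated there as an immediate consequence of the Fr\"ohlich adelic description (\ref{cl}), and your proof simply fills in the details by splitting every term of (\ref{cl}) along the idempotents $e_\chi\in A^D$, using (\ref{isab}) for the abelian determinant, and invoking the id\`elic description of $\mathrm{Cl}(O_K)$. Nothing further is needed.
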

\subsection{The unitary classgroup}
In Section 2.3 we have introduced  the  $R$-group scheme $U_{A^D}$  whose functor of points is defined on commutative algebras $S/R$ by: 
$$U_{A^D}(S)=\{u\in (A^D\otimes_RS)^\times \ \mid \ u\bar u=1\}.$$
We have set $U(A^D\otimes_RS)=U_{A^D}(S)$; in particular we consider $$U(\mathbb A^D_K)=U_{A^D}(\mathbb A_K),\   U(A^D_K)=U_{A^D}(K)\  \mathrm{and}\  U(A^D_{\frak p})=U_{A^D}(O_{K,\mathfrak{p}}).$$  We abbreviate $A^D\otimes_{O_{K }}\mathbb A_K$ by $A^D_{\mathbb A_K}$, $A^D\otimes_{O_{K }}K$ by $A^D_K$\  and $A^D\otimes_{O_{K }}
O_{K, \frak p }$ by $A^D_{\frak p}$. We note the equalities $A^D_{\mathbb A_K}= \mathbb{A}_K[G]$ and $A^D_K=K[G]$. 

The unitary classgroup $\mathrm{CU}(A^{D}) $ is defined as 

\begin{equation}\label{CU}
\mathrm{CU}(A^{D}) =\frac{\mathrm{Det}( U( \mathbb A_K[G]))}{\mathrm{Det}( U(K[G])) .\prod_{\mathfrak{p}}\mathrm{Det}(U( A_{\mathfrak{p}
}^{D})) }.
\end{equation}

Since $U( A^{D}\otimes _{R}S)$ is
a subgroup of $(A^{D}\otimes _{R}S) ^{\times }$ we have the
natural map
\begin{equation}
\xi:\mathrm{CU}(A^{D}) \rightarrow \mathrm{Cl}(
A^{D}) .
\end{equation}
{\bf Formation of classes and unitary classes.}
With the notation of the previous subsection we suppose further that $M$ is a locally free rank
one $A^{D}$-module which supports a non-degenerate ${G}$-form $q: M\times M\rightarrow R=O_{K}$\ and that for all $
\mathfrak{p}\in \mathrm{Spec}(O_{K}) $ we have isomorphisms of ${G}$-quadratic spaces 

$$j_{\mathfrak{p}}:(A_{\mathfrak{p}},Tr_{A_{
\mathfrak{p}}}^{\prime })\cong ( M_{\mathfrak{p}},q_{\mathfrak{p}}).$$ 
Then, for each maximal ideal $\mathfrak{p}$ of $
O_{K}, $ we have the automorphism $j_{0}^{-1}\circ j_{\mathfrak{p}}$ of the $G$-form $  (A_{K_{\mathfrak{
p}}},Tr_{A_{K\mathfrak{p}}}^{\prime })$ over $K_{\mathfrak{p}}$. It follows from Lemma \ref{aut} that 
\begin{equation*}
\theta _{\mathfrak{p}}=j_{0}^{-1}\circ j_{\mathfrak{p}}\in U(K_{\mathfrak{%
p}}[G])
\end{equation*}%
with almost all $\theta _{\mathfrak{p}}\in U(A_{\mathfrak{p}}^{D}).~$\ 

 The unitary class of $( M,q) $ in $\mathrm{CU%
}( A^{D})$,  denoted $\phi( M,q) ,$ is defined to be the class represented by  
$\prod_{\mathfrak{p}}$ $\mathrm{Det}( \theta _{\mathfrak{p}%
})$. 
\medskip

{\bf The relationship between locally free and unitary classes}. Let $
\mathrm{PH}^{\prime }(A) $ denote the subset of isomorphism classes of principal
homogeneous spaces which are locally isomorphic as ${G}$-quadratic
spaces to $( A,Tr_{A}^{\prime })$; that is to say the principal
homogeneous spaces $B$ of $A$ with the property that for all $\mathfrak{q}
\in \mathrm{Spec}(O_{K})$ we have isometries $j_{\mathfrak{q}
}:(A_{\mathfrak{q}},Tr_{A_\mathfrak{q}}^{\prime }):\cong (B_{\mathfrak{q
}},Tr_{B_\mathfrak{q}}^{\prime })$.  We then have a natural map of sets $
\ \phi :\mathrm{PH}^{\prime }(A) \rightarrow $ $\mathrm{CU}(A^{D}) \ $which maps $B~$in $\mathrm{PH}^{\prime }(
A) $ to the unitary class of $(B,Tr_{B}^{\prime })$  in $\mathrm{CU}( A^{D})$. 

We can then draw together the map $\phi$,  the map $\mathfrak{\xi }$
and the map $\psi$ in the commutative triangle of pointed sets:
\begin{equation*}
\begin{array}{ccc}
\mathrm{PH}^{\prime }(A) & \overset{\phi }{\rightarrow } & 
\mathrm{CU}(A^{D}) \\ 
& \searrow \psi & \downarrow \xi  \\ 
&  & \mathrm{Cl}(A^{D}).
\end{array}
\end{equation*}

{\bf Higher ranks.}
 We shall mainly be interested in the case where $M$
has $A^{D}$-rank one. However, there are some places  where the locally free $A^{D}$-modules we consider can have 
rank $m$ greater than $1$.  We therefore note here how the above
construction generalizes to the case of higher rank. So now we suppose that 
$M$ supports a non-degenerate $G$-invariant form $h:M\times M\rightarrow R
$ and that for all $\mathfrak{p}\in \mathrm{Spec}(O_{K}) $ we
have isomorphisms of $G$-quadratic spaces $j_{\mathfrak{p}}: (A_{\mathfrak{p}},Tr_{A_{\mathfrak{p}}}^{\prime })^{\perp{m}}\cong
(M_{\mathfrak{p}},h_{\mathfrak{p}})$.   Then for each
maximal ideal $\mathfrak{p}$ of $O_{K}$ we have
\begin{equation*}
\theta _{\mathfrak{p}}=j_{0}^{-1}\circ j_{\mathfrak{p}}\in \mathrm{Aut}((A_{K_{\mathfrak{p}}},Tr_{A_{K\mathfrak{p}}}^{\prime
})^{\perp{m}})
\end{equation*}
with almost all  $\theta _{\mathfrak{p}}\in \mathrm{Aut}((A_{
\mathfrak{p}},Tr_{A_{\mathfrak{p}}}^{\prime })^{\perp{m}})$. 

For any commutative algebra $S/R$ the involution $x\rightarrow \bar x$ of $A^D$ induces an involution  on $A^D\otimes_RS$ and so yields  an involution   on $M_m(A^D\otimes_RS)$  defined by 
$$(c_{r,s})\rightarrow (\bar c_{s,r})$$ where $c_{r, s}$ is the $r,s$ entry of a matrix $C$. We refer to this involution as the extended involution. For the sake of simplicity we use the notation  $\sigma$ for the involution $x\rightarrow \bar x$ on $A^D$ and for its various extensions.  We set 
$$U(M_m(A^D\otimes_RS))=\{C\in GL_m(A^D\otimes_RS)\ \arrowvert\ C\sigma( C)=I_m\}. $$ 

Lemma \ref{aut} generalizes in higher dimensions and provides us with a group isomorphism 
\begin{equation}\label {h_0} \mathrm{Aut}((A\otimes_R S
,Tr_{A\otimes_R S}^{\prime })^{\perp{m}})\simeq U(M_m(A^D\otimes_RS)). \end{equation} We identify these groups. As for $m=1$, the functor $S\rightarrow U(M_m(A^D\otimes_RS))$ is the functor of points of a finitely presented affine group scheme  over $R$ that we denote by $U_{m, A^D}$. We let $U_{m,G}$ be its generic fiber. 

When the group $G$ is of odd order, it has no non-trivial absolutely  irreducible orthogonal or symplectic representations . Therefore,  the decomposition of   $K[G]$  into a product of simple algebras leads to the decomposition of $(K[G], \sigma)$  into a product of indecomposable algebras with involution
 which can be written  
\begin{equation}\label{c1}K[G]= K\times \prod_{i\in I}A_i\prod_{j\in J}(A_j\times   A_j^{\mathrm {op}})\end{equation}
where we denote by $\{A_i \}_{i\in I}$ the simple components of $K[G]$, different from $K$,  stable by $\sigma$ and by  $\{B_j:=A_j\times A_j^{\mathrm {op}} \}_{j\in J}$ the products of simple components of $K[G]$ interchanged by $\sigma$.  From any integer $m\geq 1$,  the decomposition  (\ref{c1}) yields to a similar decomposition of $M_m(K[G])$: 
\begin{equation}\label{c2}
M_m(K)\times \prod_{i\in I}M_m(A_i)\prod_{j\in J}(M_m(A_j)\times   M_m(A_j)^{\mathrm {op}})\end{equation}
(using  the isomorphism of algebras $M_m(A_j^{\mathrm {op}})\rightarrow M_m(A_j)^{\mathrm {op}}$ given by $X\rightarrow {^t}X$).
 The extension of the identity on $K$ induces the transposition $X\rightarrow {^t}X$ on $M_m(K)$. For $i\in I$ and $j\in J$ we set $A_{m,  i}=M_m(A_i), A_{m,j}=M_m(A_j)$  and $ B_{m,j}=A_{m,j} \times A_{m, j}^{op}$ and we  denote by $\sigma_i$ and $\sigma_j$ the extended involutions to $A_{m, i}$ and $B_{m, j}$. We note that the involutions $\sigma_i$  and $\sigma_j$ are of second kind (see \cite {KMRT} Section 2) and we refer to  
 $\{A_{m,i}, \sigma_i\}_{i\in I}$ (resp. $\{B_{m,j}, \sigma_{j}\}_{j\in J}$) as the  unitary (resp. split unitary) components of $(M_m(K[G]), \sigma)$. This  decomposition induces a decomposition of $U_{m,G}$ into a product of algebraic groups that we use in Section 6.1. 
\vskip 0.1 truecm

In order to attach to $(M, h)$  an element of $CU(A^D)$ we use the lemma:
\begin{lem}\label{detred} The following equalities hold:
\begin{enumerate}
\item  $\mathrm{Det}(U(M_m(A^D_{
\mathfrak{p}}))=\mathrm{Det}(U(A^D_{
\mathfrak{p}}))$.
\item $\mathrm{Det}(U(M_m(A^D_{
K}))=\mathrm{Det}(U(A^D_{K}))$.
\end{enumerate}
\end{lem}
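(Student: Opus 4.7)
My plan is to reduce both parts of the lemma to the corresponding parts of Theorem~\ref{basieg}. In either setting the inclusion $\mathrm{Det}(U(A^D)) \subseteq \mathrm{Det}(U(M_m(A^D)))$ is immediate, via $u \mapsto \mathrm{diag}(u,1,\dots,1)$, which embeds $U(A^D)$ into $U(M_m(A^D))$ without changing determinants.

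For the reverse inclusion in part~(1), I would start from $X \in U(M_m(A^D_{\mathfrak{p}}))$. Since $\mathrm{Det}$ is a group homomorphism, the unitarity relation $X\sigma(X)=I_m$ yields $\mathrm{Det}(X)\cdot \mathrm{Det}(\sigma(X))=1$. Since $A^D_{\mathfrak{p}}$ is semi-local, equation~(\ref{eqn}) furnishes an $x \in A^{D\times}_{\mathfrak{p}}$ with $\mathrm{Det}(x)=\mathrm{Det}(X)$. This element will satisfy $\mathrm{Det}(x\sigma(x))=1$, hence $\mathrm{Det}(x) \in \mathrm{Det}(A^{D\times}_{\mathfrak{p}})_{-}$, and Theorem~\ref{basieg}(2) identifies this last set with $\mathrm{Det}(U(A^D_{\mathfrak{p}}))$, producing $u \in U(A^D_{\mathfrak{p}})$ with $\mathrm{Det}(u)=\mathrm{Det}(X)$. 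Part~(2) runs identically, with the Hasse--Schilling--Mass theorem replacing~(\ref{eqn}) to supply $x \in K[G]^\times$ with $\mathrm{Det}(x)=\mathrm{Det}(X)$, and Theorem~\ref{basieg}(1) taking the role of its part~(2); both substitutions rely on $G$ having odd order.

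The step I expect to be the main obstacle is justifying the passage from $\mathrm{Det}(X\sigma(X))=1$ to $\mathrm{Det}(x\sigma(x))=1$: this is not automatic from $\mathrm{Det}(x)=\mathrm{Det}(X)$ alone, one must also know that $\sigma$ descends to a well-defined involution on the image of $\mathrm{Det}$, so that $\mathrm{Det}(\sigma(x))=\mathrm{Det}(\sigma(X))$. I would establish this through the character-theoretic description of $\mathrm{Det}$: if $T_{\chi'}$ is the irreducible $K^c$-representation with character $\chi'$, and $(\chi')^\ast$ is its contragredient, then the computation $T_{\chi'}(\sigma(z))^t = T_{(\chi')^\ast}(z)$ gives $\det T_{\chi'}(\sigma(z))=\det T_{(\chi')^\ast}(z)$ for every $z$, so $\sigma$ acts on $\oplus_{\chi'}K^{c\times}$ simply by the permutation $\chi'\mapsto (\chi')^\ast$. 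This action evidently preserves $\mathrm{Det}(GL_m(A^D))=\mathrm{Det}(A^{D\times})$, after which the relation $\mathrm{Det}(x\sigma(x))=1$ follows and the rest of the argument is formal.
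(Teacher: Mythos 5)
Your proof is correct and takes essentially the same route as the paper: one inclusion comes for free, and the other reduces $GL_m$-determinants to $GL_1$-determinants via the semi-local argument behind (\ref{eqn}) (resp.\ the Hasse--Schilling--Maass description over $K$), after which Theorem \ref{basieg} converts minus determinants into determinants of unitary elements. Your explicit verification that the involution acts on $\mathrm{Det}$ through $\chi'\mapsto\overline{\chi'}$, so that the minus condition depends only on the determinant value, is a point the paper leaves implicit but is the same argument.
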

\begin{proof}  We prove the first statement;  the proof of the second one is similar. One observes that the arguments  used 
in the proof of (\ref{eqn}) hold for any $R$-Hopf order in $K[G]$
 and so that we have 
 $$\mathrm{Det}(\mathrm{GL}_m(A^D_{\mathfrak{p}}))=\mathrm{Det}(A^{D \times}_{\mathfrak{p}}).$$ 
 Therefore  

\begin{equation}\label{h1}
\mathrm{Det}(U(M_m(A^D_{
\mathfrak{p}}))\subset \mathrm{Det}(\mathrm{GL}_m(A^D_{\mathfrak{p}}))_-= \mathrm{Det}(A^{D \times}_{\mathfrak{p}})_-.\end{equation}
Using Theorem \ref{basieg} we obtain the chain of inclusions 
\begin{equation}\label{h2}\mathrm{Det}(A^{D \times}_{\mathfrak{p}})_-=\mathrm{Det}(U(A^D_{\mathfrak{p}}))\subset \mathrm{Det}(U(M_m(A^D_{
\mathfrak{p}})).\end{equation}
The required equality follows from (\ref{h1}) and (\ref{h2}).

\end{proof}

It follows from  (\ref{h_0}) and Lemma \ref{detred}  that for any $\mathfrak{p}$, then $\mathrm{Det}(\theta_{\frak{p}})$ belongs to 
$$ \mathrm{Det}(U(M_m(K_{
\mathfrak{p}}[G])))= \mathrm{Det}(U(K_{\frak{p}}[G])$$ 
with almost all $\mathrm{Det}(\theta_{\frak{p}})\in  \mathrm{Det}(U(M_m(A^D_{
\mathfrak{p}})))=\mathrm{Det}(U(A^D_{\frak{p}}))$.

We define  the unitary 
class of $(M,h)$ as the element of  $\mathrm{CU}(A^{D}) $ represented 
 by 
\begin{equation*}
\prod_{\mathfrak{p}}\mathrm{Det}(\theta _{\mathfrak{p}
}) \in \mathrm{Det}(U(\mathbb{A}_K[G])) .
\end{equation*}

{\bf Orthogonal sums.}
 Suppose we are given $G$-forms 
 $(M_{1},h_1)$ and $(M_{2},h_{2})$ with local isomorphisms of $G$-quadratic spaces 
\begin{equation*}
j_{\mathfrak{p}}^{( 1) }:(A_{\mathfrak{p}%
},Tr'_{A_{\mathfrak{p}}})^{\perp{n_1}}\cong (M_{1,\mathfrak{p}%
},h_{1,\mathfrak{p}}) \ \text{and\ }j_{\mathfrak{p}}^{(2)
}:(A_{\mathfrak{p}%
},Tr'_{A_{\mathfrak{p}}})^{\perp{n_2}}\cong ( M_{2,\mathfrak{p}},h_{2,\mathfrak{p}})
\end{equation*}
with the unitary classes of $\left( M_{1},h_{1}\right) $ resp. $(
M_{2},h_{2}) $ represented by $\prod_{\mathfrak{p}}\mathrm{
Det}(\theta _{\mathfrak{p}}^{(1)}) $ resp. $\prod_{
\mathfrak{p}}\mathrm{Det}( \theta _{\mathfrak{p}}^{(2)})$.  Then we
have local isometries on the orthogonal sum $( M_{1},h_{1}) \bot
( M_{2},h_{2}) $ 
\begin{equation*}
j_{\mathfrak{p}}^{( 1) }\bot j_{\mathfrak{p}}^{( 2)
}:(A_{\mathfrak{p}%
},Tr'_{A_{\mathfrak{p}}})^{\perp{n_1}}\perp(A_{\mathfrak{p}%
},Tr'_{A_{\mathfrak{p}}})^{\perp{n_2}}\cong ( M_{1,\mathfrak{p}},h_{1,\mathfrak{p}}) \bot ( M_{2,
\mathfrak{p}},h_{2,\mathfrak{p}}) \ \ 
\end{equation*}%
and so the unitary class of $( M_{1},h_{1}) \bot (
M_{2},h_{2}) $ is seen to be represented by $\prod_{
\mathfrak{p}}\mathrm{Det}( \theta _{\mathfrak{p}}^{(1)}) .\mathrm{
Det}( \theta _{\mathfrak{p}}^{(2)}) .$

\section{The local case}
In this section our hypotheses are slightly more general than in the rest of the paper. We consider a  finite and flat group scheme $\mathcal{G}:=\mathrm{Spec}(A)$ over a ring $R$. We let $A^D$ be the $R$-linear dual of $A$,  endowed with its structure of $R$-Hopf algebra   
and  we denote by $S^D$ the antipode of $A^D$.   We let $U_{A^D}$ be the group scheme  over $R$,  given by its functor of points on commutative $R$-algebras $S$: 
$$S\rightarrow U_{A^D}(S)=\{x\in (S\otimes_R A^D)^\times\/ \mid xS^D(x)=1\}, $$ (see Section 2.3).  If $2$ is invertible in $R$ then this is the  {\it unitary  group scheme} associated with the $R$-algebra with involution $(A^D, S^D)$. By Lemma \ref{ind} we know that the natural morphism of group schemes $\mathcal{G}\rightarrow U_{A^D}$ induces a map of pointed sets  
$$u: H^1(R, \mathcal{G})\rightarrow H^1(R,  U_{A^D}). $$  When  $R$ is a field $K$ of characteristic different from $2$ and $\mathcal {G}$ is  the  constant group scheme attached to  a finite  group $G$, then the algebra $A^D$ is the group algebra $K[G]$ and $S^D$ is the involution of $K[G]$ defined by $g\rightarrow g^{-1}$   on  the elements of $G$.  Since $\mathcal{G}$ and $U_{A^D}$ are both smooth group schemes over $K$, the map $u$ can be understood as a  map 
$$H^1(G_K,  G)\rightarrow H^1(G_K, U_{K[G]}(K^s))$$
between non-abelian cohomology sets,  where $G_K$ acts trivially (resp. by Galois action) on $G$ (resp.  $U_{K[G]}(K^s)$). The elements of $H^1(G_K,  G)$ correspond to the isomorphism classes of $G$-Galois algebras  over $K$,  while those of $H^1(G_K, U_{K[G]}(K^s))$ correspond to the isomorphism classes of $G$-quadratic forms,  isomorphic to the unit $G$-form after scalar extension by $K^s$. The map $u$ associates to any $G$-Galois algebra $L$ over $K$  the quadratic space $(L, Tr_L)$,  where we let $Tr_L$ be  trace form on $Tr_{L/K}$.  Bayer and Lenstra have proved in \cite{Bayer90} that if the group $G$ is of odd order any $G$-Galois algebra has a self-dual normal basis. This result  is equivalent to the triviality of the map $u$ in this case. Our aim is to prove an "integral version" of this theorem in a local set-up where we replace $K$ by a local Henselian ring $R$ and $G$ by a finite and flat group scheme $\mathcal{G}$. 
\begin{thm}\label{main}  Let $R$ be a henselian local ring with perfect  residue field $k$  of  positive characteristic and let $\mathcal G:=\mathrm{Spec}(A)$ be a finite  and flat group scheme over $R$. We assume that $\mathcal{G}$ is of odd order. Then the natural map of pointed sets
 
$$H^1(R, \mathcal{G})\rightarrow H^1(R, U_{A^D})$$
is trivial in the following cases: 
\begin{enumerate}
\item The characteristic of $k$ is odd.
\item The characteristic of $k$ is $2$ , $R$ is an integral   domain and $\mathcal{G}$ is generically constant.
\end{enumerate}  

\end{thm}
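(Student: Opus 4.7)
The aim is to prove that for every principal homogeneous space $B$ of $\mathcal{G}$ there is an isometry of $G$-quadratic spaces $(B,Tr'_B)\cong(A,Tr'_A)$; by Proposition \ref{local} this is equivalent to $u([B])=[B,Tr'_B]$ being trivial. Setting
\[
T:=\mathrm{Isom}\bigl((A,Tr'_A),(B,Tr'_B)\bigr),
\]
which by Lemma \ref{aut} is a right torsor under $U_{A^D}$, the task reduces to exhibiting an $R$-point of $T$.

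In case (1), where $\mathrm{char}(k)$ is odd, $|\mathcal{G}|$ is a unit in $R$, so $\mathcal{G}$ is étale over $R$; and $2$ is a unit in $R$, so $U_{A^D}$, and therefore $T$, is smooth. Because $R$ is henselian, the reduction map $T(R)\to T(k)$ is then surjective, so it suffices to produce an isometry over $k$. Since $k$ is perfect and $\mathcal{G}_k$ is étale, $\mathcal{G}_{k^s}$ is the constant group scheme of a finite group $\Gamma$ of odd order, and by étale descent $B_k$ corresponds to a $\Gamma$-Galois $k$-algebra. The Bayer--Lenstra theorem recalled above then furnishes a self-dual normal basis, i.e.\ an isometry $(A_k,Tr'_{A_k})\cong(B_k,Tr'_{B_k})$, which lifts to the required $R$-point of $T$.

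Case (2) is more delicate, since in residue characteristic $2$ the scheme $U_{A^D}$ can fail to be smooth and the henselian-lift strategy collapses. My plan is instead to work at the generic fibre. Let $K=\mathrm{Frac}(R)$; since $\mathcal{G}_K$ is constant of group $G$ of odd order, $B_K$ is a $G$-Galois $K$-algebra. Either the Bayer--Lenstra theorem (when $\mathrm{char}(K)=0$) or Serre's Theorem \ref{Ser2} (when $\mathrm{char}(K)=2$, applicable since an odd-order group is trivially generated by odd-order elements) then yields a self-dual normal basis, hence a $K$-point of $T$.

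The main obstacle is the final step: extending this generic section of the torsor $T$ to an $R$-section. This is exactly what the appendix by Tossici is designed for; it generalizes the Lang--Steinberg theorem to connected algebraic group schemes and, applied to the $U_{A^D}$-torsor $T$ together with its generic section, forces $T$ to be trivial over $R$. This concludes case (2).
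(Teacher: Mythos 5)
Your case (1) rests on a false inference: from ``$\mathrm{char}(k)$ odd and $|\mathcal{G}|$ odd'' you conclude that $|\mathcal{G}|$ is a unit in $R$, hence that $\mathcal{G}$ is \'etale. But the odd order of $\mathcal{G}$ may well be divisible by the odd residue characteristic $p$ (e.g.\ $\mu_p$ or $\alpha_p$ over $\mathbb{Z}_p$, or any Hopf order with non-\'etale reduction), so $\mathcal{G}_k$ can have a nontrivial connected part and $B_k$ need not come from a $\Gamma$-Galois algebra at all. Your first step -- smoothness of $U_{A^D}$ when $2\in R^\times$, hence injectivity of $H^1(R,U_{A^D})\to H^1(k,U_{A^D_k})$ by henselianity, reducing everything to the special fibre -- is exactly how the paper begins, but the remaining reduction cannot be a direct appeal to Bayer--Lenstra. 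The paper has to (i) prove that $H^1(k,\mathcal{G}_k)\to H^1(k,\mathcal{G}_k^{et})$ is a bijection (Lemma \ref{dijano}), which uses the vanishing $H^1(k,\mathcal{G}_k^0)=0$ for finite connected group schemes (the appendix, Theorem \ref{SLa}) together with a twisting argument for injectivity, and (ii) treat the \'etale but \emph{non-constant} case (Lemma \ref{et0} and Lemma \ref{etbis}): even there Bayer--Lenstra does not apply verbatim, and one needs the odd-degree descent, i.e.\ injectivity of $H^1(k,U_{A^D})\to H^1(L_{i_0},U_{A^D})$ for an odd-degree factor $L_{i_0}$ of $B_k$, from \cite{Bayer90}. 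All of this is missing from your argument.

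Your case (2) has a more serious gap at the step you yourself flag as the main obstacle. The appendix does not extend generic sections of torsors over $R$: Theorem \ref{SLa} is a vanishing statement $H^1(k,G)=0$ for \emph{connected} group schemes over a \emph{perfect field}, and it is used in the paper only to kill $H^1(k,\mathcal{G}_k^0)$. Producing a $K$-point of $T$ only shows that the class of $(B,Tr'_B)$ dies in $H^1(K,U_{A^D_K})$; to conclude triviality over $R$ you would need injectivity of $H^1(R,U_{A^D})\to H^1(K,U_{A^D_K})$, which is a genuinely nontrivial input -- $U_{A^D}$ is affine (so no properness argument), and in residue characteristic $2$ it is neither smooth nor connected (it contains a $\mu_2$ factor). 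The paper's own Proposition 3.8 shows that such injectivity is only available under extra hypotheses ($2\in R^\times$, hereditary order, via \cite{Bayer17}), precisely not the situation of case (2). The paper instead argues through the special fibre: it uses $\mathcal{G}$ constant (so $A^D=R[G]$ maximal), Serre's decomposition $U_{G,R}=U'_{G,R}\times\mu_2$ with $U'_{G,R}$ smooth, triviality of $G\to\mu_2$ by odd order, injectivity of $H^1(R,U'_{G,R})\to H^1(k,U'_{G,k})$ by henselianity, the factorization of $G\to U'_{G,k}$ through the connected component (every element of odd order is a square, \cite{Serre14}), and Serre's characteristic-$2$ vanishing theorem $H^1(k,U'^0_{G,k})=0$. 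Without a substitute for the injectivity into the generic fibre, your route does not close.
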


\begin{cor}\label{locod}Let $R$ be a henselian local ring with perfect  residue field $k$  of  positive characteristic and let $\mathcal G:=\mathrm{Spec}(A)$ be a finite  and flat group scheme over $R$. We assume that $R$ is an integral domain and that $\mathcal{G}$ is of odd order and generically constant. Then,  for any principal homogeneous space $B$ for $A$, there exists an isomorphism of $G$-forms 
$$(B, Tr'_B)\simeq (A, Tr'_A).$$
\end{cor}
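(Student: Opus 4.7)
The plan is to deduce the corollary essentially as a direct consequence of Theorem \ref{main} combined with the cohomological interpretation of the pair $([B],[B,Tr'_B])$ established in Proposition \ref{local}. Indeed, once we know that the map of pointed sets
\[
u: H^{1}(R,\mathcal{G})\longrightarrow H^{1}(R,U_{A^{D}})
\]
is trivial, the conclusion will follow because $H^{1}(R,U_{A^{D}})$ classifies twisted forms of $(A,Tr'_{A})$ and the distinguished base point corresponds precisely to the isomorphism class of $(A,Tr'_{A})$.

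First, I would invoke Proposition \ref{local}, which shows that for any principal homogeneous space $B$ of $A$, the class $[B,Tr'_{B}]$ in $H^{1}(R,U_{A^{D}})$ is exactly the image under $u$ of the class $[B]\in H^{1}(R,\mathcal{G})\cong \mathrm{PH}(A)$. Thus it suffices to verify that $u([B])$ is the trivial class.

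Next, I would split according to the characteristic of the residue field $k$. If $\mathrm{char}(k)$ is odd, the triviality of $u$ follows directly from part (1) of Theorem \ref{main}, since $\mathcal{G}$ has odd order by assumption. If $\mathrm{char}(k)=2$, then the remaining hypotheses of Theorem \ref{main} (2) are exactly the hypotheses of the corollary: $R$ is an integral domain and $\mathcal{G}$ is generically constant; thus the map $u$ is again trivial in this case. Either way, $u([B])=[A,Tr'_{A}]$, so $[B,Tr'_{B}]=[A,Tr'_{A}]$ in $H^{1}(R,U_{A^{D}})$, which gives the required isomorphism of $G$-forms $(B,Tr'_{B})\simeq (A,Tr'_{A})$.

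There is essentially no obstacle at the level of the corollary: all of the difficulty has been absorbed into Theorem \ref{main}, whose case (2) is the delicate one and rests on the appendix's extension of the Lang--Steinberg theorem to connected algebraic group schemes. The only thing worth double-checking is that the two cases of Theorem \ref{main} together cover every residue characteristic allowed by the corollary (namely every positive characteristic), which they do.
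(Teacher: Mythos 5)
Your proposal is correct and follows exactly the paper's own route: the paper proves the corollary by combining Theorem \ref{main} with Proposition \ref{local}, which is precisely your argument, merely spelled out with the case split on the residue characteristic made explicit. Nothing further is needed.
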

\begin{proof} The corollary follows from Theorem \ref{main}  and Proposition  \ref{local}
\end{proof}

\begin{rem} Under the hypotheses of the corollary we have seen that the $G$-forms $(A, Tr'_A)$ and the unit form $(A^D, \kappa_{A^D})$ are isomorphic (see Remark 1.12). Therefore,  for any principal homogeneous space for $B$ for $A$,   the $G$-form $(B, Tr'_B)$ is isomorphic to the unit form. 
\end{rem}

\begin{proof} [Proof of Theorem \ref{main}]
We let $\mathcal{G}_k:=\mathrm{Spec}(A_k) $ be the special fiber of $\mathcal{G}$.  We consider the commutative square of pointed sets. 
 \[\xymatrix{
H^1(R , \mathcal{G})\ar[r]^{u}\ar[d]_{t}&H^1(R, U_{A^D})\ar[d]_{s} \\
\textsc H^1(k, \mathcal{G}_k)\ar[r]^{u_k}&H^1(k, U_{A_k^D}). \\
}\] 

 We start by proving part (1) of Theorem  \ref{main} in the special case where $k$ is finite; we shall complete the proof for a perfect field $k$ after the proof of Lemma \ref{et0}. So now $2$ is invertible in both $k$ and  $R$. The group scheme  $U_{A^D}$ is smooth  and   of finite type  \cite{Bayer16} Appendix A and so, since $R$ is henselian,  the map $s$ is injective. Therefore in order to prove Theorem \ref{main} (1) it suffices to prove that $u_k$ is trivial. 

First we note that if  the group   $\mathcal{G}$ is commutative  and $k$ is finite then the result can  be easily deduced from \cite{Bayer94} Proposition 2.3.2.  Since  the groups involved are commutative,    the morphisms of pointed sets are group homomorphisms. If  $\mathcal{G}_k$ is a  commutative group scheme of order $n$,  the group   $H^1(k, \mathcal{G}_k)$ is annihilated by $n$. We let   $U^0_{A_k^D}$ be   the neutral component of   $U_{A_k^D}$. By  \cite{Bayer94} Proposition 2.3.2   we know that $U_{A_k^D}/U^0_{A_k^D} $ is an $2$-elementary abelian group and that $H^1(k, U_{A_k^D})$ injects into  $H^1(k, U_{A_k^D}/U^0_{A_k^D})$ if $k$ is a finite field. Therefore the group  $H^1(k, U_{A_k^D})$ injects into  a group annihilated by $2$.  Since  $n$ is odd,  we conclude that $u_k$ must be trivial.  

We now return to  the general case.  We consider the connected-\'etale exact sequence over $k$
\[\xymatrix{1\ar[r]^{}&\mathcal{G}_k^0\ar[r]^{a}&\mathcal{G}_k\ar[r]^{b}&\mathcal{G}_k^{et}\ar[r]^{}& 1.\\
}\]
To this sequence there is associated an exact sequence of pointed sets 
\[\xymatrix{\ar[r]^{}&H^1(k, \mathcal{G}_k^0)\ar[r]^{\tilde a}&H^1(k,\mathcal{G}_k)\ar[r]^{\tilde b}&H^1(k, \mathcal{G}_k^{et})\ar[r]^{}& \\
}\]

\begin{lem}\label{dijano} The map  of pointed sets  $\tilde b: H^1(k,\mathcal{G}_k)\rightarrow H^1(k, \mathcal{G}_k^{et})$  is a bijection. 
\end{lem}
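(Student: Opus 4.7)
The strategy is to combine the canonical splitting of the connected-\'etale sequence over a perfect field with a vanishing theorem for the $H^1$ of a connected finite flat group scheme. Since $k$ is perfect, the subscheme $(\mathcal{G}_k)_{\mathrm{red}}$ is a closed subgroup scheme of $\mathcal{G}_k$ (perfection guarantees that the product of reduced schemes remains reduced, so the multiplication restricts to $(\mathcal{G}_k)_{\mathrm{red}}$), and the composition $(\mathcal{G}_k)_{\mathrm{red}} \hookrightarrow \mathcal{G}_k \overset{b}{\twoheadrightarrow} \mathcal{G}_k^{et}$ is an isomorphism of group schemes. This furnishes a section $\sigma: \mathcal{G}_k^{et} \to \mathcal{G}_k$ of $b$, identifying $\mathcal{G}_k$ with the semi-direct product $\mathcal{G}_k^0 \rtimes \mathcal{G}_k^{et}$. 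Passing to $H^1$, the induced map $\tilde\sigma$ is a section of $\tilde b$, so $\tilde b$ is surjective.

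For the injectivity, I would use the standard twisting technique for non-abelian fppf cohomology. Given $\xi_1, \xi_2 \in H^1(k, \mathcal{G}_k)$ with $\tilde b(\xi_1) = \tilde b(\xi_2) = \eta$, choose a $\mathcal{G}_k^{et}$-torsor $P$ representing $\eta$. Twisting the connected-\'etale sequence by $P$ (using that $\mathcal{G}_k^0$ is normal in $\mathcal{G}_k$) yields an exact sequence
\[
1 \to {}_P\mathcal{G}_k^0 \to {}_P\mathcal{G}_k \to {}_P\mathcal{G}_k^{et} \to 1,
\]
and the fiber of $\tilde b$ over $\eta$ is identified with the quotient of the image of $H^1(k, {}_P\mathcal{G}_k^0)$ in $H^1(k, {}_P\mathcal{G}_k)$ under the action of $({}_P\mathcal{G}_k^{et})(k)$. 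Because connectedness is preserved under inner twisting, ${}_P\mathcal{G}_k^0$ is again a connected finite flat group scheme over $k$. Hence it suffices to show that $H^1(k, H) = 0$ for any connected finite flat $k$-group scheme~$H$.

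This last vanishing is the main obstacle, and I would invoke the Lang--Steinberg--type theorem proved in the appendix by Tossici, which generalises Lang's theorem to connected algebraic group schemes over a perfect field. Given this result, $H^1(k, {}_P\mathcal{G}_k^0) = 0$, the fiber of $\tilde b$ over $\eta$ reduces to a single point, and injectivity follows.

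Finally, note that under hypothesis (2) of Theorem \ref{main} (namely $\mathrm{char}(k)=2$ and $\mathcal{G}$ of odd order) the statement is immediate: any connected finite flat group scheme over $k$ has order a power of $2$, while the order of $\mathcal{G}_k^0$ divides the odd integer $|\mathcal{G}|$, forcing $\mathcal{G}_k^0 = 1$ and hence $\tilde b$ to be the identity. The substantive content therefore lies entirely in the odd-characteristic case of part~(1), where the argument above applies.
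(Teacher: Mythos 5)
Your proof is correct and follows essentially the same route as the paper: surjectivity from the canonical splitting of the connected-\'etale sequence over a perfect field (the paper cites Tate for the section you construct via $(\mathcal{G}_k)_{\mathrm{red}}$), and injectivity by twisting the sequence and invoking the vanishing of $H^1(k,-)$ for finite connected group schemes supplied by Tossici's appendix. The only cosmetic differences are that the paper twists by a $\mathcal{G}_k$-torsor representing $x$ (using Giraud's functoriality) rather than by the \'etale torsor through the splitting, and it spells out the short verification that the twisted connected component is still connected, a point you assert without proof.
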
\begin{proof} Since $\mathcal{G}_k^0$ is a finite connected group scheme over a perfect field we know that $H^1(k,  \mathcal{G}_k^0)$ is trivial (see Appendix )  and so that the kernel of $\tilde b$ is trivial. 
Moreover, the morphism $b: \mathcal{G}_k\rightarrow \mathcal{G}_k^{et}$ has a section that we denote by $b'$  (see \cite{Tate} Section 3.7). Thus $b'$ induces a map of pointed sets 
\[\xymatrix{H^1(k, \mathcal{G}_k^{et})\ar[r]^{\tilde {b'}}&H^1(k,\mathcal{G}_k)\ar[r]^{\tilde b}&H^1(k, \mathcal{G}_k^{et})& \\
}\] such that ${\tilde b}\circ {\tilde {b'}}=\mathrm{id}$.  Hence we deduce that $\tilde b$ is a surjection. Our aim is now to prove that $\tilde b$ is an injection. Let  $x, y \in H^1(k, \mathcal{G}_k)$ such that $\tilde b(x)=\tilde b(y)$. We let $P$ be a $\mathcal{G}_k$-torsor which represents $x$. Following  \cite{Giraud} Chapter III we let 
\[\xymatrix{1\ar[r]^{}&{\mathcal{G}_{k, P}^0}\ar[r]^{c}&\mathcal{G}_{k,P}\ar[r]^{d}&\mathcal{G}_{k,P}^{et}\ar[r]^{}& 1\\ 
}\] be the exact sequence obtained by twisting the connected-\'etale sequence by the torsor $P$. We obtain from  \cite {Giraud}  Corollary 3.3.5. a commutative diagram

\[\xymatrix{H^1(k, \mathcal{G}_k^0)\ar[r]^{}&
H^1(k, \mathcal{G}_k)\ar[r]^{\tilde b}\ar[d]_{\theta_P}&H^1(k, \mathcal{G}_k^{et})
\ar[d]_{\theta}& \\
H^1(k, \mathcal{G}_{k,P}^0)\ar[r]^{}&H^1(k, \mathcal{G}_{k, P})\ar[r]^{\tilde d}&H^1(k, \mathcal{G}_{k,P}^{et})\\
}\]
where the horizontal rows are exact sequences of pointed sets and where $\theta_P$ and $\theta$ are bijections. From the very definition of  $\mathcal{G}_{k, P}^0$ we know that  there exists 
an algebraic variety $X$ over $k$ such that $\mathcal{G}_{k, P}^0\times_k X\simeq \mathcal{G}_k^0\times_k X$. By considering the residue field of a closed point of $X$ we obtain a finite extension 
$k'/k$ such that $\mathcal{G}_{k, P}^0\times_k k'\simeq \mathcal{G}_{k}^0\times_kk'$.  Since any connected group scheme is geometrically connected we deduce that $\mathcal{G}_{k, P}^0\times_k k'$  and thus $\mathcal{G}_{k, P}^0$ are both connected.  Because $\mathcal{G}_{k,P}^0$ is connected, the pointed set $H^1(k, \mathcal{G}_{k,P}^0)$ is trivial and so the kernel of $\tilde d$ is trivial. We can now complete the proof of the lemma. From the equality $\tilde b(x)=\tilde b(y)$ and the commutativity of the diagram we deduce that 
$(\tilde d\circ \theta_P)(x)=(\tilde d\circ \theta_P)(y)$. From the  definition of the twisted exact sequence we know that $\theta_P(x)=0$ since $P$ represents $x$  and thus,  since $\tilde d $ is a morphism of pointed sets,  we deduce  that $(\tilde d\circ \theta_P)(x)=0=(\tilde d\circ \theta_P)(y)$. Since  the kernel of $\tilde d$ is trivial, $\theta_P(x)=0=\theta_P(y)$ and because  $\theta_P$ is a bijection  we conclude that $x=y$,  
as required. 

\end{proof}
We now return to the morphism of group schemes $b': \mathcal{G}_k^{et}\rightarrow \mathcal{G}_k$ introduced earlier  and the map of pointed sets 
$\tilde {b'}: H^1(k,  \mathcal{G}_k^{et})\rightarrow H^1(k, \mathcal{G}_k)$ that it induces. Since we know from Lemma \ref{dijano}  that $\tilde b$ is a bijection  such that $\tilde b\circ \tilde{b'}=\mathrm{id}$, 
we deduce that $\tilde{b'}$ is a bijection.  The morphism  $b': \mathcal{G}_k^{et}\rightarrow \mathcal{G}_k$ is induced  by  a morphism of Hopf algebras $A_k\rightarrow A_{k,et}$ which itself induces a morphism of Hopf algebras $(A_{k,et})^D\rightarrow A_k^D$.  Therefore we get a commutative diagram 
 \[\xymatrix{
H^1(k ,  \mathcal{G}_k^{et})\ar[r]^{u_{k, et}}\ar[d]_{\tilde{b'}}&H^1(k, U_{(A_{k, et})^D})\ar[d]_{v} \\
\textsc H^1(k, \mathcal{G}_k)\ar[r]^{u_k}&H^1(k, U_{A_k^D}) \\
}\] 
where  $\tilde{b'}$ is a bijection. Therefore,  in order to show that $u_k$ is trivial,  it suffices to prove that $u_{k, et}$ is. We note that since   $\mathcal{G}$ is of odd order then both $\mathcal{G}_k$ and  $  \mathcal{G}_k^{et} $ are of odd order. We conclude that,  in order to prove the triviality of $u_k$,  it suffices to prove that this property holds  when the group $\mathcal{G}_k$ is \'etale. We  complete the proof Theorem \ref{main}, when $2$ is a unit in $R$,   by proving the following lemma:
\begin{lem}\label {et0} Let $k$ be a finite field of characteristic different from $2$ and let $\mathcal{G}_k:=\mathrm{Spec}(A_k)$ be a finite,  \'etale group scheme over $k$, of odd order,  then the natural map of pointed sets 
$$u_k: H^1(k, \mathcal{G}_k)\rightarrow H^1(k, U_{A^D_k})$$ is trivial. 
\end{lem}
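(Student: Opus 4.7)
My plan is to adapt the cohomological argument already used in the proof above for commutative $\mathcal{G}_k$, transferring the ``order coprimality'' step from the cohomology set (which is no longer a group in the non-abelian setting) to the level of morphisms of group schemes. The two key ingredients will be Lang's theorem applied to the neutral component of $U_{A_k^D}$, and the fact that the component group $\pi_0(U_{A_k^D})$ remains $2$-elementary abelian.

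Concretely, I would first consider the short exact sequence of smooth affine $k$-group schemes
\[
1 \longrightarrow U^0_{A_k^D} \longrightarrow U_{A_k^D} \longrightarrow \pi_0(U_{A_k^D}) \longrightarrow 1,
\]
where smoothness of $U_{A_k^D}$ is automatic since $\mathrm{char}(k) \neq 2$. By Bayer Proposition~2.3.2, already invoked above for the commutative sub-case and which depends only on $(A_k^D, S^D)$ as an algebra with involution, the finite $k$-group scheme $\pi_0(U_{A_k^D})$ is $2$-elementary abelian. Lang's theorem applied to the connected algebraic group $U^0_{A_k^D}$ over the finite field $k$ then yields $H^1(k, U^0_{A_k^D}) = 1$, so the standard non-abelian six-term exact sequence of pointed sets gives that the preimage of the basepoint under $H^1(k, U_{A_k^D}) \to H^1(k, \pi_0(U_{A_k^D}))$ is reduced to the basepoint.

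Next, I would observe that the composite morphism of $k$-group schemes $\mathcal{G}_k \to U_{A_k^D} \to \pi_0(U_{A_k^D})$ is the trivial morphism. Since both source and target are finite \'etale $k$-group schemes, this morphism is determined by a $\mathrm{Gal}(k^s/k)$-equivariant homomorphism between the finite groups $\mathcal{G}_k(k^s)$ and $\pi_0(U_{A_k^D})(k^s)$, whose orders are coprime (odd versus a power of $2$); any such homomorphism is necessarily trivial. Consequently $H^1(k, \mathcal{G}_k) \to H^1(k, \pi_0(U_{A_k^D}))$ is constant at the basepoint, and the image of $u_k$ therefore lies inside the trivial fibre identified in the previous paragraph, which yields the desired triviality of $u_k$.

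The step requiring the most care is the application of the six-term sequence: in the non-abelian setting ``kernel'' means only the preimage of the basepoint, but this is exactly what Lang's theorem delivers, so no further twisting argument is required and the stronger injectivity claim used in the commutative sub-case is bypassed. A secondary point worth verifying is that Bayer's description of $\pi_0(U_{A_k^D})$ as $2$-elementary abelian remains valid when $\mathcal{G}_k$ is non-commutative of odd order; this follows from the decomposition (c1)--(c2) of $(K[G], \sigma)$, each unitary factor contributing at most a copy of $\mathbb{Z}/2$ to $\pi_0$ and each split-unitary factor yielding a connected unitary group.
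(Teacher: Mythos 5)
Your argument is correct, and it rests on the same two pillars as the paper's proof: the composite $\mathcal{G}_k\rightarrow U_{A^D_k}\rightarrow U_{A^D_k}/U^0_{A^D_k}$ is trivial because an odd-order group scheme cannot map non-trivially to the $2$-elementary component group, and a Lang--Steinberg vanishing kills the cohomology of the connected part. The packaging differs slightly. The paper factors the morphism of group schemes itself: since $\mathcal{G}_k$ is \'etale, hence reduced, $f_k$ factors through the reduced neutral component $U'^0_{A^D_k}$, which is connected and smooth over the perfect field $k$, so $u_k$ factors through $H^1(k,U'^0_{A^D_k})=1$. You instead stay at the level of cohomology, using exactness of $H^1(k,U^0_{A^D_k})\rightarrow H^1(k,U_{A^D_k})\rightarrow H^1(k,\pi_0(U_{A^D_k}))$ together with the constancy of the composite map on $H^1$; this is valid, needs no twisting since only the fibre over the base point is involved, and spares you the reduced-subscheme step, at the cost of invoking the smoothness of $U_{A^D_k}$ in residue characteristic $\neq 2$ (as asserted in Section 2.3 via \cite{Bayer16}) so that Lang's theorem applies to $U^0_{A^D_k}$. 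The paper's route is marginally more robust in that it only needs smoothness of the reduced neutral component, which is automatic over a perfect field. One small correction: your fallback justification that $\pi_0(U_{A^D_k})$ is $2$-elementary via the decomposition (\ref{c1})--(\ref{c2}) is not the right reference, since that decomposition concerns $K[G]$ in characteristic zero while here the relevant object is the unitary group of $(A^D_k,S^D)$ over $k$, where the algebra need not be semisimple; the correct (and sufficient) point is that \cite{Bayer94} Proposition 2.3.2 applies to an arbitrary finite-dimensional $k$-algebra with involution in characteristic $\neq 2$, which is exactly how the paper itself uses it, with no commutativity assumption on $\mathcal{G}_k$.
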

\begin{proof} We consider the composition of morphisms of group schemes 
 \[\xymatrix{
\mathcal{G}_k\ar[r]^{f_k}&U_{A^D_k}\ar[r]^{g_k}
& U_{A^D_k}/U^0_{A^D_k}.\\
}\] We set $h_k:=g_k\circ f_k$. Since $\mathcal{G}_k$ is of odd order  and  since $U_{A^D_k}/U^0_{A^D_k}$ is a $2$-abelian elementary group,  we deduce that $h_k$ is trivial. Therefore this implies  that 
$f_k$ factorizes into a morphism  $\mathcal{G}_k\rightarrow U^0_{A^D_k}$ followed by a closed immersion $ U^0_{A^D_k}\rightarrow U_{A^D_k}$. Moreover, since $\mathcal{G}_k$ is \'etale it is reduced  and so  the morphism $\mathcal{G}_k\rightarrow U^0_{A^D_k}$ factors in a unique way into a morphism $\mathcal{G}_k\rightarrow {U^{'0}_{A^D_k}}$ followed by  $U^{'0}_{A^D_k}\rightarrow U^0_{A^D_k}$, where $U^{'0}_{A^D_k}$ is the reduced group scheme attached to ${U^{0}_{A^D_k}}$. Putting this together we deduce that
 the factorization  of $f_k$ shows  that  $u_k$ may be factorized as  a composition of maps of pointed sets
$$u_k: H^1(k, \mathcal{G}_k)\rightarrow H^1(k, U^{'0}_{A^D_k})\rightarrow  H^1(k, U_{A^D_k}).$$ We note that,   since $U^0_{{A^D_k}}$ is  connected,  it follows that  the group scheme $U^{'0}_{A^D_k}$ is also connected. Moreover  this group scheme is smooth since it is reduced by definition and so geometrically reduced since $k$ is perfect. It now follows from a theorem of Steinberg that $H^1(k, U^{'0}_{A^D_k})=1$ (see \cite{Serre94} Theorem 4).   Therefore we conclude that $u_k$ is trivial. This completes the proof of Theorem \ref{main} (1) when $k$ is finite.
\end{proof}  

We now prove part (2) of Theorem \ref{main}. So now $k$ is perfect and has characteristic $2$ and  we  consider  the group   $\mathcal{G}$ over $R$, of odd order $n$, which is generically constant. Since $n$ is a unit in $R$, the group  scheme $\mathcal{G}$ is  a  constant group scheme and so   there exists a finite group $G$ of order $n$ such that   $\mathcal{G}=\mathrm{Spec}(A)$ with $A=\mathrm{Map}(G,R)$ and $A^D=R[G]$; for reasons of simplicity we will write $G$ instead of $\mathcal{G}$ in the remainder of the proof. We note that,  since $G$ has odd order,  $R[G]$ is a maximal order in $K[G]$. We set  $U_{G, R}:=U_{A^D}$ and we  let $u: H^1(R, G)\rightarrow H^1(R,  U_{G, R})$ be the map of pointed sets induced by the  morphism  $s: G\rightarrow U_{G, R}$. 
 
 Since $2$ is not a unit in $R$,  the group scheme $U_{G,R}$ is not smooth. Following Serre,  we know that it decomposes into a product $U_{G, R}=U'_{G, R}\times \mu_2$ where $U'_{G, R}$ is a smooth  group scheme.  We consider the morphism of group schemes $p\circ s: G\rightarrow \mu_2$ where $p$ is the projection $U_{G, R}\rightarrow \mu_2$. Since $G$ is of odd order we deduce that $p\circ s$ is trivial. Therefore  $s$ factors through $G\rightarrow U'_{G, R}$ followed by the closed immersion  $U'_{G, R}\rightarrow U_{G, R}$. This factorisation induces a decomposition of  $u$. The  base change along $\mathrm{Spec}(k)\rightarrow \mathrm{Spec}(R)$ leads us to the following commutative diagram:
\[ \xymatrix{
H^1(R, G)\ar[r]^{u'}\ar[d]^{t} & H^1(R, U'_{G, R})\ar[r]^v\ar[d]^{s'}&H^1(R, U_{G,R})\ar[d]^{}  \\
H^1(k, G)\ar[r]^{u'_k}& H^1(k, U'_{G, k})\ar[r]^{v_k}& H^1(k, U_{G, k}).
} \]
Since $U'_{G, R}$ is smooth the map  $s'$ is an injection. Because $k$ is a perfect field  we know from \cite{Serre14} Theorem 5.1.2 that $U'_{G, k}$ coincides with the reduced algebraic group associated to $U_{G,k}$.  We let  $U'^{0}_{G, k}$ be the connected component of $U'_{G, k}$.  The group scheme $G$ being reduced,  we know that  the morphism  $G\rightarrow U_{G, k}$ factorises through $G\rightarrow U'_{G, k}$. Since $G$ is of odd order every element of $G$ is a square. Therefore it follows from Serre \cite{Serre14} Section 5.3 Theorem 5.3.1 that $G\rightarrow U'_{G, k}$ factorises through $G\rightarrow U'^{0}_{G, k}$. This implies  that $u'_k$ factorises  through  the map of pointed sets $H^1(k, G)\rightarrow H^1(k, U'^0_{G, k})$. Since by Serre \cite{Serre14} Section 4 Theorem C, we know that $H^1(k, U'^0_{G, k})=0$ and so we conclude that $u'_k$ is trivial. The map $s'$ being injective we know that $u'$,  and thus $u$,  are both trivial which completes the proof of Theorem \ref{main} in this case.
\end{proof}
We now extend the above to establish part (1) of Theorem \ref{main} in the more general setting where $k$ is perfect. So again $2$ is invertible in both $k$ and $R$.  It suffices  to  generalize  Lemma \ref{et0} as follows: 
\begin{lem}\label{etbis} Let $k$ be a field of characteristic different from $2$  and let $\mathcal{G}:=\mathrm{Spec}(A)$ be a finite,  etale group scheme over $k$, of odd order,  then the natural map of pointed sets 
$$u_k: H^1(k, \mathcal{G})\rightarrow H^1(k, U_{A^D})$$ is trivial. 
\end{lem}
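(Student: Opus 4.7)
The plan is to mimic the structure of the proof of Lemma \ref{et0}, the only essentially new ingredient being the generalization of the Lang-Steinberg theorem for connected algebraic group schemes established in the appendix by D.~Tossici, which replaces the application of \cite{Serre94} Theorem 4 (valid only over finite fields) used there.

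First, consider the composite morphism of group schemes $\mathcal{G}\to U_{A^D}\to U_{A^D}/U^0_{A^D}$. The target is a $2$-elementary abelian group scheme and $\mathcal{G}$ has odd order, so this composite is trivial. Hence the natural morphism $\mathcal{G}\to U_{A^D}$ factors through the identity component $U^0_{A^D}$. Because $\mathcal{G}$ is \'etale, hence reduced, and $k$ may be assumed perfect (so that the maximal reduced closed subscheme of a group scheme is a subgroup scheme), this morphism factors further through the reduced subgroup scheme $U'^0_{A^D}$ of $U^0_{A^D}$, which is a connected smooth algebraic group over $k$. Consequently $u_k$ decomposes as a composition of maps of pointed sets
\[H^1(k,\mathcal{G})\longrightarrow H^1(k,U'^0_{A^D})\longrightarrow H^1(k,U_{A^D}),\]
and it suffices to prove that the image of the first map is trivial.

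The main obstacle lies precisely at this last step. In the finite field case of Lemma \ref{et0} one concludes by invoking Steinberg's theorem ($H^1(k,H)=1$ for $H$ connected smooth over a finite field), but this statement fails over general perfect fields: for instance, over $k=\mathbb{R}$ connected smooth algebraic groups can carry non-trivial torsors. The remedy is to apply Tossici's generalization of the Lang-Steinberg theorem from the appendix to the pair consisting of the connected smooth group $U'^0_{A^D}$ and its finite \'etale subgroup $\mathcal{G}$ of odd order; this yields directly the triviality of the induced map $H^1(k,\mathcal{G})\to H^1(k,U'^0_{A^D})$, and hence of $u_k$, completing the argument.
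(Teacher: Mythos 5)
There is a genuine gap at the final step. The appendix theorem (Theorem \ref{SLa}) does \emph{not} say what you need: it asserts $H^1(k,G)=0$ only when (i) $G$ is finite and connected over a perfect field, (ii) $G$ is affine and connected and $k$ has cohomological dimension $\le 1$, or (iii) $k$ is finite and $G$ is connected. It contains no statement about connected smooth groups over an arbitrary field of characteristic $\neq 2$, nor any statement about the triviality of the map $H^1(k,\mathcal{G})\to H^1(k,H)$ for a finite \'etale odd-order subgroup $\mathcal{G}$ of a connected smooth group $H$. You correctly observe that Steinberg's vanishing fails over $k=\mathbb{R}$, but the appendix does not repair this: for a non-finite connected group it still requires $\mathrm{cd}(k)\le 1$ or $k$ finite. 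Since Lemma \ref{etbis} is stated (and later used) for fields such as number fields or fraction fields of semilocal P.I.D.'s, whose cohomological dimension can exceed $1$ and over which $H^1(k,U'^0_{A^D})$ is typically nonzero, the reduction to the connected (reduced) component, which is the whole content of your argument beyond Lemma \ref{et0}, leaves the essential point unproved. (A side remark: over a field of characteristic $\neq 2$ the group $U_{A^D}$ is already smooth, so the passage to the reduced subgroup is not where the difficulty lies; and your restriction to perfect $k$ is also not in the statement, which the paper explicitly notes holds for non-perfect fields.)

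The paper's proof proceeds quite differently and uses genuinely arithmetic input rather than a Lang--Steinberg-type vanishing. For a constant group scheme the result is the Bayer--Lenstra self-dual normal basis theorem \cite{Bayer90} together with \cite{Bayer94} Corollary 1.5.2. In general, one views the class $u_k(B)$ of a torsor $B$ as the class of the $A$-form $(B,Tr_B)$, which becomes isomorphic to $(A,Tr_A)$ after the faithfully flat base change $k\to B$. Since $\mathcal{G}$ is finite \'etale, $B$ is a product of finite separable field extensions $L_i/k$, so $u_k(B)$ dies in each $H^1(L_i,U_{A^D})$; and because $\dim_k B=|\mathcal{G}|$ is odd, some $[L_{i_0}:k]$ is odd. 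The key tool is then the odd-degree injectivity theorem of Bayer--Lenstra (\cite{Bayer90} Theorem 2.1), which gives injectivity of $H^1(k,U_{A^D})\to H^1(L_{i_0},U_{A^D})$ and hence $u_k(B)=1$. Some ingredient of this kind (a Springer-type odd-degree descent) is indispensable, and it is exactly what your proposal is missing.
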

\begin{proof} First  we note that if $\mathcal{G}$ is a constant group scheme, the result   follows from  the theorem of Bayer and  Lenstra \cite {Bayer90}  and \cite{Bayer94} Corollary 1.5.2. We now consider the general case. Since $\mathcal{G}$ is a not  constant group scheme we have to introduce a slight generalization of $G$-forms, where we adapt our previous notations from $A$ over $R$ to now $A$ over $k$. Following \cite{CCMT15} Section 2.2 we define an $A$-form on a finite and locally free $A^D$-module as a non-degenerate bilinear symmetric form $q: M\times M\rightarrow k$ such that $q(um, n)=q(m, S^D(u)n)\  \forall m, n\ \in M , u\in A^D$. For a $\mathcal{G}$-torsor $B$ we write $Tr_B$ for the trace form $Tr_{B/k}$. By \cite{CCMT15}  Proposition 5.1 and Lemma 3.1 we know that the unit form $(A^D, \kappa_{A^D})$ and   $(B, Tr_B)$  are  $A$-forms. Proposition 2.13 generalizes in this situation and $U_{A^D}$ can be identified with the group of automorphisms of the form $(A, Tr_A)$ . For any $\mathcal{G}$-torsor $B$ over $k$, there exists an isomorphism of $B$-algebras and $A^D$-modules 
$$\varphi: B\otimes_kB\simeq B\otimes A. $$ Hence, after scalar extension by $B$,  the $A$-forms $(B, Tr_B)$ and $(A, Tr_A)$ become isomorphic and so  $(B, Tr_B)$ defines an element of $H^1(k, U_{A^D})$. This class is precisely $u_k(B)$. Hence, in order to prove that the map $u_k$ is trivial,  it suffices to prove that for any $\mathcal{G}$-torsor $B$ the $A$-forms 
$(B, q_B)$ and $(A, q_A)$ are isomorphic. 

 Let $B$ be a  $\mathcal{G}$-torsor. Since $\mathcal{G}$ is a finite and  \'etale  then $B$ is a finite and \'etale $k$-algebra. Therefore there exists a  set of orthogonal idempotents  $\{\varepsilon_i, 1\leq i\leq m \}$  of $B$ with $1=\sum_{1\leq i\leq m}\varepsilon_i$ and such that $L_i=L\varepsilon_i$ is a finite separable extension of $k$ for each integer $i$.  By restriction  $\varphi$ induces  an isomorphism of $L_i$-algebras and $A^D$-modules $L_i\otimes_kB \simeq L_i\otimes_k A$ which leads us to an isomorphism of $A$-forms 
  $$L_i\otimes_k (B, Tr_B)\simeq L_i\otimes_k( A, Tr_A), \ 1\leq i\leq m.$$
 We conclude that  $u_k(B)\in \mathrm{Ker}(H^1(k, U_{A^D})\rightarrow H^1(L_i, U_{A^D}))$ for any integer $i, 1\leq i\leq m$.  Since the dimension of $B$ as a $k$-vector space  is  odd,   there exists at least one integer $i_0$ such that 
  $[L_{i_0}:k]$ is odd. By \cite{Bayer90} Theorem 2.1 we know that $H^1(k, U_{A^D})\rightarrow H^1(L_{i_0}, U_{A^D})$ is injective and so we may conclude that $u_k(B)=1$.  
\end{proof}

If $K$ is a number field then we denote  its ring of integers  by $R$  and $\Sigma$ denotes the set of its finite and infinite primes.  
\begin{cor} Let $K$ be a number field and let $\mathcal{G}:=\mathrm{Spec}(A)$ be a finite and flat group scheme over $R$. We assume that $\mathcal{G}$ is generically constant and of odd order. Then  the composition  of the morphisms 
 \[\xymatrix{
H^1(R, \mathcal{G})\ar[r]^{}&H^1(R,  U_{A^D})\ar[r]^{}
& \prod_{v\in \Sigma}H^1(R_v, U_{A_v^D}).\\
}\] is trivial. 
\end{cor}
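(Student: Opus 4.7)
\begin{f-proof}[Proof proposal.]
The strategy is to factor the global map through the product of localization maps and then verify triviality one place at a time. For each $v\in\Sigma$, base change along $\mathrm{Spec}(R_v)\to\mathrm{Spec}(R)$ (where for infinite $v$ we set $R_v=K_v$) yields a commutative square
\[\xymatrix{
H^1(R,\mathcal{G})\ar[r]\ar[d] & H^1(R,U_{A^D})\ar[d]\\
H^1(R_v,\mathcal{G}_v)\ar[r]^{u_v} & H^1(R_v,U_{A^D_v}).
}\]
Thus the statement reduces to showing that $u_v$ is trivial for every $v\in\Sigma$. The plan is to split $\Sigma$ into four cases and handle each by invoking a result already established earlier in the paper.

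\medskip
\emph{Finite places of odd residue characteristic.} Here $R_v$ is a complete discrete valuation ring, hence a henselian local ring with perfect residue field of odd characteristic, and $\mathcal{G}_v$ is a finite flat group scheme of odd order over $R_v$. Triviality of $u_v$ is then exactly Theorem \ref{main}(1).

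\medskip
\emph{Finite places of residue characteristic $2$.} Again $R_v$ is henselian with perfect residue field, and in addition $R_v$ is an integral domain; since $\mathcal{G}$ is generically constant by hypothesis, so is $\mathcal{G}_v$. Triviality of $u_v$ therefore follows from Theorem \ref{main}(2).

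\medskip
\emph{Complex infinite places.} Here $R_v=\mathbb{C}$ is separably closed, so both $H^1(\mathbb{C},\mathcal{G}_v)$ and $H^1(\mathbb{C},U_{A^D_v})$ are trivial, and $u_v$ is trivial for the obvious reason.

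\medskip
\emph{Real infinite places.} Here $R_v=\mathbb{R}$ and, by the generic constancy hypothesis, $\mathcal{G}_v$ is the constant group scheme attached to the odd-order finite group $G$, so $A^D_v=\mathbb{R}[G]$. The set $H^1(\mathbb{R},\mathcal{G}_v)=H^1(\mathrm{Gal}(\mathbb{C}/\mathbb{R}),G)$ classifies $G$-Galois $\mathbb{R}$-algebras, and the map $u_v$ sends such an algebra $L/\mathbb{R}$ to the class of the $G$-form $(L,Tr_{L/\mathbb{R}})$ modulo the unit form $(\mathbb{R}[G],\kappa)$. By Corollary \ref{SN}, every such $L$ admits a self-dual normal basis, so $u_v$ is trivial.

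\medskip
Combining the four cases, the composite $H^1(R,\mathcal{G})\to\prod_{v\in\Sigma}H^1(R_v,U_{A^D_v})$ is trivial. The only mildly delicate point is the real place, where one must appeal to the Bayer--Lenstra theorem in the form stated as Corollary \ref{SN}; at the $p$-adic places everything is already packaged in Theorem \ref{main}, whose two clauses neatly cover the odd and even residue characteristic dichotomy thanks to the generic constancy assumption.
\end{f-proof}
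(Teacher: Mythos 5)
Your proof is correct and follows essentially the same route as the paper: reduce to triviality of the local maps $u_v$, apply Theorem \ref{main} at the finite places, and dispose of the infinite places via Bayer--Lenstra (the paper cites \cite{Bayer90} and \cite{Bayer94} there, which is the same content as your appeal to Corollary \ref{SN}). Your explicit case split by residue characteristic and at the archimedean places just spells out what the paper leaves implicit.
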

\begin{proof} It suffices to prove that for each  prime $v\in \Sigma$ the map 
$$H^1(R_v, \mathcal{G}_v)\rightarrow H^1(R_v, U_{A_v^D})$$ is trivial. This follows from 
Theorem \ref{main} for the finite places and from \cite {Bayer90}  and \cite{Bayer94} Corollary 1.5.2 for the infinite primes.

\end{proof}

One  may  observe   that  Lemma \ref{etbis},  which holds for any   not necessarily perfect field $k$ of characteristic different from $2$, can be used   to show the triviality of the map  $u$ under various hypotheses. We give the following proposition as an example.
\begin{prop} Let $R$ be a semilocal P.I.D. with  field of fractions $K$. We assume that $2\in R^\times$ and $K$ is perfect. Let $\mathcal{G}:=\mathrm{Spec}(A)$ be a finite,  flat group scheme over $R$ of odd order.  Suppose that,  
\begin{enumerate}
\item $\mathcal{G}$ is generically \'etale. 
\item $A^D$ is a hereditary order.
\end{enumerate}
 Then the map

$$H^1(R, \mathcal{G})\rightarrow H^1(R, U_{A^D})$$ 
is trivial.
\end{prop}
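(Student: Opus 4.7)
The plan is to reduce the question to the generic fiber via Proposition \ref{local} and Lemma \ref{etbis}, and then descend to $R$ by exploiting the hereditary hypothesis on $A^{D}$ together with the smoothness of $U_{A^{D}}$ (which holds because $2\in R^{\times}$).

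By Proposition \ref{local}, $u([B])=[B,Tr'_{B}]$ for each principal homogeneous space $B$ of $A$, so triviality of $u$ is equivalent to the statement that $(B,Tr'_{B})\cong (A,Tr'_{A})$ as $G$-forms over $R$ for every $B\in PH(A)$. Fix such a $B$ and consider the base-change square
\[
\xymatrix{
H^{1}(R,\mathcal{G})\ar[r]^{u}\ar[d] & H^{1}(R,U_{A^{D}})\ar[d] \\
H^{1}(K,\mathcal{G}_{K})\ar[r]^{u_{K}} & H^{1}(K,U_{A^{D}_{K}}).
}
\]
Since $K$ is perfect of characteristic different from $2$ and $\mathcal{G}_{K}$ is \'etale of odd order, Lemma \ref{etbis} applies and shows that $u_{K}$ is trivial. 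Hence there exists an isometry $\phi_{K}:(A_{K},Tr'_{A_{K}})\stackrel{\sim}{\to}(B_{K},Tr'_{B_{K}})$ of $G$-forms over $K$.

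Next, use the hereditary hypothesis. Since $A^{D}$ is a hereditary $R$-order and $R$ is a semilocal PID, the standard structure theory of hereditary orders over semilocal Dedekind domains implies that every locally free rank-one $A^{D}$-module is free. Hence $B$, which is locally free of rank one over $A^{D}$, is free, and we may choose an $A^{D}$-module isomorphism $\mu:A\stackrel{\sim}{\to}B$. The composite $\mu_{K}^{-1}\circ\phi_{K}$ is then an $A_{K}^{D}$-module automorphism of $A_{K}$, which by Lemma \ref{aut} is left multiplication by some $z\in A_{K}^{D,\times}$.

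The final step is the integral correction. Because $\phi_{K}$ is a $G$-form isometry while $\mu_{K}$ is only an $A_{K}^{D}$-module isomorphism, $z$ need not lie in $U(A^{D})$; however, the isometry condition together with Proposition \ref{isom} forces $\mathrm{Det}(z)\in\mathrm{Det}(A^{D,\times})_{-}$ after localization at each maximal ideal $\mathfrak{m}$. By Theorem \ref{basieg}(2), applied at each localization (valid since $G$ has odd order, $R_{\mathfrak{m}}$ is local, and $U_{A^{D}}$ is smooth), we have $\mathrm{Det}(A^{D,\times}_{\mathfrak{m}})_{-}=\mathrm{Det}(U(A^{D}_{\mathfrak{m}}))$. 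Patching the finitely many local unitary corrections across the semilocal $R$ produces a $w\in U(A^{D})$ so that $\mu$ composed with left multiplication by $w$ is the desired global isometry $(A,Tr'_{A})\stackrel{\sim}{\to}(B,Tr'_{B})$.

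The main obstacle is the last step: promoting the generic isometry $\phi_{K}$ to an integral one via a unitary correction. The hereditary hypothesis enters crucially at two places, first to force $B$ to be free over $A^{D}$, and then, in combination with Theorem \ref{basieg}(2) and the smoothness of $U_{A^{D}}$, to realize the required unitary correction over $R$.
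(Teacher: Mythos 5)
Your opening move is the same as the paper's: base change to the generic fibre and apply Lemma \ref{etbis} to the finite \'etale group $\mathcal{G}_K$ of odd order over the perfect field $K$, so that $u_K$ is trivial. But the descent from $K$ back to $R$ is where your argument breaks down. The paper does this in one stroke by quoting Bayer--Fluckiger and First (\cite{Bayer17}, Theorem 5.3): for a hereditary order over a semilocal PID with $2$ invertible, the map $H^1(R,U_{A^D})\rightarrow H^1(K,U_{A^D_K})$ is \emph{injective}, and triviality of $u$ follows at once from the commutative square. That injectivity is precisely where the hypotheses ``$A^D$ hereditary'' and ``$R$ semilocal PID, $2\in R^\times$'' are consumed; it is a genuine Grothendieck--Serre-type theorem for a non-reductive group scheme, not something that follows from smoothness of $U_{A^D}$ or from determinant computations.

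Your substitute for this step has two concrete gaps. First, Theorem \ref{basieg}(2) is proved in Section 5 only for $R$ the valuation ring of a finite extension of $\mathbb{Q}_p$ and for Hopf orders $A^D\subset K[G]$; in the present proposition $\mathcal{G}$ is only generically \'etale, not generically constant, so $A^D_K$ need not be a group algebra at all, $R_{\mathfrak m}$ is an arbitrary DVR (possibly with infinite residue field, not complete), and none of the character-theoretic $\mathrm{Det}$ machinery is available. Second, and more fundamentally, even where the identity $\mathrm{Det}(A^{D\times}_{\mathfrak m})_-=\mathrm{Det}(U(A^D_{\mathfrak m}))$ holds, matching determinants does not produce the unitary correction you need: to upgrade the module isomorphism $\mu$ to an isometry you must show that the element $z\in A^{D\times}_K$ lies (locally) in the double coset $U(A^D_{K_{\mathfrak m}})\cdot A^{D\times}_{\mathfrak m}$, which is exactly the triviality of the kernel of $H^1(R_{\mathfrak m},U_{A^D})\rightarrow H^1(K_{\mathfrak m},U)$ --- the very statement to be proved, and the content of the cited Bayer--First theorem. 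Determinants do not detect these unitary double cosets (this failure is the reason the global theorems of Section 6 only hold after stabilization and require Kneser's Strong Approximation), and the final ``patching'' of finitely many local corrections into a single $w\in U(A^D)$ is likewise asserted without justification. So the proposal is not a proof; to repair it along the paper's lines you should simply invoke the injectivity result of \cite{Bayer17} in place of your determinant-and-patching step.
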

\begin{proof} Let  $K$ be the field of fractions of $R$.  We consider the following commutative diagram of pointed sets as above
 \[\xymatrix{
H^1(R , \mathcal{G})\ar[r]^{u}\ar[d]_{r}&H^1(R, U_{A^D})\ar[d]_{v} \\
\textsc H^1(K, \mathcal{G}_K)\ar[r]^{u_K}&H^1(K, U_{A_K^D}). \\
}\] 
By Lemma \ref{etbis} we know that $u_K$ is trivial and so that $u_K\circ r$ is the trivial map. Moreover,  it follows from \cite{Bayer17} Theorem 5.3 that $v$ is injective. Therefore we conclude that $u$ is trivial.  
\end{proof}
\section{Grothendieck groups of local Hopf  algebras } 
Our aim is to transport the methods of Serre for group algebras,  developed in \cite{Serrerep} Part III,  to the more general setting of finite algebras and in particular Hopf algebras. 
\subsection{The Swan property}

Throughout this section $ R$ denotes  the valuation ring of a finite
extension $K$ of $\mathbb{Q}_{p}$.  Let $\mathfrak{p}=\pi R$ denote the
maximal ideal of $R$ and let $k=R/\mathfrak{p}$ be the residue field of $R$.
 We consider  a finite group $G$ and  an order $\Lambda$ in the
group algebra $K[ G]$. We let $G_{0}( \Lambda) $ denote the Grothendieck group of  
finitely generated $\Lambda $-modules and $G_{0}^{R}\left( \Lambda \right) $
denote the Grothendieck group of  finitely generated $\Lambda $-modules which
are free over $R$; henceforth we shall abusively refer to these as $\Lambda 
$-lattices. Then from page  $22$ of \cite{CR2} we know that, because $R$ is a
regular commutative ring, the natural map $G_{0}^{R}(\Lambda)
\rightarrow G_{0}( \Lambda) $ is an isomorphism. We recall also
from Ex.\ 3.1.21 in \cite{Ros} that $G_{0}( \Lambda) $ is natural with
respect to Morita equivalence and we have: 
\begin{equation}
G_{0}( M_{n}( \Lambda ) ) =G_{0}( \Lambda)
.
\end{equation}
From $38.43$ in \cite{CR2}  we know that for a homomorphism of rings $
R\rightarrow S$  the tensor product $\otimes _{R}S$ induces a homomorphism
of groups $G_{0}( \Lambda) \rightarrow G_{0}( \Lambda
\otimes _{R}S) $ via the composite
\begin{equation*}
G_{0}( \Lambda) \cong G_{0}^{R}( \Lambda) \overset{
\otimes _{R}S}{\rightarrow }G_{0}^{S}( \Lambda \otimes _{R}S)
\cong G_{0}( \Lambda \otimes _{R}S) 
\end{equation*}
since $\otimes _{R}S$ preserves exact sequences in $G_{0}^{R}( \Lambda
) $ because they are split over $R $.

We set $\widetilde{\Lambda }=\Lambda \otimes _{R}k$;  this is  a finite
dimensional $k$-algebra, and we denote by $\widetilde{\Lambda }^{ss}$ 
the maximal semisimple quotient of $\widetilde{\Lambda }$;  thus if $
J=J( \widetilde{\Lambda }) $ is the Jacobson radical of $
\widetilde{\Lambda }$,  then $\widetilde{\Lambda }^{ss}=\widetilde{\Lambda }
/J$.  From  \cite{CR2} Ex.6  p.42 the map $\widetilde{\Lambda }\rightarrow 
\widetilde{\Lambda }^{ss}$ induces an isomorphism
\begin{equation}\label{gss}
G_{0}( \widetilde{\Lambda }) \cong G_{0}( \widetilde{\Lambda 
}^{ss}) 
\end{equation}
and, for future reference, we recall that, $G_{0}( \widetilde{\Lambda }
^{ss}) $ is the free abelian group on the isomorphism classes of the
simple $\widetilde{\Lambda }^{ss}$-modules. We observe that because $
\Lambda $-modules which are $R$-free are $R$-flat, it follows that $
\otimes _{R}k$ takes exact sequences of $\Lambda $-lattices to exact
sequences of $\widetilde{\Lambda }$-modules; that is to say $\otimes _{R}k$ 
induces a homomorphism, denoted $\delta_{\Lambda} $ 
\begin{equation*}
\delta_{\Lambda}: G_{0}^{R}( \Lambda)
\rightarrow G_{0}( \widetilde{\Lambda }) .
\end{equation*}
Recall from \cite{CR2} (38.56) that we have the localization exact sequence 
\begin{equation}\label{loc}
G_{0}^{t}( \Lambda) \overset{\psi _{\Lambda }}{\rightarrow }
G_{0}^{R}( \Lambda) \overset{\varphi _{\Lambda }}{\rightarrow }
G_{0}(K[ G] ) \rightarrow 0
\end{equation}
where $G_{0}^{t}( \Lambda ) $ is the Grothendieck group of
finitely generated $\Lambda $-modules which are $R$-torsion modules  and that there exists a natural isomorphism 
\begin{equation}\label{G}
G_{0}^{t}( \Lambda) =G_{0}( \widetilde{\Lambda }) .
\end{equation}
In the case when $\Lambda =R[ G]$, from \cite{CR2} (39.10) we have Swan's
theorem which states that 
\begin{equation*}
\varphi _{G}=-\otimes _{R}K:G_{0}^{R}( R[ G]) \overset{%
\varphi _{G}}{\rightarrow }G_{0}( K[ G]) 
\end{equation*}
is an isomorphism of rings.  

\begin{definition} We shall say that the $R$-order  $\Lambda$  has the Swan property if the map $\varphi
_{\Lambda }$ in (\ref{loc}) is an isomorphism. 
\end{definition}
Recall from \cite{CR2} (39.15) : 
\begin{thm}\label{max} If $\Lambda $ is a
maximal $R$-order in $K[ G]$ which contains 
$R[ G]$, then $\Lambda$ has the Swan
property.
\end{thm}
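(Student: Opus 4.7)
The strategy is to show that the connecting map $\psi_{\Lambda}$ in the localization exact sequence (\ref{loc}) vanishes; since $\varphi_{\Lambda}$ is already surjective in (\ref{loc}), exactness will then force it to be an isomorphism.

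First I would reduce to the simple case. Using the Wedderburn decomposition of the semisimple $K$-algebra $K[G] = \prod_i A_i$, a maximal $R$-order $\Lambda$ containing $R[G]$ must split as a product $\Lambda = \prod_i \Lambda_i$ with each $\Lambda_i$ a maximal $R$-order in the simple component $A_i$. Both $G_0^R$ and $G_0$ split as direct sums along this product, and the localization sequence (\ref{loc}) splits accordingly, so it is enough to treat each factor separately.

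Next I would invoke Morita equivalence. Writing $A_i = M_{n_i}(D_i)$ for a division algebra $D_i$ over its center $Z_i$ (a finite extension of $K$), any maximal $R$-order in $A_i$ is Morita equivalent to $\mathcal{O}_{D_i}$, the unique maximal $\mathcal{O}_{Z_i}$-order in $D_i$. Since Morita equivalence preserves $G_0$ (which is the content of the isomorphism $G_0(M_n(\Lambda))=G_0(\Lambda)$ in the excerpt), preserves $G_0^R$ (preservation of lattices under the standard Morita functors), and is compatible with the localization sequence, the problem reduces to showing the Swan property for $\mathcal{O}_D$ where $D$ is a division algebra over a local field.

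In this final reduction, $\mathcal{O}_D$ is a noncommutative discrete valuation ring: it admits a two-sided uniformizer $\Pi$ (so that $\Pi \mathcal{O}_D = \mathcal{O}_D \Pi$) and has a unique simple residue module, the residue division ring $\overline{D} = \mathcal{O}_D/\Pi \mathcal{O}_D$. By (\ref{G}) combined with the remark following (\ref{gss}), $G_0^t(\mathcal{O}_D) = G_0(\widetilde{\mathcal{O}_D})$ is the free abelian group on $[\overline{D}]$. The short exact sequence of $\mathcal{O}_D$-lattices
\begin{equation*}
0 \to \mathcal{O}_D \xrightarrow{\,\cdot \Pi\,} \mathcal{O}_D \to \overline{D} \to 0
\end{equation*}
then yields $\psi_{\mathcal{O}_D}([\overline{D}]) = [\mathcal{O}_D] - [\mathcal{O}_D] = 0$ in $G_0^R(\mathcal{O}_D)$. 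Hence $\psi_{\mathcal{O}_D} = 0$, and $\varphi_{\mathcal{O}_D}$ is an isomorphism. Running the Morita and Wedderburn reductions backward yields the result for $\Lambda$.

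The main (non-routine) obstacle is the structural input at the final step: the fact that a maximal order in a division algebra over a local field is local with a \emph{two-sided} uniformizer. Without this, the crucial exact sequence above would only give a map of one-sided modules and not a relation in $G_0^R(\mathcal{O}_D)$. This is the classical theorem on maximal orders in local division algebras, which we would cite from \cite{R75} rather than reprove.
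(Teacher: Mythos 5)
Your proof is correct, but there is nothing in the paper to compare it against: Theorem \ref{max} is not proved in the text, it is simply recalled from \cite{CR2} (39.15). What you have written is essentially the standard argument behind that citation, and every step checks out. The reduction of a maximal order in $K[G]=\prod_i M_{n_i}(D_i)$ to $\prod_i M_{n_i}(\mathcal{O}_{D_i})$ uses that $R$ is complete (over a complete discrete valuation ring every maximal order in $M_n(D)$ is conjugate to $M_n(\mathcal{O}_D)$, \cite{R75}), which holds here since $R$ is the valuation ring of a finite extension of $\mathbb{Q}_p$; Morita equivalence is compatible with lattices, torsion modules and $-\otimes_R K$, so the localization sequence (\ref{loc}) transports correctly; d\'evissage plus locality of $\mathcal{O}_D/\pi\mathcal{O}_D$ gives $G_0^t(\mathcal{O}_D)=\mathbb{Z}\cdot[\overline{D}]$; and the key relation $\psi_{\mathcal{O}_D}([\overline{D}])=[\mathcal{O}_D]-[\mathcal{O}_D\Pi]=0$ uses exactly the two-sidedness of $\Pi$ that you flag (right multiplication by $\Pi$ is a left-module isomorphism of $\mathcal{O}_D$ onto $\Pi\mathcal{O}_D$), together with the description $\psi_\Lambda([E_1/E_2])=[E_1]-[E_2]$ that the paper itself uses in the proof of Theorem \ref{ideal}. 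Two small remarks: the hypothesis $R[G]\subset\Lambda$ is never used in your argument, and indeed it is not needed for the Swan property of a maximal order; and your final step is the same device the paper deploys elsewhere (for connected Hopf orders, $[k]=[R]-[\pi R]=0$, and in the lemma showing that surjectivity of $d_H$ forces $\psi_H=0$), so your route is fully consistent with the paper's toolkit.
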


\begin{prop}\label{se} 
Let $E_{1}, E_{2}$ be $\Lambda$-lattices in the
same $K$-vector space, with the property that $E_{1}\otimes
_{R}K=E_{2}\otimes _{R}K$. If we write $\widetilde{E}_{i}=E_{i}/
\mathfrak{p}E_{i}$,  then we have the equality in $G_{0}( 
\widetilde{\Lambda })$
$$[ \widetilde{E}_{1}] =[ \widetilde{E}_{2}] .$$
\end{prop}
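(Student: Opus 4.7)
The plan is to reduce to a simple ``one step'' case and then run a telescoping argument. First I would observe that the conclusion is unchanged by replacing $E_2$ by $\pi^n E_2$, since multiplication by $\pi^n$ is a $\Lambda$-isomorphism $E_2 \xrightarrow{\sim} \pi^n E_2$ which carries $\mathfrak{p}E_2$ to $\mathfrak{p}(\pi^n E_2)$ and hence induces an isomorphism $\widetilde{E}_2 \cong \widetilde{\pi^n E_2}$. Since $E_1 \otimes_R K = E_2 \otimes_R K$ and both are finitely generated over the discrete valuation ring $R$, we may choose $n$ large enough that $\pi^n E_2 \subseteq E_1$. Thus we may assume $E_2 \subseteq E_1$, in which case $E_1/E_2$ is a finitely generated $R$-torsion $\Lambda$-module, annihilated by some $\pi^N$.

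Next I would introduce the intermediate lattices $F_i := E_2 + \pi^i E_1$ for $0 \leq i \leq N$, so that
$$E_2 = F_N \subseteq F_{N-1} \subseteq \cdots \subseteq F_1 \subseteq F_0 = E_1,$$
each $F_i$ is a $\Lambda$-lattice with $F_i \otimes_R K = E_1 \otimes_R K$, and $\pi F_{i-1} = \pi E_2 + \pi^i E_1 \subseteq E_2 + \pi^i E_1 = F_i$. By induction it therefore suffices to establish the identity $[\widetilde{F}_{i-1}] = [\widetilde{F}_i]$ in $G_0(\widetilde{\Lambda})$ whenever two $\Lambda$-lattices $L \supseteq L'$ in the same $K$-space satisfy $\pi L \subseteq L'$.

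For this key step I would write down two short exact sequences of $\widetilde{\Lambda}$-modules:
\begin{equation*}
0 \to L'/\pi L \to L/\pi L \to L/L' \to 0,
\end{equation*}
\begin{equation*}
0 \to \pi L/\pi L' \to L'/\pi L' \to L'/\pi L \to 0.
\end{equation*}
Both are exact since $\pi L \subseteq L' \subseteq L$. Multiplication by $\pi$ induces a $\Lambda$-isomorphism $L/L' \xrightarrow{\sim} \pi L/\pi L'$, so $[\pi L/\pi L'] = [L/L']$ in $G_0(\widetilde{\Lambda})$. Adding the two identities in $G_0(\widetilde{\Lambda})$ now gives
$$[\widetilde{L}] = [L'/\pi L] + [L/L'] = [\pi L/\pi L'] + [L'/\pi L] = [\widetilde{L'}],$$
which completes the inductive step and therefore the proof.

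I do not expect a serious obstacle here; the only point that requires a little care is checking that the filtration $(F_i)$ genuinely interpolates between $E_2$ and $E_1$ with the containment $\pi F_{i-1} \subseteq F_i$, which is immediate from the definition of $F_i$ and the fact that $\pi^N E_1 \subseteq E_2$.
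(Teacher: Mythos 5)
Your proof is correct, and it is essentially the argument the paper relies on: the paper's proof simply cites Theorem 32 of Serre's \emph{Repr\'esentations lin\'eaires des groupes finis}, noting it works verbatim for a general $R$-order $\Lambda$, and your reduction to nested lattices via the chain $F_i=E_2+\pi^iE_1$ together with the two short exact sequences and the isomorphism $L/L'\cong \pi L/\pi L'$ is precisely Serre's proof written out, using nothing beyond the fact that $\Lambda$ is an $R$-order.
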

\begin{proof} The proof of Theorem 32 on page 138 in \cite{Serrerep} is for $\Lambda=R[G]$ but it works perfectly well for general $R$-orders $\Lambda$ in $K[G]$.
\end{proof} 

Since $G_{0}^{t}( {\Lambda }) $ is generated by the
classes of modules $E_{1}/E_{2}$,  for such $E_{i}$,  we have shown:

\begin{cor}
The composite 

$$G_{0}^{t}(\Lambda ) \overset{\psi _{\Lambda }}{\rightarrow }G^R_{0}( \Lambda )\overset{\delta_\Lambda} {\rightarrow} G_{0}( \widetilde{\Lambda })$$
is zero.
\end{cor}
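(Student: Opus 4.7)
The plan is to check the composite vanishes on a generating set of $G_0^t(\Lambda)$, and the prose preceding the corollary already indicates which generating set to use. First I would recall from the proof of the localization sequence (as in \cite{CR2} (38.56)) that every $R$-torsion, finitely generated $\Lambda$-module $T$ admits a presentation as a quotient $E_1/E_2$ of two $\Lambda$-lattices sitting inside a common $K$-vector space with $E_2 \subset E_1$ and $E_1 \otimes_R K = E_2 \otimes_R K$; indeed, one lifts generators of $T$ to elements of a lattice $E_1$ in $T \otimes_R K_{\mathrm{rat}}$ (or simply takes any full $\Lambda$-lattice mapping onto $T$ and lets $E_2$ be its kernel), noting that $E_2 \otimes K = E_1 \otimes K$ precisely because $T$ is $R$-torsion. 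Hence such classes $[E_1/E_2]$ generate $G_0^t(\Lambda)$.

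Next I would unwind the definition of $\psi_\Lambda$ on such a generator. Writing the short exact sequence
\begin{equation*}
0 \longrightarrow E_2 \longrightarrow E_1 \longrightarrow E_1/E_2 \longrightarrow 0
\end{equation*}
of $\Lambda$-modules in which $E_1, E_2$ are $\Lambda$-lattices, the boundary map of the localization sequence sends $[E_1/E_2]$ to $[E_1]-[E_2]$ in $G_0^R(\Lambda)$. Hence
\begin{equation*}
\psi_\Lambda\bigl([E_1/E_2]\bigr) = [E_1] - [E_2].
\end{equation*}

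Applying $\delta_\Lambda = {-}\otimes_R k$ then gives
\begin{equation*}
(\delta_\Lambda \circ \psi_\Lambda)\bigl([E_1/E_2]\bigr) = [\widetilde{E}_1] - [\widetilde{E}_2],
\end{equation*}
and this vanishes in $G_0(\widetilde\Lambda)$ by Proposition \ref{se}, precisely because $E_1$ and $E_2$ span the same $K$-vector space. Since the generators $[E_1/E_2]$ cover $G_0^t(\Lambda)$, the composite $\delta_\Lambda\circ\psi_\Lambda$ is identically zero. There is no serious obstacle here: the only nontrivial input is Proposition \ref{se}, which has already been established; the remainder is a bookkeeping exercise identifying the boundary map in the localization sequence on the chosen generators.
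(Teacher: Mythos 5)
Your proposal is correct and follows the same route as the paper: $G_0^t(\Lambda)$ is generated by classes $[E_1/E_2]$ of quotients of $\Lambda$-lattices spanning the same $K$-vector space, $\psi_\Lambda$ sends such a class to $[E_1]-[E_2]$, and Proposition \ref{se} gives $[\widetilde{E}_1]=[\widetilde{E}_2]$ in $G_0(\widetilde{\Lambda})$, so the composite vanishes. The paper's own argument is exactly this (stated in the sentence preceding the corollary); your write-up merely makes the generator identification and the boundary-map computation explicit.
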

Since the composite $G_{0}^{t}( {\Lambda })\overset{\psi _{\Lambda }} {\rightarrow}
G^R_{0}( \Lambda ) \overset{\delta_\Lambda }{\rightarrow }G_{0}( 
\widetilde{\Lambda }) $ is zero, it follows from (\ref {loc}) that there is a natural ring
homomorphism, called the decomposition map, $d_{\Lambda }:G_{0}( K[
G]) \rightarrow G_{0}( \widetilde{\Lambda }) $ which
makes the following diagram commute:
\begin{equation*}
\begin{array}{ccc}
G^R_{0}( \Lambda) & \overset{\varphi _{\Lambda }}{\rightarrow } & 
G_{0}( K[ G] ) \\ 
& \delta_\Lambda \searrow & \downarrow d_{\Lambda } \\ 
&  & G_{0}( \widetilde{\Lambda }).
\end{array}
\end{equation*}
\subsection {Determinants}
In Section 2.5 we introduced the $\mathrm{Det}$ map and we recalled the equality 
$$\mathrm{Det}(K[G]^\times)=\mathrm{Hom}_{\Omega_K}(G_0(K^c[G]), K^{c\times}).$$
The counterpart of this in characteristic $p$ is again that we have two Wedderburn decompositions: 
$$\widetilde{\Lambda}^{ss}=\prod_{\chi} M_{{n_\chi}}(k_\chi), \ \ \widetilde{\Lambda}^{ss}\otimes_kk^c=\prod_{\chi'} M_{{n_\chi}}(k^c)$$ 
and the equality:  
$$\mathrm{Det}: \widetilde{\Lambda}^{ss \times}\simeq \mathrm{Hom}_{\Omega_k}(G_0(\widetilde{\Lambda}^{ss}\otimes_kk^c),  k^{c \times})$$ 
where now $\Omega_k=\mathrm{Gal}(k^c/k)$ and if $T_{\chi'}: \widetilde{\Lambda}^{ss}\rightarrow M_{n_\chi'}(k^c)$ is a representation of $\widetilde{\Lambda}^{ss}$, then for $z\in \widetilde{\Lambda}^{ss \times}$, 
$$\mathrm{Det}(z)(\chi')=\mathrm{det}(T_{\chi'}(z)).$$
\begin{prop}\label{codet} If the map $d_{\Lambda}: G_0(K^c[G])\rightarrow G_0(\widetilde \Lambda^{ss} \otimes_kk^c)$ has finite cokernel of $p$-power order,  then the canonical surjection $\beta:\Lambda^\times\rightarrow \widetilde{\Lambda}^{ss \times}$ induces a group homomorphism $\gamma: \mathrm{Det}(\Lambda^\times)
\rightarrow \mathrm{Det}(\widetilde{\Lambda}^{ss \times})$ such that the following diagram is commutative:
 \[\xymatrix{
\Lambda^\times\ar[r]^{\beta}\ar[d]_{Det}&\widetilde{\Lambda}^{ss \times}\ar[d]_{Det} \\
Det(\Lambda^{\times})\ar[r]^{\gamma}&Det(\widetilde{\Lambda}^{ss \times}). \\}\]
Moreover $\gamma$ is a surjective homomorphism whose kernel is $\mathrm{Det(} 1+J( \Lambda ))$.
\end{prop}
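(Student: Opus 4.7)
The plan is to define $\gamma$ via reduction of determinants modulo $\pi$, using the decomposition map to transfer between $K^c$-characters of $G$ and modular characters over $k^c$. The key identity I first establish is that for $x\in\Lambda^\times$ and an irreducible $K^c$-character $\chi'$ of $G$ afforded by a $\Lambda\otimes_R R^c$-lattice $E$, the value $\mathrm{Det}(x)(\chi')=\det T_{\chi'}(x)$ lies in $R^{c\times}$ and its reduction modulo the maximal ideal of $R^c$ equals the determinant of $\beta(x)$ acting on $\widetilde E=E\otimes_{R^c}k^c$. Since $[\widetilde E]=d_\Lambda(\chi')$ in $G_0(\widetilde\Lambda^{ss}\otimes_kk^c)$ (Proposition \ref{se} guarantees this is well-defined), writing $d_\Lambda(\chi')=\sum_{\bar\chi}m_{\bar\chi,\chi'}[\bar\chi]$ and using multiplicativity of determinants along composition series yields
\begin{equation*}
\overline{\mathrm{Det}(x)(\chi')}\;=\;\prod_{\bar\chi}\mathrm{Det}(\beta(x))(\bar\chi)^{m_{\bar\chi,\chi'}}.
\end{equation*}

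Using this identity I establish the well-definedness of $\gamma(\mathrm{Det}(x)):=\mathrm{Det}(\beta(x))$ as follows. If $\mathrm{Det}(x)=\mathrm{Det}(y)$, set $f(\bar\chi)=\mathrm{Det}(\beta(x))(\bar\chi)\cdot\mathrm{Det}(\beta(y))(\bar\chi)^{-1}\in k^{c\times}$; the identity above forces $\prod_{\bar\chi}f(\bar\chi)^{m_{\bar\chi,\chi'}}=1$ for every $\chi'$, which means that $f$, regarded as a homomorphism $G_0(\widetilde\Lambda^{ss}\otimes_kk^c)\to k^{c\times}$, kills the image of $d_\Lambda$ after base change to $k^c$. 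By hypothesis this image has $p$-power cokernel, so $f(\bar\chi)^{p^N}=1$ for some $N$ and every $\bar\chi$; since $k^{c\times}$ has no nontrivial $p^N$-th roots of unity in characteristic $p$, we conclude $f\equiv 1$ and hence $\mathrm{Det}(\beta(x))=\mathrm{Det}(\beta(y))$. Commutativity of the diagram is built into the definition of $\gamma$.

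For the remaining assertions, $\beta$ is surjective because $\pi\Lambda\subset J(\Lambda)$ implies that the kernel of $\Lambda\to\widetilde\Lambda^{ss}$ is $J(\Lambda)$ and units lift across Jacobson radicals; this gives surjectivity of $\gamma$ at once. The inclusion $\mathrm{Det}(1+J(\Lambda))\subset\ker\gamma$ is immediate from $\beta(1+J(\Lambda))=1$. For the reverse inclusion, it suffices to show that every $v\in\ker(\mathrm{Det}_{\widetilde\Lambda^{ss}})=\prod_\chi SL_{n_\chi}(k_\chi)$ lifts to some $y\in\Lambda^\times$ with $\mathrm{Det}(y)=1$; then, given $x\in\Lambda^\times$ with $\mathrm{Det}(\beta(x))=1$, choosing $y$ with $\beta(y)=\beta(x)$ and $\mathrm{Det}(y)=1$ yields $xy^{-1}\in 1+J(\Lambda)$ with $\mathrm{Det}(x)=\mathrm{Det}(xy^{-1})\in\mathrm{Det}(1+J(\Lambda))$. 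Using that each $k_\chi$ is finite so that $SL_{n_\chi}(k_\chi)$ is generated by elementary matrices $I+\alpha E_{ij}$, I reduce the lifting problem to lifting each generator. A Wedderburn--Malcev style construction (splitting $\widetilde\Lambda^{ss}$ inside $\widetilde\Lambda$, followed by Hensel-type refinement across $\pi\Lambda$ using completeness of $R$) produces square-zero lifts $\widetilde E_{ij}^{(\chi)}\in\Lambda$ of the matrix units, and then $1+\widetilde\alpha\widetilde E_{ij}^{(\chi)}$ is a unit of $\Lambda$ whose image under every $T_{\chi'}$ is unipotent, hence of trivial $\mathrm{Det}$.

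The main obstacle is the lifting step in the kernel argument: to secure $(\widetilde E_{ij}^{(\chi)})^2=0$ exactly in $\Lambda$ together with appropriate commuting behaviour with the scalar lifts $\widetilde\alpha$, one needs Wedderburn--Malcev applied to $\widetilde\Lambda$ over the separable quotient $\widetilde\Lambda^{ss}$, followed by a delicate successive-approximation argument to correct the lifts modulo higher powers of $J(\Lambda)$. The well-definedness step, while formally a diagram chase, also relies crucially on the characteristic-$p$ fact that raising to a $p^N$-th power is injective on $k^{c\times}$, which is precisely what converts the $p$-power cokernel hypothesis into the desired equality of determinants rather than merely an equality up to $p^N$-th powers.
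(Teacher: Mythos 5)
Your construction of $\gamma$ and the proof of surjectivity follow the paper's own route: the same reduction identity $\overline{\mathrm{Det}(x)(\chi')}=\mathrm{Det}(\beta(x))(d_\Lambda(\chi'))$ (which the paper simply quotes from Fr\"ohlich), the same use of the $p$-power cokernel hypothesis, and the same appeal to injectivity of $z\mapsto z^{p^N}$ on $k^{c\times}$; this half is essentially identical. Where you genuinely diverge is the kernel computation. The paper proves that $SL(\Lambda)\to SL(\widetilde{\Lambda}^{ss})$ is onto by observing that $SL_{n_i}(k_i)$ lies in the commutator subgroup of $GL_{n_i}(k_i)$ (its Lemma 4.8, which needs residue characteristic different from $2$ to handle the small exceptional cases), so commutators of arbitrary lifted units — automatically of trivial $\mathrm{Det}$ — supply the required lifts, and then it concludes by the snake lemma applied to the two $1\to SL\to(\;)^\times\to\mathrm{Det}\to 1$ sequences; your direct argument (pick $y$ with $\beta(y)=\beta(x)$ and $\mathrm{Det}(y)=1$, so $xy^{-1}\in 1+J(\Lambda)$) is an equivalent formulation of that last step. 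Instead of commutators you lift elementary transvections one at a time via square-zero lifts of matrix units; this is precisely the alternative the paper sketches in its Remark 4.9 via Azumaya's theorem: $\Lambda$ contains a separable $R$-subalgebra (an inertial subalgebra, isomorphic to $\oplus_i M_{n_i}(R_i)$) mapping onto $\widetilde{\Lambda}^{ss}$, which gives you the matrix units with exact relations and commuting scalar lifts at once, so the ``delicate successive-approximation'' step you flag as the main obstacle is a citable theorem rather than something you must re-prove (it uses completeness of $R$ and separability of $\widetilde{\Lambda}^{ss}$ over the finite field $k$, both available here). The trade-off: your route is somewhat heavier in its reliance on the lifting theorem, but it makes the triviality of $\mathrm{Det}$ on the lifts transparent (they are unipotent in every $K^c$-representation) and, unlike the paper's Lemma 4.8, it imposes no restriction on the residue characteristic, since $SL_{n}(k_i)$ is generated by transvections over any finite field.
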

\begin{proof} We recall from Section 2.5 that we have a group homomorphism 
$$\mathrm{Det}:  \Lambda^\times\rightarrow \mathrm{Hom}_{\Omega_K} (G_0(K^c[G], O_K^{c\times})$$ and so a reduced map:
$$ \widetilde{\mathrm{Det}}:  \Lambda^\times\rightarrow \mathrm{Hom}_{\Omega_K} (G_0(K^c[G], k^{c\times}).$$ 
Moreover from \cite{F83} II Section 4 we know that 
$$\widetilde{\mathrm{Det}(\lambda)}(\chi)=\mathrm{Det}(\beta(\lambda))(d_{\Lambda}(\chi))\  \forall \lambda \in \Lambda^\times, \chi \in G_0(K^c[G]).$$

 Now   consider $\lambda\in\Lambda^\times$ such that $\mathrm{Det}(\lambda)=1$ and $\theta \in G_0(\widetilde{\Lambda}^{ss}\otimes k^c)$. Then there exists $\chi \in G_0(K^c[G]) $ such that $p^n\theta=d_{\Lambda}(\chi)$. Therefore we obtain 
$$\mathrm{Det}(\beta(\lambda))(\theta)^{p^n}=\mathrm{Det}(\beta(\lambda))(d_{\Lambda}(\chi))=\widetilde{\mathrm{Det}(\lambda)}(\chi)=1.$$

Since  $k$ is finite of characteristic $p$ the map $z\rightarrow z^{p^n}$ is an automorphism of $k^{c\times}$ we conclude that $\mathrm{Det}(\beta(\lambda))(\theta)=1$. Therefore we have proved that given 
$x\in \Lambda^\times$ such that $\mathrm{Det}(x)=1$, then $\mathrm{Det}(\beta(x))=1$ which indeed proves the existence of the  homomorphism $\gamma$,  which is surjective since $\mathrm{Det}\circ\beta$ is.

We now introduce a small amount of notation. 

\begin{definition}We set  
$$\ \ SL( \Lambda):=\mathrm{\ker }(\mathrm{Det}:\Lambda ^{\times}\rightarrow \mathrm{Hom}_{\Omega_K}( G_{0}( K^c[ G]), O_{K^c}^{\times })). $$
$$SL(\widetilde{ \Lambda}^{ss}):=\mathrm{\ker }(\mathrm{Det}: \widetilde{\Lambda} ^{ss \times}\rightarrow \mathrm{Hom}_{\Omega_k}( G_{0}(\widetilde{ \Lambda}^{ss}\otimes_kk^c), k^{c\times })). $$
\end{definition} 

For a field $F$ and an integer $n$ we denote by $\mathrm{GL}_n(F)$ the group of $n\times n$ invertible matrices over $F$ and by $SL_n(F)$ the group of those matrices whose determinant is equal to $1$. 
\begin{lem} \label{SLS}
Suppose that $R$ has residue characteristic different from $2$.  Then:
 \begin{enumerate}
 \item $SL_{n}( k) $ is a perfect group unless $
n=2$ and $k=\mathbb {F}_{3}$ in which case $SL_{2}( \mathbb{F}
_{3}) $ is contained in the commutator group $[
GL_{2}( \mathbb{F}_{3}) ,GL_{2}( \mathbb{F}_{3}) 
] $.Thus in all cases $ SL_{n}( k)$ is contained in $[
GL_{n}( k) ,GL_{n}( k)] $. 

\item The restriction of $\beta$ to $SL(\Lambda)$ induces a group homomorphism  $\alpha: SL( \Lambda) \rightarrow
SL( \widetilde{\Lambda }^{ss})$  which is  surjective.
\end{enumerate}
\end{lem}
\begin{proof} The proof of (1)  can be found in \cite{Lang}  Algebra XIII,  Theorem 8.3 and 9.2. In order to prove (2) 
we write $\widetilde{\Lambda }^{ss}=\oplus _{i}M_{n_{i}}( k_i)$,  so that 
$$\widetilde{\Lambda }^{ss\times }=\oplus _{i}GL_{n_{i}}( k_i)\ \mathrm{and}\ 
 SL( \widetilde{\Lambda }^{ss}) =\oplus
_{i}SL_{n_{i}}( k_i).$$

But $\Lambda ^{\times }$ maps onto $\widetilde{\Lambda }^{ss\times }$,  and
so $[ \Lambda ^{\times },\Lambda ^{\times }] \subset SL(
\Lambda) $ maps onto $[ \widetilde{\Lambda }^{ss\times },%
\widetilde{\Lambda }^{ss\times }]$ which contains $SL( \widetilde{\Lambda }
^{ss})$ as proved in (1). Therefore the homomorphism $\alpha$ is surjective. 
\end{proof}
\begin{rem}  We mention that there is
an alternative proof of the above using a theorem of Azumaya.  It follows from \cite{Azu} that $\Lambda$ contains a separable $R$-subalgebra $\Lambda'$ such that 
$\widetilde \Lambda^{' ss}\simeq \widetilde \Lambda^{ss}$. This implies that in order to prove Lemma \ref{SLS} we may assume that $\Lambda$ is separable.
In this case $\Lambda'$  is a maximal order of the form $\oplus _{i}M_{n_{i}}(R_i)$ where $R_i$ is a $p$-adic ring with residue field $k_i$. In order to prove the lemma is suffices to prove that the morphism ${SL}_{n_i}(R_i)\rightarrow {SL}(k_i)$ is surjective for every integer $i$ which is immediate. 
\end{rem}
In order to  complete the proof of the proposition we consider the commutative diagram with
exact rows:
\[\xymatrix{1\ar[r]^{}&
SL(\Lambda)\ar[r]^{}\ar[d]_{\alpha}&\Lambda^{\times}\ar[r]^{}
\ar[d]_{\beta}& \mathrm{Det}(\Lambda^{\times})\ar[r]^{}\ar[d]_{\gamma} &1\\
1\ar[r]^{}&SL(\widetilde{\Lambda}^{ss})\ar[r]^{}&\widetilde{\Lambda}^{ss}\ar[r]^{}&
\mathrm{Det}(\widetilde{\Lambda}^{ss})\ar[r]^{}&1.\\
}\]
By Lemma \ref{SLS} we know that  $\alpha$ is surjective; morover   $\mathrm{Ker}(\beta)=1+J(\Lambda)$. Therefore by the Snake lemma we conclude that  
$\mathrm{Ker}(\gamma)=\mathrm{Det}(1+J(\Lambda))$
\end{proof}
\subsection{Hopf orders} We keep the previous notations  but now in addition we assume that $\Lambda
=\Lambda _{G}$ is a Hopf order in $K[ G] $.  We start by observing
that for two $\Lambda $-modules $M, N$ the tensor product $M\otimes _{R}N$ is
a $\Lambda $-module via the comultiplication map of $\Lambda $: so for $m\in
M,\ n\in N,\ \lambda \in \Lambda $ with $\Delta ( \lambda )
=\sum \lambda _{( 1) }\otimes \lambda _{( 2) }$ we
set 
$$\lambda ( m\otimes n) =\sum \lambda _{( 1) }m\otimes
\lambda _{( 2) }n.$$
If $ M$  is $R$-free,  then $\otimes _{R}M$ preserves exact sequences of $
\Lambda $-lattices; so, using $G_{0}( \Lambda) =G_{0}^{R}(
\Lambda)$,  we see that $G_{0}( \Lambda) $ is a  ring.
Furthermore the twist isomorphism $N\otimes M\cong M\otimes N$ implies that $
G^R_{0}( \Lambda) $ is a commutative ring.

Because the map $\varphi _{\Lambda }=\otimes _{R}K:G^R_{0}( \Lambda)
\rightarrow G_{0}( K[ G] ) $ is a ring homomorphism, $
\mathrm{Im}\psi _{\Lambda }=\mathrm{Ker} \varphi _{\Lambda }$ is an ideal of $G^R_{0}(
\Lambda ) $.
\begin{thm} \label{ideal}The square of the ideal $\mathrm{Ker} \varphi _{\Lambda }$ is equal to $(0)$ in $G^R_{0}( \Lambda)$;  that is to say 
$$\mathrm{Ker} \varphi _{\Lambda }^2=(0).$$
\end{thm}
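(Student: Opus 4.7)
The plan is to pin down $\mathrm{Ker}\,\varphi_{\Lambda}$ via the localization sequence (\ref{loc}), then analyze products of its generators. Since $\mathrm{Ker}\,\varphi_{\Lambda} = \mathrm{Im}\,\psi_{\Lambda}$ and, by (\ref{G}), $G_{0}^{t}(\Lambda) \cong G_{0}(\widetilde{\Lambda})$, the $\pi$-adic filtration of any $R$-torsion module reduces the problem to the following: for any two finitely generated $\Lambda$-modules $M, N$ with $\pi M = \pi N = 0$, show that $[M] \cdot [N] = 0$ in $G_{0}^{R}(\Lambda) \cong G_{0}(\Lambda)$.

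The first concrete step is to make the Hopf ring product on $G_{0}^{R}(\Lambda)$ explicit for such torsion classes. I would present $M$ as $\Lambda^{a}/K_{M}$ with $K_{M}$ a $\Lambda$-lattice (automatic over the DVR $R$, since $K_{M}$ embeds in the $R$-free module $\Lambda^{a}$), and similarly $N = \Lambda^{b}/K_{N}$; then tensoring with lattices (exact) and applying the Tor long-exact sequence obtained by tensoring $0 \to K_{M} \to \Lambda^{a} \to M \to 0$ with $N$ yields the identity
\[
[M] \cdot [N] \;=\; [M \otimes_{R} N] \;-\; [\mathrm{Tor}_{1}^{R}(M, N)],
\]
where both terms inherit a $\Lambda$-module structure from the comultiplication $\Delta : \Lambda \to \Lambda \otimes_{R} \Lambda$ (diagonally on the tensor product and via its derived version on $\mathrm{Tor}_{1}^{R}$).

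The heart of the argument is then to show, for $M$ and $N$ both killed by $\pi$, an isomorphism of $\Lambda$-modules $M \otimes_{R} N \cong \mathrm{Tor}_{1}^{R}(M, N)$. Both sides are canonically $M \otimes_{k} N$ as $R$-modules, so the real issue is the compatibility of the two $\Lambda$-actions. The cleanest route is through the Koszul-type resolution $0 \to R \overset{\pi}{\to} R \to k \to 0$, in which every term is a \emph{trivial} $\Lambda$-module via the counit $\varepsilon$. Tensoring with $N$ over $R$ under the diagonal $\Lambda$-action and invoking the Hopf counit identity $\sum \varepsilon(\lambda_{(1)}) \lambda_{(2)} = \lambda$ gives $R \otimes_{R} N \cong N$ and $\mathrm{Tor}_{1}^{R}(k, N) \cong N$ as $\Lambda$-modules carrying the original action on $N$. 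A change-of-rings argument of the form $M \otimes_{R}^{L} N \cong M \otimes_{k}^{L} (k \otimes_{R}^{L} N)$ then identifies both $M \otimes_{R} N$ and $\mathrm{Tor}_{1}^{R}(M, N)$ with $M \otimes_{k} N$ endowed with the diagonal action via the reduced comultiplication $\widetilde{\Delta} : \widetilde{\Lambda} \to \widetilde{\Lambda} \otimes_{k} \widetilde{\Lambda}$. Hence $[M] \cdot [N] = 0$, completing the proof.

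The main obstacle is precisely this final identification: the underlying $R$-module isomorphism between $\mathrm{Tor}_{1}^{R}(M, N)$ and $M \otimes_{R} N$ is immediate from the Koszul computation, but the $\Lambda$-action on $\mathrm{Tor}_{1}^{R}$ is defined through the Hopf comultiplication acting on a resolution, and verifying that it really matches the explicit diagonal action on $M \otimes_{R} N$ demands careful bookkeeping. Routing everything through the trivial-action Koszul resolution of $k$, where the counit absorbs one tensor factor cleanly, is what makes this step manageable.
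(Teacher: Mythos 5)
Your argument is correct, and at the decisive step it genuinely diverges from the paper's. Both proofs share the same skeleton: identify $\mathrm{Ker}\,\varphi_\Lambda=\mathrm{Im}\,\psi_\Lambda$ via (\ref{loc}), use the filtration to reduce to generators $\psi_\Lambda([M])$ with $\pi M=0$, present such $M$ as a quotient of lattices, and tensor short exact sequences of lattices. The paper then cancels the resulting expression $[F\otimes_k\widetilde E_1]-[F\otimes_k\widetilde E_2]$ by invoking Lemma \ref{int} together with Proposition \ref{se} (the transport of Serre's Theorem 32 to arbitrary orders), whereas you cancel via the intrinsic identity $\psi_\Lambda([M])\,\psi_\Lambda([N])=[M\otimes_R N]-[\mathrm{Tor}_1^R(M,N)]$ plus a $\Lambda$-equivariant isomorphism $\mathrm{Tor}_1^R(M,N)\cong M\otimes_R N$ when $\pi M=\pi N=0$. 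The step you flag as the main obstacle does go through, and in fact needs no derived change-of-rings machinery: with $0\to K_M\to L_M\to M\to 0$, $\pi L_M\subseteq K_M$, and $\pi N=0$, one has $K_M\otimes_R N=\widetilde K_M\otimes_k N$, $L_M\otimes_R N=\widetilde L_M\otimes_k N$, and since $N$ is $k$-flat the kernel computing $\mathrm{Tor}_1^R(M,N)$ is $\bigl(\pi L_M/\pi K_M\bigr)\otimes_k N$; multiplication by $\pi$ is $\Lambda$-linear, so $\pi L_M/\pi K_M\cong L_M/K_M=M$ equivariantly and the kernel is $M\otimes_k N$ with the diagonal $\widetilde\Lambda$-action, exactly the structure occurring in your four-term sequence. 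Comparing the two routes: the paper's keeps the Grothendieck-group bookkeeping minimal by leaning on Proposition \ref{se}, which it has already established; yours bypasses Proposition \ref{se} entirely, at the cost of tracking the Hopf-diagonal structure through $\mathrm{Tor}_1$, and in return gives the clean closed formula for the product of two torsion classes, which makes the vanishing transparent.
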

First we require\ some elementary properties of the tensor
product:
\begin{lem}\label{int}
\begin{enumerate}
\item  If $T_{i}$ are finitely generated $\widetilde{\Lambda }$-modules for $i=1,2$ then $T_{1} \otimes
_{R}T_{2}=T_{1}\otimes _{k}T_{2}$. 

\item  If $E$ is a $ \Lambda $-lattice and if $T$
 is a  finitely generated $\widetilde{\Lambda }$-module then writing  $\widetilde{E}=E/\mathfrak{p}E$
$$E\otimes _{R}T=\widetilde{E}\otimes _{R}T=\widetilde{E}\otimes _{k}T.$$

\end{enumerate}
\end{lem}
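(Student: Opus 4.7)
The plan is to observe that both identities follow from a single trivial fact: if an $R$-module is annihilated by $\mathfrak{p}$, then its $R$-module structure factors through $k=R/\mathfrak{p}$, and the tensor product can be computed in either category. The Hopf-order structure on $\Lambda$ plays no role here; both claims are statements about $R$-modules, and the $\Lambda$-action is carried along by functoriality of $\otimes_R$ via the comultiplication, so I will suppress it.

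For part (1), since $T_1$ and $T_2$ are $\widetilde{\Lambda}$-modules they are in particular $k$-modules, so $\mathfrak{p}$ annihilates each factor. I would argue that the natural surjection $T_1\otimes_R T_2\twoheadrightarrow T_1\otimes_k T_2$ is in fact a bijection: for $r\in R$ with image $\bar r\in k$ we have $rt_i=\bar r t_i$, so the defining relations of $\otimes_k$ already hold in $\otimes_R$. Equivalently, one invokes the universal property: the $k$-bilinear map $T_1\times T_2\to T_1\otimes_R T_2$ is a fortiori $R$-bilinear, so factors through $T_1\otimes_R T_2$, yielding the inverse map.

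For part (2), I would apply right-exactness of $-\otimes_R T$ to the short exact sequence $0\to\mathfrak{p}E\to E\to\widetilde{E}\to 0$, obtaining
\[
\mathfrak{p}E\otimes_R T\longrightarrow E\otimes_R T\longrightarrow \widetilde{E}\otimes_R T\longrightarrow 0.
\]
The leftmost arrow is zero: every simple tensor has the form $\pi e\otimes t$, and $\pi e\otimes t=e\otimes\pi t=0$ because $\pi\in\mathfrak{p}$ annihilates $T$. Hence $E\otimes_R T\cong\widetilde{E}\otimes_R T$, and part (1) applied to $\widetilde{E}$ and $T$ then identifies this with $\widetilde{E}\otimes_k T$.

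There is no serious obstacle; the lemma is a bookkeeping step whose real content is that, when both factors of a tensor product are killed by $\mathfrak{p}$, passage from $\otimes_R$ to $\otimes_k$ is free. I expect it will be used in the proof of Theorem \ref{ideal} to reduce products of torsion classes in $G_0^R(\Lambda)$ to computations entirely inside $G_0(\widetilde{\Lambda})$.
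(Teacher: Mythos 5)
Your proof is correct and follows essentially the same route as the paper: for (2) the paper applies right-exactness of $-\otimes_R T$ to $0\to\mathfrak{p}E\to E\to\widetilde{E}\to 0$ and then invokes (1), exactly as you do (you additionally spell out why the leftmost arrow vanishes, which the paper leaves implicit). For (1) the paper simply cites a textbook exercise, and your universal-property argument is the standard proof of that fact.
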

\begin{proof} For (1) see Ex. 1.12 in \cite{Ro}. For (2) we use
the exact sequence
$$\mathfrak{p}E\otimes _{R}T{\rightarrow }E\otimes _{R}T{\rightarrow }\widetilde{E}\otimes _{R}T\rightarrow 0$$
and by (1) we know $\widetilde{E}\otimes _{R}T=\widetilde{E}\otimes _{k}T$.

\end{proof}

\noindent{\bf Proof of Theorem \ref{ideal}}
\begin{proof} From (\ref{G}) we know that $G_{0}^{t}( \Lambda ) =G_{0}( \widetilde{\Lambda }) $
and  that $G_{0}( \widetilde{\Lambda })$ is generated by
terms $[ E_{1}/E_{2}] $ where $E_{1}$ and $E_{2}$ are $\Lambda $-lattices with $\pi
E_{1}\subset E_{2}\subset E_{1}$ which span the same vector space (see \cite{Se68}  Section 2.3, Corollary). It will therefore suffice to
show that for any such pairs of $\Lambda$-lattices, $( D_1, D_2)$ and $(E_1, E_2)$ one has 
$$\psi_{\Lambda}([ D_{1}/D_{2}]).\psi_{\Lambda}([ E_{1}/E_{2}]) =([ D_{1}] -[ D_{2}]) .( [ E_{1}] -[E_{2}])=0. $$
To see this, we write $F=D_{1}/D_{2}$;  then we have the exact sequence
\begin{equation}\label{D}  0\rightarrow D_{2}\rightarrow D_{1}\rightarrow F\rightarrow 0. \end{equation}
 From (\ref{D}),  applying $\otimes _{R}E_{i}$,  we have the exact sequences for $
i=1,2$
$$0\rightarrow D_{2}\otimes _{R}E_{i}\rightarrow D_{1}\otimes_{R}E_{i}\rightarrow F\otimes _{R}E_{i}\rightarrow 0.$$
But by  Lemma \ref{int}
$$F\otimes _{R}E_{i}=F\otimes _{R}\widetilde{E}_i=F\otimes _{k}\widetilde{E}_{i}.$$
So in summary we have shown
$$\psi_{\Lambda}([ D_{1}/D_{2}]).\psi_{\Lambda}([ E_{1}/E_{2}]) =[D_{1}\otimes
_{R}E_{1}]-[D_{2}\otimes _{R}E_{1}]-[D_{1}\otimes _{R}E_{2}]+
[D_{2}\otimes _{R}E_{2} ]$$
$$=F\otimes _{k}\widetilde{E}_{1}-F\otimes _{k}\widetilde{E}_{2}$$
and the latter term is zero by Proposition \ref{se}.
\end{proof}

\begin{rem} Following the argument  of \cite{CR2} Theorem (39.16) we can prove that 
$$\mathrm{Ker}(\varphi_{\Lambda})=\{x \in G_0(\Lambda)\ \mid
\ x^2=0\}.$$
Indeed we know from Theorem \ref{ideal}  that $x^2=0$ for any $x\in \mathrm{Ker}(\varphi_{\Lambda})$. If now $x^2=0$, then $\varphi_{\Lambda}(x)^2=0$ and since $G_0(K[G])$ has no non zero nilpotent elements we conclude that $x\in \mathrm{Ker}(\varphi_{\Lambda})$.
\end{rem}
For future reference we note the following important result of Jensen and
Larson \cite{JL}:
\begin{thm}\label{JL}
If $G$  is an abelian group, if $\Lambda $ is an 
$R$-Hopf order in $K[ G]$, and if $K$
is assez gros for $G$;  then $\mathrm{Ker }(\varphi _{\Lambda })=
(0)$;  that is to say $\Lambda$ has the Swan
property. 
\end{thm}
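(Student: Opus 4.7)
The plan is to establish $\psi_\Lambda=0$ directly, which in combination with the surjectivity of $\varphi_\Lambda$ from the localization sequence (\ref{loc}) yields the desired isomorphism. The argument proceeds in three steps and relies on the commutativity of $\Lambda$ together with the large field hypothesis; notably the Hopf structure of $\Lambda$ plays no role beyond producing a commutative $R$-order.

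First, I would use the assez gros hypothesis to produce $|G|$ integral characters of $\Lambda$. Since $K$ contains the $e$-th roots of unity, the Wedderburn decomposition reads $K[G]=\prod_{\chi\in\widehat G} K$, and each projection $\chi\colon K[G]\to K$ restricts to an $R$-algebra map $\chi\colon \Lambda\to K$. Because $\Lambda$ is finite, hence integral, over $R$, the image $\chi(\Lambda)$ consists of elements integral over $R$, so is contained in the integral closure of $R$ in $K$, which is $R$ itself. Thus each $\chi$ defines a surjective $R$-algebra map $\Lambda\to R$, whose kernel $\mathfrak{p}_\chi$ is a minimal prime of $\Lambda$. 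Since $\Lambda$ is $R$-flat and $\mathrm{Spec}(K[G])$ has exactly $|\widehat G|=|G|$ components, the distinct primes $\{\mathfrak{p}_\chi\}_{\chi\in\widehat G}$ exhaust the minimal primes of $\Lambda$.

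The crucial geometric input is the next step: every residue field of $\widetilde\Lambda$ equals $k$. Indeed any maximal ideal $\mathfrak{m}$ of $\Lambda$ contains some minimal prime $\mathfrak{p}_\chi$ and therefore lies in $V(\mathfrak{p}_\chi)=\mathrm{Spec}(\Lambda/\mathfrak{p}_\chi)\cong \mathrm{Spec}(R)$, whose unique closed point has residue field $k=R/\pi R$. It follows that $\widetilde\Lambda^{ss}$ is a product of copies of $k$, and every simple $\widetilde\Lambda^{ss}$-module is a one-dimensional $k$-space $k_\chi=\widetilde\Lambda/\mathfrak{m}_\chi$ attached to some character $\chi$.

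Finally I would kill each such simple module via a standard shift argument. Taking $L_\chi=R$ with $\Lambda$ acting through $\chi\colon \Lambda\to R$ yields a rank-one $\Lambda$-lattice satisfying $L_\chi/\pi L_\chi=k_\chi$. Multiplication by $\pi$ provides a $\Lambda$-linear isomorphism $L_\chi\xrightarrow{\sim}\pi L_\chi$, whence $[L_\chi]=[\pi L_\chi]$ in $G_0(\Lambda)$, and the short exact sequence $0\to\pi L_\chi\to L_\chi\to k_\chi\to 0$ forces $[k_\chi]=0$. Since the $[k_\chi]$ generate $G_0(\widetilde\Lambda^{ss})\cong G_0(\widetilde\Lambda)\cong G_0^t(\Lambda)$ by (\ref{gss}) and (\ref{G}), this gives $\psi_\Lambda=0$, completing the proof.
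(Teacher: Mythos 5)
Your proof is correct, and it takes a route the paper does not: the paper gives no argument for this statement at all, simply quoting Jensen--Larson, and its closest in-house analogue (the $l$-elementary case, Theorem \ref{el2}) is proved with genuinely Hopf-theoretic tools -- the connected-\'etale sequence, Larson's theory of integrals, maximal orders -- whereas you use none of the Hopf structure. Your three steps check out: since $K[G]=\prod_{\chi}K$, integrality of $\Lambda$ over the integrally closed ring $R$ forces each character to give a surjection $\Lambda\to R$ with prime kernel $\mathfrak{p}_\chi$; every maximal ideal $\mathfrak{m}$ contains some $\mathfrak{p}_\chi$ (your minimal-prime argument is terse but standard -- $\pi$ is a non-zerodivisor on the $R$-flat ring $\Lambda$, so no minimal prime contains $\pi$ and all of them contract from $K[G]$; alternatively $\prod_\chi\mathfrak{p}_\chi\subseteq\bigcap_\chi\mathfrak{p}_\chi=0\subseteq\mathfrak{m}$ gives the containment directly), whence $\Lambda/\mathfrak{m}\cong k$ and $\mathfrak{m}=\mathfrak{p}_\chi+\pi\Lambda$, so every simple $\widetilde{\Lambda}$-module is $L_\chi/\pi L_\chi$ for the rank-one lattice $L_\chi=\Lambda/\mathfrak{p}_\chi$; and then $\psi_\Lambda([L_\chi/\pi L_\chi])=[L_\chi]-[\pi L_\chi]=0$, which together with the exactness of (\ref{loc}) gives the isomorphism -- exactly the shift maneuver the paper uses in the final lemma of the proof of Theorem \ref{el2} (surjectivity of the decomposition map kills $\psi$). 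What your approach buys is elementarity and extra generality: the argument shows the Swan property for \emph{any} commutative $R$-order in a split commutative semisimple $K$-algebra, the abelian and assez gros hypotheses entering only to guarantee that $K[G]$ splits as $\prod_\chi K$; what the paper's (cited and Hopf-theoretic) machinery buys instead is applicability beyond the split commutative setting, e.g.\ the $l$-elementary case where the order is no longer commutative.
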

We \ have seen in Section 4.1  that group rings and maximal orders have the Swan
property. Moreover, any Hopf order $\Lambda $ which is connected has $
G_{0}^{t}( \Lambda )$ generated by the trivial simple module $
k$ (using augmentation and reduction) and $[ k] =[ R]
 -[ \pi R] =[ R] -[ R] =0$;  hence
such Hopf orders also have the {Swan property}. Presently we shall
also show that if $
K$ is assez gros for $G$, then $\Lambda _{G}$ has the Swan property whenever $G$ is an $l$-elementary group with $l\neq p$ and $l\neq 2$. 
\vskip 0.1 truecm
\noindent{\bf Conjecture.} This large number of examples leads us to ask
whether all Hopf orders have the Swan property when $R$ is local;  that is to say whether $G_0^R(\Lambda)$ is  a reduced ring for all Hopf orders $\Lambda$ in $K[G]$.
\vskip 0.1 truecm
\noindent{\bf Character action.} Our aim is to introduce various modules over $G_0(K[G])$. Since $\mathrm{Im}(\psi_{\Lambda})$ is an ideal of $G_0^R(\Lambda)$,  it has a structure of $G_0^R(\Lambda)$-module. By  Theorem \ref{ideal} we observe that $G_0^R(\Lambda)$ acts on $\mathrm{Im}(\psi_{\Lambda})$ via 
$G_0^R(\Lambda)/\mathrm{Ker}(\varphi_{\Lambda})=G_0(K[G])$.  The ring homomorphism $\delta_{\Lambda}$ induces an action of $G_0^R(\Lambda)$ on $G_0(\widetilde{\Lambda})$ and thus an action of $G_0(K[G])$, since $\delta_{\Lambda}\circ\psi_{\Lambda}=0$. Finally it follows from the very definition of this action that
$\mathrm{Im}(\delta_{\Lambda})=\mathrm{Im}(d_{\Lambda})$ is a submodule of $G_0(\widetilde{\Lambda})$ and so that $\mathrm{coker}(\delta_{\Lambda})$ is also a $G_0(K[G])$-module.

\subsection{ Change of groups and Frobenius structure} 
In this subsection $\Lambda =\Lambda _{G}$ is  again a Hopf order in $K
[ G]$. 
\begin{prop}\label{sous} If $H$ is a subgroup of $G$, then $\Lambda_H:=\Lambda \cap K[H]$ is an $R$-Hopf order in $K[H]$. Moreover, $\Lambda$ is a free $\Lambda_H$-module and 
$\Lambda_H$ is a direct summand of $\Lambda$.

\end{prop}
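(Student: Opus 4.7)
My strategy has three parts: (i) establishing that $\Lambda_H$ is a Hopf order via Cartier duality, (ii) proving freeness by reducing modulo $\mathfrak p$ and applying the Nichols--Zoeller theorem, and (iii) upgrading freeness to a direct-summand decomposition by arranging a basis of $\Lambda$ over $\Lambda_H$ containing $1$.

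For (i), I would pass to the $R$-dual $A := \Lambda^D \subset \mathrm{Map}(G,K)$ and consider the restriction Hopf-algebra surjection $\rho : \mathrm{Map}(G,K) \twoheadrightarrow \mathrm{Map}(H,K)$, $f \mapsto f|_H$, dual to $K[H] \hookrightarrow K[G]$. Its image $A^H := \rho(A)$ is a finitely generated $R$-submodule spanning $\mathrm{Map}(H,K)$ over $K$ and stable under the Hopf operations, hence an $R$-Hopf order in $\mathrm{Map}(H,K)$. The compatibility of the duality pairings then identifies $(A^H)^D = \{y \in K[H] : \langle A, y\rangle \subset R\} = K[H] \cap \Lambda = \Lambda_H$, so $\Lambda_H$ is the $R$-dual of a Hopf order, and therefore itself an $R$-Hopf order in $K[H]$.

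For (ii), the first observation is that $\Lambda/\Lambda_H$ is $R$-torsion-free: if $\pi\lambda \in \Lambda_H$ then $\pi\lambda \in K[H]$, forcing $\lambda \in K[H] \cap \Lambda = \Lambda_H$. Hence $\Lambda/\Lambda_H$ is $R$-free, so the reduction $\widetilde\Lambda_H := \Lambda_H \otimes_R k \hookrightarrow \Lambda \otimes_R k =: \widetilde\Lambda$ is an injection of finite-dimensional $k$-Hopf algebras. The Nichols--Zoeller theorem then yields that $\widetilde\Lambda$ is a free $\widetilde\Lambda_H$-module, of rank $n = [G:H]$ by a $k$-dimension count. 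Lifting a $\widetilde\Lambda_H$-basis to $e_1, \ldots, e_n \in \Lambda$, the associated $\Lambda_H$-linear map $\varphi : \Lambda_H^{\oplus n} \to \Lambda$ is surjective by Nakayama's lemma (applied using that $\mathfrak p\Lambda_H$ lies in the Jacobson radical of the finite $R$-algebra $\Lambda_H$), and a comparison of $R$-ranks forces $\varphi$ to be an isomorphism.

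For (iii), I would arrange $e_1 = 1$, which is possible precisely when $\widetilde\Lambda_H \cdot 1 = \widetilde\Lambda_H$ is a $\widetilde\Lambda_H$-direct summand of $\widetilde\Lambda$. For a sub-Hopf-algebra inclusion in finite dimensions this always holds, via the Hopf-algebraic retraction arising from the coaction on the coquotient $\widetilde\Lambda \otimes_{\widetilde\Lambda_H} k$. With $e_1 = 1$, the decomposition $\Lambda = \Lambda_H \oplus \bigoplus_{i \geq 2} \Lambda_H e_i$ exhibits $\Lambda_H$ as the desired direct summand. The main obstacle is exactly this last step --- arranging a basis with $e_1 = 1$ --- which depends on a sometimes-delicate Hopf-algebraic retraction argument; everything else reduces to standard duality, Nakayama, and rank-counting.
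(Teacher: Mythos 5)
Your proposal is correct, but it diverges from the paper's argument in two places, one worth comparing and one that is more work than the statement requires. For the Hopf-order claim the paper does not dualize: it proves $\Delta(\Lambda_H)\subset\Lambda_H\otimes\Lambda_H$ directly, by first showing that $\Lambda/\Lambda_H$ is $R$-torsion-free, splitting $\Lambda=\Lambda_H\oplus T$ as $R$-modules (recall $R$ is a discrete valuation ring in this section), and then comparing the expansion of $\Delta(x)$, for $x\in\Lambda_H$, in the $K$-basis $\mathfrak B_1\otimes\mathfrak B_1$ of $K[H]\otimes K[H]$ with its expansion in the $R$-basis $\mathfrak B\otimes\mathfrak B$ of $\Lambda\otimes\Lambda$. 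Your route through the dual --- the image $\rho(\Lambda^D)$ under restriction is an $R$-Hopf order in $\mathrm{Map}(H,K)$, and $(\rho(\Lambda^D))^D=K[H]\cap\Lambda$ by compatibility of the pairings and double duality of lattices --- is a valid and arguably cleaner alternative; it trades the explicit coefficient comparison for the standard facts that images of Hopf orders under Hopf surjections and duals of finitely generated projective Hopf orders are again Hopf orders. Your freeness argument (torsion-freeness, reduction mod $\mathfrak p$, Nichols--Zoeller, Nakayama lifting, rank count) is essentially the paper's proof. Finally, note that the ``direct summand'' assertion is meant, and later used, only as an $R$-module statement: it follows at once from the $R$-freeness of $\Lambda/\Lambda_H$, which you already established, so the step you flag as delicate --- arranging $e_1=1$ to obtain a $\Lambda_H$-module splitting --- is not needed for the proposition. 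That stronger statement is nevertheless true: a finite-dimensional Hopf algebra is Frobenius, hence self-injective, so the inclusion $\widetilde\Lambda_H\hookrightarrow\widetilde\Lambda$ splits as $\widetilde\Lambda_H$-modules, Krull--Schmidt shows the complement is free, and your Nakayama lifting then produces a $\Lambda_H$-basis of $\Lambda$ containing $1$; your appeal to a ``retraction from the coaction on the coquotient'' is vaguer than this, but the conclusion stands.
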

\begin{proof} Since $\Lambda$ is an $R$-Hopf order of $K[G]$, it follows that  $R[G]\subset \Lambda$, hence $R[H]\subset \Lambda_H$, and so $\Lambda_H$ is an $R$-order of $K[H]$. Indeed,  $\Lambda_H$ is stable under the action of the antipode, therefore in order to prove the proposition it suffices to prove that 
$$\Delta(\Lambda_H)\subset \Lambda_H\otimes \Lambda_H.$$ 
First we claim  that $\Lambda/\Lambda_H$ is a torsion free $R$-module. Let $a\in \Lambda$  such that $\bar a\in (\Lambda/\Lambda_H)_{tor}$. Then there exists $d\in R, d\neq 0$ such that 
$da\in \Lambda_H$.  Let $\{g_1,\cdots, g_q\}$ be a set of representatives of $G/H$, where $g_1$ is the unit element of $G$. One can write $a=\sum_{1\leq i\leq q}a_ig_i$, with $a_i\in K[H]$. Since $da\in \Lambda_H\subset K[H]$ we deduce that $da_i=0$ for $2\leq i$. We conclude that $a_i=0$, for $2\leq i$ and so that $a=a_1\in \Lambda\cap K[H]=\Lambda_H$ which proves the claim. 

Since $R$ is a principal ideal domain we can decompose $\Lambda$  into a direct sum of $R$-modules $\Lambda_H\oplus T$, where $\Lambda_H$ and $T$ are both finite and free. We let $\frak B_1:=\{e_1,\cdots, e_t\}$ (resp. $\frak B_2:=\{f_1, \cdots,  f_r\}$)  be an $R$-basis of $\Lambda_H$ (resp. $T$) and we  denote by $\frak B$ the $R$-basis $\frak B_1\cup \frak B_2$ of $\Lambda$. We note that $\frak B\otimes \frak B$ is a free basis  of $\Lambda\otimes \Lambda$  over $R$ and a free basis  of $K[G]\otimes K[G]$ over $K$. We decompose 
$\frak B\otimes \frak B$ into the disjoint union of the set $\frak B_1\otimes \frak B_1$, $\frak B_1\otimes \frak B_2$, $\frak B_2\otimes \frak B_1$ and $\frak B_2\otimes \frak B_2$. Let 
$x\in \Lambda_H$.  Since  $x\in K[H]$, then $\Delta(x)\in K[H]\otimes K[H]$ and so  there exist elements  $(\alpha_u)_{  u\in \frak B_1\otimes \frak B_1}$ in $K$  such that 
$$\Delta(x)=\sum_{u\in \frak B_1\otimes \frak B_1}\alpha_u u.$$ We now use the fact that $x\in \Lambda$ which is a Hopf order. Therefore $\Delta(x)\in \Lambda\otimes \Lambda$. Therefore there exist 
$(\beta_t)_{t \in \frak B_1\otimes \frak B_1}, (\gamma_v)_{v \in \frak B_1\otimes \frak B_2}, (\eta_ w)_{v \in \frak B_2\otimes \frak B_1}$ and $(\zeta_ y)_{y \in \frak B_2\otimes \frak B_2}$
elements of $R$ such that
 $$\Delta(x)= \sum_{t\in  \frak B_1\otimes \frak B_1}\beta_tt+ \sum_{v\in  \frak B_1\otimes \frak B_2}\gamma_vv+ \sum_{w\in  \frak B_2\otimes \frak B_1}\eta_ww+
  \sum_{y\in  \frak B_2\otimes \frak B_2}\zeta_yy.$$
  By comparing the equalities we conclude that $\gamma_v=0=\eta_w=\zeta_y=0, \forall v, w, y$  and that $\alpha_u=\beta_u\in R, \forall  u \in \frak B_1\otimes \frak B_1$. Since 
  $\frak B_1\otimes \frak B_1$ is a basis of $\Lambda_H\otimes \Lambda_H$ we conclude that $\Delta(\Lambda_H)\subset \Lambda_H\otimes \Lambda_H$ as required. 
  
  We now wish to prove that $\Lambda$ is a free $\Lambda_H$-module.  Since $\Lambda$ is a finite and free 
$R$-module and $\Lambda/\Lambda_H$ is a torsion free $R$-module we deduce that $\Lambda_H$ is an $R$-summand of $\Lambda$. We let $\pi$ be a uniformising  parameter of $R$. Since $\Lambda_H$ is an $R$-summand of $\Lambda$ then  $\Lambda_H\cap \pi \Lambda=\pi \Lambda_H$ and so  $\Lambda_H/\pi
\Lambda_H$   is $k$-Hopf subalgebra of $\Lambda/\pi \Lambda_G$. From the theorem of Nichols and Zoeller \cite{NZ} we deduce that $\Lambda/\pi \Lambda$ is a finite and free 
$\Lambda_H/\pi\Lambda_H$-module. Thus there exists  and isomorphism of left $\Lambda_H/\pi\Lambda_H$-modules
$f: (\Lambda_H/\pi\Lambda_H)^m\simeq \Lambda/\pi\Lambda$  for some integer $m$.  As a consequence of Nakayama's Lemma, the morphism $f$ can be lifted to a surjective homomorphism of left $\Lambda_H$-modules $\hat f: \Lambda_H^m\rightarrow \Lambda$. Since  $(\Lambda_H/\pi\Lambda_H)^m$ and $\Lambda/\pi\Lambda$ have the same dimension as $k$-vector spaces  we deduce that $\hat f$ is an isomorphism. 
  \end{proof}
\vskip 0.1 truecm
\noindent{\bf Frobenius structure.}  Given the Hopf order $\Lambda_G$ we have defined for any subgroup $H$ of $G$ a Hopf order $\Lambda_H$ of $K[H]$. Any $\Lambda _{G}$-module $M$ is a $\Lambda _{H}$-
module by restricting along the inclusion map $\Lambda _{H}\hookrightarrow
\Lambda_G$.  In the same way,  since $\Lambda _{H}\hookrightarrow
\Lambda_G$ induces an inclusion map $\widetilde{\Lambda} _{H}\hookrightarrow
\widetilde {\Lambda}_G$,  any $\widetilde {\Lambda}_G$-module is a $\widetilde {\Lambda}_H$-module. Restriction preserves both exact sequences and the ring
structure; that is to say restriction affords  ring homomorphisms of
Grothendieck rings

$$\mathrm{Res}_{\Lambda _{G}}^{\Lambda _{H}}: G_{0}^R( \Lambda_{G}) \rightarrow G_{0}^R( \Lambda _{H})\ \mathrm{and}\   \mathrm{Res}_{\widetilde{\Lambda} _{G}}^{\widetilde{\Lambda}_{H}}: G_{0}(\widetilde{\Lambda}_{G}) \rightarrow G_{0}(\widetilde{\Lambda} _{H}).$$

From Proposition \ref{sous} above we deduce that $\otimes_{\Lambda _{H}}\Lambda _{G}$ and $\otimes_{\widetilde{\Lambda} _{H}}\widetilde{\Lambda} _{G}$
preserve exact sequences and so induce  homomorphisms of abelian groups
$$\mathrm{Ind}_{\Lambda _{H}}^{\Lambda _{G}}: G_{0}^R( \Lambda_{H}) \rightarrow G_{0}^R( \Lambda _{G})\ \mathrm{and}\   \mathrm{Ind}_{\widetilde{\Lambda} _{H}}^{\widetilde{\Lambda}_{G}}: G_{0}(\widetilde{\Lambda}_{H}) \rightarrow G_{0}(\widetilde{\Lambda} _{G}).$$
If $\ M$ is a $\Lambda _{H}$-lattice and $N$ is a $\Lambda _{G}$-lattice then,
by the associativity property of the tensor product, we have the so-called
Frobenius identity 
\begin{equation*}
( \Lambda _{G}\otimes _{\Lambda _{H}}M) \otimes _{R}N\cong
\Lambda _{G}\otimes _{\Lambda _{H}}( M\otimes _{R}N) 
\end{equation*}
which we may write as
\begin{equation*}
\text{\textrm{Ind}}_{\Lambda _{H}}^{\Lambda _{G}}( M) .N\cong 
\text{\textrm{Ind}}_{\Lambda _{H}}^{\Lambda _{G}}( M.\text{\textrm{Res}}
_{\Lambda _{G}}^{\Lambda _{H}}( N)). 
\end{equation*}
Indeed,  a similar  identity holds when we replace $\Lambda_H$ (resp. $\Lambda_G$)  by $\widetilde{\Lambda}_H$ (resp. $\widetilde{\Lambda}_G$) and when $M$ and $N$ are respectively $\widetilde{\Lambda}_H$ and $\widetilde{\Lambda}_G$-modules.
Using the functorial properties of the maps $\psi_{\Lambda}$ and $\delta_{\Lambda}$   we check that these  restriction and  induction maps  allow us to define restriction and induction maps for the functors $H\rightarrow \mathrm{Im}(\psi_H)$ and $H\rightarrow \mathrm{coker}(\delta_H)$. 
Moreover,  for any such  $H$,  we have seen subsection 4.3 that $\mathrm{Im}(\psi_H)$ and $\mathrm{coker}(\delta_H)$ both have a structure of $G_0(K[H])$-module. According  to the terminology of \cite{CR2} Section 38  the functor which associates to any subgroup $H$ of $G$ the ring $G_0(K[H])$ is a  Frobenius functor,  where the restriction and the induction are the usual restriction and induction of characters.  Moreover the functors $H\rightarrow  \mathrm{Im}(\psi_H)$ and $H\rightarrow \mathrm{coker}(\delta_H)$ both are Frobenius modules for this Frobenius functor. 

\noindent{ \bf  Brauer induction.} In this subsection we assume that $K$ is assez gros for $G$. 
We now use Frobenius structure to apply Brauer induction to our study of the
representation theory of Hopf orders. 

Next, for a prime number $l$ different from $p$,  we recall the notion of an $l$-elementary group\ (see 10.1 in \cite{Serrerep}). A
subgroup $H$ of $G$ is called  $l$-elementary if we can write $H=C\times
C^{\prime }\times L'$ where $L'$ is an $l$-group,  $\ C^{^{\prime }}$ is
cyclic of order prime to $p$ and to $l$ and $C$ is cyclic of $p$-power order; so we
may write $ H=C\times L$ where the group $L$ has order
prime to $p$. We let $C( G) $ denote the set of all $l$-elementary subgroups of 
$G$ for all $l\neq p $. 

Let $\zeta$ denote a complex root of unity whose order is equal to the
exponent $e$ of $G$ and we may suppose that $e>2$ since $G$ has odd order.  We set $N=\mathbb{Q}
( \zeta) $. By Brauer's Induction Theorem (see Chapters 10 and 11 in \cite{Serrerep}), for 
some $m>0$ we can write 
\begin{equation}\label{br1}
p^{m}\varepsilon _{G}=\sum_{H\in C(G)}\mathrm{Ind}_{H}^{G}( \theta _{H}) 
\end{equation}
where $\theta _{H}\in G_{0}( N[ H]) $.  

In the  local situation that we consider  $R$ is a valuation ring of a
finite extension $K$ of $\mathbb{Q}_{p}$ and  $K$ is assez gros for $G$;  
 we may then choose field embeddings 
\begin{equation*}
h:N=\mathbb{Q}( \zeta) \hookrightarrow \mathbb{Q}_{p}(
\zeta) \hookrightarrow K. 
\end{equation*}
This induces an isomorphism, which henceforth we shall regard as an
identification, 
\begin{equation}
G_{0}( N[ G]) =G_{0}( K[ G] ).
\end{equation}
Therefore for any $H\in C(G)$ we can consider $\theta_H$ as an element of $G_0(K[H])$.
Using now the Frobenius module structure of $H\rightarrow \mathrm{Im}(\psi_{\Lambda_H})$  over the Frobenius functor $H\rightarrow G_0(K[H])$ and the Frobenius identity, we know that for $x \in \mathrm{Im}\psi _{\Lambda _{G}}$ we  similarly have 
\begin{equation}
p^{m}\varepsilon _{G}.x  =\sum_{H\in C(
G)}\mathrm{Ind}_{H}^{G}( \theta _{H}) .x
\end{equation}
and so 
\begin{equation} 
p^{m}x  =\sum_{H\in C(G) }\mathrm{
Ind}_{\Lambda _{H}}^{\Lambda _{G}}(\theta _{H}.\mathrm{Res}
_{\Lambda _{G}}^{\Lambda _{H}}x).
\end{equation}
Using now the Frobenius module structure of $H\rightarrow \mathrm{Im}(\delta_{\Lambda_H})$, then for any $y\in  \mathrm{Im}(\delta_{\Lambda_H})$ we have 
\begin{equation}
p^{m}y  =\sum_{H\in C(G) }\mathrm{
Ind}_{\widetilde{\Lambda} _{H}}^{\widetilde{\Lambda} _{G}}(\theta _{H}.\mathrm{Res}
_{\widetilde{\Lambda} _{G}}^{\widetilde{\Lambda} _{H}}y).
\end{equation}
Therefore Proposition \ref{inter} follows from equalities (41) and (42). 

\begin{prop} \label{inter} Let $C( G) $ denote the set of all $l$-elementary subgroups of $G$ for all $l\neq p $. There exists an integer $m>0$ such that :
\begin{enumerate} 
\item If $\mathrm{Im}(\psi _{\Lambda _{H}})=0$ for all $H\in C(G)$,  then we have 
$$p^{m}.\mathrm{Im}(\psi _{\Lambda _{G}})=0.$$
\item If $\delta_{\Lambda_H}$ is surjective  for all $H\in C(G)$,  then
 we have 
$$p^{m}.\mathrm{coker}(\delta_{\Lambda _{G}})=p^{m}.\mathrm{coker}(d_{\Lambda _{G}})=0.$$
\end{enumerate}
\end{prop}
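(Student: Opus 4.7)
The plan is to take the integer $m$ supplied by the Brauer induction decomposition
\[
p^{m}\varepsilon_{G} \;=\;\sum_{H\in C(G)}\mathrm{Ind}_{H}^{G}(\theta_{H})\quad\text{in } G_{0}(K[G]),
\]
which has already been set up above, and show that this single choice of $m$ witnesses both assertions. The key input is the Frobenius module structure of $H\mapsto\mathrm{Im}(\psi_{\Lambda_{H}})$ and $H\mapsto\mathrm{coker}(\delta_{\Lambda_{H}})$ over the Frobenius functor $H\mapsto G_{0}(K[H])$, which was established in Section~4.4 and yields formulas (41) and (42).

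For part (1), I would start from (41): for any $x\in\mathrm{Im}(\psi_{\Lambda_{G}})$,
\[
p^{m}x \;=\; \sum_{H\in C(G)}\mathrm{Ind}_{\Lambda_{H}}^{\Lambda_{G}}\bigl(\theta_{H}\cdot\mathrm{Res}_{\Lambda_{G}}^{\Lambda_{H}}x\bigr).
\]
The crucial observation is that $\mathrm{Res}_{\Lambda_{G}}^{\Lambda_{H}}$ carries $\mathrm{Im}(\psi_{\Lambda_{G}})$ into $\mathrm{Im}(\psi_{\Lambda_{H}})$, because restriction preserves $R$-torsion and is compatible with the localization sequence (\ref{loc}). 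Hence under the hypothesis $\mathrm{Im}(\psi_{\Lambda_{H}})=0$ for every $H\in C(G)$, each summand on the right vanishes, giving $p^{m}x=0$ as required.

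For part (2), I would run the parallel argument using (42). Given $y\in G_{0}(\widetilde{\Lambda}_{G})$, its class $\bar y$ in $\mathrm{coker}(\delta_{\Lambda_{G}})$ is an element of the relevant Frobenius module, and the identity reads
\[
p^{m}\bar y \;=\; \sum_{H\in C(G)}\mathrm{Ind}_{\widetilde{\Lambda}_{H}}^{\widetilde{\Lambda}_{G}}\bigl(\theta_{H}\cdot\mathrm{Res}_{\widetilde{\Lambda}_{G}}^{\widetilde{\Lambda}_{H}}\bar y\bigr)\quad\text{in }\mathrm{coker}(\delta_{\Lambda_{G}}).
\]
The surjectivity hypothesis means $\mathrm{coker}(\delta_{\Lambda_{H}})=0$ for each $H\in C(G)$, so every restricted class is zero and the right-hand side vanishes, giving $p^{m}\bar y=0$. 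The remaining equality $\mathrm{coker}(\delta_{\Lambda_{G}})=\mathrm{coker}(d_{\Lambda_{G}})$ I would deduce immediately from the factorization $\delta_{\Lambda_{G}}=d_{\Lambda_{G}}\circ\varphi_{\Lambda_{G}}$ together with the surjectivity of $\varphi_{\Lambda_{G}}$ coming from (\ref{loc}), which forces $\mathrm{Im}(\delta_{\Lambda_{G}})=\mathrm{Im}(d_{\Lambda_{G}})$.

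Given that the Brauer induction identity, the identification $G_{0}(N[G])=G_{0}(K[G])$, and both Frobenius module structures have already been put in place, there is no genuine obstacle here: the proposition is essentially the formal consequence of multiplying the Brauer identity by an element and invoking the Frobenius reciprocity formula. The only point that requires a small check is the compatibility of restriction with $\mathrm{Im}(\psi_{\Lambda_{H}})$ and with $\mathrm{coker}(\delta_{\Lambda_{H}})$, which follows from the functoriality of the localization sequence (\ref{loc}) and of reduction mod $\mathfrak{p}$ under $\Lambda_{H}\hookrightarrow\Lambda_{G}$.
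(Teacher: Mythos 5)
Your proposal is correct and follows the same route as the paper: both apply the Brauer induction identity $p^{m}\varepsilon_{G}=\sum_{H\in C(G)}\mathrm{Ind}_{H}^{G}(\theta_{H})$ to elements of $\mathrm{Im}(\psi_{\Lambda_{G}})$ and of the cokernel of $\delta_{\Lambda_{G}}$, using the Frobenius module structures over $H\mapsto G_{0}(K[H])$ and the identification $\mathrm{Im}(\delta_{\Lambda_{G}})=\mathrm{Im}(d_{\Lambda_{G}})$. The compatibility checks you flag (restriction respecting $\mathrm{Im}(\psi)$ and $\mathrm{coker}(\delta)$) are exactly what the paper packages into the statement that these functors are Frobenius modules.
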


\vskip 0.1 truecm
\noindent{\bf More on $l$-elementary groups.}
 We let $H$ be an $l$-elementary group with $l\neq p$. The group $H$ decomposes into a direct product $C\times L$ where $C$ is a cyclic $p$-group of order $p^n$ and $L$ a finite group of order $m$, coprime with $p$. We consider  a Hopf $R$-order $\Lambda_H$ of $K[H]$ and we define $\Lambda_C=\Lambda_H\cap K[C]$ and $\Lambda_L=\Lambda_H\cap K[L]$. By Proposition \ref{sous} we know that $\Lambda_C$ and $\Lambda_L$ are Hopf $R$-orders respectively of $K[C]$ and  $K[L]$ which implies,  since 
 $m$ is coprime with $p$,  that $\Lambda_L=R[L]$. The order $\Lambda_H$ can be described as follows: 
 \begin{prop}\label{clo} The Hopf $R$-orders $\Lambda_H$ and $\Lambda_C[L]$ are equal. 
\end{prop}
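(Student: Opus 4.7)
The plan is to prove the two inclusions separately. The inclusion $\Lambda_C[L]\subseteq\Lambda_H$ is immediate, since both $\Lambda_C$ and $R[L]$ lie in $\Lambda_H$, and elements of $C$ commute with elements of $L$ in $H=C\times L$, so the $R$-subalgebra of $\Lambda_H$ they generate is $\Lambda_C\cdot R[L]=\Lambda_C[L]$.

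For the reverse inclusion I would exploit the Hopf algebra map $p_L\colon K[H]\to K[L]$ induced by the group projection $C\times L\twoheadrightarrow L$, together with the map $\rho:=(\mathrm{id}\otimes p_L)\circ\Delta_H\colon K[H]\to K[H]\otimes K[L]$, which sends a basis element $c\cdot s$ to $(c\cdot s)\otimes s$. The key step is to show $p_L(\Lambda_H)=R[L]$: since $p_L$ is a Hopf morphism, $p_L(\Lambda_H)$ is a Hopf $R$-order in $K[L]$; as $|L|$ is a unit in $R$, Proposition \ref{H0}(3) applied to any Hopf order $B$ of $K[L]$ gives $\varepsilon(I(B))\cdot\varepsilon(I(B^D))=|L|R=R$, forcing both ideals to equal $R$, and then Proposition \ref{H1}(1) yields $B\subseteq R[L]$. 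So $R[L]$ is the unique Hopf $R$-order of $K[L]$, whence $R[L]=p_L(R[L])\subseteq p_L(\Lambda_H)$ is an equality. Combined with $\Delta_H(\Lambda_H)\subseteq\Lambda_H\otimes\Lambda_H$, this shows $\rho$ restricts to a morphism $\Lambda_H\to\Lambda_H\otimes_R R[L]$ that makes $\Lambda_H$ into a right $R[L]$-comodule.

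The last step will be to invoke the standard dictionary: since $R[L]$ is a group algebra with group-like basis $\{s\}_{s\in L}$, right $R[L]$-comodule structures correspond to $L$-gradings, so $\Lambda_H=\bigoplus_{s\in L}(\Lambda_H)_s$ with $(\Lambda_H)_s=\{x\in\Lambda_H:\rho(x)=x\otimes s\}$. Writing $x\in K[H]$ as $\sum_t x_t\cdot t$ with $x_t\in K[C]$ gives $\rho(x)=\sum_t x_tt\otimes t$, so $\rho(x)=x\otimes s$ forces $x_t=0$ for $t\neq s$ and hence $x=x_s\cdot s$; since $s^{-1}\in\Lambda_H$ we obtain $x_s=x\cdot s^{-1}\in\Lambda_H\cap K[C]=\Lambda_C$, yielding $(\Lambda_H)_s=\Lambda_C\cdot s$ and therefore $\Lambda_H=\bigoplus_{s\in L}\Lambda_C\cdot s=\Lambda_C[L]$. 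The hard part will be the uniqueness of $R[L]$ as a Hopf $R$-order of $K[L]$, since this is what guarantees that the coaction lands in $\Lambda_H\otimes_R R[L]$; everything after that is a formal application of the comodule/grading correspondence.
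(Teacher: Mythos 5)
Your argument is correct, but it is genuinely different from the one in the paper. The paper proves the nontrivial inclusion $\Lambda_H\subseteq\Lambda_C[L]$ indirectly: it invokes Larson's criterion (\cite{L72}, Corollary 5.2) reducing the equality of the two Hopf orders to the equality $\varepsilon(I(\Lambda_C[L]))=\varepsilon(I(\Lambda_H))$, and then computes the ideals of integrals explicitly via $\sigma_H=\sigma_C\sigma_L$, the hard inclusion $I_H\subseteq I_C$ being obtained by expanding $\tfrac{\sigma_L}{m}$ in a $\Lambda_C$-basis of $\Lambda_H$ (so it also leans on Proposition \ref{sous}, i.e.\ Nichols--Zoeller freeness, to have such a basis). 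You instead work directly with elements: you first establish the rigidity statement that $R[L]$ is the unique Hopf $R$-order of $K[L]$ when $|L|$ is a unit — your derivation via Proposition \ref{H0}(3) and Proposition \ref{H1}(1) is legitimate, noting that \ref{H1}(1) is applied to the Hopf order $B^D\subset\mathrm{Map}(L,K)$ with the identification $B^{DD}\simeq B$, and that in the local setting of Section 4 all the relevant ideals are principal so the proof of \ref{H1} applies; alternatively one could quote that $R[L]$ is a maximal order since $m\in R^\times$ — and then you apply it to $p_L(\Lambda_H)$, which indeed is a Hopf order because $p_L$ is a Hopf morphism and $\Lambda_H$ is finitely generated. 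Feeding this into $\rho=(\mathrm{id}\otimes p_L)\circ\Delta_H$ shows that for $x=\sum_{s\in L}x_s s\in\Lambda_H$ (with $x_s\in K[C]$) each component $x_s s$ lies in $\Lambda_H$, whence $x_s\in\Lambda_H\cap K[C]=\Lambda_C$; the comodule/grading dictionary you cite is correct (it only needs $R[L]$ free on the group-like basis $L$ and the injectivity of $\Lambda_H\otimes_R R[L]\to K[H]\otimes_K K[L]$), though in fact the coefficient extraction alone already finishes the proof without invoking the full grading. What each approach buys: the paper's proof is a short computation once Larson's external criterion and the freeness of $\Lambda_H$ over $\Lambda_C$ are granted; yours avoids both of those inputs, is self-contained given Section 2 of the paper, and yields the slightly stronger structural fact that $\Lambda_H$ is $L$-graded with identity component $\Lambda_C$, at the cost of having to prove the uniqueness of $R[L]$ as a Hopf order of $K[L]$ in full (not just for $\Lambda_H\cap K[L]$, which is all the paper asserts).
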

\begin{proof}  It follows from the definitions that $\Lambda_C[L]$ and $\Lambda_H$ are both Hopf $R$-orders of $K[H]$ such that $\Lambda_C[L]\subset\Lambda_H$. For any Hopf $R$-order $\Lambda$ of $K[H]$ we let  
$I(\Lambda)$ be the ideal of left integrals of $\Lambda$ (see Section 2.1). From \cite{L72} Corollary 5.2 we know that the equality of the orders $\Lambda_C[L]=\Lambda_H$ is equivalent to the equality of the $R$-ideals 
\begin{equation}\label{I_0}\varepsilon (I(\Lambda_C[L]))=\varepsilon (I(\Lambda_H)).\end{equation} Our aim is to prove this equality. 

For  a finite group $G$ of order $r$  we set $\sigma_G=\sum_{g\in G}g$. For any Hopf $R$-order $\Lambda$ of $K[G]$ one easily checks the following: 
\begin{equation}\label{I_1}I(K[G])=K\frac{\sigma_G}{r}, \ \ I(\Lambda)=\Lambda\cap I(K[G])= I_\Lambda\frac{\sigma_G}{r} \end{equation} where
$$I_\Lambda=\{\lambda\in R\\ \ \ \  \mathrm{with}\ \lambda \frac{\sigma_G}{r}\in \Lambda\}.$$
We conclude that $\varepsilon (I(\Lambda))=I_{\Lambda}$. 

Since $H=C\times L$, then $\sigma_H=\sigma_C.\sigma_L$ and thus  we have 
\begin{equation}\label{I_2}I(\Lambda_H)=I_{\Lambda_H} (\frac{\sigma_C}{p^n}. \frac{\sigma_L}{m}), \ I(\Lambda_C)=I_{\Lambda_C}\frac{\sigma_C}{p^n},  \ I(\Lambda_C[L])=I_{{\Lambda_C[L]}} (\frac{\sigma_C}{p^n}. \frac{\sigma_L}{m}).\end{equation}
For the sake of simplicity, for any subgroup $T$ of $G$ we write $I_T$ for $I_{\Lambda_T}$. Since $m$ is a unit of $R$ it is easy to check that $I_{\Lambda_C[L]}=I_C$. Therefore it follows from (\ref{I_1}) and (\ref{I_2}) that in order to prove (\ref{I_0}) it suffices to prove that 
$$I_H=\varepsilon (I(\Lambda_H))=I_C=\varepsilon (I(\Lambda_{C}[L])).$$ 

Indeed,  for $x\in I_C$ then $x\frac{\sigma_C}{p^n} \in \Lambda_C$ and,  since $m$ is a unit in $R$, we deduce that  $x\frac{\sigma_C}{p^n}\frac{\sigma_L}{m} \in \Lambda_C[L]\subset \Lambda_H$
 and so that $x \in I_H$.   Therefore we have proved that $I_C\subset I_H$.

In order to prove that $I_H\subset  I_C$ we start by observing that since $m$ is a unit of $R$, then $\frac{\sigma_L}{m}\in \Lambda_H$. We let $\{e_1,\cdots, e_m\}$ be a basis of $\Lambda_H$ over $\Lambda_C$. Thus there exists $\{x_1, \cdots, x_m\}$ of $\Lambda_C$ such that $\frac{\sigma_L}{m}=\sum_ix_ie_i$. Therefore $1=\sum_i\varepsilon(x_i)\varepsilon(e_i)$. Since for any $i$ we know that $\varepsilon(x_i)$ and $\varepsilon(e_i)$ belong to $R$, the equality implies that there exists at least $i_0$ such that $\varepsilon (x_{i_0})$ is a unit of $R$.  Let $x\in I_H$ then $x (\frac{\sigma_C}{p^n}. \frac{\sigma_L}{m}) \in \Lambda_H$. We set $y=x (\frac{\sigma_C}{p^n})$. Thus $y\in K[C]$ and we know that:
$$y\frac{\sigma_L}{m}=\sum_i(yx_i)e_i \in \Lambda_H.$$
Therefore there exist $\{y_1,; \cdots, y_m \in \Lambda_C\}$ such that 
\begin{equation}\label{I_3} \sum_i(yx_i)e_i =\sum_iy_ie_i.\end{equation}  Since $yx_i\in K[C]$  there exists non zero $d\in R$ such that  $dyx_i\in \Lambda_C$ for all $i$. It follows from (\ref{I_3}) that 
$dyx_i=dy_i$  and so that $yx_i=y_i$ for all $i$. We conclude that $yx_i\in \Lambda_C, \forall i$. We now have 
$$yx_i=x\frac{\sigma_C}{p^n}x_i=xx_i\frac{\sigma_C}{p^n}=x\varepsilon(x_i)\frac{\sigma_C}{p^n}\in \ \Lambda_C, \  \forall i.$$
In particular for $i=i_0$, then $x\varepsilon(x_{i_0})\frac{\sigma_C}{p^n}\in  \Lambda_C$ and so, since $\varepsilon (x_{i_0})$ is a unit of $R$, we deduce  that $x\frac{\sigma_C}{p^n}\in \Lambda_C$ and therefore that 
$x\in I_C$. Therefore we have proved that $I_H\subset I_C$. This completes the proof of the proposition.

\end{proof}
  
   From the proposition above  we have an isomorphism of Hopf $R$-algebras 
   \begin{equation}\label{tens}\Lambda_H=\Lambda_C[L]\simeq \Lambda_C\otimes_R\Lambda_L.\end{equation}
   
   By tensoring with $K$ we obtain the equalities: 
 $$\Lambda_H\otimes_RK=K[H],\  \Lambda_C\otimes_RK=K[C],\  \Lambda_L\otimes_RK=K[L]$$ and the isomorphism of Hopf $K$-algebras
 $$K[H]\simeq K[C]\otimes_KK[L].$$

 It follows from \cite{Serrerep} Theorem 10, that there exists an isomorphism of abelian groups:
 $$G_0(K[H])\simeq G_0(K[C]\otimes_{\mathbb Z}G_0(K[L]).$$
 We now set $$\widetilde{\Lambda}_H=\Lambda_H\otimes_Rk,\ \widetilde{\Lambda}_C=\Lambda_C\otimes_Rk, \ \widetilde{\Lambda}_L=\Lambda_L\otimes_Rk.$$
 By tensoring  (\ref{tens}) with $k$ we now obtain \begin{equation}\widetilde\Lambda_H=\widetilde\Lambda_C\otimes_k\widetilde \Lambda_{L}.\end{equation}
 We now assume that $K$ is assez gros for $H$.   
  \begin{thm}\label{el2} 
  For an elementary $l$-group $H$, with $l\neq p$,  there is an isomorphism of Grothendieck groups:
   \begin{equation}
 G_{0}(\widetilde{\Lambda}_{H}) \simeq G_{0}(\widetilde{\Lambda }_{C}) \otimes _{\mathbb Z}G_{0}( \widetilde{\Lambda }_L
).
\end{equation}
 Moreover,  we have:
\begin{enumerate}
\item The morphism $d_{\Lambda_H}$ is surjective.
\item $\mathrm{Im}(\psi _{\Lambda _{H}})=0$.
\end{enumerate}
\end{thm}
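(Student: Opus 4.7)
The plan is to reduce the theorem to its two factors $\Lambda_{C}$ and $\Lambda_{L}$ via the tensor decomposition (44), and then invoke already-established results on each factor. I will begin with the decomposition of $G_{0}(\widetilde{\Lambda}_{H})$. Because $|L|$ is coprime to $p$, one has $\Lambda_{L}=R[L]$ and $\widetilde{\Lambda}_{L}=k[L]$, which is split semisimple since $K$, hence $k$, is assez gros for $L$. Starting from the identification $\widetilde{\Lambda}_{H}\simeq\widetilde{\Lambda}_{C}\otimes_{k}\widetilde{\Lambda}_{L}$ of (44), I will transcribe the proof of Serre's Theorem 10 in \cite{Serrerep}: any finitely generated $\widetilde{\Lambda}_{H}$-module $M$ decomposes, as a $\widetilde{\Lambda}_{L}$-module, into isotypic components $M=\bigoplus_{S}M_{S}$; these components are stable under the commuting action of $\widetilde{\Lambda}_{C}$, and the resulting functor is exact. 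This yields the tensor product isomorphism (45). Running the same argument in characteristic zero gives $G_{0}(K[H])=G_{0}(K[C])\otimes_{\mathbb Z}G_{0}(K[L])$, and under these identifications the decomposition map becomes $d_{\Lambda_{H}}=d_{\Lambda_{C}}\otimes d_{\Lambda_{L}}$.

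To prove (1), I will analyse each factor. Since $\Lambda_{L}=R[L]$ is a maximal $R$-order in $K[L]$ (as $|L|$ is a unit), the standard theory of modular representations of groups of order prime to $p$ gives that $d_{\Lambda_{L}}$ is an isomorphism. It therefore suffices to show surjectivity of $d_{\Lambda_{C}}$. Here I will invoke Theorem \ref{JL} of Jensen--Larsen: as $C$ is abelian and $K$ is assez gros for $C$, the map $\varphi_{\Lambda_{C}}$ is an isomorphism, so using the factorisation $\delta_{\Lambda_{C}}=d_{\Lambda_{C}}\circ\varphi_{\Lambda_{C}}$ the problem reduces to showing that $\delta_{\Lambda_{C}}$ is surjective. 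To establish this I will exploit that $\widetilde{\Lambda}_{C}$ is a commutative $k$-Hopf algebra: its simple modules are one-dimensional characters, corresponding to group-like elements of its Cartier dual $\widetilde{\Lambda}_{C}^{D}=k\otimes_{R}\Lambda_{C}^{D}$. Since $\Lambda_{C}^{D}$ is a flat $R$-lift of $\widetilde{\Lambda}_{C}^{D}$ and $R$ is henselian, every such group-like element lifts to a group-like element of $\Lambda_{C}^{D}$, equivalently to a rank-one $\Lambda_{C}$-lattice whose reduction modulo $\pi$ recovers the prescribed simple $\widetilde{\Lambda}_{C}$-module.

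For (2), I will prove the Swan property for $\Lambda_{H}$, equivalently the injectivity of $\varphi_{\Lambda_{H}}$. Theorem \ref{max} provides this property for the maximal order $\Lambda_{L}=R[L]$, while Theorem \ref{JL} gives it for $\Lambda_{C}$. I will upgrade the first paragraph to the integral level: since $\Lambda_{L}$ is separable over $R$, the isotypic-decomposition argument applied to $\Lambda_{H}$-lattices, using the central idempotents of $\Lambda_{L}$, produces a natural ring isomorphism $G_{0}^{R}(\Lambda_{H})\simeq G_{0}^{R}(\Lambda_{C})\otimes_{\mathbb Z}G_{0}^{R}(\Lambda_{L})$ under which $\varphi_{\Lambda_{H}}$ identifies with $\varphi_{\Lambda_{C}}\otimes\varphi_{\Lambda_{L}}$, and hence with an isomorphism. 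This establishes $\mathrm{Im}(\psi_{\Lambda_{H}})=0$.

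The principal technical obstacle will be the surjectivity of $\delta_{\Lambda_{C}}$, i.e., the lifting of a one-dimensional simple $\widetilde{\Lambda}_{C}$-module to a rank-one $\Lambda_{C}$-lattice. Everything else is a formal consequence of the tensor-product technique à la Serre, combined with the already-established Swan properties of maximal orders and of abelian Hopf orders over assez gros bases.
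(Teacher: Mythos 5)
Your overall architecture is workable and genuinely different from the paper's in two places. For part (2) the paper argues directly and more cheaply: once $d_{\Lambda_H}$ is surjective, $\psi_{\Lambda_H}\circ d_{\Lambda_H}([V])=[M]-[\pi M]=0$ for any lattice $M$ in $V$, so $\mathrm{Im}(\psi_{\Lambda_H})=0$ without ever invoking the Swan property of $\Lambda_C$; you instead deduce the full Swan property of $\Lambda_H$ from Jensen--Larson together with the decomposition $\Lambda_H\simeq\prod_\beta M_{n_\beta}(\Lambda_C)$ coming from the central idempotents of $R[L]\simeq\prod_\beta M_{n_\beta}(R)$ and Morita invariance. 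That route is legitimate (it even yields Corollary 4.18 directly), though it leans on the cited Jensen--Larson theorem where the paper's proof is self-contained. Note also that your appeal to Jensen--Larson in part (1) is superfluous, since $\mathrm{Im}(\delta_{\Lambda_C})=\mathrm{Im}(d_{\Lambda_C})$ anyway, and that the identification $d_{\Lambda_H}=d_{\Lambda_C}\otimes d_{\Lambda_L}$ requires the compatibility of reduction with tensor products of lattices, which the paper isolates as Lemma 4.17; that verification is routine but must be done.

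The genuine gaps are at the crux of (1), the surjectivity of $d_{\Lambda_C}$. First, you assert that the simple $\widetilde{\Lambda}_C$-modules are one-dimensional characters; commutativity only gives that $\widetilde{\Lambda}_C^{ss}$ is a product of finite extensions of $k$, and the fact that these are all equal to $k$ is exactly where the paper works: it identifies $\widetilde{\Lambda}_C^{ss}$ with the reduction of the \'etale Hopf order $\Lambda_C^{et}$, which (using that $K$ is assez gros) is the maximal order $\prod_\alpha R$ in $K[D]$ for a subgroup $D\subset C$. You need such an argument, or for instance the observation that each local factor $A_i$ of $\Lambda_C$ over the henselian ring $R$ surjects onto $R$ by projecting onto a $K$-factor of $A_i\otimes_RK$ and using that $R$ is integrally closed, so that its residue field is $k$. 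Second, your lifting step is not justified as stated: ``$\Lambda_C^D$ is a flat $R$-lift and $R$ is henselian'' is not a valid application of Hensel's lemma, because $\mathrm{Spec}(\Lambda_C)$ is finite flat but not smooth, and $k$-points of the special fiber of a non-\'etale finite flat scheme need not lift to $R$-points (e.g. $R[x]/(x^2-p)$ has a $k$-point but no $R$-point; similarly $H^1_{fppf}(R,\mathcal{G}^0)$ can be nonzero, as for $\mu_p$ over $\mathbb{Z}_p$). The lifting you want is true here, but it needs a real argument: either the paper's route through the connected--\'etale sequence plus the fact that every character of $D$ extends to a character of $C$ (using that $C$ is cyclic and $K$ contains the relevant roots of unity), or the local-factor argument just indicated, which simultaneously repairs the first gap. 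With those two points supplied, your plan goes through.
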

\begin{cor} Let $H$  be an  $l$-elementary group with $l\neq p$. Then, for any Hopf $R$-order of $K[H]$, 
the change of ground ring map induces an isomorphism of rings
$$\varphi_{\Lambda_H}: G_0^R(\Lambda_H)\simeq G_0(K[H]).$$
\begin{proof} The proof follows immediately from the theorem above and the localization exact sequence (\ref{loc}).
\end{proof}

\end{cor}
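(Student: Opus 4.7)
My plan is to read off the corollary directly from the localization exact sequence \eqref{loc} combined with Theorem \ref{el2}(2). The exact sequence
\begin{equation*}
G_{0}^{t}(\Lambda_H) \overset{\psi _{\Lambda_H}}{\rightarrow} G_{0}^{R}(\Lambda_H) \overset{\varphi _{\Lambda_H }}{\rightarrow} G_{0}(K[H]) \rightarrow 0
\end{equation*}
already tells us that $\varphi_{\Lambda_H}$ is surjective and that its kernel is precisely $\mathrm{Im}(\psi_{\Lambda_H})$. Thus it suffices to note that Theorem \ref{el2}(2) asserts $\mathrm{Im}(\psi_{\Lambda_H})=0$, and so $\varphi_{\Lambda_H}$ is also injective, hence bijective.

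It remains only to observe that $\varphi_{\Lambda_H}$ is a ring homomorphism, not merely a homomorphism of abelian groups. This was established in Section 4.3: the Hopf structure on $\Lambda_H$ makes $G_0^R(\Lambda_H)$ into a commutative ring via the comultiplication, and $\varphi_{\Lambda_H}=\otimes_R K$ preserves the tensor product, hence is a ring homomorphism to $G_0(K[H])$ (which is itself a ring in the standard way). Putting everything together, $\varphi_{\Lambda_H}$ is an isomorphism of rings, as required.

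There is really no obstacle here — the content of the corollary lies entirely in Theorem \ref{el2}(2), and the corollary is a clean restatement of that vanishing in terms of the scalar extension map being an isomorphism. The only thing to verify is that one is genuinely entitled to invoke \eqref{loc} for a general Hopf $R$-order $\Lambda_H$ in $K[H]$; but the exact sequence from \cite{CR2} (38.56) is stated for any $R$-order in $K[H]$, so this is automatic.
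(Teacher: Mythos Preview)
Your proof is correct and follows exactly the same approach as the paper: combine the localization exact sequence \eqref{loc} with Theorem \ref{el2}(2), which gives $\mathrm{Im}(\psi_{\Lambda_H})=0$, to conclude that the surjection $\varphi_{\Lambda_H}$ is also injective. Your additional remark that $\varphi_{\Lambda_H}$ is a ring homomorphism (from Section~4.3) is a nice touch, since the corollary claims an isomorphism of rings rather than merely of abelian groups.
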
 
\begin{proof} [Proof of Theorem \ref{el2}]
 We start by recalling that for any order $\Lambda$ then $\widetilde{\Lambda}\rightarrow \widetilde{\Lambda}^{ss}$ induces an isomorphism 
$G_0(\widetilde{\Lambda})\simeq G_0(\widetilde{\Lambda}^{ss})$ (see (\ref{gss})). We consider the   isomorphism of $k$-algebras
$$ \widetilde{\Lambda }_{C}\otimes _{k}\widetilde{\Lambda }_{L}\cong \widetilde{\Lambda }_{H}. $$

Since $k$ is a perfect field and since the algebras are finite over $k$ we know that  
$$J( \widetilde{\Lambda }_{C}\otimes _{k}\widetilde{\Lambda }_{L}) =J( \widetilde{\Lambda }_{C}) \otimes _{k}
\widetilde{\Lambda }_{L}+\widetilde{\Lambda }_{C}\otimes
_{k}J(\widetilde{\Lambda }_{L}). $$ Therefore  we have an  isomorphism  of $k$-algebras
\begin{equation}\label{dec1}
 \widetilde{\Lambda }_{C}^{ss}\otimes _{k}\widetilde{\Lambda }_{L}^{ss}=\widetilde{\Lambda }_{C}^{ss}\otimes _{k}\widetilde{\Lambda }_{L}\cong \widetilde{\Lambda }_{H}^{ss}.
\end{equation}
 
 Since $K$ is assez gros for $H$, we start by proving that  $\widetilde{\Lambda }_L$ and $\widetilde{\Lambda}_C^{ss}$ are split semisimple $k$-algebras.  Since $K$ is assez gros for $H$, there exists an isomorphism of $K$ algebras 
$$K[L]\simeq \prod_\beta M_{n_\beta}(K).$$
The order of $L$ is coprime to $p$ and thus $\Lambda_L=R[L]$ is a maximal order of $K[L]$. From \cite{R75} chapter 5   we deduce that there exists an isomorphism of $R$-algebras 
$$\Lambda_L\simeq  \prod_\beta M_{n_\beta}(R).$$
Therefore there exists an isomorphism of $k$-algebras 
\begin{equation}\label{pr1}\widetilde{\Lambda}_L\simeq \prod_\beta M_{n_\beta}(k)
\end{equation}
and so $\widetilde{\Lambda}_L$ is split. 

We now consider $\widetilde{\Lambda}_C^{ss}$. Since $\Lambda_C$ is commutative we can consider the affine group scheme  $\mathcal{G}=\mathrm{Spec}(\Lambda_C)$  and the connected  \'etale sequence attached to this group. We let $\Lambda_C^{et}$ be the \'etale subalgebra of $\Lambda_C$ of the global sections of 
$\mathcal{G}^{et}$. We have an isomorphism of $k$-algebras (see (\ref{utu}))
$$\widetilde{\Lambda}_C^{ss}\simeq \widetilde{\Lambda_C^{et}}.$$
Since $\Lambda_C^{et}\otimes_RK$ is a Hopf subalgebra of $K[C]$,  it is a group algebra $K[D]$ where $D$ is a subgroup of $C$. Therefore, since $K$ is assez gros for $H$,  it is assez gros for $C$ and $D$ and so there exists an isomorphism of $K$-algebras 
$$K[D]\simeq \prod_\alpha K. $$
Since $\Lambda_C^{et}$ is an \'etale Hopf $R$-order of $K[D]$ it is the maximal order and so we have an isomorphism of $R$-algebras 
$$\Lambda_C^{et}\simeq \prod_\alpha R$$ and  an isomorphism of $k$-algebras
\begin{equation}\label{pr2} \widetilde{\Lambda}_C^{ss}\simeq \prod_\alpha k.\end{equation}
In what follows we shall use the following Lemma:
\begin{lem}(see \cite{Ros})
Let $R$ and $S$ be commutative algebras on a field $k$. Then the following properties hold: 
\begin{enumerate}
\item $G_0(\mathrm{M}_n(R)\simeq G_0(R).$

\item $G_0(R\times S)\simeq G_0(R)\oplus G_0(S).$

\item $G_0(k)\simeq \mathbb Z.$

\end{enumerate}
\end{lem}
From  (\ref{pr1})  and (\ref {pr2}) we obtain  isomorphisms of $k$-algebras
\begin{equation}\label{pr3}
\widetilde{\Lambda}_H^{ss}\simeq (\prod_{\alpha}k)\otimes_k(\prod_{\beta}\mathrm{M}_{n_\beta}(k))\simeq \prod_{\alpha, \beta}\mathrm{M}_{n_\beta}(k).
\end{equation}
Using the lemma above we deduce from (\ref{pr3})  isomorphism of $\mathbb{Z}$-modules
$$G_0(\widetilde{\Lambda}_H)\simeq \oplus_{\alpha, \beta}G_0(\mathrm{M}_{n_\beta}(k))\simeq \oplus_{\alpha, \beta}G_0(k)\simeq \oplus_{\alpha, \beta}\mathbb Z\simeq G_{0}(\widetilde{\Lambda }_{C}) \otimes _{\mathbb Z}G_{0}( \widetilde{\Lambda }_L
).
$$
The first part of the theorem now follows.

 From now on,  for the sake of simplicity,  we write $\psi_H$ and $d_H$ for $\psi_{\Lambda_H}$ and  $d_{\Lambda_H}$  and $\psi_C$ and  $d_C$ for $\psi_{\Lambda_C}$ and $d_{\Lambda_C}$. 

 We start by proving that $d_C$ is surjective.  We set  $\mathcal{G}=\mathrm{Spec}(\widetilde \Lambda_C)$.  Then $\mathcal{G}$ is a  finite and commutative group scheme over $k$.  We consider the connected-\'etale sequence of $\mathcal{G}$ over $k$
$$0\rightarrow \mathcal{G}^0\rightarrow \mathcal{G}\rightarrow \mathcal{G}^{et}\rightarrow 0.$$
Since the  field  $k$ is  perfect, this sequence is split and $\mathcal{G}$ is the direct product   
\begin{equation}\label{C}\mathcal{G}=\mathcal{G}^0\times\mathcal{G}^{et}.\end{equation} Since $\mathcal{G}$ is a finite group, the groups $\mathcal{G}^0$ and $\mathcal{G}^{et}$ are finite, then  there exist  finite commutative Hopf $k$-algebras $\widetilde \Lambda_C^0$ and $\widetilde \Lambda_C^{et}$ such that $\mathcal{G}^0=\mathrm{Spec}(\widetilde \Lambda_C^0)$ and $\mathcal{G}^{et}=\mathrm{Spec}(\widetilde \Lambda_C^{et})$ and the equality (\ref{C}) can be translated into an equality of Hopf algebras
\begin{equation}\label{CC} \widetilde \Lambda_C=\widetilde \Lambda_C^0\otimes_k\widetilde \Lambda_C^{et}.
\end{equation} 
Since the algebras above are finite over a perfect field, we deduce from (\ref{CC}):
$$ \widetilde \Lambda_C^{ss}=\widetilde \Lambda_C^{0,ss}\otimes_k\widetilde \Lambda_C^{et,ss}$$
and so that 
\begin{equation}\label{decet}\widetilde \Lambda_C^{ss}=k\otimes_k\widetilde \Lambda_C^{et,ss}\simeq \widetilde \Lambda_C^{et}.\end{equation}
More precisely, let $i_C: \widetilde \Lambda_C^{et}\hookrightarrow \widetilde \Lambda_C$ be the canonical injection and $\alpha_C$ be the canonical surjection $\widetilde \Lambda_C\rightarrow \widetilde \Lambda_C^{ss}$, then 
$\beta_C:=\alpha_C\circ i_C$ is an isomorphism of $k$-algebras. Therefore $\beta_C$ induces a group isomorphism 
$\hat\beta_C: G_0(\widetilde \Lambda_C^{ss})\rightarrow G_0(\widetilde \Lambda_C^{et})$ which can be decomposed into $\hat\beta_C=\hat i_C\circ\hat \alpha_C$  with 
$\hat i_C: G_0(\widetilde \Lambda_C)\rightarrow G_0(\widetilde \Lambda_C^{et})$   and $\hat \alpha_C: G_0(\widetilde \Lambda_C^{ss})\rightarrow G_0(\widetilde \Lambda_C)$. Since $\hat \alpha_C$ is a group isomorphism we deduce that the restriction homomorphism $\hat i_C$ is also a group isomorphism.

We recall that  since $R$ is Henselian,  then
$\widetilde{ \Lambda}_{C}^{et}=\widetilde {\Lambda_{C}^{et}}$ and therefore,  as claimed before,  we have isomorphisms of $k$-algebras 
\begin{equation}\label{utu} \widetilde \Lambda_C^{ss}\simeq \widetilde \Lambda_C^{et}\simeq \widetilde {\Lambda_{C}^{et}}.\end{equation}
Moreover we know that  $\Lambda_C^{et}\otimes_RK =K[D]$ where $D$ is a subgroup of $C$. 

We consider  the decomposition homomorphism   $d_C:G_0(K[C])\rightarrow G_0(\widetilde \Lambda_C)$. Since $\Lambda_C^{et}\otimes_RK =K[D]$ we also have a decomposition homomorphism 
$d_D: G_0(K[D])\rightarrow G_0(\widetilde{\Lambda}_C^{et})$. We let $\hat{j_C}: G_0(K[C])\rightarrow G_0(K[D])$ be the homomorphism of groups induced by restriction. 
The following diagram is commutative 
 \[\xymatrix{
G_0(K[C])\ar[r]^{d_C}\ar[d]_{\hat j_C}&G_0(\widetilde{\Lambda_C})
\ar[d]_{\hat i_C} \\
\textsc G_0(K[D]))\ar[r]^{d_D}&G_0(\widetilde{\Lambda_C}^{et}) \\
}\] 
One knows that $\hat j_C$ is surjective. Moreover since $\widetilde{\Lambda_C}^{et}$ is a semisimple algebra over $k$, then the decomposition map $d_D$ is a group isomorphism and so 
$d_D\circ \hat j_C=\hat i_C\circ d_C$ is surjective. Therefore, since $\hat i_C$ is a group isomorphism,  we deduce that $d_C$ is surjective. 
 
We now want to prove that $d_H$ is surjective. Since $d_C$ is surjective, we obtain a set of generators of $G_0(\widetilde{\Lambda_C})$ by considering the classes $\{[\frac{M_1}{\pi M_1}], \cdots, [\frac{M_r}{\pi M_r}]\}$ where $\{M_1,\cdots, M_r\} $ are $\Lambda_C$-lattices such that $\{[M_1\otimes K], \cdots, [M_t\otimes K]\}$ is a $\mathbb Z$-basis of $G_0(K[C])$.  Since the order of $L$ is coprime to  $p$,  it follows from \cite{Serrerep} Proposition 4.3,  that   a $\mathbb Z$-basis of  $G_0(\widetilde\Lambda_L)$ is given by $\{ [\frac{N_1}{\pi N_1}], \cdots, [\frac{N_t}{\pi N_t}]\}$, where $\{N_1,\cdots, N_t\} $ are $\Lambda_L$-lattices such that $\{[N_1\otimes K], \cdots, [N_t\otimes K]\}$ is a $\mathbb Z$-basis of $G_0(K[L])$. 
 We conclude that  \begin{equation}\label{S} S=\{[\frac{M_i}{\pi M_i}\otimes_k\frac{N_j}{\pi N_j}],\ 1\leq i\leq r,\ 1\leq j\leq t\}\end{equation}
is a set of generators of $G_0(\widetilde{\Lambda}_H)$. We set $U_i=M_i\otimes_RK$ and $V_j=N_j\otimes_RK$. Then $M_i\otimes_RN_j$ is a $\Lambda_H$ lattice of $U_i\otimes_R  V_i$ and so 
\begin{equation}d_H( [U_i\otimes_K V_j])=[\frac{M_i\otimes_RN_j}{\pi (M_i\otimes_RN_j)}]. \end{equation}
\begin{lem}\label{ut} Let $ L_1$, $L_2$, $L'_1$ and $L'_2$ be finite and free $R$-modules with $L'_1\subset L_1$ and $L'_2\subset L_2$. We assume that $\pi L_1\subset L'_1$ and $\pi L_2\subset L'_2$. Then the morphism $\theta$ of $R$-modules 
$L_1\otimes_RL_2\rightarrow \frac{L_1}{L'_1}\otimes_k \frac{L_2}{L'_2}$ induces an isomorphism of $k$-vector spaces
$$\frac{L_1\otimes_RL_2}{L'_1\otimes_R L_2+L_1\otimes_RL'_2}\simeq \frac{L_1}{L'_1}\otimes_k \frac{L_2}{L'_2}.$$
Moreover,  if $L_1$ and $L'_1$ (resp. $L_2$ and $L'_2$ ) are $\Lambda_C$ (resp. $\Lambda_L$)-lattices, then $\theta$ is an isomorphism of $\widetilde\Lambda_C\otimes_k\widetilde\Lambda_L$-modules.
\end{lem}
\begin{proof}  Indeed $\theta$ is surjective; therefore  it suffices to prove that it is injective.  We know that $L_1$ and $ L'_1$ are both $R$-free modules. Since $\pi L_1\subset L'_1 $ the classical structure theory of finite modules over a principal ring implies that there exists a basis $\{e_1,\cdots, e_n\}$  of $L_1$ such that $\{e_1, \cdots e_r, \pi e_{r+1}, \cdots, \pi e_n\}$ is a basis of $L'_1$. We easily check that $\{\bar e_{r+1},\cdots, \bar e_n\}$ is a basis of $\frac{L_1}{L'_1}$ as a $k$-vector space. Thus every element $y\in \frac{L_1}{L'_1}\otimes_k \frac{L_2}{L'_2}$ can be written in a unique way $y=\bar e_{r+1}\otimes x_{r+1}+\cdots  +\bar e_n\otimes x_n$ with 
$x_{r+1}, \cdots, x_n \in  \frac{L_2}{L'_2}$. Let $x$ be an element of $L_1\otimes_RL_2$. There exist $b_1, \cdots, b_n \in L_2$ such that 
$x=\sum_{1\leq i\leq n}e_i\otimes b_i$. Therefore
$$\theta(x)=\sum_{1\leq i\leq n}\bar e_i\otimes \bar b_i=\sum_{r+1\leq i\leq n}\bar e_i\otimes \bar b_i .$$
We conclude that $x\in \mathrm{Ker}(\theta)$ if and only if $b_i\in L'_2, \ r +1\leq i\leq n$ and so 
$$x=\sum_{1\leq i\leq r}e_i\otimes b_i+\sum_{i\leq r+1\leq n}e_i\otimes b_j \in L'_1\otimes_R L_2+L_1\otimes_RL'_2.$$ Therefore $\theta $ induces  the required  isomorphism of $k$-vector spaces,  still denoted by $\theta$.  
When $L_1$ and $L'_1$ (resp. $L_2$ and $L'_2)$ are $\Lambda_C$ (resp. $\Lambda_L$) lattices, then $\Lambda_C\otimes_R\Lambda_L$ acts on $L_1\otimes_RL_2$ via the action of $\Lambda_C$ on $L_1$ and the action on $\Lambda_L$ of $L_2$ and $\theta$ commutes with the action of $\widetilde\Lambda_C\otimes_k\widetilde\Lambda_L$ induced on $\frac{L_1\otimes_RL_2}{L'_1\otimes_R L_2+L_1\otimes_RL'_2}$ and $\frac{L_1}{L'_1}\otimes_k \frac{L_2}{L'_2}$. 
\end{proof}
Choosing  $L_1=M_i$, $L_2=N_j$, $L'_1=\pi M_i$ and $L'_2=\pi N_j$ we note the equality 
$$L_1\otimes_RL'_2+L'_1\otimes_RL_2=M_i\otimes_R\pi N_j+\pi M_i\otimes_R N_j=\pi(M_i\otimes_RN_j)$$ and we deduce from  Lemma \ref{ut} the isomorphism of $\widetilde{\Lambda}_H$-modules: 

$$\frac{M_i\otimes_RN_j}{\pi (M_i\otimes_RN_j)}\simeq \frac{M_i}{\pi M_i}\otimes_k\frac{N_j}{\pi N_j}.$$
We conclude that each element of $S$ in (\ref{S}) belongs to $\mathrm{Im}(d_H)$ and so that $d_H$ is surjective.

In order to complete the proof of Theorem \ref{el2} it suffices to prove that (1) implies (2) which is what we do in the next lemma.
\begin{lem}
Suppose that the decomposition homomorphism $d_H$ is surjective then $\psi_H$ is the trivial map.
\end{lem}
\begin{proof} We consider the group homorphism 
$$\psi_H\circ d_H: G_0(K[H])\rightarrow G_0(\Lambda_H).$$
Let $V$ be  a $K[H]$-module and let $M$ be  a $\Lambda_H$-lattice of $V$. We then have $d_H([V])=[\frac{M}{\pi M}]$. From the exact sequence of $\Lambda_H$-modules 
$$0\rightarrow \pi M\rightarrow M\rightarrow \frac{M}{\pi M}\rightarrow 0$$ we deduce that $\psi_H( [\frac{M}{\pi M}])=[M]-[\pi M]=0$, since $M$ and $\pi M$ are isomorphic $\Lambda_H$-lattices.  Therefore we have proved that 
$\psi_H\circ d_H([V])=0$ for every $K[H]$-module $V$. Since $d_H$ is surjective,  we conclude that $\psi_H$ is the trivial map. 
\end{proof}

\end{proof}

As a consequence of Proposition \ref{inter} and Theorem \ref{el2} we have now shown: 
\begin{thm}\label{majeur} If $G$  is a finite group, if $\Lambda_G $ is an 
$R$-Hopf order in $K[ G]$ , and if $K$
is assez gros for $G$;  then,  for some $m>0$ we have: 
\begin{enumerate}
 \item $p^{m}.\mathrm{Im}(\psi _{\Lambda
_{G}})=0$
\item $p^{m}\mathrm{coker}(\delta_{\Lambda_G})=p^m\mathrm{coker}(d_{\Lambda_G})=0$.
\end{enumerate}
\end{thm}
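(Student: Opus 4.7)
\bigskip

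\noindent\textbf{Proof plan for Theorem \ref{majeur}.}

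The plan is to deduce Theorem \ref{majeur} directly from the combination of Proposition \ref{inter} and Theorem \ref{el2}; no new ideas are needed, only the verification that the hypotheses of Proposition \ref{inter} are available for each $H \in C(G)$.

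First I would check that for every $l$-elementary subgroup $H$ of $G$ (with $l \neq p$), the object $\Lambda_H := \Lambda_G \cap K[H]$ is an $R$-Hopf order in $K[H]$. This is exactly Proposition \ref{sous}. Since the exponent of $H$ divides the exponent $e$ of $G$ and $K$ contains a primitive $e$-th root of unity, $K$ is also assez gros for $H$. Thus for each such $H$ all the hypotheses of Theorem \ref{el2} hold, and we conclude that $\mathrm{Im}(\psi_{\Lambda_H}) = 0$ and that the decomposition map $d_{\Lambda_H}\colon G_0(K[H])\to G_0(\widetilde{\Lambda}_H)$ is surjective. Since $\mathrm{Im}(\delta_{\Lambda_H}) = \mathrm{Im}(d_{\Lambda_H})$ (as recorded in the Character action discussion of Section 4.3), surjectivity of $d_{\Lambda_H}$ is equivalent to surjectivity of $\delta_{\Lambda_H}$.

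Having checked the two hypotheses of Proposition \ref{inter} for every $H \in C(G)$, I would simply invoke that proposition to obtain an integer $m > 0$ such that
\[
p^m \cdot \mathrm{Im}(\psi_{\Lambda_G}) = 0 \qquad \text{and} \qquad p^m \cdot \mathrm{coker}(\delta_{\Lambda_G}) = p^m \cdot \mathrm{coker}(d_{\Lambda_G}) = 0.
\]
This is precisely the conclusion of Theorem \ref{majeur}. Since Proposition \ref{inter} was itself proved via Brauer induction (together with the Frobenius module structure on $H \mapsto \mathrm{Im}(\psi_{\Lambda_H})$ and $H \mapsto \mathrm{coker}(\delta_{\Lambda_H})$ over the Frobenius functor $H \mapsto G_0(K[H])$), the integer $m$ produced here is the same one appearing in the Brauer relation $p^m \varepsilon_G = \sum_{H\in C(G)} \mathrm{Ind}_H^G(\theta_H)$.

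There is essentially no obstacle: Theorem \ref{majeur} is a formal consequence of the two preceding results, with the only verification being that membership in $C(G)$ passes to the Hopf order $\Lambda_H$ and that assez gros-ness is inherited by subgroups. The real work has already been done in proving Theorem \ref{el2} (which required the connected-étale decomposition of $\widetilde{\Lambda}_C^{ss}$, the Nichols--Zoeller freeness used in Proposition \ref{sous}, and the identification $\Lambda_H = \Lambda_C[L]$ of Proposition \ref{clo}) and Proposition \ref{inter} (the Frobenius module machinery applied to Brauer induction).
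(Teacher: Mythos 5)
Your proposal is correct and follows exactly the paper's own derivation: Theorem \ref{majeur} is stated there as an immediate consequence of Proposition \ref{inter} combined with Theorem \ref{el2}, with the same routine verifications (that $\Lambda_H=\Lambda_G\cap K[H]$ is a Hopf order via Proposition \ref{sous}, that assez gros-ness passes to subgroups, and that surjectivity of $d_{\Lambda_H}$ is equivalent to that of $\delta_{\Lambda_H}$ since $\mathrm{Im}(\delta_{\Lambda_H})=\mathrm{Im}(d_{\Lambda_H})$). Nothing further is needed.
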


\subsection {Duality.} 

 Here we first consider duality for representations of a finite group $G$ over a
field $K$ of characteristic zero (see 13.2 in \cite{Serrerep}).

Let $V$ denote a $K[G] $-representation; that is to say, $ V$
is a left $K[ G]$-module of finite $K$-dimension. The dual of $V$, denoted $V^{D}$,  is the left $K[G]$-representation 
$\mathrm{Hom}_{K}(V, K) $ where for $g\in G,\ f:V\rightarrow K,\ \
^{g}f( v) =f( g^{-1}v)$.  Thus, if $V$ has
character $\chi$,  then $V^{D}$ has character $\overline{\chi }$ where $
\overline{\chi }( g) =\chi( g^{-1})$. 
\begin{definition}
 We say that $V$ is a self-dual representation of $G$ if $V\cong V^{D}$ as left $K[
G]$-modules. We write $G_{0}^{+}( K[ G]
)$ for the subgroup of $G_{0}(K[ G]) $ of characters of self-dual representations. We note that,
if $K\subset \mathbb{R}$, then every $K[ G]$-representation is self-dual.
\end{definition}
\begin{example} For any $K[ G] $-representation $V$ the
representation $V\oplus V^{D}$ is self-dual: indeed, identifying $V=(
V^{D}) ^{D}$ we have
\begin{equation}
( V\oplus V^{D})^{D}=V^{D}\oplus V^{DD}=V^{D}\oplus V\cong
V\oplus V^{D}.
\end{equation}
In particular observe that $(V\oplus V^{D})$ supports the $G$-invariant non-degenerate symmetric form $q$  
\begin{equation}
q( v\oplus f,v^{\prime }\oplus f^{\prime }) =f( v^{\prime
}) +f^{\prime }( v) 
\end{equation}
and it also supports the $G$-invariant non-degenerate alternating form $a$  
\begin{equation}
a( v\oplus f,v^{\prime }\oplus f^{\prime }) =f( v^{\prime
}) -f^{\prime }( v) .
\end{equation}
\end{example}

\noindent{\bf Brauer induction of self-dual representations.} We now assume that $K$ is assez gros for $G$. 
We keep the notations of subsection 4.4. We recall that  $\zeta$ denotes a complex root of unity whose order is equal to the
exponent $e$ of $G$,  with $e>2$,  and  that  $N=\mathbb{Q}
( \zeta)$. We let  $N^{+}=\mathbb{Q}(\zeta +\zeta ^{-1}) $ and 
we put  $\Gamma =\Gamma _{N^{+}}=\mathrm{Gal}( N/N^{+}) $ so that 
$\Gamma=\left\langle \gamma \right\rangle$ where $\gamma$ is complex
conjugation. The  action of $\Gamma$ on the $e$-th roots of unity induces a group isomorphism  $\Gamma \simeq \pm 1$; we identify $\Gamma$ and its image in  
$\mathbb Z/2\mathbb Z$. From the above (see also \cite{Serrerep} Section 13.2) we know that an
irreducible character $\chi \in G_{0}( N[G]
) $ is self-dual iff $\chi $ is real valued and  $\chi $
is the character of an orthogonal representation of $G$ iff $\chi \in
G_{0}( N^{+}[ G]) $. 

Recall from  12.6 of \cite{Serrerep} that a subgroup $H$ of $G$ is called $\Gamma$-$l$-elementary if we can write $\ H=C\rtimes L$ where $C$ is cyclic of order
prime to $l$ and $L$ is an $l$-group such that, for any $\lambda \in L$, there exists a unique $t \in \Gamma$ such that 
$$\lambda x\lambda^{-1}=x^t, \forall x\in C.$$
We note that since we  take $H$ to be a subgroup of the group $G$ of
odd order and since $\Gamma $ has order $2$,  in our situation,  a subgroup $\Gamma$-$l$-elementary is  always  $l$
-elementary; that is to say $H$ is the direct product of $C$ and $L$. 

By Brauer's Theorem, as described above, for 
some $m>0$ we can write 
\begin{equation}\label{br1}
p^{m}\varepsilon _{G}=\sum_{H\in C(G) }\mathrm{Ind}_{H}^{G}( \theta _{H}) 
\end{equation}
for virtual characters $\theta _{H}\in G_{0}( N^{+}[H])=G_0^+(N^+[H])$ (see \cite{Serrerep} Theorem 28). 

We fix an  embedding 
\begin{equation*}
t: N=\mathbb{Q}( \zeta) \hookrightarrow \mathbb{Q}_{p}(
\zeta) \hookrightarrow K
\end{equation*}
and we identify 

\begin{equation}
G_{0}( N[ G]) =G_{0}( K[ G] ).
\end{equation}
 Under this identification $G^+_{0}( N[ G]) $
identifies as the group of virtual self-dual characters $G_{0}^{+}( K
[ G])$ of $G$  and so we may view $\varepsilon _{G}$
(resp. $\theta _{H}$)  in (\ref{br1}) above as self-dual characters of $G$ (resp. $
H$). 

\medskip

\noindent{\bf Duality for Grothendieck groups of Hopf orders.}
\vskip 0.1truecm
\noindent {\bf Duality for $G_{0}( \Lambda)$.} Here, as previously,
we assume that $R$ is the valuation ring of a finite extension $K$ of $
\mathbb{Q}_{p}$, that the group $G$ has odd order  and that $K$ is assez gros for $G$. Let $\Lambda =\Lambda
_{G}$ denote a Hopf $R$-order in $K[G]$. We wish to define a
duality map on $G_{0}( \Lambda)$.  For a $\Lambda$-lattice $M$
we let $M^{D}=\mathrm{Hom}_{R}( M,R)$, endowed with a $\Lambda$-module structure  given by 
$$(\lambda f):m\rightarrow f(S(\lambda)m\ \forall m\in M, \lambda \in \Lambda$$
where $S$ is the antipode of $\Lambda$.  We say that such a module is self-dual if
there is an isomorphism of $\Lambda $-modules $M\cong M^{D}$. Since $
\mathrm{Hom}_R( -,R)$ preserves exact sequences of free $R$-modules, the
map $M\mapsto M^{D}$ induces an involution $G_{0}^{R}(\Lambda)$. We let $
G_{0}^{R +}( \Lambda)$ denote the subgroup of $G_0^R(\Lambda)$ generated by classes of
self-dual finitely generated $\Lambda $-lattices. This is a subring of $G_0^R(\Lambda)$.

\noindent{\bf Duality for $G_{0}( \widetilde{\Lambda })$.}  Recall
that the reduction map $\Lambda \rightarrow \widetilde{\Lambda }$ induces a
ring homomorphism $\delta_{\Lambda} :G^R_{0}(\Lambda) \rightarrow
G_{0}( \widetilde{\Lambda })$. For a finitely generated $\widetilde{\Lambda }$-module $N$ we set $
N^{D}=\mathrm{Hom}_{k}( N,k)$.  The functor $\mathrm{ Hom}_{k}(
-,k)$ preserves exact sequences of finitely generated $\widetilde{
\Lambda }$-modules and so, as previously, the map $N\rightarrow N^D$  induces an involution on $
G_{0}( \widetilde{\Lambda })$.  Note that the reduction map is
natural for duality in the sense that for a finitely generated $\Lambda $-lattice $M$
\begin{equation*}
\delta_{\Lambda}( M^{D}) =\delta_{\Lambda} ( M) ^{D}
\end{equation*}
by virtue of the natural isomorphism 
\begin{equation*}
\mathrm{Hom}_{R}( M,R) \otimes _{R}k\cong \mathrm{Hom}_{k}( M\otimes
_{R}k,k) .
\end{equation*}

\begin{prop}\label{triv1}
 Let $H$ be an $l$-elementary group  of odd order and let $\Lambda:=\Lambda_H$ be a Hopf order of $K[H]$. We assume that $p>2$, $l\neq p$ and that $K$ is assez gros for $H$. Then 
$ \widetilde{\Lambda}_{H}^{ss}$ has no non-trivial simple self-dual modules. 
\end{prop}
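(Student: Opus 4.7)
The plan is to reduce the self-duality question on $\widetilde{\Lambda}_H^{ss}$ to two independent self-duality questions on $\widetilde{\Lambda}_C^{ss}$ and $\widetilde{\Lambda}_L^{ss}$, dispose of each using classical character-theoretic arguments that exploit the odd order hypothesis, and then recombine.

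First I would invoke the Hopf-algebra isomorphism $\widetilde{\Lambda}_H\simeq \widetilde{\Lambda}_C\otimes_k\widetilde{\Lambda}_L$ coming from the tensor decomposition $\Lambda_H\simeq \Lambda_C\otimes_R\Lambda_L$ of Proposition \ref{clo}, under which the antipode of $\Lambda_H$ is $S_C\otimes S_L$. Passing to semisimple quotients yields the isomorphism of $k$-Hopf algebras $\widetilde{\Lambda}_H^{ss}\simeq \widetilde{\Lambda}_C^{ss}\otimes_k\widetilde{\Lambda}_L^{ss}$ obtained in the proof of Theorem \ref{el2}. Because both factors were shown there to be split semisimple, every simple $\widetilde{\Lambda}_H^{ss}$-module has the form $V\otimes_k W$ with $V$, $W$ simple over the respective factors, and the formula for the antipode of a tensor product of Hopf algebras gives $(V\otimes_k W)^D\simeq V^D\otimes_k W^D$. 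Hence $V\otimes_k W$ is self-dual precisely when both $V$ and $W$ are.

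Next I would dispose of the $L$-factor. Since $|L|$ is coprime to $p$, Proposition \ref{sous} forces $\Lambda_L=R[L]$, so $\widetilde{\Lambda}_L^{ss}=k[L]$ and the decomposition map $d_L:G_0(K[L])\to G_0(k[L])$ is an isomorphism that intertwines the dualities induced by the antipode $l\mapsto l^{-1}$ on both sides. As $K$ is assez gros for $L$, simple $K[L]$-modules correspond to irreducible complex characters, and self-duality becomes $\chi=\bar\chi$. Using that $g\mapsto g^2$ is a bijection of the odd-order group $L$, a short Frobenius--Schur calculation shows that the only self-dual irreducible character of $L$ is trivial, so the only self-dual simple $\widetilde{\Lambda}_L^{ss}$-module is the trivial one. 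For the $C$-factor I would invoke the further identifications $\widetilde{\Lambda}_C^{ss}\simeq \widetilde{\Lambda_C^{et}}$ and $\Lambda_C^{et}\otimes_RK\simeq K[D]$ established in the proof of Theorem \ref{el2}, where $D$ is a subgroup of $C$ of $p$-power order. Since $K$ is assez gros for $D$, the étale order $\Lambda_C^{et}$ is isomorphic as $R$-Hopf algebra to $\prod_\alpha R$ indexed by the characters $\chi_\alpha$ of $D$, and reduction gives $\widetilde{\Lambda}_C^{ss}\simeq \prod_\alpha k$ with antipode swapping the component indexed by $\chi_\alpha$ with that indexed by $\chi_\alpha^{-1}$. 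A simple module $M_\alpha$ is therefore self-dual iff $\chi_\alpha^2=1$, which for an odd $p$-group forces $\chi_\alpha$ to be trivial.

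Combining the three steps, the only self-dual simple $\widetilde{\Lambda}_H^{ss}$-module is $k\otimes_k k=k$. The step that I expect to require the most care is the first one: verifying that the antipode of $\Lambda_H$ really corresponds to $S_C\otimes S_L$ under the decomposition of Proposition \ref{clo}, and that this Hopf structure descends cleanly through the semisimplification so that duality of modules genuinely decomposes as a tensor product of dualities on each factor.
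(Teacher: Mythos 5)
Your proposal is correct and follows essentially the same route as the paper: decompose $\widetilde{\Lambda}_H^{ss}\simeq\widetilde{\Lambda}_C^{ss}\otimes_k\widetilde{\Lambda}_L^{ss}$, dispose of the $L$-factor as $k[L]$ having no non-trivial self-dual simple modules for odd $|L|$ (the paper simply cites Serre, Proposition 43, where you rederive the fact via the decomposition map and the odd-order squaring argument), and treat the $C$-factor via $\widetilde{\Lambda}_C^{ss}\simeq\widetilde{\Lambda_C^{et}}\simeq\prod_\alpha k$ with the antipode permuting the character idempotents of the subgroup $D$, so that self-duality forces $\chi_\alpha=\chi_\alpha^{-1}$ and hence $\chi_\alpha$ trivial. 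The only real difference is that you make explicit, using split semisimplicity and the antipode $S_C\otimes S_L$, why self-duality of $V\otimes_k W$ forces self-duality of both factors, a reduction the paper leaves implicit.
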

\begin{proof} We recall that   we may write $ H=C\times L$ where the group $C$ is a cyclic $p$-group and $L$ has order
prime to $p$. Under these hypotheses    we know from (\ref{dec1}) that there exists   a natural isomorphism of $k$-algebras with involutions:
$$\widetilde{ \Lambda}^{ss} _{C}\otimes _{k} \widetilde{\Lambda}^{ss} _{L}\simeq \widetilde{\Lambda}^{ss} _{H}.$$ Therefore,  in order to prove  Proposition \ref{triv1},  it suffices to prove that the result  holds for  
 $\widetilde{\Lambda}^{ss} _{C}$ and   $\widetilde{\Lambda}^{ss} _{L}$. Since  $L$ is a group of order coprime to $p$, then $\widetilde{\Lambda}^{ss} _{L}=\widetilde{\Lambda} _{L}=k[L]$. Since the order of $L$ is odd then  $k[L]$ has no non-trivial simple self-dual module (see \cite{Serrerep} Section 15.5, Proposition 43). 

 We now consider the semisimple $k$-algebra $\widetilde{\Lambda}_C^{ss}$. By (\ref{decet}) we have  isomorphisms  
 $$\widetilde{\Lambda}_C^{ss} \simeq \widetilde{\Lambda}_C^{et}\simeq \widetilde{\Lambda_C^{et}}.$$
 We know that  $\Lambda_{C}^{et}$ is a Hopf $R$-order of $K[D]$, where $D$ is a subgroup of $C$ (see Proof of Theorem 4.16). Since $\Lambda_{C}^{et}$ is separable and $K$ is assez gros for $H$,  it follows from \cite{L72} that this order  is the unique maximal order of $K[D]$. The group $D$ is a $p$-group,  we denote by $r$ its order. If $\{\chi_1, \cdots, \chi_r\}$  are the irreducible $K$-characters of $D$, then 
\begin{equation}\label{dec3}\Lambda_{C}^{et}=\oplus_{1\leq i\leq r}\Lambda_{C}^{et}e_i=\oplus_{1\leq i\leq r}Re_i, \end{equation}
with $e_i=\frac{1}{r}\sum_{x\in D}\chi(x^{-1})x$ and thus 
\begin{equation}\label{find}\widetilde{ \Lambda}_{C}^{et}=\oplus_{1\leq i\leq r}k\tilde e_i.\end{equation} Since $1=\sum_{1\leq i\leq r}e_i$ is a decomposition of $1$ into  primitive orthogonal idempotents, then (\ref{find}) is a decomposition of $\widetilde{ \Lambda}_{C}^{et}$ into minimal left ideals.  Moreover,  since  $\widetilde{ \Lambda}_{C}^{et}$ is a split commutative semi-simple $k$-algebra, then $\{k\tilde e_1,\cdots, k\tilde e_r\}$ are non-isomorphic $\widetilde{ \Lambda}_{C}^{et}$-left ideals (see \cite{R75}  Section 6.C). In particular we deduce  that $k\tilde e_i=k\tilde e_j$ implies  $\Lambda_{C}^{et}e_i\simeq \Lambda_{C}^{et}e_j$ and so  $e_i=e_j$.  We let $S$ denote the antipode of $\Lambda_{C}^{et}$. Suppose that $k\tilde e_i$ is a self-dual simple module, then we have:
$$\tilde S(k\tilde e_i)=k\tilde S(\tilde e_i)=k\widetilde{ (S(e_i)})=k\tilde e_i. $$ This implies that  ${\Lambda_C^{et}}S(e_i)\simeq {\Lambda_C^{et}}e_i$ and so that $S(e_i)=e_i$. If $e_i$ is attached to the character $\chi_i $, then $S(e_i)$ is attached to the character $\chi_i^{-1}$. Therefore if $S(e_i)=e_i$  then  $\chi_i=\chi_i^{-1}$ and thus   $\chi_i$ is the trivial character since $r$ is odd. We conclude that  $\widetilde{\Lambda_C^{et}}$ has no non-trivial self-dual modules. This  completes the proof of the proposition. 
\end{proof}

\noindent {\bf Character action for self-dual classes.}
 We note that the reduction map $d_{{\Lambda}_G}: G_0(K[G])\rightarrow G_0(\widetilde{\Lambda}_G)$ induces by restriction a morphism of rings 
 $d^+_{{\Lambda}_G}: G^+_0(K[G])\rightarrow G^+_0(\widetilde{\Lambda}_G)$. We  consider $G_0^+{(\widetilde{\Lambda}_G}) $ endowed with a structure of $G^+_0(K[G])$-module  via $d^+_{{\Lambda}_G}$.  We want to check that the $H\rightarrow G^+_0(\widetilde{\Lambda}_H)$ is a Frobenius module over the Frobenius functor $H\rightarrow G^+_0(K[H])$. Indeed, restriction provides  homomorphisms of Grothendieck rings 
 $$\mathrm{Res}_{\Lambda _{G}}^{\Lambda _{H}}: G_{0}^{R +}( \Lambda_{G}) \rightarrow G_{0}^{R +}( \Lambda _{H})\ \mathrm{and}\   \mathrm{Res}_{\widetilde{\Lambda} _{G}}^{\widetilde{\Lambda}_{H}}: G^+_{0}(\widetilde{\Lambda}_{G}) \rightarrow G^+_{0}(\widetilde{\Lambda} _{H}).$$ A key point is  to show that induction induces group homomorphisms 
$$\mathrm{Ind}_{\Lambda _{H}}^{\Lambda _{G}}: G_{0}^{R +}( \Lambda_{H}) \rightarrow G_{0}^{R +}( \Lambda _{G})\ \mathrm{and}\   \mathrm{Ind}_{\widetilde{\Lambda} _{H}}^{\widetilde{\Lambda}_{G}}: G^+_{0}(\widetilde{\Lambda}_{H}) \rightarrow G^+_{0}(\widetilde{\Lambda} _{G}).$$
This will follow from Theorem \ref{indu} and Proposition \ref{symp}. 

We let $H$ be a subgroup of the finite group $G$, we consider  a Hopf $R$-order  $\Lambda_G$ of $K[G]$ and we set $\Lambda_H=\Lambda_G\cap K[H]$; this is a Hopf $R$-order of $K[H]$.  Indeed $\Lambda_G$ is a $\Lambda_H$-bimodule by left and right mutiplication by $\Lambda_H$. For any left module $M$ we consider  the following $R$-modules: 
$$(\Lambda_G\otimes_{\Lambda_H}M)^D:=\mathrm{Hom}_R(\Lambda_G\otimes_{\Lambda_H}M, R)\ \ \mathrm{and} \ \Lambda_G\otimes_{\Lambda_H}M^D:= \Lambda_G\otimes_{\Lambda_H}\mathrm{Hom}_R(M, R).$$ Both modules can be endowed with a $\Lambda_G$-module structure. 
  The $\Lambda_G$-module structure of $(\Lambda_G\otimes_{\Lambda_H}M)^D$ is given by 
\begin{equation}\label{acl}(\lambda.f)(x\otimes_{\Lambda_H}m)=f(S(\lambda)x\otimes_{\Lambda_H}m)\ \ \forall  \lambda, x \in \Lambda_G, m\in M. \end{equation}
This is the usual $\Lambda_G$-module structure for the dual of the  $\Lambda_G$-module $\Lambda_G\otimes_{\Lambda_H} M$.  
  The $\Lambda_G$-module structure of $\Lambda_G\otimes_{\Lambda_H}M^D$ is defined  by
\begin{equation}\lambda.(x\otimes_{\Lambda_H}g)=(\lambda x)\otimes_{\Lambda_H}g. \ \forall  \lambda, x \in \Lambda_G, g\in M^D .\end{equation}
This is the $\Lambda_G$-module structure obtained by left multiplication.

For the sake of simplicity for $f: X\rightarrow Y$ we often write $<f, x>$ for $f(x)$.  For an $R$-module $M$ we write $M_K=K\otimes_RM$ and for a morphism of $R$-modules $f:M\rightarrow P$ we write $f_K= M_K\rightarrow P_K$ for the morphism of $K$-vector spaces induced by $f$. During the proof of Theorem \ref{indu} we often use the following Lemma. 
\begin{lem}\label{simp} Let $N$ be a finite group, $\Lambda$ be an $R$-order of $K[N]$ and let $M$ and $P$ be finitely generated $\Lambda$-lattices so that  
$\mathrm{Hom}_{\Lambda}(M, P)$ is a $R$-lattice of $K\otimes_R\mathrm{Hom}_{\Lambda}(M, P)$. Tensoring by $K$ yields an isomorphism of $K$-vector spaces 
$$K\otimes_R\mathrm{Hom}_{\Lambda}(M, P)\simeq {Hom}_{K[N]}(M_K, P_K).$$
 
Then $f\in \mathrm{Hom}_{R}(M, P)$ lies in $\mathrm{Hom}_{\Lambda}(M, P)$  iff $f_K$  lies  in $\mathrm{Hom}_{K[N]}(M_K, P_K) $.
\end{lem}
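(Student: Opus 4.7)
The plan is to use that $M$ and $P$ are $R$-lattices, so the natural maps $M \hookrightarrow M_K$ and $P \hookrightarrow P_K$ are injective, and the whole statement then reduces to a routine diagram chase based on the identity $\Lambda \otimes_R K = K[N]$.

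The heart of the matter is the ``$\Leftrightarrow$'' in the concluding sentence. The forward direction is immediate: if $f$ is $\Lambda$-linear, then $f_K = \mathrm{id}_K \otimes f$ is $K \otimes_R \Lambda = K[N]$-linear. For the reverse, assume $f_K$ is $K[N]$-linear. For $\lambda \in \Lambda \subset K[N]$ and $m \in M \subset M_K$, both $f(\lambda m)$ and $\lambda f(m)$ lie in $P \subset P_K$, and in $P_K$ they are equal to $f_K(\lambda m) = \lambda f_K(m)$. Since $P \hookrightarrow P_K$ is injective, the equality holds in $P$ itself, so $f$ is $\Lambda$-linear.

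For the displayed isomorphism $K \otimes_R \mathrm{Hom}_\Lambda(M, P) \simeq \mathrm{Hom}_{K[N]}(M_K, P_K)$, the natural map $k \otimes f \mapsto k \cdot f_K$ is injective by flatness of $K$ over $R$. For surjectivity, given $g \in \mathrm{Hom}_{K[N]}(M_K, P_K)$, choose $R$-module generators $m_1, \ldots, m_s$ of $M$ and a nonzero $d \in R$ with $d \cdot g(m_i) \in P$ for every $i$; then $d \cdot g$ restricts to an $R$-module map $\tilde f : M \to P$ whose $K$-extension equals $d \cdot g$, which is $K[N]$-linear. By the equivalence just established, $\tilde f$ is $\Lambda$-linear, and $g = d^{-1} \tilde f_K$ lies in the image. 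No real obstacle arises here; the only care needed is to prove the equivalence first and then invoke it to deduce surjectivity, so that the two parts of the lemma are established in a non-circular order.
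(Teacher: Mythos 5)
Your proof is correct, and for the equivalence itself it follows exactly the paper's argument: since $M\subset M_K$ and $P\subset P_K$, the identity $f(\lambda m)=f_K(\lambda m)=\lambda f_K(m)=\lambda f(m)$, read in $P_K$ and then in $P$, gives $\Lambda$-linearity of $f$, the forward direction being immediate. The only divergence is that the paper simply quotes the displayed isomorphism $K\otimes_R\mathrm{Hom}_{\Lambda}(M,P)\simeq \mathrm{Hom}_{K[N]}(M_K,P_K)$ from the literature, whereas you prove it by hand (injectivity from flatness of $K$ together with the embedding of the Hom-lattice into $\mathrm{Hom}_{K[N]}(M_K,P_K)$, surjectivity by clearing denominators and then invoking the equivalence); that self-contained argument is sound, and your ordering of the two parts indeed avoids any circularity.
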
 
\begin{proof} From \cite{R} Theorem 3.84 we obtain the isomorphism of $K$-vector spaces above.  Therefore, for $\lambda\in \Lambda$ and  $m\in M$,  we have $f(\lambda m)=f_K(\lambda m)=\lambda f_K(m)=\lambda f(m)$
\end{proof}
\begin{thm} \label{indu} For any left $\Lambda_H$-module $M$, there exists an isomorphism of $\Lambda_G$-modules
$$(\Lambda_G\otimes_{\Lambda_H}M)^D\simeq \Lambda_G\otimes_{\Lambda_H}M^D.$$
\end{thm}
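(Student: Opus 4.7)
The plan is to construct an explicit natural $\Lambda_G$-linear map $\Phi$ from right to left, and then show it is an isomorphism by reducing to a rank comparison after base change to $K$. The key structural input is Proposition \ref{sous}: $\Lambda_G$ is a finitely generated free left $\Lambda_H$-module, and $\Lambda_H$ is an $R$-direct summand of $\Lambda_G$ (with $1$ lying in a basis). Pick a free left $\Lambda_H$-basis $(e_1,\ldots,e_n)$ of $\Lambda_G$ with $e_1=1_{\Lambda_G}$, and let $\tau:\Lambda_G\to\Lambda_H$ denote the left-$\Lambda_H$-linear projection onto the coordinate of $e_1$; by construction $\tau(1)=1$ and $\tau(\lambda\mu)=\lambda\tau(\mu)$ for $\lambda\in\Lambda_H$, $\mu\in\Lambda_G$. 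We implicitly assume $M$ is a $\Lambda_H$-lattice, which is the case of interest (otherwise pass to the torsion-free quotient, since the statement is $R$-linear in $M$).

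First, I would define
\begin{equation*}
\Phi:\Lambda_G\otimes_{\Lambda_H}M^D\longrightarrow(\Lambda_G\otimes_{\Lambda_H}M)^D,\qquad \Phi(\lambda\otimes f)(\mu\otimes m)=f\bigl(\tau(S(\lambda)\mu)\cdot m\bigr),
\end{equation*}
where $S$ denotes the antipode of $\Lambda_G$. Using that $S$ restricts to the antipode of $\Lambda_H$ and that the left $\Lambda_H$-action on $M^D$ is $(h\cdot f)(m)=f(S(h)m)$, one checks $\Phi(\lambda h\otimes f)=\Phi(\lambda\otimes hf)$, so $\Phi$ descends to the tensor product over $\Lambda_H$; the $\Lambda_G$-linearity $\Phi(\nu\lambda\otimes f)=\nu\cdot\Phi(\lambda\otimes f)$ falls out from the identity $S(\nu\lambda)=S(\lambda)S(\nu)$ together with the definition (\ref{acl}) of the $\Lambda_G$-action on the right-hand side.

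Second, I would prove $\Phi$ is an isomorphism in two stages. As $R$-lattices both sides have rank $n\cdot\mathrm{rk}_R(M)$: the left-hand side decomposes as $\bigoplus_i(e_i\otimes M^D)$, while the right-hand side is the $R$-dual of $\Lambda_G\otimes_{\Lambda_H}M\cong\bigoplus_i(e_i\otimes M)$. After tensoring with $K$, the statement becomes the classical self-duality of induction $\mathrm{Ind}_{K[H]}^{K[G]}(M_K^D)\cong\mathrm{Ind}_{K[H]}^{K[G]}(M_K)^D$ for the finite-group pair $H\subseteq G$, from which $\Phi\otimes_R K$ is bijective. Since both lattices are $R$-torsion-free, this forces $\Phi$ to be injective; for surjectivity one uses the explicit basis: the collection $\{\Phi(e_i\otimes f)\}_{i,f}$ exhausts the dual basis of $(\bigoplus_i e_i\otimes M)^D$ because $\tau(S(e_i)e_j)$ interpolates a unit matrix (up to antipode twist) modulo $J(\widetilde\Lambda_G)$, and so is invertible after a Nakayama-type argument.

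The main obstacle is the surjectivity at the integral level: the naive comparison over $K$ is straightforward, but controlling $\Phi$ at the lattice level requires knowing that the matrix $\bigl(\tau(S(e_i)e_j)\bigr)_{i,j}\in M_n(\Lambda_H)$ has invertible determinant in $\Lambda_H$. This is precisely the condition that $\Lambda_H\subseteq\Lambda_G$ is a Frobenius extension with Frobenius form $\tau$, which one can deduce from the fact that $\Lambda_G$ is free of finite rank over the Hopf subalgebra $\Lambda_H$ together with the integral theory of \cite{L72}. Alternatively, one may bypass this by invoking the Nichols–Zoeller-style freeness for Hopf algebras reduced modulo $\pi$ used in the proof of Proposition \ref{sous}, then lifting the isomorphism $\Phi\otimes_R k$ to $R$ by Nakayama's lemma once it has been verified over both $k$ and $K$.
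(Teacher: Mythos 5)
Your reduction to $K$ and the torsion-freeness argument give injectivity cheaply, but the two places where the integral content actually lives are not established, and both are genuine gaps. First, well-definedness of $\Phi$: in $\Lambda_G\otimes_{\Lambda_H}M$ the tensor relation identifies $\mu h\otimes m$ with $\mu\otimes hm$ for $h\in\Lambda_H$, so for $\Phi(\lambda\otimes f)$ to be a functional on this module you need $f\bigl(\tau(S(\lambda)\mu h)m\bigr)=f\bigl(\tau(S(\lambda)\mu)\,hm\bigr)$ for all $f,m$, i.e.\ $\tau(xh)=\tau(x)h$. A coordinate projection attached to a one-sided (left) $\Lambda_H$-basis is left $\Lambda_H$-linear but has no reason to be right $\Lambda_H$-linear, so $\Phi$ need not descend in the second variable; you would need $\tau$ to be a $\Lambda_H$-bimodule projection, which is not what Proposition \ref{sous} provides (note also that the induction functor uses $\Lambda_G$ as a \emph{right} $\Lambda_H$-module, so the relevant basis is a right basis). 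Second, and more seriously, your surjectivity hinges on the matrix $\bigl(\tau(S(e_i)e_j)\bigr)_{i,j}$ being invertible in $M_n(\Lambda_H)$, i.e.\ on $\tau$ being a Frobenius form for the extension $\Lambda_H\subseteq\Lambda_G$. This is not a formal consequence of freeness plus the integral theory of \cite{L72}: for Hopf algebra extensions the Frobenius (in general only $\beta$-Frobenius) property is a substantive theorem whose form is built from integrals, not from an arbitrary coordinate projection, and for a random basis the claim that $\tau(S(e_i)e_j)$ is ``a unit matrix up to antipode twist modulo the radical'' is simply unjustified (and false in general). The proposed rescue by Nakayama is circular: verifying that $\Phi\otimes_R k$ is an isomorphism requires exactly the same invertibility modulo $\pi$ that you are trying to prove.

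For contrast, the paper never needs the extension $\Lambda_H\subseteq\Lambda_G$ to be Frobenius with respect to a chosen projection. It factors the duality through three isomorphisms: the adjunction $(\Lambda_G\otimes_{\Lambda_H}M)^D\simeq\mathrm{Hom}_{\Lambda_H}(M,\Lambda_G^D)$, the self-duality $\Lambda_G^D\simeq\Lambda_G$ coming from the rank-one basis $\theta=\lambda l$ of Proposition \ref{eg} (this is where the Hopf-order hypothesis and the principal integral enter), the freeness of $\Lambda_G$ over $\Lambda_H$ to identify $\mathrm{Hom}_{\Lambda_H}(M,\Lambda_G)$ with $\Lambda_G\otimes_{\Lambda_H}\mathrm{Hom}_{\Lambda_H}(M,\Lambda_H)$, and finally $\mathrm{Hom}_{\Lambda_H}(M,\Lambda_H)\simeq M^D$ via the comodule structure map and $\theta_H$. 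If you want to salvage your approach you would have to prove the Frobenius-extension statement for $\Lambda_H\subseteq\Lambda_G$ with an explicitly bimodule-compatible form; as written, the proposal assumes it.
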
 
\begin{proof}
 We proceed  in three steps. We prove successively the existence of isomorphisms of $\Lambda_G$-modules
$$ (\Lambda_G\otimes_{\Lambda_H}M)^D\simeq \mathrm{Hom}_{\Lambda_H}(M, \Lambda_G) $$ 
$$\mathrm{Hom}_{\Lambda_H}(M, \Lambda_G)\simeq \Lambda_G\otimes_{\Lambda_H}\mathrm{Hom}_{\Lambda_H}(M, \Lambda_H)$$
and finally the isomorphism of $\Lambda_H$-modules 
 $$ \mathrm{Hom}_{\Lambda_H}(M,\Lambda_H)\simeq M^D.$$ 
\noindent{\bf  Step 1.} We endow $\Lambda_G^D$ with a structure of left $\Lambda_H$-module structure  by setting
\begin{equation}<\alpha*u, x>=<u, x\alpha >\   \forall \alpha \in \Lambda_H, u\in \Lambda_G^D, x\in \Lambda_G \end{equation}
and we consider the $R$-module:  
$$\mathrm{Hom}_{\Lambda_H}(M, \Lambda_G^D)=\{f: M\rightarrow \Lambda_G^D\ \mid f(\alpha.m)=\alpha*f(m), \forall \alpha \in \Lambda_H, m\in M \}.$$

Since $\Lambda_G$ is a left $\Lambda_G$-module by left mutiplication, then  its dual $\Lambda_G^D$ is a left $\Lambda_G$-module  with 
\begin{equation}<\lambda.v, x>=<v, S(\lambda)x>, \forall \lambda \in \Lambda_G, v\in \Lambda_G^D, x\in \Lambda_G. \end{equation}
We claim that we can endow $ \mathrm{Hom}_{\Lambda_H}(M, \Lambda_G^D)$ with a structure of left $\Lambda_G$-module by defining    $\lambda.f$, for $f\in \mathrm{Hom}_{\Lambda_H}(M, \Lambda_G^D)$ and $\lambda \in \Lambda_G$,   as the map:  
\begin{equation}\lambda.f: m\rightarrow \lambda.f(m)\end{equation}
i.e \begin{equation}<(\lambda.f)(m), x>=<\lambda.f(m), x>=<f(m), S(\lambda)x>, \forall x \in \Lambda_G.\end{equation}
  We have to check that this morphism of $R$-modules is a morphism of $\Lambda_H$-modules  and so   that 
$$(\lambda.f)(\alpha m)=\alpha*((\lambda.f)(m))\ \forall \alpha\in \Lambda_H, m\in M . $$ This follows from the equalities, for all $x\in \Lambda_G$
$$<(\lambda.f)(\alpha m), x>=<f(\alpha m),S(\lambda)x>=<f(m),S(\lambda)x\alpha>=<\alpha*(\lambda.f)(m),x>.$$

We now  introduce the map 
$F:(\Lambda_G\otimes_{\Lambda_H}M)^D\rightarrow \mathrm{Hom}_R (M, \Lambda_G^D)$   given by 

$$F(f): m\rightarrow (x\rightarrow f(x\otimes_{\Lambda_H} m)) \ \forall m\in M, x \in \Lambda_G.$$
\begin{prop}\label{st1} The map $F$ is an isomorphism of $\Lambda_G$-modules. 
\end{prop}
\begin{proof} 

By  Theorem 2.11 of \cite{Ro}  we know that $F$ is the adjoint  isomorphism of $R$-modules. Therefore it suffices to prove that $F$ is an isomorphism of $\Lambda_G$-modules. This follows from the equalities:
$$<F(\lambda.f)(m), x>=(\lambda.f)(x\otimes_{\Lambda_H}m)=f(S(\lambda)x\otimes_{\Lambda_H}m)$$ (see (\ref{acl}))

$$<(\lambda.F(f))(m), x>=<F(f)(m), S(\lambda)x>=f(S(\lambda)x\otimes_{\Lambda_H}m).$$

\end{proof}

We now consider $$\mathrm{Hom}_{\Lambda_H}(M, \Lambda_G)=\{f: M\rightarrow \Lambda_G)\ \mid f(\alpha m)=\alpha f(m) \ \forall m\in M, \alpha \in \Lambda_H\}$$
 endowed with the left $\Lambda_G$-module structure given by:
\begin{equation}\label{acc}(\lambda.f)(m)=f(m)S(\lambda), \ \forall m\in M, \lambda\in \Lambda_{G}.\end{equation}
We recall that  we have fixed a basis $\theta$ of $\Lambda^D_G$ as a left  $\Lambda_G$-module. Hence for every $u\in \Lambda^D_G$ there exists $i(u)\in \Lambda_G$ such that $u=i(u)\theta$. By Proposition 2.3 the  map 
$i: \Lambda^D_G\rightarrow \Lambda_G$ is given 
 by \begin{equation}i: u\rightarrow \lambda^{-1}\sum_{g\in G}u(g^{-1})g.\end{equation} We let 
 $\hat i: \mathrm{Hom}_{\Lambda_H}(M, \Lambda^D_G)\rightarrow \mathrm{Hom}_{\Lambda_H}(M, \Lambda_G)$ be the map defined by 
 $$\hat i(f)(m)=i(f(m)).$$
 \begin{lem}\label{imp} The map $$\hat i: \mathrm{Hom}_{\Lambda_H}(M, \Lambda^D_G)\rightarrow \mathrm{Hom}_{\Lambda_H}(M, \Lambda_G)$$ is an isomorphism of $\Lambda_G$-modules. 
 \end{lem}
\begin{proof}Indeed  $\hat i$ is  a bijection. 
We start by proving that  for $f\in \mathrm{Hom}_{\Lambda_H}(M, \Lambda^D_G)$ then $\hat i(f)$ is a $\Lambda_H$-morphism.  By Lemma \ref{simp} it suffices to show that for $f$ in 
$\mathrm{Hom}_{K[H]}(M_K, \mathrm{Hom}_K(K[G], K))$ we have: 
\begin{equation} \hat i(f)(hm)=h\hat i(f)(m) \ \forall h\in H, m\in M_K.\end{equation}  By definition of $\hat i$  we have:  
\begin{equation}\hat i(f)(hm)= i(f(hm))=i(h*f(m))=\lambda^{-1}\sum_{t\in H}f(m)(t^{-1}h)t.    
 \end{equation}                                                                                                                                                                            
By  writing $t^{-1}h=v^{-1}$  we obtain from the (73) 
\begin{equation}\lambda^{-1}\sum_{t\in H}f(m)(t^{-1}h)t=\lambda^{-1}\sum_{v\in H}f(m)(v{^{-1}})hv=h(\lambda^{-1}\sum_{v\in H}f(m)(v^{-1})v).\end{equation}
Thus (72) follows from (73) and (74).  

It remains to prove that $f \rightarrow \hat i(f)$  is a morphism of $\Lambda_G$-modules and so to check that for $f\in \mathrm{Hom}_{\Lambda_H}(M, \Lambda^D_G)$
$$\hat i(gf)=g\hat i(f)\  \forall g \in G$$  and the proof is entirely similar. 
 \end{proof}
As a consequence of Proposition \ref{st1} and Lemma \ref{imp} we obtain  
\begin{cor}\label{cst1} The map $F_1=\hat i \circ F$
 is an isomorphism of $\Lambda_G$-modules 
$$(\Lambda_G\otimes_{\Lambda_H}M)^D\rightarrow \mathrm{Hom}_{\Lambda_H}(M, \Lambda_G).$$

\end{cor}
\vskip 0.1 truecm
\noindent{\bf Step 2.} We consider 
 $$\mathrm{Hom}_{\Lambda_H}(M, \Lambda_H)=\{f: M\rightarrow \Lambda_H\ \mid\ f(\alpha.m)=\alpha.f(m)\ \forall m\in M, \alpha\in \Lambda_H\} $$ 
 endowed with the left $\Lambda_H$-module structure given by  
\begin{equation}\lambda.f: m\rightarrow f(m)S(\lambda). \end{equation}  Since $\Lambda_G$ is a right $\Lambda_H$-module we can consider 
$$\Lambda_G\otimes_{\Lambda_H}\mathrm{Hom}_{\Lambda_H}(M, \Lambda_H).$$
This a left $\Lambda_G$-module by left mutiplication. 

By Proposition 4.23 we know  that $\Lambda_G$ is a free left $\Lambda_H$-module.  We fix a basis  $\{e_1, \cdots, e_n\}$ of $\Lambda_G$ as a right $\Lambda_H$-module  and we write $\Lambda_G=\oplus_i e_i\Lambda_H$.  Therefore for every element 
$u\in \Lambda_G\otimes_{\Lambda_H}\mathrm{Hom}_{\Lambda_H}(M, \Lambda_H)$  there exists a unique family $\{f_1,\cdots, f_n\}$ of $\mathrm{Hom}_{\Lambda_H}(M, \Lambda_H)$ such that $u=\sum_ie_i\otimes_{\Lambda_H}f_i$. 

  Using the antipode $S$ of $\Lambda_G$ we note that $\Lambda_G$ is free left  $\Lambda_H$-module with basis
$\{S(e_1), \cdots, S(e_n\}$. Therefore for any $g\in \mathrm{Hom}_{\Lambda_H}(M, \Lambda_G)$ there exist a unique family $\{g_1, \cdots, g_n\}$ 
of $\mathrm{Hom}_{\Lambda_H}(M, \Lambda_H)$ 
 such that 
 $$g(m)=g_1(m)S(e_1)+\cdots +g_n(m)S(e_n), \forall m\in M.$$
 We denote by   $g_iS(e_i): M\rightarrow \Lambda_G$ the map given by $ m\rightarrow g_i(m)S(e_i)$. Indeed,  
 each $g_iS(e_i) \in \mathrm{Hom}_{\Lambda_H}(M, \Lambda_G)$ and we can write  $g=\sum_i g_iS(e_i)$.
 \begin{prop}\label{st2} The map 
 $$F_2: \Lambda_G\otimes_{\Lambda_H}\mathrm{Hom}_{\Lambda_H}(M, \Lambda_H)\rightarrow \mathrm{Hom}_{\Lambda_H}(M, \Lambda_G)$$ defined by the rule
 $$\sum_ie_i\otimes_{\Lambda_H}f_i\rightarrow \sum_i f_iS(e_i)$$
 is an isomorphism of $\Lambda_G$-modules.  
 \end{prop} 
 \begin{proof} It  follows from the definition that $F_2$ is a bijection. Therefore it remains to show that $F_2$ commutes with the action of $\Lambda_G$ and so  to check that for every $ i , 1\leq i\leq n$ we have the equality
$$F_2(\alpha.(e_i\otimes_{\Lambda_H}f_i))=\alpha.F_2(e_i\otimes _{\Lambda_H}f_i)\ \forall \alpha\in \Lambda_G.$$
For $\alpha \in \Lambda_G$ there exist $\{\alpha_{i,j}\in \Lambda_H,  1\leq j\leq n\}$ such that 
\begin{equation}\alpha e_i=\sum_je_j\alpha_{i,j}.\end{equation} 
It follows from (75) and (76) that 
$$\alpha(e_i\otimes f_i)=\alpha e_i\otimes_{\Lambda_H} f_i=\sum_je_j\alpha_{i,j}\otimes_{\Lambda_H} f_i=\sum_je_j\otimes_{\Lambda_H} \alpha_{i,j}.f_i=\sum_je_j\otimes_{\Lambda_H} f_iS(\alpha_{i,j}).$$
Therefore  
\begin{equation}F_2(\alpha.(e_i\otimes_{\Lambda_H}f_i))=\sum_jf_iS(\alpha_{ij})S(e_j)=f_iS(\sum_j e_j\alpha_{ij})=f_iS(\alpha e_i)=(f_iS(e_i))S(\alpha).\end{equation} 
From the definition  of the action of $\Lambda_G$ on  $\mathrm{Hom}_{\Lambda_H}(M, \Lambda_G)$ (see  (70))  we can write  (77) as follows
$$F_2(\alpha.(e_i\otimes_{\Lambda_H}f_i))=(f_iS(e_i))S(\alpha)=\alpha.F_2(e_i\otimes_{\Lambda_H}f_i)\ \forall \alpha \in \Lambda_G.$$
 We conclude that $F_2$ is a morphism of $\Lambda_G$-modules.
 \end{proof}

\vskip  0.2 truecm

\noindent{\bf Step 3.}

For a left $\Lambda_H$-module $M$ we consider $\mathrm{Hom}_{\Lambda_H}(M, \Lambda_H^D)$ with 
\begin{equation}\mathrm{Hom}_{\Lambda_H}(M, \Lambda_H^D)=\{ f:M \rightarrow \Lambda_H^D  \mid  f(\alpha.m)= \alpha*f(m)\}.\end{equation}
 For 
$f\in \mathrm{Hom}_{\Lambda_H}(M, \Lambda_H^D)$ and $\lambda \in \Lambda_H$  we set 
\begin{equation}< (\lambda.f)(m), x>=<f(m), S(\lambda)x>\ \forall x\in \Lambda_H. \end{equation}
Indeed, $\lambda.f \in \mathrm{Hom}_R(M, \Lambda_H^D)$.  We check by hand that  
$$(\lambda.f)(\alpha m)=\alpha*(\lambda.f)(m)\ \forall \alpha \in \Lambda_H$$ and so $\lambda.f\in \mathrm{Hom}_{\Lambda_H}(M, \Lambda_H^D)$. Therefore (79) provides us with a structure of left $\Lambda_H$-module on  $\mathrm{Hom}_{\Lambda_H}(M, \Lambda_H^D)$.  

We consider the map 
$$\fonc{\varphi}{\mathrm{Hom}_{\Lambda_H}(M, \Lambda_H^D)}{\mathrm{Hom}_R(M, R)}
{f}{m\rightarrow \varepsilon_D(f(m))}.$$ Our aim is to prove that $\varphi$ is an isomorphism of $\Lambda_H$-modules. This will follow from Proposition \ref{st3}.

Since $M$ is a $\Lambda_H$-module it is a $\Lambda_H^D$-comodule. We denote by $\rho: M\rightarrow M\otimes_R\Lambda_H^D$ the comodule map. 
We  introduce  the map 
$$
\fonc{\psi}{\mathrm{Hom}_R(M, R)}{\mathrm{Hom}_{R}(M, \Lambda_H^D)}
{f}{(m\rightarrow \sum_{(m)}f(m_{(0)})u_{(1)})}
$$ 
\begin{prop}\label{st3} The following properties hold:
\begin{enumerate}
\item $\varphi$ is a morphism of $\Lambda_{H}$-modules.
\item For $f\in \mathrm{Hom}_R(M, R))$, then  $\psi(f)  \in \mathrm{Hom}_{\Lambda_H}(M, \Lambda_H^D)$.
\item  The maps $\varphi$ and $\psi$ are   isomorphisms of $\Lambda_H$-modules such that   $$\varphi\circ\psi=\psi\circ \varphi= id.$$
 \end{enumerate}

\end{prop}
\begin{proof}   We recall that 
$$K\otimes_R\mathrm{Hom}_{\Lambda_H}(M, \Lambda_H^D)=\mathrm{Hom}_{K[H]}(M_K, K[H]^D).$$
We start by proving (1). By Lemma \ref{imp},  it suffices to show that for 
 $f\in \mathrm{Hom}_{K[H]}(M_K, K[H]^D)$, $h\in H$ and $m\in M_K$ we have $\varphi_K(hf)(m)=h\varphi_K(f)(m)$. This follows from the equalities 
 \begin{equation}\varphi_K(h.f)(m)=\varepsilon_D((h.f)(m))=<(h.f)(m), 1>=<f(m), h^{-1}>\end{equation}
 \begin{equation}(h.\varphi_K (f))(m)=\varphi_K(f)(h^{-1}m)=\varepsilon(f(h^{-1} m))=<f(h^{-1} m), 1>\end{equation}
and, since $f$ is a $K[H]$-morphism:
\begin{equation}<f(h^{-1} m), 1>=<h^{-1}*f(m), 1>=<f(m), h^{-1}>. \end{equation}

We now want to prove (2). Again by Lemma \ref{imp} we are reduced to  showing that for $f\in \mathrm{Hom}_{K}(M_K, K)$, $m\in M_K$ and $h\in H$, then 
\begin{equation}\label{k1}\psi_K(f)(hm)=h*\psi_K(f)(m).\end{equation} We recall that $\{l_t, t\in H\}$ is a basis of the $K$-vector space $K[H]^D=\mathrm{Map}(H, K)$ with $l_t(h)=\delta_{t, h}$. Moreover,  for $m\in M_K$,   the term $\rho(m)$ simplifies  to  
\begin{equation}\label{k2}\rho(m)=\sum_{t\in H}(tm)\otimes_Kl_t.\end{equation} 
Then we have 
\begin{equation}\label{k3}\psi_K(f)(m)=\sum_{t\in H} f(tm)l_t.\end{equation}Therefore 
\begin{equation}\label{k33}\psi_K(f)(hm)=\sum_{t\in H}f((th)m)l_t=\sum_{v\in H}f(vm)l_{vh^{-1}}.\end{equation}
We  check  that $h*l_t=l_{th^{-1}}$ and so   
\begin{equation}\label{k4} h*\psi_K(f)(m)=\sum_{t\in H} f(tm)(h*l_t)=\sum_{t \in H}f(tm)l_{th^{-1}}.\end{equation}
We deduce (\ref{k1}) from (\ref{k33}) and (\ref{k4}) and so $\psi(f)$ is a $\Lambda_H$-morphism. 

 We  now start by observing that $\varphi$ is clearly injective since $\varphi_K$ is injective. Therefore, in order to prove (3),  it suffices to show that $\varphi\circ \psi=id$. Indeed,  this   will show that $\varphi$ is surjective and hence is an isomorphism. Since $\varphi\circ\psi=id$,  we will conclude  that $\psi $ is the inverse isomorphism  of $\varphi$  and so are both isomorphisms of $\Lambda_H$-modules. 
 
 To show that 
 $\varphi\circ \psi=id$ by Lemma \ref{imp} it suffices to check that $\varphi_K\circ\psi_K=id$.
 We consider  $f\in \mathrm{Hom}_K(M_K, K)$ and $m\in M_K$.  From the above we have  as required:  
$$(\varphi_K \circ\psi_K)(f)(m)=\varphi_K(\psi_K(f))(m)=\varepsilon_D(\psi_K(f)(m))=\sum_tf(tm)\varepsilon_D(l_t)=f(m). $$
\end{proof}

We now follow   the lines of Lemma \ref{imp}, using $\theta_H$ instead of $\theta$ and $i_H$ instead of $i $ to  prove: 
\begin{lem}\label{imp3}  The map $$\hat i_H: \mathrm{Hom}_{\Lambda_H}(M, \Lambda^D_H)\rightarrow \mathrm{Hom}_{\Lambda_H}(M, \Lambda_H)$$ is an isomorphism of $\Lambda_H$-modules. 
 \end{lem}
 Therefore we deduce from Proposition \ref{st3} and Lemma \ref{imp3} 
 \begin{cor}\label{cst3}
 There exist isomorphisms of $\Lambda_H$-modules
 $${\mathrm{Hom}_{\Lambda_H}(M, \Lambda_H)}\simeq {\mathrm{Hom}_{\Lambda_H}(M, \Lambda_H^D)}\simeq M^D$$
 \end{cor}

Using now Corollary 4.27, Proposition \ref{st2} and Corollary 4.32  we finally obtain that as required  there exists an isomorphism of $\Lambda_G$-modules
$$(\Lambda_G\otimes_{\Lambda_H}M)^D\simeq \Lambda_G\otimes_{\Lambda_H}M^D.$$ This completes the proof of Theorem \ref{indu}.
\end{proof}
 It follows from Theorem \ref{indu} that $\mathrm{Ind}_{\Lambda_H}^{\Lambda_G}$ induces a group homomorphism 
 $$G_0^{R +}(\Lambda_H)\rightarrow G_0^{R +}(\Lambda_G).$$ We want to show that these  results remain true in positive odd characteristic. This  follows from the next proposition.  
 As before we consider  the Hopf  $R$-orders $\Lambda_H\subset \Lambda_G$ and,  by tensoring by the finite field $k$,  the Hopf $k$-algebras $\widetilde \Lambda_H\subset \widetilde \Lambda_G$.  We have defined by scalar extension  group homomorphisms $ \mathrm{Ind}_{\Lambda_H}^{\Lambda_G}$ 
and $\mathrm{Ind}_{\widetilde\Lambda_H}^{\widetilde\Lambda_G}$ that for the sake of simplicity we simply denote by $\mathrm{ind}$
$$\mathrm{Ind}: G_0^R(\Lambda_H)\rightarrow G_0^R(\Lambda_G)\ \ \mathrm{and}\ \ \mathrm{Ind}: G_0^R(\widetilde\Lambda_H)\rightarrow G_0^R(\widetilde\Lambda_G)
.$$ We recall that the map which associates  to a module its dual induces an involution on the Grothendieck groups involved, denoted by $x\rightarrow x^D$.
\begin{prop}\label{symp}For every $y\in G_0(\widetilde{\Lambda_H})$ the following equality holds 
$$\mathrm{Ind}(y^D)=\mathrm{Ind}(y)^D$$
\end{prop}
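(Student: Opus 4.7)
The plan is to reduce Proposition \ref{symp} to the module-level statement that, for every finitely generated $\widetilde{\Lambda}_H$-module $N$, there is a natural isomorphism of $\widetilde{\Lambda}_G$-modules
$$(\widetilde{\Lambda}_G \otimes_{\widetilde{\Lambda}_H} N)^D \simeq \widetilde{\Lambda}_G \otimes_{\widetilde{\Lambda}_H} N^D,$$
where duality on both sides is $\mathrm{Hom}_k(-,k)$. Since induction is exact (by flatness of $\widetilde{\Lambda}_G$ over $\widetilde{\Lambda}_H$) and duality is exact (over a field), this module isomorphism passes to Grothendieck groups and yields the claimed equality $\mathrm{Ind}(y^D)=\mathrm{Ind}(y)^D$ in $G_0(\widetilde{\Lambda}_G)$.

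To produce the module isomorphism, I would repeat verbatim the three-step chain used in the proof of Theorem \ref{indu}, with the pair $(\widetilde{\Lambda}_H,\widetilde{\Lambda}_G)$ over the field $k$ playing the role formerly held by $(\Lambda_H,\Lambda_G)$ over $R$. The three successive isomorphisms to be established are
$$(\widetilde{\Lambda}_G \otimes_{\widetilde{\Lambda}_H} N)^D \simeq \mathrm{Hom}_{\widetilde{\Lambda}_H}(N,\widetilde{\Lambda}_G) \simeq \widetilde{\Lambda}_G \otimes_{\widetilde{\Lambda}_H} \mathrm{Hom}_{\widetilde{\Lambda}_H}(N,\widetilde{\Lambda}_H) \simeq \widetilde{\Lambda}_G \otimes_{\widetilde{\Lambda}_H} N^D.$$
The argument in each step is formal in the antipode and counit of $\widetilde{\Lambda}_G$ and $\widetilde{\Lambda}_H$ (both inherited from $\Lambda_G$, $\Lambda_H$ by reduction modulo $\pi$) and relies on just two structural inputs.

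The first structural input is the freeness of $\widetilde{\Lambda}_G$ as a right (and left) $\widetilde{\Lambda}_H$-module; this follows by reducing modulo $\pi$ the right $\Lambda_H$-decomposition $\Lambda_G = \oplus_i e_i \Lambda_H$ produced inside the proof of Proposition \ref{sous}, yielding $\widetilde{\Lambda}_G = \oplus_i \tilde e_i \widetilde{\Lambda}_H$. The second input is that $\widetilde{\Lambda}_G^D$ is free of rank one as a left $\widetilde{\Lambda}_G$-module with basis the reduction $\tilde\theta_G$ of the integral basis of Proposition \ref{eg}, together with the analogous statement for $\widetilde{\Lambda}_H$. This reduces to the observation that, because $\Lambda_G$ is $R$-free, one has $\Lambda_G^D \otimes_R k = \mathrm{Hom}_k(\widetilde{\Lambda}_G,k) = \widetilde{\Lambda}_G^D$, and reduction modulo $\pi$ carries a rank-one $\Lambda_G$-basis to a rank-one $\widetilde{\Lambda}_G$-basis (by dimension count over $k$).

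With these two inputs in hand, Step~1 (the adjunction using the presentation $\widetilde{\Lambda}_G^D = \widetilde{\Lambda}_G \cdot \tilde\theta_G$), Step~2 (the change-of-side manipulation using $\widetilde{\Lambda}_G = \oplus_i \tilde e_i \widetilde{\Lambda}_H = \oplus_i \widetilde{\Lambda}_H \cdot \widetilde{S}(\tilde e_i)$), and Step~3 (the duality identification $\mathrm{Hom}_{\widetilde{\Lambda}_H}(N, \widetilde{\Lambda}_H) \simeq N^D$ coming from Proposition~\ref{st3} transposed to $k$) reproduce word-for-word. The only point requiring careful verification is the second structural input; once that base-change statement is checked, the remainder of the argument is a mechanical copy of the proof of Theorem~\ref{indu}, and I do not anticipate any further obstacle.
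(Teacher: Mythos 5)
Your plan is a genuinely different route from the paper's, and as written it has a gap at exactly the point you dismiss as mechanical. The paper never proves a module-level isomorphism over $k$: its proof of Proposition \ref{symp} is a short diagram chase, writing $p^m y=\delta_H(x)$ for some $x\in G_0^R(\Lambda_H)$ (possible by Theorem \ref{majeur}), using the commutation of $\delta$ with $\mathrm{Ind}$ and with duality together with Theorem \ref{indu} over $R$ to get $p^m\,\mathrm{Ind}(y^D)=p^m\,\mathrm{Ind}(y)^D$, and then cancelling $p^m$ because $G_0(\widetilde{\Lambda}_G)$ is a free $\mathbb{Z}$-module. This route is designed precisely to avoid saying anything about $(\widetilde{\Lambda}_G\otimes_{\widetilde{\Lambda}_H}N)^D$ for modules $N$ that are not reductions of lattices.

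Your claim that the three steps of the proof of Theorem \ref{indu} ``reproduce word-for-word'' over $k$ once the two freeness inputs are secured is not correct. In Steps 1 and 3 every equivariance statement (Lemma \ref{imp}, Proposition \ref{st3}(1)--(3), the injectivity of $\varphi$) is verified in the paper by invoking Lemma \ref{simp}: one extends scalars to $K$, where $\Lambda_G\otimes_RK=K[G]$ and $\Lambda_H\otimes_RK=K[H]$ are group algebras, and computes explicitly on elements $h\in H$, $g\in G$ and on the basis $\{l_t\}$ of $\mathrm{Map}(H,K)$. Over $k$ there is no generic fibre and $\widetilde{\Lambda}_G$ is in general not a group algebra (the image of $k[G]$ in $\widetilde{\Lambda}_G$ is typically a proper subalgebra), so this device is unavailable; nor can you obtain the isomorphism for an arbitrary finitely generated $\widetilde{\Lambda}_H$-module $N$ by reducing the $R$-isomorphism mod $\pi$, since such $N$ need not lift to a $\Lambda_H$-lattice --- this is exactly the obstruction that forces the paper through $G_0$ and Theorem \ref{majeur}. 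The identities you need over $k$ are in fact provable (for the maps that are independent of the module, such as $i=(\,\cdot\,\theta)^{-1}$, by reducing the corresponding $R$-linear identities mod $\pi$; for the module-dependent maps $\varphi$, $\psi$ of Step 3, by direct Sweedler-notation computations with the comodule axioms), and if supplied they would yield a statement stronger than Proposition \ref{symp}; but that verification is the actual mathematical content of your approach, it is not a copy of the paper's computations, and your proposal identifies the wrong point (the rank-one freeness of $\widetilde{\Lambda}_G^D$) as the only delicate one. As it stands the proof is incomplete, and you should also note that the paper's lifting argument gives the Grothendieck-group statement at a fraction of the cost.
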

\begin{proof} We have a commutative diagram 
 \[\xymatrix{
G_0^R(\Lambda_H)\ar[r]^{\mathrm{Ind}}\ar[d]_{\delta_H}&G_0^R(\Lambda_G)\ar[d]_{\delta_G} \\
G_0(\widetilde{\Lambda_H})\ar[r]^{\mathrm{Ind}}&G_0(\widetilde{\Lambda_G}). \\}\]Let $y\in G_0(\widetilde{\Lambda_H})$. It follows from Theorem \ref{majeur} that  there exists $x\in G_0^R(\Lambda_H)$ such that $p^my=\delta_H(x)$ and so 
$p^my^D=\delta_H(x^D)$. Then we deduce from the commutativity of the diagram that  
$$p^m\mathrm{Ind}(y^D)=\mathrm{Ind}(p^my^D)=\mathrm{Ind}\circ \delta_H(x^D)=\delta_G\circ \mathrm{Ind}_H(x^D).$$
We know from Theorem \ref{indu} that 
$$ \mathrm{Ind}(x^D)=(\mathrm{Ind}(x)^D)$$
Therefore we obtain  
$$p^m\mathrm{Ind}(y^D)=(\delta_G(\mathrm{Ind}(x)))^D=(\mathrm{Ind}(\delta_H(x)))^D=p^m(\mathrm{Ind}(y))^D.$$
Since $G_0(\widetilde\Lambda_G)$ is a free $\mathbb{Z}$-module we conclude that 
$$\mathrm{Ind}(y^D)=\mathrm{ind}(y)^D.$$
\end{proof}
It follows from the properties of the restriction and  induction maps, in particular  Theorem 4.26,   that $H\rightarrow G_0^{+}(\widetilde {\Lambda_H})$ has a Frobenius module structure over 
$H\rightarrow G_0^+(K[H])$. We use this structure to study the self-dual representations of $\widetilde{\Lambda_G}$. 

By  the Brauer induction formula of (\ref{br1}) we have: 
\begin{equation}
 p^{m}\varepsilon _{G}=\sum_{H\in C(G) }\mathrm{Ind}_{H}^{G}( \theta _{H})
\end{equation}
for virtual characters $\theta _{H}\in G^+_{0}(K[H])$. Therefore,   for any $x \in G^+_0( \widetilde{
\Lambda }_H) $,  we obtain
\begin{equation}\label {gindu}
p^{m}x  =\sum_{H\in C(G) }\mathrm{
Ind}_{\widetilde{\Lambda} _{H}}^{\widetilde{\Lambda} _{G}}(\theta _{H}.\mathrm{Res}
_{\widetilde{\Lambda} _{H}}^{\widetilde{\Lambda} _{G}}x).
\end{equation}

Recall that a finitely generated self-dual $\widetilde{\Lambda }_{G}$-module $ M$ is called symplectic if there is a duality isomorphism $
f:M\cong M^{D}$ such that the corresponding form $M\times M\rightarrow k$ is
an alternating form. Recall by standard algebra that $\dim _{k}(
M)$ is necessarily even for such symplectic $M$. More generally we  call a class symplectic if it lies  in  the additive subgroup of $ G_0(\widetilde{\Lambda}_G)$ generated by the classes of symplectic $\widetilde{\Lambda}_G$-modules.

We now can use Proposition \ref{triv1} and the induction formula (\ref{gindu}) to show: 
\begin{thm}\label{nosimp}
If $p>2$, if $G$ has odd order and if $K$ is 
 assez gros for $G$,  then $\widetilde{\Lambda}_G^{ss}$  has  no self-dual and simple   module which is of even dimension on $k$.

\end{thm}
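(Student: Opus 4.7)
I would argue by contradiction: suppose $M$ is a simple self-dual $\widetilde{\Lambda}_G^{ss}$-module with $\dim_k M$ even, and derive a parity contradiction on the Jordan--H\"older multiplicity $c_M$. Set $x = [M]\in G_0^+(\widetilde{\Lambda}_G)$. The starting point is the Brauer induction formula (85): for some $m>0$,
$$p^m[M] = \sum_{H\in C(G)} \mathrm{Ind}_{\widetilde{\Lambda}_H}^{\widetilde{\Lambda}_G}(y_H),\qquad y_H:= d_{\Lambda_H}(\theta_H)\cdot\mathrm{Res}_H^G[M] \in G_0^+(\widetilde{\Lambda}_H),$$
each $y_H$ being self-dual because $\theta_H$ is self-dual and restriction commutes with duality.

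Every $H\in C(G)$ is an $l$-elementary subgroup of the odd-order group $G$ (so $H$ itself has odd order) with $l\neq p$, and $K$ remains assez gros for $H$. Proposition 4.30 therefore forces the only self-dual simple of $\widetilde{\Lambda}_H^{ss}$ to be the trivial module $k_H$, and I can write
$$y_H = a_H[k_H] + \sum_V b_{H,V}\bigl([V]+[V^D]\bigr),$$
with $a_H, b_{H,V}\in\mathbb{Z}$ and the sum over representatives of pairs of non-self-dual simples. Applying $\mathrm{Ind}_H^G$ and using Proposition 4.33 (induction commutes with duality) turns each pair term $[V]+[V^D]$ into $[W_i]+[W_i^D]$ for $W_i := \mathrm{Ind}_H^G V$, so
$$p^m[M] = \sum_H a_H\bigl[\mathrm{Ind}_H^G(k_H)\bigr] + \sum_i\bigl([W_i]+[W_i^D]\bigr).$$

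I then extract the multiplicity at $[M]$ on both sides. The LHS contributes $p^m$, which is odd. In each pair $[W_i]+[W_i^D]$ the multiplicity of $M$ is $c_M(W_i)+c_M(W_i^D) = 2c_M(W_i)$ (using $M=M^D$), hence even. To reach a contradiction it suffices to check that $c_M([\mathrm{Ind}_H^G(k_H)])$ is even for every $H$. Using Frobenius reciprocity at the level of Grothendieck groups,
$$c_M\bigl([\mathrm{Ind}_H^G(k_H)]\bigr) = c_{k_H}\bigl([\mathrm{Res}_H^G M]\bigr);$$
and $\mathrm{Res}_H^G M$ is self-dual of even $k$-dimension, so applying Proposition 4.30 once more gives $[\mathrm{Res}_H^G M] = c\,[k_H] + \sum_V e_V([V]+[V^D])$. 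The dimension identity $c + 2\sum_V e_V\dim V = \dim_k M$ (even) forces $c = c_{k_H}([\mathrm{Res}_H^G M])$ to be even. Summing over $H$ gives an even total on the RHS, contradicting $p^m$ odd.

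\textbf{Main obstacle.} The delicate step is justifying the Frobenius reciprocity identity for Jordan--H\"older multiplicities, not merely for $\dim\mathrm{Hom}$. Since the Hopf subalgebra inclusion $\widetilde{\Lambda}_H\subseteq\widetilde{\Lambda}_G$ is free of finite rank by Proposition 4.23 (Nichols--Zoeller), it is a Frobenius extension; induction therefore coincides with coinduction, and this, together with the passage to $G_0(\widetilde{\Lambda}_G)=G_0(\widetilde{\Lambda}_G^{ss})$ via (\ref{gss}), upgrades the tensor--Hom adjunction to the needed equality of multiplicities. I expect this to be the only non-routine point; the remainder of the argument is a direct combination of the induction formula and Proposition 4.30 via a dimension-parity computation.
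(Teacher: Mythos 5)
Your overall strategy is the paper's: argue by contradiction from the Brauer induction formula (\ref{gindu}), use Proposition \ref{triv1} to write each local term over the basis $\{[k],[S_i],[S_i^D]\}$, use the compatibility of induction with duality (Theorem \ref{indu}, Proposition \ref{symp}) so that the non-self-dual pairs contribute evenly at $[M]$, and compare parities at the coefficient of $[M]$ against the odd number $p^m$. The divergence, and the gap, is exactly at the step you flag as the ``main obstacle''. The identity $c_M\bigl([\mathrm{Ind}_H^G(k_H)]\bigr)=c_{k_H}\bigl([\mathrm{Res}_H^G M]\bigr)$ for Jordan--H\"older multiplicities is not a consequence of Frobenius reciprocity, even granting that $\widetilde{\Lambda}_H\subset\widetilde{\Lambda}_G$ is a Frobenius extension with induction equal to coinduction: the adjunctions only compute $\mathrm{Hom}$ spaces, i.e.\ the multiplicity of $M$ in the head (resp.\ socle) of $\mathrm{Ind}_H^G(k_H)$, whereas the class of $\mathrm{Ind}_H^G(k_H)$ in $G_0(\widetilde{\Lambda}_G)=G_0(\widetilde{\Lambda}_G^{ss})$ records all composition factors of a module that is in general far from semisimple; composition multiplicities are governed by $\mathrm{Hom}(P_M,-)$ with $P_M$ the projective cover, which does not restrict the way you need. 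The equality is already false for group rings: for $\Lambda_G=R[C_p]$ and $H$ the trivial subgroup, $\mathrm{Ind}_H^G(k)=k[C_p]$ contains the trivial simple $p$ times while it occurs once in $\mathrm{Res}_H^G(k)$. (In that example the parities happen to agree, but nothing in your argument establishes even a mod $2$ version of the identity, and that is what your proof needs.)

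The repair is the one the paper makes, and it is cheaper than what you attempt: do not try to control $c_M(\mathrm{Ind}_H^G(k_H))$ at all, but instead show that its coefficient $a_H$ is even. Indeed $y_H=d_{\Lambda_H}(\theta_H)\cdot\mathrm{Res}_H^G[M]$ has $k$-dimension $\deg(\theta_H)\cdot\dim_k M$, which is even; writing $y_H=a_H[k_H]+\sum_V b_{H,V}([V]+[V^D])$ and taking dimensions, every pair contributes an even amount, so $a_H$ is even. (You applied this dimension-parity trick to $\mathrm{Res}_H^G M$ alone, which gives you evenness of the wrong coefficient; applying it to $y_H$ itself gives evenness where it is actually used.) Then $a_H\,[\mathrm{Ind}_H^G(k_H)]$ contributes an even multiplicity at every self-dual simple, in particular at $[M]$, whatever the unknown multiplicity of $M$ in $\mathrm{Ind}_H^G(k_H)$ may be, and your parity contradiction with $p^m$ odd goes through exactly as in the paper. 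With that single change your argument coincides with the paper's proof; the rest of your write-up (self-duality of the $y_H$, the pair terms giving even multiplicities, the use of Propositions \ref{triv1} and \ref{symp}) is correct.
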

\begin{proof} Suppose for contradiction that  $M$  is a simple module which is self-dual.  Then, continuing with the notation of (\ref{gindu}),  we set  $$x_H:=\theta _{H}.\mathrm{Res}
_{\widetilde{\Lambda} _{H}}^{\widetilde{\Lambda} _{G}}[ M]\in G^+_0( \widetilde{
\Lambda }_H) $$   for any $H\in C(G)$.  Since by   Proposition \ref{triv1} we know that $\widetilde{\Lambda}_H$ has no non-trivial, self-dual, simple modules then  there exist   non self-dual  $\widetilde{\Lambda}_H$-modules
$\{S_1,\cdots, S_t\}$ such that $\{[k], [S_i], [S_i^D], 1\leq i\leq t\}$ is a basis of $G_0(\widetilde{\Lambda}_H)$ over $\mathbb{Z}$ . Therefore  we can write
\begin{equation}\label{el}
x_H=x_0[k]+\sum_{1\leq i\leq r}x_i([S_i]+S_i^D]) 
\end{equation}
with $x_0\cdots, x_r \in \mathbb{Z}$.  Moreover,  since  $\mathrm{dim}_k(M)$ is even then $x_0$  is even. After applying induction $\mathrm{Ind}_{\widetilde{\Lambda}_H}^{\widetilde{\Lambda}_G}$,  taking into consideration  the signs of the $x_i$'s, and using Theorem \ref{indu},   we deduce from (\ref{el})  that  for any $H\in C(G)$  there exist  an integer $a_0(H)$ and $\widetilde{\Lambda}_G$-modules $N_H$ and $L_H$ such that 
\begin{equation}\label{sum1}\mathrm{Ind}_{\widetilde{\Lambda}_H}^{\widetilde{\Lambda}_G}(x_H)={\varepsilon}_H([N_H+N_H^D])-{\varepsilon}'_H([L_H+L_H^D])+2a_0(H)[k]  \end{equation}
with $\varepsilon_H, \varepsilon_H' \in \{0, 1\}$. By summing over $H \in C(G)$  it follows from (\ref{gindu}) and (\ref{sum1}) that one can find $\widetilde{\Lambda}_G$-modules $U$ and $V$ such that 
\begin{equation}\label{dc} p^m[M]=\varepsilon ([U]+[U^D])-\varepsilon' ([V]+[V^D])+2\sum_{H\in C(G)} a_0(H)\mathrm{
Ind}_{\widetilde{\Lambda} _{H}}^{\widetilde{\Lambda} _{G}}([k]) \end{equation} 
with $\varepsilon, \varepsilon' \in \{0, 1\}$. We now consider $$\{ [k], [X_1], \cdots [X_r], [Y_1], \cdots ,[Y_s], [Y_1^D], \cdots ,[Y_s^D]\}$$ a basis of $G_0(\widetilde{\Lambda})$ where each $X_i$ (resp. $Y_j$) is a simple non-trivial self-dual (resp. simple non self-dual) module of $G_0(\widetilde{\Lambda})$.  By decomposing with respect to  this basis the various classes of modules appearing in the right hand side of (\ref{dc}) we conclude that there exist integers such that 
$$p^m[M]=2b_0[k]+ \sum_{1 \leq i \leq r}2b_i[X_i]+\sum_{1 \leq j\leq s}d_j([Y_j]+[Y_j^D]).$$
But, since $p$ is odd,  this is impossible since  $M$ is a simple module.
\end{proof}
We can deduce from  Theorem \ref{nosimp}:

\begin{cor}\label{nonosimp} We assume the hypotheses of the Theorem. Suppose that  $\widetilde{\Lambda}_G^{ss}$ is a split semi-simple algebra,  
 then it  has no simple symplectic component. 
 \end{cor}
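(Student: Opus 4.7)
The plan is to argue by contradiction and reduce the statement to Theorem \ref{nosimp}. Suppose that $\widetilde{\Lambda}_G^{ss}$ were to admit a simple symplectic component. Since by hypothesis $\widetilde{\Lambda}_G^{ss}$ is split semi-simple, its Wedderburn decomposition takes the form
\[
\widetilde{\Lambda}_G^{ss}=\prod_{i} M_{n_i}(k),
\]
and the involution $\sigma$ induced by the antipode $S^D$ acts on this decomposition. A simple symplectic component corresponds to some index $i_0$ for which the factor $M_{n_{i_0}}(k)$ is stable under $\sigma$ and the induced involution on this factor is of symplectic type.

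First I would spell out what the existence of such a component gives at the level of simple modules. Let $V_{i_0}=k^{n_{i_0}}$ be the simple $\widetilde{\Lambda}_G^{ss}$-module attached to the fixed factor $M_{n_{i_0}}(k)$. Stability of the factor under $\sigma$ means that $V_{i_0}^D\cong V_{i_0}$ as $\widetilde{\Lambda}_G^{ss}$-modules; that is, $V_{i_0}$ is a self-dual simple module. Moreover, since the restriction of $\sigma$ to $M_{n_{i_0}}(k)$ is symplectic, there is a non-degenerate alternating $k$-bilinear form $V_{i_0}\times V_{i_0}\rightarrow k$ invariant under the action of $\widetilde{\Lambda}_G^{ss}$. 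A standard linear-algebra fact about alternating non-degenerate forms then forces $n_{i_0}=\dim_k V_{i_0}$ to be even.

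Next I would invoke Theorem \ref{nosimp}. Under the hypotheses of that theorem (which are exactly the hypotheses inherited from Theorem \ref{nosimp}, namely $p>2$, $G$ of odd order, and $K$ assez gros for $G$), we have shown that $\widetilde{\Lambda}_G^{ss}$ has no simple self-dual module of even $k$-dimension. However, the module $V_{i_0}$ constructed above is both simple and self-dual, and has even dimension $n_{i_0}$ over $k$. This is the required contradiction, and it rules out the existence of any simple symplectic component.

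The main obstacle here is not the argument itself, which is a short extraction, but rather ensuring that the notion of ``simple symplectic component'' is matched correctly with the notion of ``self-dual simple module supporting an alternating form'' used in Theorem \ref{nosimp}. Once the splitness assumption is used to identify the involution on a stable factor $M_{n_{i_0}}(k)$ with a symplectic involution in the classical sense, the passage from the algebra-level datum to the module-level datum $V_{i_0}$ is routine, and the corollary follows directly.
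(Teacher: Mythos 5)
Your proposal is correct and follows essentially the same route as the paper's proof: use the split decomposition $\widetilde{\Lambda}_G^{ss}=\prod_i M_{n_i}(k)$, observe that a symplectic component is $\sigma$-stable of even degree so its attached simple module is self-dual of even $k$-dimension, and contradict Theorem \ref{nosimp}. The only cosmetic difference is that the paper cites \cite{KMRT} Proposition 2.6 for evenness and \cite{LM} Lemma 2.1 for self-duality, while you argue via the alternating form adjoint to the involution (your phrase ``invariant under the action'' should more precisely be ``the form realizes the duality $V_{i_0}\cong V_{i_0}^D$, i.e.\ $b(xv,w)=b(v,\sigma(x)w)$''), which amounts to the same thing.
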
 
\begin{proof}
Since the radical of $\widetilde{\Lambda}_G$ is stable under the antipode $S^D$, then $(\widetilde{\Lambda}_G^{ss}, S^D)$ is an algebra with involution over $k$. Moreover, since 
 $\widetilde{\Lambda}_G^{ss}$ is split,  we can write 
 $$ \widetilde{\Lambda}_G^{ss}=\prod_{1\leq i\leq d}\mathrm{M}_{n_i}(k).$$
 Suppose that there exists $i, 1\leq i\leq d$,  such that 
$\mathrm{M}_{n_i}(k)$ is a symplectic component of  $ \widetilde{\Lambda}_G^{ss}$. Then by \cite{KMRT} Proposition 2.6  we know that $n_i$ is even. Each simple factor of 
$ \widetilde{\Lambda}_G^{ss}$ is attached to a $\widetilde{\Lambda}_G^{ss}$-simple module,  unique up to isomorphism. Let $V_i$ be the simple $ \widetilde{\Lambda}_G^{ss}$-module attached to  $\mathrm{M}_{n_i}(k)$. We know that $n_i$ is equal to dimension of $V_i$ as a $k$-vector space (see \cite{R75} Theorem 7.4). Moreover, since $\mathrm{M}_{n_i}(k)$ is a simple component of $\widetilde{\Lambda}_G^{ss}$, stable under $S^D$, it follows from  \cite{LM} Lemma 2.1  that $V_i$ is isomorphic to $V_i^D$ as a $\widetilde{\Lambda}_G^{ss}$-module. Therefore we conclude that if  $\mathrm{M}_{n_i}(k)$ is a symplectic component of $ \widetilde{\Lambda}_G^{ss}$, then  $V_i$ is a simple and self-dual $\widetilde{\Lambda}_G^{ss}$-module of even dimension over $k$ and by  Theorem \ref{nosimp}  it is impossible. 
\end{proof}

\section {Unitary Determinants}

\subsection{Algebras with involution}
In this subsection we suppose that $K$ is a field of characteristic zero and
 we suppose  that $G$ is a finite group of odd order. Recall that the
involution which is $K$-linear and induced by
inversion on the group elements of $G$ is usually written as $x\mapsto 
\overline{x}$; however, for typographical reasons, sometimes we shall
denote the involution by $c$ and write $\overline{x}=c(x)$.

 The group algebra $K[ G] $ is a semi-simple $K$-algebra with Wedderburn decomposition as a product of simple $K$-algebras
whose simple components,   indexed by $\chi $,   are matrix rings
over division algebras $D_{\chi }$ with centers denoted $Z_{\chi }$
\begin{equation}
K[ G] =\prod_{\chi }M_{n_{\chi }}( D_{\chi
}) .
\end{equation}
 We now  recall from (29) Section 2-7  the decomposition of $K[ G] $ into $c$-stable simple algebras:
\begin{equation}\label{devis}K[G]=K\times\prod_{i\in I^{u}}A_{i}\times \prod_{j\in J}(A_{j}\times A^{\mathrm{op}}_{j})\end{equation}
where, for $j\in J$,  the involution on $( x,y) \in A_{j}\times
A^{\mathrm{op}}_{j}$ is $c (( x,y^{\mathrm{op}}))=( y,x^{\mathrm{op}})$,  where, for $i\in
I^{u},$ then $ c$ acts as a unitary involution on the simple algebra $A_{i}$ 
and so  induces a nontrivial automorphism of order $2$ on the center of $
A_{i}$. where the copy of $K$ on the right hand-side corresponds to the trivial
representation of $G$.
 
\begin{definition}The group of minus determinants of $K[ G] ^{\times } $ is defined as
\begin{equation}\label{det-}
\mathrm{Det}(K[ G]^\times ) _{-}=\{\mathrm{Det}(
z) \in \mathrm{Det}(K[G]^\times)\  \text{ s.t. }\ 
\mathrm{Det}( z\overline{z}) =1\text{ }\}.
\end{equation}
\end{definition}

 We recall from Section 2.7 that $U(K[G])=\{x\in K[G]^\times \mid\ x\bar x=1\}$. Given a $c$-stable subgroup $\Lambda$ of  $K[ G] ^{\times
}$, we set $U(\Lambda) =\Lambda \cap U( K[ G]
) $ and we set

\begin{equation*}
\mathrm{Det}( \Lambda^\times) _{-}=\mathrm{Det}( \Lambda^\times)
\cap \mathrm{Det}( K[ G]^\times) _{-}.
\end{equation*}
In many situations we shall need to know whether we have the equality 
\begin{equation*}
\mathrm{Det}( U( \Lambda)) =\mathrm{Det}(
\Lambda) _{-}\ .
\end{equation*}

The following result comes from Theorem 7 on page 104 of \cite{F84}:

\begin{thm}\label{fglob}
If $K$ has characteristic different from $2$ and if $G$ has odd order, then 
$$\mathrm{Det}(U( K[ G]) ) =\mathrm{Det}(K[G]^\times) _{-}.$$
\end{thm}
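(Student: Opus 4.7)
The inclusion $\mathrm{Det}(U(K[G])) \subseteq \mathrm{Det}(K[G]^\times)_-$ is immediate from the definition, since $u\bar u = 1$ forces $\mathrm{Det}(u)\cdot\mathrm{Det}(\overline{u}) = 1$. The content is the reverse inclusion, and I would prove it by working component-by-component through the $c$-stable Wedderburn decomposition (\ref{devis}). Since the decomposition is $c$-stable, both $\mathrm{Det}(K[G]^\times)_-$ and $\mathrm{Det}(U(K[G]))$ factor as products indexed by $K$, by the $A_i$ with $i\in I^u$, and by the hyperbolic pairs $A_j\times A_j^{\mathrm{op}}$ with $j\in J$. It therefore suffices to verify the equality in each factor separately.

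On the trivial factor $K$ (where $c$ is the identity), the minus condition $x\bar x = 1$ reduces to $x^2 = 1$, so the minus group equals $\{\pm 1\} = U(K)$, and the $\mathrm{Det}$ map is the identity here, giving equality trivially (this is where $\mathrm{char}(K)\neq 2$ is used to keep $-1$ nontrivial but present). On a hyperbolic factor $A_j\times A_j^{\mathrm{op}}$, a direct computation shows $U(A_j\times A_j^{\mathrm{op}}) = \{(x,(x^{-1})^{\mathrm{op}}) : x\in A_j^\times\}$, with determinant $(\mathrm{Nrd}(x),\mathrm{Nrd}(x)^{-1})$. Given any minus element whose determinant is $(\mathrm{Nrd}(a),\mathrm{Nrd}(b))$ with $\mathrm{Nrd}(ab) = 1$, the choice $x = a$ produces a unitary element with the desired determinant; so the easy cases are disposed of.

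The main obstacle is the unitary factor $A_i$, where $c$ restricts to a nontrivial order-two automorphism of the center $Z_i = Z_{\chi_i}$, fixing a subfield $F_i$ of index two. Here the minus group is $\{z\in Z_i^\times : N_{Z_i/F_i}(z) = 1\}$, using that the reduced norm intertwines the involutions $\mathrm{Nrd}(\bar x) = \overline{\mathrm{Nrd}(x)}$ for involutions of the second kind. The required statement is that the composite $U(A_i)\xrightarrow{\mathrm{Nrd}} Z_i^\times$ surjects onto this norm-one subgroup. This is the classical norm theorem for unitary groups of a central simple algebra with involution of the second kind; via Hilbert~90 it amounts to writing a prescribed $z = w/\bar w$ and then lifting to a unitary element of $A_i$, a reduction that ultimately rests on Skolem–Noether and the structure of $A_i$. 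Exactly here the odd order of $G$ is indispensable: it is what guarantees there are no symplectic components in (\ref{devis}) (where a positivity obstruction of Hasse–Schilling–Maass type would otherwise appear), so the inclusion in (\ref{det-}) cannot fail for those kinds of reasons.

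Assembling the three components gives the required equality $\mathrm{Det}(U(K[G])) = \mathrm{Det}(K[G]^\times)_-$. The only non-formal ingredient is the norm-surjectivity for unitary groups over simple $K$-algebras with involution of the second kind, which is what the reference to \cite{F84} provides.
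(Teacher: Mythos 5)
The paper does not prove this statement at all: it is imported wholesale from \cite{F84}, Theorem 7, p.~104, so the real question is whether your sketch could substitute for that citation. Your reduction along the $c$-stable decomposition (\ref{devis}) is correct, and your treatment of the trivial component and of the hyperbolic components $A_j\times A_j^{\mathrm{op}}$ is complete. The gap is in the unitary components, which carry all of the content. The Hilbert~90 step only produces \emph{central} unitary elements: if $z=w/\overline{w}$ with $w\in Z_i^{\times}$, then $w\overline{w}^{-1}$ is indeed unitary, but its image under $\mathrm{Det}$ on this component is its reduced norm, namely $(w/\overline{w})^{d}$ with $d=\deg A_i=\chi(1)$, so this construction only reaches $d$-th powers of norm-one elements; already for $Z_i/F_i=\mathbb{Q}(i)/\mathbb{Q}$ and $d=3$ the norm-one element $(2+i)/(2-i)$ is not such a power, so the target is missed, and Skolem--Noether does not supply the missing non-central unitary elements. (The variant $u=b\,\sigma_i(b)^{-1}$ with $\mathrm{Nrd}(b)=w$ also fails: such $u$ is not unitary unless $A_i$ is commutative.) What is actually needed is surjectivity of $\mathrm{Nrd}\colon U(A_i,\sigma_i)\to \mathrm{Nrd}(A_i^{\times})\cap\ker N_{Z_i/F_i}$, and proving it requires either the split case (essentially the diagonal argument the paper gives later in Lemma \ref{right}) combined with the arithmetic facts that $A_i$ is split at every place of $F_i$ that does not split in $Z_i$, glued by a local--global or approximation argument, or Fr\"ohlich's explicit case analysis of local and global involution algebras --- which is precisely what the cited theorem contains. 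So your ``only non-formal ingredient'' is in fact the whole theorem, and the heuristic you offer for it would not go through.

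Two smaller corrections. Identifying $\mathrm{Det}(A_i^{\times})_{-}$ with the full norm-one group $\{z\in Z_i^{\times}\colon N_{Z_i/F_i}(z)=1\}$ presupposes that $\mathrm{Nrd}\colon A_i^{\times}\to Z_i^{\times}$ is onto; over a number field this follows from Hasse--Schilling--Maass together with the fact that, $G$ having odd order, the nontrivial irreducible characters are not real-valued and hence $Z_i$ has no real places, but it is not automatic over an arbitrary field of characteristic $\neq 2$, and in any case the statement to be proved concerns reduced norms of minus elements only. Finally, the role of odd order is not a ``positivity obstruction of Hasse--Schilling--Maass type'': it is what forces the decomposition (\ref{devis}) to have no symplectic and no nontrivial orthogonal components. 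For a symplectic component the asserted equality genuinely fails, since the determinant of the symplectic group is trivial while the corresponding minus group contains $-1$; this, rather than positivity at real places, is why oddness is indispensable.
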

\subsection{Reduction of determinants}
In this subsection we adopt the notations of subsection 4.1. More precisely $R$ is a local $p$-adic ring integers with valuation
ideal $\mathfrak{p}$  and the residue field,  denoted by $k$,  has
characteristic $p$.  We consider a Hopf  $R$-order $\Lambda:=\Lambda_G $ in $K[G]$. We recall that $G_{0}( \Lambda)$ is the Grothendieck group of finitely
generated left $\Lambda $-modules and $G_{0}^{R}( \Lambda) $ is the
Grothendieck group of  left $\Lambda$-lattices. We write  $\widetilde{\Lambda }=\Lambda\  \mathrm{mod}\ \mathfrak{p\ }$ and we denote by $\widetilde{
\Lambda }^{ss}$  the semi-simplification of $\widetilde{\Lambda}$. 
 If $\mathcal{J}$ and   $\widetilde{\mathcal{J}}$ denote respectively the radicals of $\Lambda $ and  $\widetilde{\Lambda }$,  then
\begin{equation*}
\frac{\Lambda }{\mathcal{J}}=\frac{\widetilde{\Lambda }}{\widetilde{\mathcal{%
J}}}\cong \widetilde{\Lambda }^{ss}.
\end{equation*}
    The involution on $\Lambda$ given by the antipode induces an involution on the $k$-semisimple algebra $\widetilde{\Lambda }^{ss}$  that we denote by $c$. 
    \begin{thm}\label{ss} If $p>2$ and  $G$ has odd order  then 
\begin{equation*}
\mathrm{Det}( \widetilde{\Lambda }^{ss \times}) _{-}=\mathrm{Det}(
U( \widetilde{\Lambda }^{ss}) ) .
\end{equation*}
\end{thm}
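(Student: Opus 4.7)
The inclusion $\mathrm{Det}(U(\widetilde{\Lambda}^{ss}))\subseteq \mathrm{Det}(\widetilde{\Lambda}^{ss\times})_{-}$ is immediate from the definitions, so my plan is to produce the reverse inclusion by reducing to a factor-by-factor verification on the Wedderburn decomposition of $(\widetilde{\Lambda}^{ss},c)$. Since $k$ is finite, Wedderburn together with the fact that every finite division ring is commutative gives
\[
\widetilde{\Lambda}^{ss}=\prod_\chi M_{n_\chi}(F_\chi),
\]
with $F_\chi/k$ finite. The involution $c$ (induced from the antipode) permutes the simple factors, so as in \eqref{devis} we may regroup them as $c$-stable unitary blocks $A_i$ (with $c|_{A_i}$ of first or second kind) and swap blocks $B_j\times B_j^{\mathrm{op}}$. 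Both $U(\cdot)$ and $\mathrm{Det}(\cdot)_{-}$ respect this decomposition, so it will suffice to treat each block separately.

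The main obstacle is the ``no symplectic component'' step. I would argue this by a base change. Choose a finite extension $K'/K$ such that $K'$ is assez gros for $G$ and its residue field $k'$ contains every $F_\chi$ (e.g.\ $K'=K(\zeta_e)\cdot K^{\mathrm{un}}_N$ for a sufficiently large unramified piece $K^{\mathrm{un}}_N$), and set $R'=\mathcal{O}_{K'}$, $\Lambda'=\Lambda\otimes_R R'$. Then $\Lambda'$ is a Hopf $R'$-order in $K'[G]$, and because $k'/k$ is separable one has
\[
\widetilde{\Lambda'}\;=\;\widetilde{\Lambda}\otimes_k k',\qquad J(\widetilde{\Lambda'})\;=\;J(\widetilde{\Lambda})\otimes_k k',
\]
hence $\widetilde{\Lambda'}^{ss}=\widetilde{\Lambda}^{ss}\otimes_k k'$, which is split semisimple by the choice of $k'$. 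Corollary \ref{nonosimp} applied to $\Lambda'$ then shows $\widetilde{\Lambda'}^{ss}$ has no simple symplectic component. Since the type (orthogonal vs.\ symplectic) of a first-kind involution is preserved under extension of the base field, the same holds for $\widetilde{\Lambda}^{ss}$. The only delicate point here is checking that the formation of $\widetilde{\Lambda}^{ss}$ really does commute with a possibly ramified base change, which is why I single out the separability of $k'/k$.

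Having excluded symplectic blocks, the remaining verifications are short computations. For a swap block $B\times B^{\mathrm{op}}$ with $c(x,y^{\mathrm{op}})=(y,x^{\mathrm{op}})$, one checks $U=\{(x,(x^{-1})^{\mathrm{op}}):x\in B^\times\}$ so $\mathrm{Det}(U)=\{(\alpha,\alpha^{-1}):\alpha\in F^\times\}$, which coincides with $\mathrm{Det}_{-}$. For an orthogonal block $A=M_n(F)$, orthogonal matrices have determinant $\pm 1$, while $\mathrm{Det}(A^\times)_{-}=\{d\in F^\times:d^2=1\}=\{\pm 1\}$ because $p>2$. For a unitary block $A=M_n(F)$ with $c$ of second kind extending the nontrivial $\sigma\in\mathrm{Gal}(F/F_0)$, we have $\mathrm{Det}(A^\times)_{-}=\ker(N_{F/F_0})$, and the surjectivity of $\det\colon U_n(F/F_0)\to U_1(F/F_0)=\ker(N_{F/F_0})$ (which over finite fields is classical, e.g.\ via elementary unitary transvections together with Hilbert 90) yields the desired equality. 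Assembling the three cases across the block decomposition completes the proof.
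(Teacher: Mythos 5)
Your plan is essentially the paper's own proof: the same decomposition of $(\widetilde{\Lambda}^{ss},c)$ into swap, unitary, orthogonal and (a priori) symplectic blocks, the same exclusion of symplectic blocks by base change to a splitting extension $K'/K$ (using $\widetilde{\Lambda'}^{ss}=\widetilde{\Lambda}^{ss}\otimes_k k'$ and Corollary \ref{nonosimp}, the paper carrying out explicitly the computation you summarize as ``type of a first-kind involution is preserved under base field extension''), and then a block-by-block comparison of $\mathrm{Det}(U(\cdot))$ with $\mathrm{Det}(\cdot)_{-}$. The only divergence is at the block level, where the paper proves a uniform Lemma \ref{right} over an arbitrary field via the explicit elements $\Delta_T(t)=T^{-1}\Delta(t)T$ built from a decomposition $S=\tau(T)DT$ of the matrix defining the involution, while you invoke classical facts for finite orthogonal and unitary groups; these are interchangeable, but note that in your orthogonal case the inclusion that actually needs an argument is $\{\pm1\}\subseteq\mathrm{Det}(U(A))$, i.e.\ that $-1$ is attained (a reflection, or $\mathrm{diag}(-1,1,\dots,1)$ in an orthogonal basis of the associated symmetric form, which exists since $p>2$), a point worth one explicit line since you only stated the reverse inclusion.
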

\begin{proof}  We decompose   $\widetilde{\Lambda }^{ss}$ into a product of indecomposable simple algebras with involution (see 1.D in [7]): 

\begin{equation}\label{dss}
\widetilde{\Lambda }^{ss}= \prod_{i\in I^{u}}a_{i}\times \prod_{j\in J}(a_{j}\times
a_{j}^{op})\times \prod_{h\in I^{o}}a_{h}\times\prod_{f\in I^{s}}a_{f}
\end{equation} where $a_l=M_{n_l}(k_l)$ for $l\in I= J\cup I^u \cup I^o\cup I^s$. For any $l\in I^u\cup I^o\cup I^s$ (resp. $J$) we denote by $\sigma_l$ the restriction of $c$ to $a_l$ 
(resp. $a_j\times a_j^{op})$. For $l\in I^u$ (resp. $I^o$,  resp. $I^s$) then $\sigma_l$ is unitary (resp. orthogonal,  resp. symplectic) and for $j \in J$ the involution $\sigma_l$ is given by 
$(x, y^{op})\rightarrow (y,x^{op})$. We set $b_j:=a_j\times a_j^{op}$. 

We start by proving that $\widetilde{\Lambda}^{ss}$ has no indecomposable symplectic components. 

\begin{lem}If $p>2$ and $G$ has  of odd order then $I^s$ is empty. 
\end{lem}
\begin{proof}  For  a finite extension $K'/K$  with ring of integers $R'$ and residual field $k'$   we set $\Lambda'=\Lambda\otimes_RR'$; indeed $\Lambda'$ is an $R'$-Hopf order of $K'[G]$. We observe the equalities
$$\widetilde {\Lambda}'=(\Lambda\otimes_{O_K}O_{K'})\otimes_{O_{K'}}k' =\Lambda\otimes_{O_K}k'=(\Lambda\otimes_{O_K}k)\otimes_kk'=\widetilde \Lambda\otimes_kk'.$$
Since  $\widetilde {\Lambda}$ and $k'$ are finite $k$-algebras  we know that
$$\mathrm{rad}(\widetilde {\Lambda}')=\mathrm{rad}(\widetilde \Lambda\otimes_kk')=\mathrm{rad}(\widetilde \Lambda)\otimes_kk'+\widetilde \Lambda\otimes_k\mathrm{rad}(k')=\mathrm{rad}(\widetilde \Lambda)\otimes_kk'$$ and so 
\begin{equation}\label{decsymp}\widetilde {\Lambda}^{'ss}=\widetilde {\Lambda}'/\mathrm{rad}(\widetilde {\Lambda}')=(\widetilde {\Lambda}\otimes_kk')/(\mathrm{rad}(\widetilde {\Lambda})\otimes_k k')=\widetilde {\Lambda}^{ss}\otimes_kk'.\end{equation}
 We now choose $K'/K$ such that $K'[G]$ and $\widetilde {\Lambda}^{'ss}$ are both split semi-simple algebras respectively on $K'$ and $k'$. Our aim is to show   that $\widetilde \Lambda^{ss}$ has  no simple symplectic component. We suppose otherwise and we let  $A_1={M}_{n_1}(k_1)$ be such a component. Using  
\cite{KMRT} Proposition 2.19 we know that  the restriction of $c$ to $A_1$ is given by 
$$c(x)=u^txu^{-1},\ \forall x\in {M}_{n_1}(k_1)$$ where $u\in Gl_{n_1}(k_1)$ and  $^tu=-u$.  

It  follows from (\ref{decsymp})  that each simple factor of  $A_1\otimes_kk'$ is a simple factor of $\widetilde \Lambda^{'ss}$. 
We  have an isomorphism of $k'$-algebras 
$$\varphi: A_1\otimes_kk'={M}_{n_1}(k_1)\otimes_kk'\rightarrow \oplus_{\sigma\in S}{M}_{n_1}(k')$$ 
defined by $x\otimes\lambda \rightarrow \oplus_{\sigma\in S}y_{\sigma}$, where  $S=\{\sigma_1, \cdots, \sigma_m\}$ is the set of $k$-embeddings of $k_1$ into $k'$ and  
$y_{\sigma}=\sigma(x)\lambda$.  Therefore the simple factors of $A_1\otimes_kk'$ consist of $m=[k_1:k]$ copies of ${M}_{n_1}(k')$

The involution $c$ of $A_1$ extends to an involution $c_1$ of $ A_1\otimes_kk'$ with $c_1(x\otimes\lambda)=c(x)\otimes \lambda$. Therefore   
$$\varphi( c_1(x\otimes\lambda))=\varphi (c(x)\otimes\lambda)=\sum_{\sigma \in S}z_{\sigma}$$ with 
$$z_{\sigma}=\sigma(c(x))\lambda=\sigma(u)\sigma(^tx)\sigma(u)^{-1}\lambda=\sigma(u)^t\sigma(x)\sigma(u)^{-1}\lambda$$
$$=\sigma(u)^t(\sigma(x)\lambda)\sigma(u)^{-1}=\sigma(u)^t y_{\sigma}\sigma(u)^{-1}.$$
Therefore each simple factor ${M}_{n_1}(k')$ of $A_1\otimes_kk'$ is  $c_1$-stable and   the restriction  of $c_1$ to the $\sigma$-factor  is given by $c_1(y_{\sigma})=\sigma(u)y_{\sigma}\sigma(u)^{-1}$. Since $^t(\sigma(u))=\sigma(^t u)=-\sigma (u)$ and $p>2$, then this involution is symplectic and so each simple factor of  $A_1\otimes_kk'$ is symplectic. We conclude that $\widetilde{\Lambda}'^{ss}$ would have symplectic components which is excluded by Corollary \ref{nonosimp} and so $\widetilde{\Lambda}^{ss}$ has no  symplectic factor which is simple,  as required.
\end{proof}
We consider now the indecomposable  unitary or orthogonal components of $\widetilde \Lambda^{ss}$.  If $F$ is  a field, $A$  the matrix algebra $M_n(F)$ and $\sigma$ an involution of $A$, then    $\sigma$  induces by restriction an involution on $F$ that we denote by $\tau$ ; 
such a $\tau$ can be extended to an involution of $A$ by setting: 
$$\tau((a_{i, j}))=^t(\tau(a_{i, j})). $$
For the sake of simplicity we say that $\sigma$ is orthogonal  if the restriction of $\sigma$ to $F$  is trivial and unitary when the restriction of $\sigma$ to $F$ is  non trivial.   We set 
$$\mathrm{Det}(A^\times)_-=\{\mathrm{det}(X),  X\in A\  \mathrm{and}\ \mathrm{det}(X\sigma(X))=1\}$$
$$U(A)=\{X\in A\ \mid X\sigma(X)=I_n\}.$$ Our aim is to prove 
\begin{lem}\label{right} Assume that $\sigma$ is orthogonal or unitary, then one has 
$$\mathrm{Det}(U(A))=\mathrm{Det}(A^\times)_-.$$

\end{lem}
\begin{proof} The inclusion  $\mathrm{Det}(U(A))\subset \mathrm{Det}(A^\times)_-$ is clear.  
It follows from \cite{KMRT} Proposition 2.20 that for any involution $\sigma$ of $A$, such that the restriction of $\sigma$ to $F$ is $\tau$,  there exists $S\in A^{\times}$ such that 
$$\sigma (X)=S^{-1}\tau(X)S\ \  \forall X\in A.$$  First we assume that  $\sigma$ is   orthogonal and the characteristic of $F$ is $2$. If $X\in \mathrm{Det}(A^\times)_-$ then 
$$\mathrm{det}(X\sigma(X))=\mathrm{det}(X)\mathrm{det}(^tX)=\mathrm{det}(X)^2=1$$
and so $\mathrm{det}(X)=1$. We conclude that 
$$\mathrm{Det}(U(A))=\mathrm{Det}(A^\times)_-=1$$ in this case. 

We now assume that $\sigma$ is unitary or orthogonal with the characteristic of $F$ different from $ 2$. It follows once again from \cite{KMRT} Proposition 2.20 that $\tau(S)=S$. Therefore in order to complete the proof of Lemma \ref{right} it suffices to prove that 
$$ \mathrm{Det}(A^\times)_-\subset \mathrm{Det}(U(A)).$$
 Let $x\in \mathrm{Det}(A^\times)_-$ given by $x=\mathrm{det}(M)$ and $\mathrm{det}(M\sigma(M))=1$. Since $\tau(S)=S$ we know by \cite{F84} chap.III Lemma 3.4 p.97 that there exists $T\in GL_n(F)$ and $D=\mathrm{diag}(d_1, \cdots, d_n)$,  with $d_i \in F^\times$,  such that $S=\tau(T)DT$. We 
 let $\Delta: F^\times\rightarrow GL_n(F)$ be the standard embedding given by $t\rightarrow \Delta(t)=\mathrm{diag}(t, 1,\cdots, 1)$ and we  set $\Delta_T(t)=T^{-1}\Delta(t)T, \forall t \in F^\times$. By an easy computation we check that  

 \begin{equation}\label{1}T\sigma(X)T^{-1}=D^{-1}\tau(TXT^{-1})D\ \ \forall X\in GL_n(F)\end{equation}
 and so  
 \begin{equation}\label{2}T\sigma(\Delta_T(t))T^{-1}=D^{-1}\tau(\Delta(t))D=\Delta(d_1^{-1}\tau(t)d_1)=\Delta(\tau(t))\ \forall t\in F^\times\end{equation}
 where the first equality follows from (\ref{1}) and the second from a simple computation. We observe that (\ref{2}) can be written
 \begin{equation}\label{3}\sigma(\Delta_T(t))=\Delta_T(\tau(t))\ \forall t\in F^\times. \end{equation}
We set  $N:=\Delta_T(\mathrm{det}(M))$.   It follows from (\ref{3}) that 
 $$N\sigma(N)=\Delta_T(\mathrm{det}(M))\Delta_T(\tau(\mathrm{det}(M)))=\Delta_T(\mathrm{det}(M)\tau(\mathrm{det}(M)).$$
 Since we have 
 $$\tau(\mathrm{det}(M))=\mathrm{det}(\tau(M)=\mathrm{det}(\sigma(M))$$
 we conclude that $N\sigma(N)=\Delta_T(1)=1$ hence that $N\in U(A)$ and so  that 
 $$x=\mathrm{det}(M)=\mathrm{det}(N)\in \mathrm{Det}(U(A)).$$
 This completes the proof of the lemma.
 \end{proof}

We now return to the proof of Theorem \ref{ss}. 
The isomorphism of algebras (\ref{dss}) induces  isomorphisms of groups  
\begin{equation}
\mathrm{Det}(\widetilde{\Lambda}^{ss})=\prod_{i\in I^u}k_i^{\times}\prod_{j\in J}(k_j^{\times}\times k_j^{\times})\prod_{h\in I^o}k_h^{\times}
\end{equation}
\begin{equation}
\mathrm{Det}(\widetilde{\Lambda}^{ss})_{-}=\prod_{i\in I^{u}}\mathrm{Det}(a_i^{\times})_{-}\times \prod_{j\in J}\mathrm{Det}(b_j^{\times})_{-}\times \prod_{h\in I^{o}}\mathrm{Det}(k_h^{\times})_{-} 
\end{equation}
and 
\begin{equation}\mathrm{Det}(U(\widetilde{\Lambda}^{ss}))=\prod_{i\in I^{u}}\mathrm{Det}(U(a_i))\times \prod_{j\in J}\mathrm{Det}(U(b_j))\times \prod_{h\in I^{o}}\mathrm{Det}U((k_h)) 
\end{equation}

One easily checks that:
$$\mathrm{Det}(U(b_j))=\mathrm{Det}( b_{j}^{\times })_-=\{(x, x^{-1}), x\in k_j^{\times}\}. $$
Moreover,  the equalities  $$\mathrm{Det}(U(a_i))=\mathrm{Det}( a_{i}^{\times})_-=\{x\in k_i^{\times}\ \mid x\tau_i(x)=1\}$$ and 
$$\mathrm{Det}(U(a_h))=\mathrm{Det}( a_{h}^{\times})_-=\pm 1$$
follow from Lemma \ref{right}. 
\end{proof}

\subsection{The structure of unitary determinants. }
As above, $K$ is a $p$-adic field, $R$ is its ring of integers and $G$ is a finite group of odd order. We consider a Hopf $R$-order $A^D$ of $K[G]$ and we let $\mathcal {J}$ denote its radical. The main goal of this section is to show:
\begin{thm}\label{fund}
For an arbitrary prime $p$ suppose that $R$  is a $p$-adic ring of
integers and for  $G$ of odd order,  then we have
\begin{equation*}
\mathrm{Det}( U( A^{D}) =\mathrm{Det}(
A^{D \times}) _{-}.
\end{equation*}

\end{thm}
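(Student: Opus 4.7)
The inclusion $\mathrm{Det}(U(A^D)) \subset \mathrm{Det}(A^{D\times})_-$ is immediate from $u\bar u = 1$. For the reverse, my plan is to descend to the semisimple quotient $(\widetilde{A^D})^{ss}$ via Proposition \ref{codet}, lift a unitary representative using Theorem \ref{ss}, and correct the lift by a Hermitian square root; the construction forces me to treat the residue characteristics $p > 2$ and $p = 2$ separately.

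Assume first that $p > 2$. Given $d = \mathrm{Det}(x)$ with $\mathrm{Det}(x\bar x) = 1$, the image $\gamma(d)$ lies in $\mathrm{Det}((\widetilde{A^D})^{ss\times})_-$, so Theorem \ref{ss} gives $\tilde u \in U((\widetilde{A^D})^{ss})$ with $\mathrm{Det}(\tilde u) = \gamma(d)$. Since $1 + \mathcal{J} \subset A^{D\times}$, the surjection $A^D \twoheadrightarrow (\widetilde{A^D})^{ss}$ lifts units to units, so I choose $y_0 \in A^{D\times}$ with image $\tilde u$. Then $y_0\bar y_0 = 1 + n$ with $n \in \mathcal{J}$ and $\bar n = n$. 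The Hermitian square root is
$$v := (1+n)^{1/2} = \sum_{k \geq 0} \binom{1/2}{k} n^k \in 1 + \mathcal{J};$$
the coefficients lie in $R$ since $1/2 \in R$, and convergence in the $\mathfrak{p}$-adic topology is guaranteed by $\mathcal{J}^m \subset \mathfrak{p}A^D$ for some $m$. By construction $\bar v = v$ and $v^2 = 1+n$, so $u := v^{-1} y_0$ satisfies
$$u\bar u = v^{-1}(y_0\bar y_0)\bar v^{-1} = v^{-1}v^2 v^{-1} = 1,$$
giving $u \in U(A^D)$ with $\gamma(\mathrm{Det}(u)) = \gamma(\mathrm{Det}(y_0)) = \gamma(d)$. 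Proposition \ref{codet} then forces $d \cdot \mathrm{Det}(u)^{-1} \in \mathrm{Det}(1+\mathcal{J}) \cap \mathrm{Det}(A^{D\times})_-$, and the claim reduces to showing this intersection lies in $\mathrm{Det}(U(A^D))$. For $z \in 1+\mathcal{J}$ with $\mathrm{Det}(z\bar z) = 1$, the same construction applied to the symmetric $z\bar z = 1 + n'$ produces $v' \in 1+\mathcal{J}$ with $\bar v' = v'$ and $(v')^2 = z\bar z$, so $(v')^{-1} z \in U(A^D)$. One then shows $\mathrm{Det}(v') = 1$: indeed $\mathrm{Det}(v')^2 = \mathrm{Det}(z\bar z) = 1$, while at each character $\chi'$ the eigenvalues of $T_{\chi'}(v')$ are topologically nilpotent perturbations of $1$ (as $T_{\chi'}(\mathcal{J})$ is topologically nilpotent integrally over $R$), so $\mathrm{Det}(v')(\chi') \in 1 + \mathfrak{m}_{K^c}$, a pro-$p$ group that is $2$-torsion free for $p > 2$; hence $\mathrm{Det}((v')^{-1}z) = \mathrm{Det}(z)$, as required.

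When $p = 2$, the odd order of $G$ makes $|G|$ a unit in $R$, so $R[G]$ is separable and thus a maximal $R$-order in $K[G]$; as $R[G] \subset A^D \subset K[G]$, this forces $A^D = R[G]$. A maximal order decomposes along the $c$-stable Wedderburn decomposition in (\ref{devis}) into the trivial factor $R$, unitary simple factors $\mathcal{O}_i$, and swap pairs $(\mathcal{O}_j, \mathcal{O}_j^{\mathrm{op}})$; the equality $\mathrm{Det}(U(\cdot)) = \mathrm{Det}(\cdot)_-$ is then verified factor by factor, trivially on $R$ and on swap pairs (both reducing to $\{(a,a^{-1}) : a \in \mathcal{O}_j^\times\}$), and on the unitary $\mathcal{O}_i$ by the classical surjectivity of the unitary group onto the kernel of the norm $N_{Z_i/Z_i^{+}}$ for maximal orders in central simple algebras with unitary involution. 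The main obstacle is the Hermitian square root: its very definition and the sign-control argument $\mathrm{Det}(v')^2 = 1 \Rightarrow \mathrm{Det}(v') = 1$ both collapse in residue characteristic $2$, and it is only the structural accident $A^D = R[G]$ that rescues the argument there.
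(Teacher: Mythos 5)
Your argument is correct, and for $p>2$ it follows a genuinely different route through the same skeleton. The paper also descends to $\widetilde{A}^{D\,ss}$ and uses Theorem \ref{ss} there, but it handles the pro-unipotent part $1+\mathcal{J}$ by successive approximation: Proposition \ref{C1} shows $(1+\mathcal{J})_{+}=\{(1+j)(1+\overline{j})\}$ (and its $SL$-variant) using the fact that each graded quotient $1+\mathcal{J}^{n}/1+\mathcal{J}^{n+1}$ is abelian of odd order, then Corollary \ref{C2} and the three exact sequences of Theorem \ref{cle} assemble the statement by a diagram of cokernels. Your binomial-series square root $v=(1+n)^{1/2}$ proves the same key fact in one stroke (a symmetric $v$ with $v^{2}=1+n$ gives $1+n=v\overline{v}$), and you replace the $SL$-level adjustment of Proposition \ref{C1}(2)/Corollary \ref{C2} by the observation that $\mathrm{Det}(v')$ is a square root of $1$ lying in principal units, hence trivial for $p>2$; both mechanisms use oddness of $p$ in an essential way, so neither buys extra generality, but yours is shorter and avoids the filtration bookkeeping, at the cost of still importing Theorem \ref{ss} and the map $\gamma$ of Proposition \ref{codet} (whose hypothesis on $\mathrm{coker}\,d_{\Lambda}$ rests on Theorem \ref{majeur}; the paper's own proof of Theorem \ref{cle} leans on the same machinery via Lemma \ref{SLS}, so this is a shared foundation rather than a new gap).

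For $p=2$ your reduction is the same as the paper's: oddness of $|G|$ forces $A^{D}=R[G]$, a maximal order splitting into the trivial factor, unitary factors and swap pairs. Where the paper simply cites Taylor's theorem, you sketch the component-wise verification; the one step you leave as ``classical'' --- that $\mathrm{Det}(U(\mathcal{O}_{i}))$ exhausts the norm-one units for a maximal order with unitary involution --- is exactly the content of that reference, and if you want it self-contained you should note that $L_{i}/L_{i}^{+}$ is unramified (both sit inside an unramified extension of $K$), that the unimodular Hermitian lattice is then diagonalizable, and conclude with $\mathrm{diag}(a,1,\dots,1)$ as in the residue-field argument of Lemma \ref{right}.
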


The key to the proof of this theorem lies in the following three exact
sequences: 

\begin{thm}\label{cle}
For $p\neq 2$, the following sequences are exact :
\begin{eqnarray}\label{exact}
1 &\rightarrow &\mathrm{Det}( 1+\mathcal{J}) \rightarrow \mathrm{%
Det}( A^{D \times}) \rightarrow \mathrm{Det}( \widetilde{A}^{D 
ss \times }) \rightarrow 1 \\
1 &\rightarrow &\mathrm{Det}( 1+\mathcal{J}) _{-}\rightarrow 
\mathrm{Det}( A^{D \times}) _{-}\rightarrow \mathrm{Det}( 
\widetilde{A}^{D  ss \times}) _{-}\rightarrow 1  \notag \\
1 &\rightarrow &\mathrm{Det}( U( 1+\mathcal{J}))
\rightarrow \mathrm{Det}( U( A^{D}) ) \rightarrow 
\mathrm{Det}( U( \widetilde{A}^{D ss})) \rightarrow
1.  \notag
\end{eqnarray}
\end{thm}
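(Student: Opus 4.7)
The three sequences all describe the effect of reducing modulo the Jacobson radical $\mathcal{J}$, so I will treat them as successive refinements of the same surjection--kernel analysis. The key technical ingredients are: (i) Proposition~\ref{codet} combined with Theorem~\ref{majeur}, which identifies the kernel of the induced determinant map with $\mathrm{Det}(1+\mathcal{J})$; (ii) the pro-$p$ structure of $1+\mathcal{J}$ and of the unipotent group $1+\widetilde{\mathcal{J}}\subset\widetilde{A}^{D}$, so that, because $p$ is odd, squaring is a bar-equivariant bijection and every bar-fixed element has a unique bar-invariant square root; and (iii) smoothness of the unitary group scheme $U_{A^{D}}$ over the henselian local ring $R$ (valid because $2\in R^{\times}$), which by Hensel's lemma yields $U(A^{D})\twoheadrightarrow U(\widetilde{A}^{D})$.

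For the first sequence, since $\mathcal{J}$ is contained in the Jacobson radical of $A^{D}$, the reduction $A^{D\times}\twoheadrightarrow\widetilde{A}^{Dss\times}$ is surjective, whence so is $\mathrm{Det}(A^{D\times})\to\mathrm{Det}(\widetilde{A}^{Dss\times})$. To apply Proposition~\ref{codet} I will verify its hypothesis that $d_{A^{D}}:G_{0}(K^{c}[G])\to G_{0}(\widetilde{A}^{Dss}\otimes_{k}k^{c})$ has $p$-power cokernel, which follows from Theorem~\ref{majeur} after base change to a finite extension of $K$ that is assez gros for $G$. Proposition~\ref{codet} then simultaneously provides surjectivity and identifies the kernel as $\mathrm{Det}(1+\mathcal{J})$.

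For the second sequence, I intersect the first with the minus subgroups; the only non-trivial point is surjectivity on the right. Given $\beta\in\mathrm{Det}(\widetilde{A}^{Dss\times})_{-}$, I lift it by the first sequence to $\tilde\beta\in\mathrm{Det}(A^{D\times})$; then $\tilde\beta\,\overline{\tilde\beta}$ reduces to $\beta\bar\beta=1$ and therefore lies in the pro-$p$ group $\mathrm{Det}(1+\mathcal{J})$. Since $p$ is odd, the bar-fixed element $(\tilde\beta\,\overline{\tilde\beta})^{-1}$ has a unique bar-invariant square root $\gamma\in\mathrm{Det}(1+\mathcal{J})$, and $\tilde\beta\gamma\in\mathrm{Det}(A^{D\times})_{-}$ still projects onto $\beta$. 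The kernel is then $\mathrm{Det}(1+\mathcal{J})\cap\mathrm{Det}(A^{D\times})_{-}=\mathrm{Det}(1+\mathcal{J})_{-}$ by definition.

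For the third sequence, the kernel statement $U(A^{D})\cap(1+\mathcal{J})=U(1+\mathcal{J})$ is immediate. Surjectivity $U(A^{D})\twoheadrightarrow U(\widetilde{A}^{Dss})$ will follow by factoring through $U(\widetilde{A}^{D})$: Hensel's lemma for the smooth group scheme $U_{A^{D}}$ handles the first step, and for the second I lift any $\bar v\in U(\widetilde{A}^{Dss})$ to $v\in\widetilde{A}^{D\times}$, observe that $v\bar v\in 1+\widetilde{\mathcal{J}}$ is bar-fixed, and use the pro-$p$ square-root trick inside the unipotent group $1+\widetilde{\mathcal{J}}$ to correct $v$ into a unitary lift. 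The hardest point will be exactness in the middle. One inclusion is trivial; for the reverse, if $u\in U(A^{D})$ has $\mathrm{Det}(\bar u)=1$ in $\mathrm{Det}(U(\widetilde{A}^{Dss}))$, then by Theorem~\ref{ss} this vanishing is also a vanishing inside $\mathrm{Det}(\widetilde{A}^{Dss\times})_{-}$, and the second sequence yields $\mathrm{Det}(u)\in\mathrm{Det}(1+\mathcal{J})_{-}$. To promote this to $\mathrm{Det}(u)\in\mathrm{Det}(U(1+\mathcal{J}))$ I will establish separately the auxiliary equality $\mathrm{Det}(U(1+\mathcal{J}))=\mathrm{Det}(1+\mathcal{J})_{-}$, proved by applying the same bar-invariant square-root construction directly in the pro-$p$ group $1+\mathcal{J}$ to convert any element with bar-invariant determinant into a unitary element having the same determinant. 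This last lemma is the step I expect to require the most care.
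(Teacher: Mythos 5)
Your proposal is correct in substance and runs on the same engine as the paper --- odd-order square-root/norm corrections inside the pro-$p$ filtration of $1+\mathcal{J}$ --- but it is organized differently at the two places where the paper brings in outside input. For the first row the paper does not invoke Proposition~\ref{codet}: it applies the snake lemma to the diagram comparing $SL(1+\mathcal{J})\subset SL(A^{D})$ with $1+\mathcal{J}\subset A^{D\times}$, using Lemma~\ref{SLS}; since Proposition~\ref{codet} is itself proved by exactly that snake lemma, your route is essentially equivalent, and it has the merit of making explicit the well-definedness of the reduction map on determinants. For surjectivity in the second and third rows the paper first makes the element unitary downstairs via Theorem~\ref{ss} ($\mathrm{Det}(\widetilde{A}^{D\,ss\times})_{-}=\mathrm{Det}(U(\widetilde{A}^{D\,ss}))$) and then lifts it using the norm decompositions of Proposition~\ref{C1}; you avoid Theorem~\ref{ss} altogether, correcting at the level of $\mathrm{Det}(1+\mathcal{J})$ for row two and getting row-three surjectivity from smoothness of $U_{A^{D}}$ plus Hensel together with a square-root correction in the finite $p$-group $1+\widetilde{\mathcal{J}}$. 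That is a genuine economy for Theorem~\ref{cle} itself (no input from the symplectic-component analysis behind Theorem~\ref{ss}), at the cost of importing smoothness of $U_{A^{D}}$; the paper's choice costs nothing extra since Theorem~\ref{ss} is needed anyway in Step~3 of Theorem~\ref{fund}.

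Three details deserve care when you write this up. First, to verify the hypothesis of Proposition~\ref{codet} you must take the auxiliary extension $K'$ not merely assez gros but also with residue field splitting $\widetilde{A}^{D\,ss}\otimes_{k}k'$ (as in the proof of Theorem~\ref{ss}); otherwise the scalar-extension map $G_{0}(\widetilde{A}'^{D\,ss})\rightarrow G_{0}(\widetilde{A}^{D\,ss}\otimes_{k}k^{c})$ has infinite cokernel and Theorem~\ref{majeur} alone does not yield a $p$-power cokernel for the absolute decomposition map. Second, the unique $2$-divisibility of $\mathrm{Det}(1+\mathcal{J})$ and of $1+\mathcal{J}$ requires the completeness/successive-approximation argument of Proposition~\ref{C1}, so that work is relocated, not avoided. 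Third, in your auxiliary equality $\mathrm{Det}(U(1+\mathcal{J}))=\mathrm{Det}(1+\mathcal{J})_{-}$ (the paper's Corollary~\ref{C2}) you must also observe that the bar-fixed square root $s$ of $z\bar{z}$ satisfies $\mathrm{Det}(s)^{2}=1$, hence $\mathrm{Det}(s)=1$ because $\mathrm{Det}(1+\mathcal{J})$ has no $2$-torsion, so $s^{-1}z$ really has the same determinant as $z$.
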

We proceed in three steps. Firstly we show Theorem \ref{fund} when $p=2.$ Next we
show Theorem \ref{cle} and then finally we  prove Theorem \ref{fund} for $p>2$.
\linebreak

\textbf{Step 1.} So first we suppose that $R$ has residue characteristic
$2$. Then, since $G$ has odd order, $A^{D}=R[G]$ is a maximal $R$-order in $K[ G]$ and is isomorphic to a direct sum of matrix rings over 
local rings of integers which are non-ramified over $R$. The required equality in this case is given by \cite {T89} Theorem 1 (b)  (see the proof of this theorem under our hypotheses in \cite {T89} Proof of (3.3.3) 
Step 2).

\textbf{Step 2.}\ We now suppose that $p\neq 2$ and we consider the filtration
\begin{equation*}
...1+\mathcal{J}^{n}\subset\cdots \subset 1+\mathcal{J}^{2}\subset 1+\mathcal{J}
\end{equation*}
and for each $n>0$ we note that $1+\mathcal{J}^{n}/1+\mathcal{J}^{n+1}$ is
abelian and finite of odd order. 

For a multiplicative group $\mathcal{H}$, endowed with an involution $c$ which fixes the neutral element,  we define the set
$$\mathcal{H}_+=\{x \in \mathcal{H}\  \mid  \ x=c(x)\}.$$ We note that if $\mathcal{H}$ is commutative then $\mathcal{H}_+$ is a subgroup of $\mathcal{H}$. 
\begin{prop}\label{C1} The following equalities hold: 
\begin{enumerate}
\item  $( 1+\mathcal{J}) _{+}=\{( 1+j) ( 1+
\overline{j}) \ \mid j\in \mathcal{J}\}$. 
\item $SL(1+\mathcal{J})_+=\{(1+j)(1+\bar j)\ \mid\ 1+j\in SL(1+\mathcal J) \}$.
\item $\mathrm{Det}( 1+\mathcal{J}) _{+}=\{\mathrm{Det}((
1+j)( 1+\overline{j})) \ \mid j\in \mathcal{J}\}$.

\end{enumerate}
\end{prop}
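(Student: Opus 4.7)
My plan is to deduce all three parts from a single structural observation: since $p>2$ and $A^D$ is a finite module over the complete DVR $R$, the group $1+\mathcal{J}$ is pro-$p$, and hence uniquely $2$-divisible. I would first verify this by noting that $\pi A^D \subseteq \mathcal{J}$ (because a simple $A^D$-module is a finitely generated $R$-module on which $\pi$ cannot act invertibly by Nakayama, so $\pi$ lies in every maximal left ideal), and that $\mathcal{J}^N \subseteq \pi A^D$ for some $N$ (because $A^D/\pi A^D$ is Artinian with nilpotent Jacobson radical). Hence the $\mathcal{J}$-adic and $\pi$-adic topologies agree, $A^D$ is complete in both, and each graded piece $\mathcal{J}^n/\mathcal{J}^{n+1}$ is a finite $k$-vector space, so $1+\mathcal{J} = \varprojlim (1+\mathcal{J})/(1+\mathcal{J}^n)$ is pro-$p$. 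Concretely, since $2 \in R^\times$, the binomial series $(1+x)^{1/2} = 1 + x/2 - x^2/8 + \cdots$ converges $\mathcal{J}$-adically for every $x \in \mathcal{J}$ and gives the unique square root in $1+\mathcal{J}$; as the antipode $c$ preserves $\mathcal{J}$ and is continuous, it commutes with the square-root operation: $c((1+x)^{1/2}) = (1+c(x))^{1/2}$.

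For part (1), the inclusion $\supseteq$ is immediate. For $\subseteq$, given $1+x \in (1+\mathcal{J})_+$ I would set $j := (1+x)^{1/2} - 1 \in \mathcal{J}$; then $c(j) = j$ and $(1+j)(1+\bar j) = (1+j)^2 = 1+x$.

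For part (2), I would take the same $j$. Then $\mathrm{Det}(1+j)^2 = \mathrm{Det}(1+x) = 1$, and I need to show $\mathrm{Det}(1+\mathcal{J})$ has no $2$-torsion. For each irreducible character $\chi$, the inclusion $\mathcal{J}^N \subseteq \pi A^D$ forces the matrix $T_\chi(j)$ to be topologically nilpotent, so its eigenvalues lie in the maximal ideal $\mathfrak{m}$ of the integral closure of $R$ in $K^c$; hence $\mathrm{Det}(1+j)(\chi) = \det(I+T_\chi(j)) \in 1+\mathfrak{m}$, a pro-$p$ group with $p$ odd. Thus $\mathrm{Det}(1+j) = 1$, i.e.\ $1+j \in SL(1+\mathcal{J})$. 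For part (3), given $f \in \mathrm{Det}(1+\mathcal{J})_+$, the same pro-$p$ argument yields a unique $g \in \mathrm{Det}(1+\mathcal{J})$ with $g^2 = f$; comparing $c(g)^2 = c(f) = f = g^2$ and using uniqueness gives $c(g) = g$. Lifting $g = \mathrm{Det}(u)$ for some $u \in 1+\mathcal{J}$ and setting $j := u-1$, the compatibility $\mathrm{Det}(c(u)) = c(\mathrm{Det}(u))$ (coming from the identity $\mathrm{Det}(c(u))(\chi) = \mathrm{Det}(u)(\bar\chi)$ via the contragredient representation) yields $\mathrm{Det}((1+j)(1+\bar j)) = g \cdot c(g) = g^2 = f$.

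The main obstacle is the pro-$p$ analysis on the determinant side (verifying that $\mathrm{Det}(1+\mathcal{J})$ is itself uniquely $2$-divisible via the eigenvalue argument above) and the compatibility of the antipode with the involution induced on the determinant image; once these are in place, the square-root construction dispatches all three statements uniformly.
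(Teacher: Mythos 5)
Your proof is correct, but it takes a genuinely different route from the paper's. The paper argues by successive approximation along the filtration $1+\mathcal{J}\supset 1+\mathcal{J}^{2}\supset\cdots$: for (1) it inductively writes $1+j\equiv x_{1}\cdots x_{n}\,x_{n}\cdots x_{1}$ modulo $1+\mathcal{J}^{n+1}$ with $x_{i}\in(1+\mathcal{J}^{i})_{+}$, using only that each quotient $[(1+\mathcal{J}^{n})/(1+\mathcal{J}^{n+1})]_{+}$ is abelian of odd exponent (so every element is a square of a power of itself), and passes to the limit; parts (2) and (3) repeat the same induction inside $SL(1+\mathcal{J})$ and inside the filtration of $\mathrm{Det}(1+\mathcal{J})$ by the $\mathrm{Det}(1+\mathcal{J}^{n})$, using $\mathrm{Det}(1+\mathcal{J}^{n})_{+}\cap\mathrm{Det}(1+\mathcal{J}^{n+1})=\mathrm{Det}(1+\mathcal{J}^{n+1})_{+}$. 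You instead construct one canonical square root via the $2$-integral binomial series in the $\mathcal{J}$-adically complete order (after identifying the $\mathcal{J}$-adic and $\pi$-adic topologies, which your Nakayama and nilpotence observations justify), note that the antipode commutes with this square root, and then reduce (2) and (3) to the absence of $2$-torsion in $\mathrm{Det}(1+\mathcal{J})$, proved by your valuation argument on eigenvalues, since $\mathrm{Det}(1+j)(\chi)\in 1+\mathfrak{m}$ and $-1\notin 1+\mathfrak{m}$ for $p$ odd. What your approach buys is a uniform, explicit and unique ($c$-equivariant) square root, which makes (1) immediate and collapses (2) and (3) into a single torsion statement; what the paper's approach buys is independence from any convergence or series considerations, staying purely group-theoretic with the odd-exponent graded quotients, which is why the same induction transfers verbatim to the three settings. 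Two small points you should still record explicitly: the easy inclusions $\supseteq$ in (2) and (3) use the compatibility $\mathrm{Det}(\bar u)(\chi)=\mathrm{Det}(u)(\bar\chi)$ to see that $1+\bar j\in SL(1+\mathcal{J})$ when $1+j$ is, and in (3) the existence of a square root of $f$ inside $\mathrm{Det}(1+\mathcal{J})$ should be exhibited as $g=\mathrm{Det}(u^{1/2})$ for a lift $u\in 1+\mathcal{J}$ of $f$, after which your uniqueness and $c$-equivariance argument applies unchanged.
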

\begin{proof} (1)  Consider $1+j\in ( 1+\mathcal{J}) _{+}$. We show that
for each $n>0$ we can write 
\begin{equation*}
1+j=x_{1}...x_{n}.x_{n}...x_{1}\  \mathrm{mod}\ (1+\mathcal{J}^{n+1})
\end{equation*}
with $x_{i}\in ( 1+\mathcal{J}^{i}) _{+}$. The result then
follows on taking limits. To show that we can write $1+j$  in this manner
we\ proceed by induction.

Since $p>2$, for each $n>0$ we know that the group $[1+\mathcal{J}^n/1+\mathcal{J}^{n+1}]_+$  is an abelian group 
 of odd exponent and so each element in this group is a square of some power of itself. So when $
n=1$ we can write $$1+j \equiv ((1+j)^m)^2=x_{1}^{2}\ \mathrm{mod}\ (1+\mathcal{J}^2) $$ with $x_{1}\in ( 1+
\mathcal{J})_{+}$.

 Inductively we assume that we may write 
\begin{equation*}
1+j=x_{1}\cdots x_{n-1}.x_{n-1}\cdots x_{1}\ \mathrm{mod}\ (1+\mathcal{J}^{n})
\end{equation*}
with each $x_{i}\in ( 1+\mathcal{J}^{i}) _{+}.$ Then,  since  $[1+\mathcal{J}^n/1+\mathcal{J}^{n+1}]_+$  is an abelian group 
 of odd exponent, we can write as above:
\begin{equation*}
( x_{1}...x_{n-1}) ^{-1}( 1+j) (
x_{n-1}..x_{1}) ^{-1}\equiv {x}_{n}^{2}\ \mathrm{mod} ( 1+\mathcal{J}^{n+1})
\end{equation*}
with  ${x}_{n}$ in $
( 1+\mathcal{J}^{n}) _{+}$. We therefore have   
\begin{equation*}
1+j=x_{1}\cdots x_{n-1}x_{n}.x_{n}x_{n-1}\cdots x_{1}\ \mathrm{mod}\  (1+\mathcal{J}^{n+1})
\end{equation*}
with   $x_i\in (1+\mathcal{J}^i)_+$ for $1\leq i\leq n$ which completes the inductive step. 
\vskip 0.1 truecm
(2) We start by  observing  that $[SL(1+\mathcal{J}^{n})/SL(1+\mathcal {J}^{n+1})]_+$ is a subgroup of the group  $[1+\mathcal{J}^n/1+\mathcal{J}^{n+1}]_+$ for each  integer $n>0$. Therefore the proof of (2) 
is similar to the proof of (1) when replacing in each step $(1+\mathcal{J}^{n})$ by $SL(1+\mathcal{J}^{n})$. 
 
\vskip 0.1 truecm

(3) For any $n\geq 1$  the group $\mathrm{Det}(1+\mathcal{J}^n)/\mathrm{Det}(1+\mathcal{J}^{n+1})$ is abelian and of odd exponent. 
Moreover,  one  easily checks that 

$$\mathrm{Det}(1+\mathcal{J}^n)_+\cap \mathrm{Det}(1+\mathcal{J}^{n+1})=\mathrm{Det}(1+\mathcal{J}^{n+1})_+.$$  It follows that   
 $\mathrm{Det}(1+\mathcal{J}^n)_+/\mathrm{Det}(1+\mathcal{J}^{n+1})_+$ is a subgroup of $\mathrm{Det}(1+\mathcal{J}^n)/\mathrm{Det}(1+\mathcal{J}^{n+1})$ for any integer $n\geq 1$, and so    is abelian 
  of odd exponent.   Thus  each element of this group is the square of some power of itself. 
 
 Let $u$ be an element of $\mathrm{Det}(1+\mathcal{J})_+$, then there exists $x_1\in 1+\mathcal{J}$ and $z_1\in 1+\mathcal{J}^2$ with $\mathrm{Det}(x_1)\in \mathrm{Det}(1+\mathcal{J})_+$ and 
 $\mathrm{Det}(z_1)\in \mathrm{Det}(1+\mathcal{J}^2)_+$ such that 
  $$u=\mathrm{Det}(x_1)^2\mathrm{Det}(z_1)=\mathrm{Det}(x_1 \bar x_1)\mathrm{Det}(z_1).$$
  Inductively we assume that there exist $\{x_i\}_{1\leq i\leq n-1}$ and  $\{z_i\}_{1\leq i\leq n-1}$ with  $ x_i \in 1+\mathcal{J}, x_i-x_{i-1}\in \mathcal{J}^i, z_i\in 1+\mathcal{J}^{i+1},  \mathrm{Det}(z_i) \in \mathrm{Det}(1+\mathcal{J}^{i+1})_+$ such that 
  $$u=\mathrm{Det}(x_{i} \bar x_{i})\mathrm{Det}(z_i)),\ \forall i\leq n-1 .$$ We want now to construct $x_n$ and $z_n$. Since  $\mathrm{Det}(z_{n-1})\in  \mathrm{Det}(1+\mathcal{J}^{n})_+$,  
  then there exists $y_n \in 1+\mathcal{J}^n$ 
  and $z_n\in 1+\mathcal{J}^{n+1}$ with $\mathrm{Det}(y_n)\in \mathrm{Det}(1+\mathcal{J}^n)_+$,  
 $\mathrm{Det}(z_n)\in \mathrm{Det}(1+\mathcal{J}^{n+1})_+$ and such that 
  $$\mathrm{Det}(z_{n-1})=\mathrm{Det}(y_{n})^2\mathrm{Det}(z_n).$$
  We set $x_n=x_{n-1}y_n$, thus $x_n-x_{n-1}\in \mathcal{J}^n$. Moreover,  since $\mathrm{Det}(y_n)\in \mathrm{Det}(1+\mathcal{J}^n)_+$, then $\mathrm{Det}(y_n)^2=\mathrm{Det}(y_n)\mathrm{Det}(\bar y_n)$. 
  Therefore we obtain 
  $$u=\mathrm{Det}(x_{n-1} \bar x_{n-1})\mathrm{Det}(z_{n-1}))=\mathrm{Det}(x_{n-1} \bar x_{n-1})\mathrm{Det}(y_{n})^2\mathrm{Det}(z_n)=$$
  $$\mathrm{Det}(x_{n-1} \bar x_{n-1})\mathrm{Det}(y_n)\mathrm{Det}(\bar y_n)\mathrm{Det}(z_n)=\mathrm{Det}(x_{n} \bar x_{n})\mathrm{Det}(z_n)$$
  and the result follows on passing to the limit.
  \end{proof}

\begin{cor}\label{C2}
Given any element $\mathrm{Det}( z) \in \mathrm{Det}( 1+
\mathcal{J}) $ with $\mathrm{Det}( z.\overline{z}) =1$, 
we can find \ $\widetilde{z}\in U( 1+\mathcal{J}) $ with $
\mathrm{Det}( \widetilde{z}) =\mathrm{Det}( z)$; 
that is to say
\begin{equation*}
\mathrm{Det}( U( 1+\mathcal{J})) =\mathrm{Det}(
1+\mathcal{J}) _{-}.
\end{equation*}
\end{cor}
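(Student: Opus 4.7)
The inclusion $\mathrm{Det}(U(1+\mathcal{J})) \subseteq \mathrm{Det}(1+\mathcal{J})_-$ is immediate: any $\tilde{z} \in U(1+\mathcal{J})$ satisfies $\tilde{z}\overline{\tilde{z}} = 1$, whence $\mathrm{Det}(\tilde{z}\overline{\tilde{z}}) = 1$. So the content of the corollary is the reverse inclusion, which I expect to reduce cleanly to Proposition~\ref{C1}(2).

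Starting with $z \in 1+\mathcal{J}$ satisfying $\mathrm{Det}(z\bar{z}) = 1$, the first observation is that $z\bar{z}$ lies in $SL(1+\mathcal{J})_+$. Indeed, since the antipode is an anti-involution, $c(z\bar{z}) = c(\bar{z})c(z) = z\bar{z}$, so $z\bar{z} \in (1+\mathcal{J})_+$; combined with the hypothesis $\mathrm{Det}(z\bar{z}) = 1$, this puts $z\bar{z}$ in $SL(1+\mathcal{J})_+$. This is where part (2) of Proposition~\ref{C1}, rather than merely part (1), is needed: we must control both the symmetry and the determinant simultaneously.

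Applying Proposition~\ref{C1}(2) to $z\bar{z}$ yields an element $1+j \in SL(1+\mathcal{J})$ such that $z\bar{z} = (1+j)(1+\bar{j})$. The candidate unitary element is then $\tilde{z} := (1+j)^{-1}z \in 1+\mathcal{J}$. A direct computation, using that inversion commutes with the involution, gives
\begin{equation*}
\tilde{z}\,\overline{\tilde{z}} \;=\; (1+j)^{-1}\, z\bar{z}\,(1+\bar{j})^{-1} \;=\; (1+j)^{-1}(1+j)(1+\bar{j})(1+\bar{j})^{-1} \;=\; 1,
\end{equation*}
so $\tilde{z} \in U(1+\mathcal{J})$. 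On the other hand, $\mathrm{Det}(\tilde{z}) = \mathrm{Det}((1+j))^{-1}\mathrm{Det}(z) = \mathrm{Det}(z)$ because $1+j \in SL(1+\mathcal{J})$.

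The main (and essentially only) obstacle has already been addressed in the preceding proposition, namely the existence of a factorization $z\bar{z} = (1+j)(1+\bar{j})$ with $1+j$ of trivial determinant. Once Proposition~\ref{C1}(2) is in hand, the corollary is formal: it is the explicit construction $\tilde{z} = (1+j)^{-1}z$ that witnesses the equality of the two groups of determinants.
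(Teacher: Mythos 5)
Your proof is correct and is essentially the paper's own argument: the paper likewise observes that $z\overline{z}\in SL(1+\mathcal{J})_{+}$, applies Proposition \ref{C1}(2) to write $z\overline{z}=s\overline{s}$ with $s\in SL(1+\mathcal{J})$, and then checks that $s^{-1}z$ is unitary with the same determinant as $z$ (your $1+j$ playing the role of $s$). Nothing is missing.
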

\begin{proof} \ Consider $\mathrm{Det}( z) \in \mathrm{Det}( 1+
\mathcal{J})$ with $ \mathrm{Det}( z.\overline{z}) =1 $. Then $z.\overline{z}\in SL(1+\mathcal{J})_+$ and by 
 (2) above we can write $z.\overline{z}=s.\overline{s}$ with $s\in SL(1+\mathcal{J})$. We therefore have 
 $\mathrm{Det}(z)=\mathrm{Det}(s^{-1}z)$ and 
 $$s^{-1}z.\overline{s^{-1}z}=s^{-1}z.\overline{z}.\overline{s^{-1}}=s^{-1} z.\overline{z}.\overline{s}^{-1}=1.$$
 
\end{proof}

We are now in a position to show that the three rows in Theorem \ref{cle}  are
exact.

We first show that the top row in (\ref{exact}) is exact. To this end we consider the
commutative diagram where by Lemma  \ref{SLS} we know that the top row is also
exact and where  we know that the bottom row is exact (see Section 4.2 ): 
\begin{equation*}
\begin{array}{ccccccccc}
1 & \rightarrow & SL( 1+\mathcal{J}) & \rightarrow & SL(
A^{D}) & \rightarrow & SL( \widetilde{A}^{D ss}) & 
\rightarrow & 1 \\ 
&  & \downarrow &  & \downarrow &  & \downarrow &  &  \\ 
1 & \rightarrow & 1+\mathcal{J} & \rightarrow & A^{D\times } & \rightarrow & 
\widetilde{A}^{D ss^{\times }} & \rightarrow & 1.
\end{array}
\end{equation*}
Using the Snake Lemma we obtain an exact sequence of cokernels, and this
gives the desired exact sequence.

Next we show the exactness of the middle row:
\begin{equation*}
1\rightarrow \mathrm{Det}( 1+\mathcal{J}) _{-}\overset{a}{
\rightarrow }\mathrm{Det}( A^{D \times}) _{-}\overset{b}{\rightarrow }
\mathrm{Det}( \widetilde{A}^{D ss \times}) _{-}\rightarrow 1.
\end{equation*}
The map $a$ is an inclusion and hence is injective. Next we show that $b $
is surjective: given $\mathrm{Det}( d) \in \mathrm{Det}( 
\widetilde{A}^{D ss \times}) _{-},$ by Theorem \ref{ss} we know that   $\mathrm{Det}( 
\widetilde{A}^{D ss \times}) _{-}=\mathrm{Det}( U( \widetilde{A}
^{D ss}))$,  and so we may assume that in fact $d\in U( 
\widetilde{A}^{D ss})$.  Choose a lift $\ d^{\prime }\in A^{D \times}$ of $d$.  Then $d^{\prime }.\overline{d^{\prime }}\in ( 1+\mathcal{J}
) _{+}$ and by Proposition \ref{C1} we can write $d^{\prime }.\overline{
d^{\prime }}=( 1+j) \overline{( 1+j) }$ and therefore $
( 1+j) ^{-1}$\bigskip $d^{\prime }.\overline{( 1+j)
^{-1}\bigskip d^{\prime }}=1$ and thus in fact  
\begin{equation*}
\mathrm{Det}(( 1+j) ^{-1}\bigskip d^{\prime }) \in 
\mathrm{Det}( U( A^{D})) \subset \mathrm{Det}(
A^{D \times}) _{-}
\end{equation*}
and is a lift of $\mathrm{Det}(d)$. 

To conclude this part of the proof we must show that $\mathrm{ker}( b)\subset \mathrm{Im}
 ( a)$. Indeed, given $\mathrm{Det}( d) \in \mathrm{ker} (b)$,  then by the
exactness of the top row in (\ref{exact})
\begin{equation*}
\mathrm{Det}( d) \in \mathrm{Det}( 1+\mathcal{J}) \cap 
\mathrm{Det}( A^{D \times}) _{-}\subset \mathrm{Det}( 1+\mathcal{J}) _{-}
\end{equation*}
as required.

To conclude we establish the exactness of the bottom row of (\ref{exact}). For this
we use the exactness of the middle row of (\ref{exact}). To this end we consider the
diagram 
\begin{equation*}
\begin{array}{ccccccccc}
1 & \rightarrow & \mathrm{Det}( U( 1+\mathcal{J})) & 
\overset{\alpha }{\rightarrow } & \mathrm{Det}( U( A^{D})
) & \overset{\beta }{\rightarrow } & \mathrm{Det}( U( 
\widetilde{A}^{D ss}) ) & \rightarrow & 1 \\ 
&  & \downarrow &  & \downarrow &  & \downarrow &  &  \\ 
1 & \rightarrow & \mathrm{Det}( 1+\mathcal{J}) _{\_} & \rightarrow
& \mathrm{Det}( A^{D \times}) _{-} & \rightarrow & \mathrm{Det}( 
\widetilde{A}^{D ss \times}) _{-} & \rightarrow & 1
\end{array}
\end{equation*}
where all three vertical maps are inclusions. The map $\alpha $ is also an
inclusion and therefore injective. Moreover the map  $\beta $ is surjective, since as seen above,  every element of $ \mathrm{Det}( U( 
\widetilde{A}^{D ss}) ) $ has a lift in $\mathrm{Det}(U(A^D))$. To conclude, we show that $\mathrm{ker}( \beta) \subset \mathrm{Im}(\alpha) $. Indeed,
given $d\in \mathrm{ker }(\beta) $, we know that $d\in \mathrm{Det}( A^{D \times})
_{-}$ has trivial image in $\mathrm{Det}( \widetilde{A}^{D ss \times})
_{-}$ and so by the exactness of the lower row $d$ lies in $\mathrm{Det}
( 1+\mathcal{J}) _{\_}$ and by Corollary \ref{C2},  $\mathrm{Det}( 1+
\mathcal{J}) _{-}=\mathrm{Det}( U( 1+\mathcal{J})) $.

\medskip

\textbf{Step 3. } We now prove Theorem \ref{fund} using Theorem \ref{cle} for $p>2$. To this end we
again consider the commutative diagram with exact rows:
\begin{equation*}
\begin{array}{ccccccccc}
1 & \rightarrow & \mathrm{Det}( U( 1+\mathcal{J})) & 
\rightarrow & \mathrm{Det}( U( A^{D})) & \rightarrow
&  & \mathrm{Det}( U( \widetilde{A}^{D ss \times }))
\rightarrow & 1 \\ 
&  & \downarrow &  & \downarrow &  &  & \downarrow &  \\ 
1 & \rightarrow & \mathrm{Det}( 1+\mathcal{J}) _{-} & \rightarrow
& \mathrm{Det}( A^{D \times}) _{-} & \rightarrow &  & \rightarrow 
\mathrm{Det}( \widetilde{A}^{D ss \times}) _{-} & 1.
\end{array}
\end{equation*}
By Corollary \ref{C2} the left hand downward vertical arrow is an equality and by
Theorem \ref{ss} the right hand downward vertical arrow is an equality; hence the
central downward vertical arrow is also an equality, as required. 
\subsection{ The group $\mathrm{ker}(\xi)$}

Again in this subsection we assume $G$ to have odd order. 

We recall from (24) that 
\begin{equation}
\mathrm{Cl}( A^{D}) =\frac{\mathrm{Det}( \mathbb{A}_{K}[
G] ^{\times }) }{\mathrm{Det}( K[ G] ^{\times
}) \cdot \prod_{\mathfrak{p}}\mathrm{Det}( A_{\mathfrak{p}
}^{D\times }) }
\end{equation}
and from (26) we have 
\begin{equation*}
\mathrm{CU}( A^{D}) =\frac{\mathrm{Det}( U(\mathbb {A}_{K}[G])
 }{\mathrm{Det}(
U( K[G] ) .\prod_{\mathfrak{p}}%
\mathrm{Det}( U( A_{\mathfrak{p}}^{D}) ) }.
\end{equation*}%
It therefore follows that the kernel of the natural map $\xi :$ $\mathrm{
CU}\left( A^{D}\right) \rightarrow \mathrm{Cl}\left( A^{D}\right) $,  denoted 
$\mathrm{ker}( \xi ) $,   is a subgroup of:
\begin{equation}
\Omega ( G): =\frac{[\mathrm{Det}( K[ G]^\times)
.\prod_{\mathfrak{p}}\mathrm{Det}( A_{\mathfrak{p}}^{D \times})
]_{-}}{\mathrm{Det}( U(K[ G]))\prod_{
\mathfrak{p}}\mathrm{Det}( U(A_{\mathfrak{p}}^{D})) }.
\end{equation}

However, from Theorems \ref{fglob} and \ref {fund}  we know that 
\begin{equation*}
\mathrm{Det}( U( K[ G])) \prod_{
\mathfrak{p}}\mathrm{Det}( U( A_{\mathfrak{p}}^{D})) =
\mathrm{Det}( K[ G]^\times) _{-}\prod_{\mathfrak{p}}
\mathrm{Det}( A_{\mathfrak{p}}^{D \times}) _{-}
\end{equation*}
and so we may write 
\begin{equation}
\Omega ( G) =\frac{[\mathrm{Det}( K[ G]^\times)
.\prod_{\mathfrak{p}}\mathrm{Det}( A_{\mathfrak{p}}^{D \times})
]_{-}}{\mathrm{Det}( K[ G]^\times) _{-}\prod_{%
\mathfrak{p}}\mathrm{Det}( A_{\mathfrak{p}}^{D \times}) _{-}}.
\end{equation}

\begin{prop}\label{exp}
\bigskip The group $\Omega \left( G\right) ,\ $and hence the subgroup$\ \ker
\xi$, is an elementary 2-group.
\end{prop}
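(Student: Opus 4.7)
The plan is to show directly that for every representative $\mathrm{Det}(z)$ of a class in the numerator of $\Omega(G)$, the square $\mathrm{Det}(z)^2$ already lies in the denominator.

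First I would write any such $\mathrm{Det}(z)$ as $\mathrm{Det}(z) = a \cdot b$ with $a \in \mathrm{Det}(K[G]^\times)$ and $b \in \prod_{\mathfrak{p}} \mathrm{Det}(A_{\mathfrak{p}}^{D\times})$. The antipode $x \mapsto \bar x$ induces an involution $\ast$ on each of these abelian groups of determinants, and since $\mathrm{Det}$ factors through the abelianization, one has $\mathrm{Det}(\bar w) = \mathrm{Det}(w)^\ast$ and $(ab)^\ast = a^\ast b^\ast$ (the composition of an anti-involution with the quotient to an abelian group is a homomorphism). The defining condition $\mathrm{Det}(z\bar z) = 1$ then becomes
$$a a^\ast \cdot b b^\ast \;=\; 1.$$

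The key step is the identity
$$\mathrm{Det}(z)^2 \;=\; a^2 b^2 \;=\; \frac{a}{a^\ast}\cdot\frac{b}{b^\ast},$$
which follows immediately from $a^\ast b^\ast = (ab)^\ast = (ab)^{-1}$. Setting $x := a/a^\ast$ and $y := b/b^\ast$, I note that $x \in \mathrm{Det}(K[G]^\times)$ and $y \in \prod_{\mathfrak{p}} \mathrm{Det}(A_{\mathfrak{p}}^{D\times})$ (since each factor already lives in the respective group), and moreover
$$x\cdot x^\ast = \frac{a}{a^\ast}\cdot\frac{a^\ast}{a} = 1, \qquad y\cdot y^\ast = 1,$$
so $x \in \mathrm{Det}(K[G]^\times)_-$ and $y \in \prod_{\mathfrak{p}} \mathrm{Det}(A_{\mathfrak{p}}^{D\times})_-$. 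Hence $\mathrm{Det}(z)^2 = x y$ lies in the denominator of $\Omega(G)$, so every class in $\Omega(G)$ has order dividing $2$; the conclusion for $\ker \xi$ follows since $\ker \xi$ has already been identified as a subgroup of $\Omega(G)$.

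There is essentially no serious obstacle here: once the "minus" hypothesis is rewritten as $(ab)(ab)^\ast = 1$, the whole proof reduces to the telescoping identity $a^2 b^2 = (a/a^\ast)(b/b^\ast)$. The only point that merits explicit mention is the compatibility between the involution on the algebras and the induced $\ast$ on determinants, namely that $\ast$ is a homomorphism on the abelian Det groups (not an anti-homomorphism), which justifies the manipulations $(ab)^\ast = a^\ast b^\ast$ and $(a/a^\ast)^\ast = a^\ast/a$ used above.
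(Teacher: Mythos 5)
Your proof is correct and takes essentially the same route as the paper: there the minus condition is unwound by evaluating on $\chi+\overline{\chi}$ to get $\mathrm{Det}(au\overline{a}\overline{u})=1$, and then the same telescoping identity $x^{2}=\mathrm{Det}(a\overline{a}^{-1})\cdot\mathrm{Det}(u\overline{u}^{-1})$ is used, which is exactly your $a^{2}b^{2}=(a/a^{\ast})(b/b^{\ast})$ with each factor visibly in the respective minus group. The only difference is cosmetic: you phrase the compatibility of the antipode with $\mathrm{Det}$ abstractly as an induced involution $\ast$ on the abelian determinant groups, whereas the paper performs the same manipulation via character evaluation.
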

\begin{proof} Consider 
\begin{equation*}
x\in \mathrm{Det}( a) \mathrm{Det}( u) \in [ 
\mathrm{Det}( K[ G]^\times) .\mathrm{Det}(
\prod_{\mathfrak{p}}A_{\mathfrak{p}}^{D \times})] _{-}
\end{equation*}
with $\mathrm{Det}( a) \in \mathrm{Det}( K[ G]^\times
) $ and $\mathrm{Det}( u) \in \mathrm{Det}(
\prod_{\mathfrak{p}}A_{\mathfrak{p}}^{D \times})$.  Then for any
character $\chi $ of $G$ we have 
\begin{equation*}
1=\mathrm{Det}( au)( \chi +\overline{\chi }) =\mathrm{
Det}( au)( \chi) \mathrm{Det}( au)( 
\overline{\chi }) =\mathrm{Det}(a.u.\overline{a}.\overline{u})(
\chi) .
\end{equation*}
We may therefore deduce that $\mathrm{Det}(a.u)=\mathrm{Det}(\overline{a}%
^{-1}.\overline{u}^{-1})$,  so that 
\begin{equation*}
x^{2}=\mathrm{Det}(a.u)^{2}=\mathrm{Det}(a.u).\mathrm{Det}(\overline{a}^{-1}.
\overline{u}^{-1})=\mathrm{Det}(a.\overline{a}^{-1}).\mathrm{Det}(u.
\overline{u}^{-1})
\end{equation*}
and so 
\begin{equation*}
x^{2}\in \mathrm{Det}( K[ G]^\times) _{-}\mathrm{Det}(
\prod_{\mathfrak{p}}A_{\mathfrak{p}}^{D \times}) _{-}.
\end{equation*}

\end{proof}

\section {Proof of Theorems}
We recall the diagram introduced in Section 2.7 
\begin{equation}\label{didi}
\begin{array}{ccc}
\mathrm{PH}^{\prime }(A) & \overset{\phi }{\rightarrow } & 
\mathrm{CU}(A^{D}) \\ 
& \searrow \psi & \downarrow \xi  \\ 
&  & \mathrm{Cl}(A^{D}).
\end{array}
\end{equation}
This section contains the  proofs of our main theorems. We begin by describing two key tools that we shall use for these proofs.

\subsection{Kneser Strong Approximation}.

\vskip 0.1 truecm  
 We let $K$ be a number field, $R$ its ring of integers and we denote by $\mathbb A_K$ the ring of  finite ad\`eles.    For an algebraic group $\mathcal{H}$ over $K$ we denote by $\mathcal{H}(\mathbb A_K)$ the group of its finite ad\`elic points,  endowed with the usual ad\`elic topology. More precisely:

$$\mathcal{H}(\mathbb A_K)=\{ h=(h_{\frak{p}}) \in \prod_{\frak{p} }\mathcal{H}(K_\frak{p})\  \vert \ \mathrm{for\ almost\ all}\   h_{\frak{p}}\  \in \mathcal{H}(R_{\frak{p}}) \}$$ where $\mathfrak{p}$ goes through the set of maximal ideals of $R$. We recall that  the open subgroups of $\prod_{\frak{p}}\mathcal{H}(R_{\frak{p}})$ are taken as  a fundamental system of neighborhoods of the identity. 

\begin{definition} The algebraic group $\mathcal{H}$ is said to have  Kneser's Strong Approximation if $\mathcal{H}(K)$ is dense in $\mathcal{H} (\mathbb A_K)$
\end{definition}
In \cite{K1}, Main Theorem, Kneser proved: 
\begin{thm} Suppose that $\mathcal{H}$ is a connected, absolutely almost simple algebraic group over $K$. Then $\mathcal{H} $ has Strong Approximation if and only if the following two properties hold:
\begin{enumerate}
\item $\mathcal{H}$ is simply connected.
\item $\prod_{v\in S}\mathcal{H}(K_v)$ is not compact, where $S$ is the set of infinite primes of $K$.
\end{enumerate}  
\end{thm}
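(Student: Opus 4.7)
The plan is to treat this as Kneser's classical theorem from \cite{K1} and outline a proof, which splits naturally into the (easier) necessity direction and the (much harder) sufficiency direction that constitutes the bulk of Kneser's original work.

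For necessity, I would argue as follows. If $\mathcal{H}$ is not simply connected, there is a non-trivial central $K$-isogeny $\pi:\tilde{\mathcal{H}}\to \mathcal{H}$ with kernel $F$ a finite central $K$-group scheme. The associated exact sequence in fppf cohomology yields a connecting map $\delta:\mathcal{H}(\mathbb{A}_K)\to H^1(\mathbb{A}_K,F)$ whose restriction to $\mathcal{H}(K)$ factors through the localization $H^1(K,F)\to H^1(\mathbb{A}_K,F)$; a Poitou--Tate comparison of the local and global cohomology of $F$ then exhibits classes in $\mathcal{H}(\mathbb{A}_K)$ outside the closure of $\mathcal{H}(K)$. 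If instead $\prod_{v\in S}\mathcal{H}(K_v)$ is compact, then the image of $\mathcal{H}(K)$ under diagonal embedding into $\mathcal{H}(\mathbb{A}_K)\times\prod_{v\in S}\mathcal{H}(K_v)$ is discrete with cocompact projection onto the archimedean factor, and a standard topological argument forces the projection $\mathcal{H}(K)\to \mathcal{H}(\mathbb{A}_K)$ to have closed, non-dense image.

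For sufficiency, the core idea is to transport strong approximation from suitable unipotent subgroups to all of $\mathcal{H}$. The basic lemma is that a $K$-split unipotent group $U\cong \mathbb{G}_a^n$ has strong approximation, since $U(K)=K^n$ is dense in $U(\mathbb{A}_K)=\mathbb{A}_K^n$ by additive strong approximation on $K$. The non-compactness of $\prod_{v\in S}\mathcal{H}(K_v)$ ensures that $\mathcal{H}$ is isotropic over at least one archimedean place $v_0$, which together with simple connectedness yields a non-trivial $K_{v_0}$-split root subgroup. After combining with a Bruhat-decomposition argument to show that conjugates of such root subgroups generate a dense subset of the adelic group, one concludes by a closure argument: the closure $\overline{\mathcal{H}(K)}$ in $\mathcal{H}(\mathbb{A}_K)$ is stable under left and right multiplication by $\mathcal{H}(K)$, contains all adelic points of each root subgroup, and hence contains an open neighborhood of the identity, which implies density.

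The main obstacle is that the above sketch is clean only when $\mathcal{H}$ is already $K$-isotropic; for $K$-anisotropic groups one must descend by a case-by-case analysis of the classification of simply connected absolutely almost simple $K$-groups. In the classical-type cases this is handled via Eichler's condition and strong approximation for $SL_1(D)$ with $D$ a central simple $K$-algebra, while the exceptional types $E_6,\,E_7,\,E_8,\,F_4,\,G_2$ require separate arguments case by case. The delicate topological bookkeeping and these type-by-type verifications are what makes Kneser's proof long and technical. Since the theorem is invoked only as a black box in Section 6 below, I would not reproduce the argument in detail but simply cite \cite{K1}.
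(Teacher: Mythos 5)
The paper itself offers no proof of this statement---it is quoted as the Main Theorem of Kneser from \cite{K1}---and your proposal ultimately does the same thing, deferring to that citation, so you are taking essentially the same route as the paper. Your accompanying sketch is a fair summary of how the result is proved in the literature (necessity via a central isogeny and via discreteness when $\prod_{v\in S}\mathcal{H}(K_v)$ is compact; sufficiency via unipotent subgroups, with the $K$-anisotropic case needing extra work), though it is loose in places---for instance a $K_{v_0}$-split root subgroup is only defined over $K_{v_0}$, so one cannot literally feed ``all adelic points of each root subgroup'' into the closure argument---but none of this detail is needed, since the theorem is used only as a black box.
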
  
We now come back to the notation of Section 2.7. If $G$ is a finite group of odd order, we recall that for any $m\geq 1$ we have considered  the unitary $R$-group scheme $U_{m, A^D}$  and denoted by $U_{m, G}$ its generic fiber. This group can be identified  with  the group of automorphisms of the $G$-form $(A, Tr'_A)^{\perp m}$.   In Section 2.7 (30), we have introduced,  for $i\in I$ and $j\in J$,  the $K$-algebras with involution $(A_{m, i}, \sigma_i)$ and $(B_{m,j}, \sigma_j)$. We can attach to these algebras their unitary group  schemes over $K$ that we denote by $U_{A_{m, i}, K}$ and $U_{B_{n,j}, K }$. For $i\in I$ and $j\in J$  we denote   by $E_i$ and $E_j$ the subfields of elements of  the centers of    $A_{m,i}$    and $B_{m,j}$,  stables respectivement by $\sigma_i$ and $\sigma_j$. Indeed, for every $i$ and $j$, then  $(A_{m,i}, \sigma_i)$    and $(B_{m,j}, \sigma_j)$  are algebras with involution on $E_i$ and $E_j$ and so we can consider the unitary group schemes   $U_{A_{m, i}, E_i}$ and  $U_{B_{m,j}, E_j}$     over $E_i$ and $E_j$. 
We have the equalities 
$$U_{A_{m, i}, K}=\mathrm{R}_{E_i/K}( U_{A_{m, i}, E_i})\ \ \mathrm{and} \ \  U_{B_{m,j}, K }=  \mathrm{R}_{E_j/K}( U_{B_{m, j}, E_j})     $$
where $\mathrm{R}$ is the Weil's restriction. 

The decomposition of the algebra with involution $(M_m(K[G]), \sigma) $ into a product of indecomposable algebras  (\ref {c2})  yields  the decomposition of $U_{m, G}$ into a product of algebraic groups:
\begin{equation}\label{decu}U_{m,G}=O_{m, K}\times \prod_{i\in I}U_{A_{m,i}}\times  \prod_{j\in J}U_{B_{m,j}}\end{equation}
$$=O_{m, K}\times \prod_{i\in I}\mathrm{Res}_{E_{i/K}}(U_{A_{m,i}, E_i})\times  \prod_{j\in J}\mathrm{Res}_{E_j/K}(U_{B_{m,j}, E_j}).$$
We denote by    $SU_{m, G}$  the subgroup of $U_{m, G}$ defined as the kernel of the morphism of algebraic groups induced by the reduced norm. It may  be described as the product 
\begin{equation}\label{decdec}SU_{m,G}:=SO_{m,K}\times \prod_{i\in I}\mathrm{Res}_{E_{i/K}}(SU_{A_{m,  i}, E_i})\times \prod_{j\in J}\mathrm{Res}_{E_j/K}(SU_{B_{m,j}, E_j})\end{equation}
where for $i\in I$ and $j\in J$ the groups  $SU_{A_{m,i}, E_i}$ and $SU_{B_{m,j}, E_j}$ are respectively defined as  the kernel of the morphism $n_{rd}: U_{A_{m,i}, E_i}\rightarrow U_{F_i, E_i}$ 
and $U_{B_{m,j}, E_j}\rightarrow U_{B_j, E_j}$ induced by the reduced norm.  

We define   $U^{\varepsilon}_{m, G}$ as  the kernel of  the morphism of algebraic groups $\varepsilon : U_{m, G}\rightarrow O_{m, K}$ induced by projection to the trivial character of $G$ and we denote by $SU^{\varepsilon}_{m, G}$ the kernel of the restriction of this morphism to  $SU_{m, G}$. Indeed  
$SU^{\varepsilon}_{m, G}=SU_{m, G}$ when $m=1$
\begin{thm} \label{Kimp}Suppose that $G$ is a group of odd order and that $K$ is a non-totally real number field. Then the algebraic group  $SU^{\varepsilon}_{m, G}$ has Kneser's Strong Approximation. 

\end{thm}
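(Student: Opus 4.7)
By the definition of $SU^{\varepsilon}_{m,G}$ together with the decomposition (\ref{decdec}) of $SU_{m,G}$, the $SO_{m,K}$-factor attached to the trivial character is dropped, leaving
$$SU^{\varepsilon}_{m,G} \;\cong\; \prod_{i\in I} \mathrm{Res}_{E_i/K}\bigl(SU_{A_{m,i},E_i}\bigr) \times \prod_{j\in J} \mathrm{Res}_{E_j/K}\bigl(SU_{B_{m,j},E_j}\bigr).$$
Strong approximation descends from a direct product to its factors (density in each coordinate gives density in the product of adelic points) and transfers through Weil restriction along a finite separable extension, via the canonical identifications $\mathrm{Res}_{E/K}(\mathcal{H})(K)=\mathcal{H}(E)$ and $\mathrm{Res}_{E/K}(\mathcal{H})(\mathbb{A}_K)=\mathcal{H}(\mathbb{A}_E)$. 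My plan is therefore to verify the two hypotheses of Kneser's theorem for each factor $H_E\in\{SU_{A_{m,i},E_i},\ SU_{B_{m,j},E_j}\}$ over its base field $E\in\{E_i,E_j\}$.

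For condition (1), simple connectedness, each $H_E$ is the derived group of the unitary group of a central simple $E$-algebra with involution. Because $G$ has odd order, the decomposition (\ref{c1}) shows that no orthogonal or symplectic simple factor of $K[G]$ other than the discarded trivial-character component occurs; every involution appearing in the $A_{m,i}$ or $B_{m,j}$ is of the second kind or is of swap type. Hence each $H_E$ is a classical group of type $^1\!A$ or $^2\!A$, which is simply connected and absolutely almost $E$-simple, as required.

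The substantive step is condition (2): non-compactness of $\prod_{w\mid\infty}H_E(E_w)$, and this is where the hypothesis that $K$ is non-totally real enters. I would pick a complex place $v_0$ of $K$ and a place $w_0$ of $E$ above $v_0$; necessarily $E_{w_0}=\mathbb{C}$. Over an algebraically closed field, every central simple algebra with involution of the kinds occurring here splits, so $H_E(\mathbb{C})\cong SL_N(\mathbb{C})$ for some $N\ge 1$ determined by the factor. When $N=1$ the factor $H_E$ is trivial and strong approximation holds vacuously; when $N\ge 2$, the group $SL_N(\mathbb{C})$ is a nontrivial connected complex semisimple Lie group, which is non-compact (a connected compact complex Lie group is necessarily trivial, by the maximum principle applied to matrix coordinates). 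In every case $\prod_{w\mid\infty}H_E(E_w)$ contains the non-compact factor $H_E(\mathbb{C})$ and is itself non-compact. Kneser's theorem then applies to each $H_E$, and the product/Weil-restriction reductions yield strong approximation for $SU^{\varepsilon}_{m,G}$.

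The main obstacle is precisely this archimedean non-compactness: a totally real $K$ would only guarantee real archimedean components of the $H_E$, which for unitary groups in definite signature or for norm-one groups of totally definite quaternion algebras are compact. The non-totally real hypothesis is the minimal global input that uniformly produces a complex place, at which no compact real form can survive and each $H_E(\mathbb{C})$ is forced to be $SL_N(\mathbb{C})$; this is exactly what Kneser's criterion requires.
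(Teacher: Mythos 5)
Your proof is correct and follows essentially the same route as the paper: the decomposition (\ref{decdec}) with the $SO_{m,K}$-factor removed, reduction to the factors via Weil restriction and products, and then Kneser's criterion, with simple connectedness coming from the fact that for $G$ of odd order only type-$A$ (unitary or swap) components occur, and non-compactness secured at a complex place of $K$, which is exactly where the non-totally-real hypothesis enters in the paper's Proposition \ref{K1}. The only (harmless) variations are that the paper treats the swapped components $B_{m,j}$ separately via strong approximation for reduced-norm-one groups under Eichler's condition rather than re-running Kneser's theorem, and that you explicitly dispose of the degenerate factors with $N=1$, a case the paper sidesteps by assuming non-commutativity in Proposition \ref{K1}.
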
 
\begin{proof} 
We start by proving a proposition.

 Let $E$ be a number field and  let $(C, \sigma)$ be a finite dimensional, non commutative  $E$-algebra,  endowed with an  involution of the second kind  whose center $F$ is a quadratic \'etale extension of $E$  (see  \cite{ KMRT} Section 2.B). More precisely we assume that either $C$ is a simple algebra and   $F$ is a field or a direct product of two simple algebras and   $F=E\times E$.    We let $SU_{C, E}$ be the kernel of the morphism of algebraic groups over $E$ induced by the reduced norm $n_{red}: U_{C,E}\rightarrow U_{F,E}$. 

\begin{prop}\label{K1} We assume that one of the following conditions holds:
\begin{enumerate}
\item $C$ is the product of two simple algebras interchanged by $\sigma$ which satisfy Eichler's condition. 
\item $C$ is a simple algebra and $E$ is a non-totally real number field. 
\end{enumerate} Then  $SU_{C,E}$  is an algebraic group over $E$ which has  Kneser's Strong Approximation. 
\end{prop} 
\begin{proof} We write $U_C$ for $U_{C, E}$ and  $SU_{C}$ for $SU_{C, E}$. First we assume that $C$ is a product of two simple algebras interchanged by $\sigma$. Under a slight abuse of notation we write $C=D\times D^{op}$ and we let $\sigma$ be defined by $(x, y^{op})=(y, x^{op})$. For any commutative $E$-algebra  $T$, then the morphism of groups induced by the reduced norm $U_{C}(T)\rightarrow U_{F}(T)$ is given by $(x, x^{-1})\rightarrow (n_{red}(x), n_{red}(x)^{-1})$. Therefore the group $SU_{C}$ is isomorphic to the reduced norm one subgroup  defined  for any $T$ as the kernel of $(D\otimes_ET)^\times\rightarrow (F\otimes_ET)^\times$. Since $D/E$ satisfies Eichler's condition,  then the norm one subgroup satisfies 
Kneser's Strong Approximation (see \cite {CFY} Theorem 12 for instance). 

We now suppose that $C$ is a simple $E$-algebra.  Since  $\mathrm{dim}_{E }(C)>4$, it follows from  \cite{BP} Section 1.2 that $SU_{C}$ is a semisimple and simply connected algebraic group over $E$. Therefore, it follows from  \cite{K1} Main Theorem that to complete the proof of  Proposition \ref {K1}      it remains to check that  $\prod_{v}SU_{C} (E_{v}) $ is non compact, when $v$ goes through the set  of non archimedean places  of $E$. We know that   $E$ has at least 
 one complex prime $v$.  Our aim is to prove that the group $SU_{C} (E_{v})$ is not compact for such a prime. The canonical embedding $E\hookrightarrow E_{v}$ factorizes through
  an embedding $F\hookrightarrow  E_{ v}$. For the sake of simplicity we write $E\subset F \subset E_{ v}$. From \cite{KMRT} Proposition 2.15 we know that there exists an isomorphism of $F$-algebras  with involution 
 $$(C, \sigma)\otimes_{E}F\simeq (C\times C ,\eta)$$
 where $\eta$ is defined by $\eta (x, y^{op})=(y, x^{op})$. Therefore we obtain the following isomorphisms of  $E_{ v}$-algebras with involution: 
 $$(C,  \sigma)\otimes_{E}E_{v}\simeq((C, \sigma)\otimes_{E}F)\otimes_{F}E_{v}\simeq(C\otimes_{F} E_{ v}\times C\otimes_{F} E_{ v}, \eta).$$
 We are now back  to the previous case and thus, as seen before,  we have an isomorphism of topological groups 
 $$SU_{C} (E_{v})\simeq SGL^1_C(E_{ v})$$
 where $SGL^1_C(E_{v})$ is the kernel of the homomorphism of groups 
 $$(C\otimes_{F}E_{ v})^\times\rightarrow E_{ v}^\times$$ induced by the reduced norm. This group will be equal to $SL_m(\mathbb C)$ for some integer $m>1$ and  so is not compact. 
\end{proof}
We can now complete the proof of Theorem 6.3.  Using  the decomposition (\ref{decdec}),  it follows from \cite{PR} Proposition 7.1  that $SU^{\varepsilon}_{m, G}$ has Kneser   Strong Approximation since by Proposition \ref{K1}  we know that for  any $i\in I$ and $j\in J$ the groups  $SU_{A_{m,  i, E_i}}$ and $SU_{B_{m,  j, E_j}}$  have this property .
\end{proof}

\subsection {A double coset description of isometry classes of $G$-forms}
We  let $\mathrm{LI}((A, Tr_A')^{\perp m})$ be the set of isometry classes of $G$-forms over $R$ which are locally isometric to $(A, Tr')^{\perp m}$ at all prime ideals of $R$ (this includes the generic prime $0$). 

We set \begin{equation}c(R, U_{m, A^D})= U_{m, A^D}(K)\backslash U_{m, A^D}(\mathbb A_K)/ \prod_{\mathfrak{p}}U_{m, A^D}(R_{\mathfrak{p}})
\end{equation}
 $$=U(M_m(K[G])\backslash U(M_m(\mathbb A_K[G])/ \prod_{\mathfrak{p}}U(M_m(A^D_{\frak{p}})).$$

 If  $(M, q)$ is such a form,  for each  prime ideal of $R$,   we fix isomorphisms of $G$-forms  
\begin{equation*}
\varphi _{\mathfrak{p}}:(A_{\frak{p}}, Tr'_{A_{\frak{p}}})^{\perp m}\simeq ( M_{\mathfrak{p}}, q_{\mathfrak{p}}) \ \mathrm{and} \ \varphi_{0}: (A_K, Tr_{A_K}')^{\perp m}\simeq (M_K, q_K). 
\end{equation*}

Then  the composite map 
\begin{equation*}
\varphi _{0}^{-1}\circ \prod_{\mathfrak{p}}\varphi _{\mathfrak{p}}\in 
\mathrm{Aut}(( A, Tr_A')^{\perp m}\otimes_R\mathbb{A}_K)=U(M_m(\mathbb A_K[G]) 
\end{equation*}
 determines a well-defined double coset $$U(M_m(K[G])(\varphi _{0}^{-1}\circ \prod_{\mathfrak{p}}\varphi _{\mathfrak{p}}) \prod_{\mathfrak{p}}U(M_m(A^D_{\frak{p}})) \ \in  c(R, U_{m, A^D})$$ which is independant of  choices. The map 
 $$(M, q)\rightarrow U(M_m(K[G])(\varphi _{0}^{-1}\circ \prod_{\mathfrak{p}}\varphi _{\mathfrak{p}}) \prod_{\mathfrak{p}}U(M_m(A^D_{\frak{p}})$$ induces a map $\alpha_m: \mathrm{LI}((A, Tr_A')^{\perp m})\rightarrow c(R, U_{m, A^D})$ which is a bijection of  pointed sets, pointed respectively by the isomorphism class of $(A, Tr_A')^{\perp m}$ and the trivial  double coset  of  $c(R, U_{m, A^D})$. By taking determinants and using Lemma \ref{detred} we obtain a map 
 $$c(R, U_{m, A^D})\rightarrow CU(A^D).$$
 We recall that the unitary class of $(M,q)$  is represented in $CU(A^D)$ by $\mathrm{Det} (\varphi _{0}^{-1}\circ \prod_{\mathfrak{p}}\varphi _{\mathfrak{p}})$
 (see Section 2.7).

We   attach to any $B\in PH(A)$ the $G$-form $(B, Tr'_B)$. Since $G$ is of odd order we know by Corollary 3.5 that the $G$-forms  
  $(B, Tr'_B)$ and $(A, Tr'_A)$  are locally isomorphic.  Therefore  we can consider   for each prime ideal of $R$ isomorphisms
 
\begin{equation}\label{r1}
\phi _{\mathfrak{p}}:( A_{\mathfrak{p}}, Tr'_{A_{\mathfrak{p}}})
 \simeq ( B_{\mathfrak{p}},Tr_{B_{\mathfrak{p}}}^{\prime
}) \ \mathrm{and}\ \phi _{0}:( A_{K}, Tr'_{A_K}) \simeq (
B_K,Tr_{B_K}^{\prime }).
\end{equation}
We set $\theta_{\frak{p}}=\phi _{0}^{-1}\circ \phi _{\mathfrak{p}}$  and $\theta=\prod_{\frak{p}}\theta_{\frak{p}}$. We note that $\theta_{\frak{p}} \in U(K_{\frak{p}}[G])$ and so that $\varepsilon (\theta_{\frak{p}})=\pm 1$.  By possibly modifying the generator of $B_{\frak{p}}$ as an $A^D_{\frak{p}}$-module we can assume that $\varepsilon (\theta_{\frak{p}})=1$ then  we say  in this case that $\theta_{\frak{p}}$ is normalized. We know that $\mathrm{Det}(\theta)$ represents $\phi(B)$.  
More generally, for any integer $m$ then $(B, Tr'_B)^{\perp m}$ defines a class in $\mathrm{LI}((A, Tr_A')^{\perp m})$. The double coset class 
\begin{equation}\label{rep}U(M_m(K[G])(\theta^{\perp m}) \prod_{\mathfrak{p}}U(M_m(A^D_{\frak{p}})) \ \in  c(R, U_{m, A^D})\end{equation}  represents  $\alpha_m(B):=\alpha_m((B, Tr'_B)^{\perp m})$ and $\mathrm{Det}(\theta^{\perp m})$ represents the unitary class of $(B, Tr'_B)^{\perp m}$ in $CU(A^D)$.

\subsection {Proofs of the main theorems} 
 For the reader's ease we recall  the statements of these theorems. 
 
 \begin{thm}Suppose that $G$ is an abelian group of odd order and $K$ is a number field. Then $(B, Tr'_B)$ and $(A, Tr'_A)$ are isomorphic $G$-quadratic spaces if and only if $B$ is a free $A^D$-module. 
\end{thm}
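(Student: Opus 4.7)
The forward direction is immediate from Proposition \ref{H0}(2): if $(B, Tr'_B) \simeq (A, Tr'_A)$ as $G$-quadratic spaces, then $B \simeq A$ as $A^D$-modules, and since $A$ is free of rank one over $A^D$, so is $B$.

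For the converse, suppose $B$ is a free $A^D$-module. Since $G$ has odd order, Corollary \ref{locod} (applied to the henselization of $R$ at each finite prime, with the resulting isometry descending to the localization) together with Bayer--Lenstra's Corollary \ref{SN} at the generic point produces isometries $(B_{\mathfrak{p}}, Tr'_{B_\mathfrak{p}}) \simeq (A_{\mathfrak{p}}, Tr'_{A_\mathfrak{p}})$ for every prime $\mathfrak p$ of $R$. Hence $B \in \mathrm{PH}'(A)$ and the unitary class $\phi(B) \in \mathrm{CU}(A^D)$ is defined. Because $B$ is free as an $A^D$-module, $\psi(B) = 0$ in $\mathrm{Cl}(A^D)$, so from the commutative triangle $\psi = \xi \circ \phi$ of Section 2.7 we deduce $\phi(B) \in \ker \xi$, and Proposition \ref{exp} then yields $\phi(B)^2 = 1$.

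The crux of the argument is to show that $\phi(B)$ also has odd order. Since $G$ is abelian, $\mathcal{G} = \mathrm{Spec}(A)$ is a commutative finite flat $R$-group scheme of order $n = |G|$, and the $R$-algebra $A^D$ (hence the group scheme $U_{A^D}$) is commutative. The classical fact that a commutative finite flat group scheme is killed by its order gives that $\mathrm{PH}(A) = H^1_{fppf}(\mathrm{Spec}(R), \mathcal{G})$ has exponent dividing $n$. On the other hand, the commutativity of $U_{A^D}$, combined with the observation that the hermitian form attached via Proposition \ref{isom} to the $G$-form on a contracted product of two principal homogeneous spaces $B_1$ and $B_2$ is the tensor product of the rank-one hermitian forms on $B_1$ and $B_2$, shows that $\phi: \mathrm{PH}'(A) \to \mathrm{CU}(A^D)$ is a homomorphism of abelian groups. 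Consequently $\phi(B)^n = 1$, and since $\gcd(n, 2) = 1$ we obtain $\phi(B) = 1$.

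It remains to deduce $(B, Tr'_B) \simeq (A, Tr'_A)$ from $\phi(B) = 1$. For $G$ abelian, (\ref{isab}) tells us that $\mathrm{Det}$ is an isomorphism on $K[G]^\times$, so in particular $\mathrm{Det}$ is injective on $U(K[G])$ and on each $U(A^D_{\mathfrak p})$. Invoking the double-coset description of Section 6.2, the map $c(R, U_{1, A^D}) \to \mathrm{CU}(A^D)$ induced by $\mathrm{Det}$ is therefore injective, and the bijection $\alpha_1: \mathrm{LI}((A, Tr'_A)) \to c(R, U_{1, A^D})$ sends $(B, Tr'_B)$ to a class representing $\phi(B)$, so the vanishing of $\phi(B)$ produces the desired global isometry. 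The main obstacle is the group-homomorphism property of $\phi$ used in Paragraph 3; once that is verified, the remaining ingredients slot together mechanically.
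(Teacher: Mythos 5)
Your forward direction, your derivation of $\phi(B)^2=1$ from freeness (via $\psi=\xi\circ\phi$ and Proposition \ref{exp}), and your concluding step (for abelian $G$ the map $\mathrm{Det}$ is injective on the adelic unitary group by (\ref{isab}), so $\phi(B)=1$ forces the double coset $\alpha_1(B)$ to be trivial) all coincide with the paper. Where you genuinely diverge is in the ``odd exponent'' half of the squeeze. The paper never puts a group structure on $\mathrm{PH}'(A)$ and never claims $\phi$ is multiplicative; instead it uses the resolvent of Proposition \ref{expli}: writing $\phi_0^{-1}\circ\phi_{\mathfrak p}=(\lambda^{-1}r(b_{\mathfrak p}))(\lambda r(b_0)^{-1})$ and raising to the power $e=\exp(G)$ kills the Galois action on the resolvent determinants, giving (\ref{clecle}); since $\mathrm{Det}$ is an isomorphism in the abelian case, the $G$-fixed determinants there descend to $\mathrm{Det}(U(A^D_K))$ and $\mathrm{Det}(U(A^D_{\mathfrak p}))$, whence $\phi(B)^e=1$ with $e$ odd. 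You instead bound the exponent of $\mathrm{PH}(A)=H^1_{fppf}(R,\mathcal G)$ by $|G|$ (Deligne's theorem on commutative finite flat group schemes) and then need $\phi$ to be a group homomorphism.

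That multiplicativity is exactly the point you leave unproved, and, as you say yourself, it is the crux: it amounts to the assertion that the trace form of the contracted product $B_1\ast B_2$ represents the product of the hermitian classes of $B_1$ and $B_2$, i.e. to multiplicativity of resolvends, which is a real statement and not a mere ``observation''. It is true and can be established with the paper's own tools: for abelian $G$ both $\mathcal G$ and $U_{A^D}$ are commutative, so the map $u$ of Lemma \ref{ind} induces a homomorphism of abelian groups $H^1(R,\mathcal G)\rightarrow H^1(R,U_{A^D})$; Proposition \ref{local} identifies $u([B])$ with the class of $(B,Tr'_B)$; and for rank-one forms over the commutative ring with involution $(A^D,S^D)$ the passage from locally trivial classes in $H^1(R,U_{A^D})$ to $\mathrm{CU}(A^D)$ respects products, since local comparison isometries simply multiply. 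So your argument can be completed, but this lemma must be supplied, and once you are invoking $u$ and Proposition \ref{local} anyway, the paper's resolvent computation is arguably no longer extra work; it also has the advantage of functioning verbatim for nonabelian $G$, where it underlies Theorems 6.6--6.9, whereas your mechanism is intrinsically abelian. Finally, your parenthetical descent from the henselization to $R_{\mathfrak p}$ when applying Corollary \ref{locod} deserves a word of justification (or one should phrase the local conditions at completions, which are henselian), though the paper glosses this point in the same way.
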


\begin{thm} \label{choc} Let   $G$ be a finite group of odd order and  $K$ is  a non-totally real number field. Suppose that $\phi(B)^n=1$. Then 
$(B, Tr'_B)^{\perp{m_n}}$ and  $(A, Tr'_B)^{\perp{m_n}}$ are isomorphic $G$-quadratic spaces,   where $m_n=1$ (resp. $2n$) if $n=1$ (resp. $n>1$).
\end{thm}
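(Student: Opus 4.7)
\textbf{Proof plan for Theorem \ref{choc}.}
The strategy is to interpret the isometry question via the double coset description of Section 6.2 and then apply Kneser's strong approximation (Theorem \ref{Kimp}) to $SU^{\varepsilon}_{m_n,G}$. First, since $G$ has odd order and $B\in \mathrm{PH}(A)$, Corollary \ref{locod} together with the global local isometries already implicit in the definition of $\phi$ shows that $B\in \mathrm{PH}'(A)$; so for every prime $\mathfrak{p}$ (including the generic point) we may choose normalized local isometries $\phi_{\mathfrak{p}}:(A_{\mathfrak{p}},Tr'_{A_{\mathfrak{p}}})\cong (B_{\mathfrak{p}},Tr'_{B_{\mathfrak{p}}})$, yielding elements $\theta_{\mathfrak{p}}=\phi_0^{-1}\circ\phi_{\mathfrak{p}}\in U(K_{\mathfrak{p}}[G])$ with $\theta_{\mathfrak{p}}\in U(A^D_{\mathfrak{p}})$ for almost all $\mathfrak{p}$. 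By the representation (\ref{rep}) the $G$-form $(B,Tr'_B)^{\perp m_n}$ is isometric to $(A,Tr'_A)^{\perp m_n}$ if and only if the double coset of $\theta^{\perp m_n}$ in $c(R,U_{m_n,A^D})$ is trivial.

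Next I translate the hypothesis $\phi(B)^n=1$. By the orthogonal-sum formula from the end of Section 2.7 the unitary class of $(B,Tr'_B)^{\perp n}$ in $\mathrm{CU}(A^D)$ is represented by $\prod_{\mathfrak{p}}\mathrm{Det}(\theta_{\mathfrak{p}})^n=\mathrm{Det}(\theta^{\perp n})$, so the hypothesis says we may find $\lambda\in U(K[G])$ and $u=(u_{\mathfrak{p}})\in\prod_{\mathfrak{p}}U(A^D_{\mathfrak{p}})$ with $\mathrm{Det}(\lambda\,u)=\mathrm{Det}(\theta^{\perp n})$. To promote this equality of determinants into an equality inside the unitary groups (up to strong approximation) I pass to rank $2n$ by considering $(B,Tr'_B)^{\perp 2n}$. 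At rank $2n$ one can write $\theta^{\perp 2n}=\theta^{\perp n}\perp\theta^{\perp n}$ and the block-diagonal hyperbolic-type structure allows one to prescribe the reduced-norm contribution on one block to exactly cancel that of the other, after left/right multiplication by elements of $U(M_{2n}(K[G]))$ and $\prod_{\mathfrak{p}}U(M_{2n}(A^D_{\mathfrak{p}}))$; together with the $\lambda,u$ produced above this modifies $\theta^{\perp 2n}$ into an element $\tau\in U_{2n,G}(\mathbb{A}_K)$ whose reduced norm in each simple factor is trivial and whose $\varepsilon$-component is $1$, i.e.\ $\tau\in SU^{\varepsilon}_{2n,G}(\mathbb{A}_K)$. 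For $n=1$ this doubling is unnecessary because $\phi(B)=1$ already gives $\mathrm{Det}(\theta)$ trivial in $\mathrm{CU}(A^D)$ at rank one, so one can place oneself in $SU^{\varepsilon}_{1,G}(\mathbb{A}_K)$ directly, which is why $m_1=1$.

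Finally I invoke Kneser's Theorem \ref{Kimp}: since $K$ is non-totally real and $G$ has odd order, $SU^{\varepsilon}_{m,G}$ has strong approximation for every $m\geq 1$. The open subgroup $\prod_{\mathfrak{p}}\bigl(SU^{\varepsilon}_{m,G}(R_{\mathfrak{p}})\cap U(M_m(A^D_{\mathfrak{p}}))\bigr)$ of $SU^{\varepsilon}_{m,G}(\mathbb{A}_K)$ is a neighborhood of $1$; strong approximation therefore produces $\sigma\in SU^{\varepsilon}_{m,G}(K)$ with $\sigma^{-1}\tau$ lying in this open subgroup. Writing out the resulting identity shows that $\theta^{\perp m_n}$ lies in $U(M_{m_n}(K[G]))\cdot\prod_{\mathfrak{p}}U(M_{m_n}(A^D_{\mathfrak{p}}))$, so its double coset in $c(R,U_{m_n,A^D})$ is trivial, proving the claimed isometry.

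The main obstacle is the bookkeeping in the middle step: one must show that the kernel of $\mathrm{Det}:U_{m_n,G}(\mathbb{A}_K)\to \mathrm{CU}(A^D)$ is covered, modulo the global and local unitary subgroups, by $SU^{\varepsilon}_{m_n,G}(\mathbb{A}_K)$. This is where the passage from $n$ to $2n$ is used, because the reduced norm obstruction in each simple component of the Wedderburn decomposition (\ref{c2}) and the sign in the trivial-character orthogonal block $O_{m_n,K}$ can only be simultaneously killed after duplicating the form; once this is done, Kneser's theorem closes the argument cleanly.
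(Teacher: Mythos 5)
Your plan is essentially the paper's own proof: you represent $(B,Tr'_B)^{\perp m_n}$ by the double coset of the normalized $\theta^{\perp m_n}$ in $c(R,U_{m_n,A^D})$, use $\phi(B)^n=1$ together with Lemma \ref{detred} to modify $\theta^{\perp m_n}$ by a global element of $U(M_{m_n}(K[G]))$ and local elements of $\prod_{\mathfrak{p}}U(M_{m_n}(A^D_{\mathfrak{p}}))$ so as to land in $SU^{\varepsilon}_{m_n,G}(\mathbb{A}_K)$ (the passage from $n$ to $2n$ serving, exactly as in the paper, to kill the sign $\varepsilon(\lambda)=\pm1$ in the trivial-character orthogonal block, since $SU_{m,G}$ itself lacks strong approximation for $m>1$), and then conclude by Theorem \ref{Kimp} that the double coset is trivial. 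The only cosmetic difference is your ``hyperbolic cancellation between the two $n$-blocks'' phrasing: in the paper the same effect is obtained directly by writing $\mathrm{Det}(\theta^{\perp 2n})=\mathrm{Det}(a^2)\,\mathrm{Det}(u^2)$ with $\varepsilon(a^2)=1$ and adjusting the trivial-character components of the realizing unitary matrices, rather than by any cancellation of reduced norms between blocks.
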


\begin{thm}\label{LA}
 Suppose that  $G$ is an abelian  group of  odd order and  $K$  a non-totally real number field. If $e=e\left( G\right) $ is the exponent of $G,$
then $\left( B,Tr_{B}^{\prime }\right) ^{\perp 2e}$ and $\left(
A,Tr_{A}^{\prime }\right) ^{\perp 2e}\ $are\ isomorphic $G$-quadratic spaces.
\end{thm}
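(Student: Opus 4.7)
The plan is to reduce Theorem \ref{LA} to Theorem \ref{choc}: once the equality $\phi(B)^{e}=1$ is established in $\mathrm{CU}(A^{D})$, applying Theorem \ref{choc} with $n=e$ (and noting that $e>1$ for nontrivial $G$, the trivial case being vacuous) yields $m_{n}=2e$ and the desired isomorphism. The task therefore reduces to showing $\phi(B)^{e}=1$.

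Since $G$ is abelian of odd order, $\mathcal{G}:=\mathrm{Spec}(A)$ is generically constant, so Corollary \ref{locod} gives $PH'(A)=PH(A)$ and $\phi(B)$ is defined on the whole of $PH(A)$. By Waterhouse's theorem (cited just before (\ref{CU})), $PH(A)$ carries a natural abelian group structure, and a standard flatness argument shows this group is $e$-torsion: the endomorphism $[e]\colon\mathcal{G}\to\mathcal{G}$ vanishes on the generic fibre, which is the constant group scheme attached to $G$ of exponent $e$, hence vanishes globally by flatness of $\mathcal{G}/R$; applying $H^{1}(R,-)$ gives that multiplication by $e$ on $PH(A)=H^{1}(R,\mathcal{G})$ is zero. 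The central step is then to verify that $\phi$ is itself a group homomorphism. Given $B_{1},B_{2}\in PH(A)$ with normalised local trivialisations and cocycle representatives $\theta^{(i)}\in U(\mathbb{A}_{K}[G])$, one must check that the contracted product torsor $B_{1}\wedge^{A^{D}}B_{2}$, equipped with its canonical $G$-invariant trace form, has cocycle equal to $\theta^{(1)}\theta^{(2)}$ modulo $U(K[G])\cdot\prod_{\mathfrak{p}}U(A^{D}_{\mathfrak{p}})$. Commutativity of $A^{D}$ and the fact that $\mathrm{Det}$ is an isomorphism on $K[G]^{\times}$ by (\ref{isab}) identify this quotient with $\mathrm{CU}(A^{D})$ itself; concretely, the multiplicativity $r(b_{1,\mathfrak{p}}\cdot b_{2,\mathfrak{p}})=r(b_{1,\mathfrak{p}})\,r(b_{2,\mathfrak{p}})$ of resolvents (Proposition \ref{expli}) encodes the required compatibility on the nose.

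Combining the two, one has $\phi(B)^{e}=\phi(B^{e})=\phi([A])=1$, and Theorem \ref{choc} applied with $n=e$ furnishes the isomorphism $(B,Tr'_{B})^{\perp 2e}\cong (A,Tr'_{A})^{\perp 2e}$. The principal obstacle is the multiplicativity of $\phi$: the corresponding property for $\psi$ is supplied by Waterhouse, but promoting it to the unitary refinement $\phi$ requires a careful identification of the $G$-quadratic structure on the contracted product with the tensor product of the trace forms, and threading this identification through the adelic definition of $\mathrm{CU}(A^{D})$. Once this compatibility is verified, the deduction from the flatness-exponent argument and Theorem \ref{choc} is mechanical.
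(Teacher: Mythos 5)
Your overall strategy agrees with the paper at the two ends: everything is reduced to proving $\phi(B)^{e}=1$ in $\mathrm{CU}(A^{D})$, and the passage from $\phi(B)^{e}=1$ to $(B,Tr'_{B})^{\perp 2e}\cong (A,Tr'_{A})^{\perp 2e}$ via Theorem \ref{choc} with $n=e$ is exactly what the paper does. Your flatness argument that $H^{1}(R,\mathcal{G})=\mathrm{PH}(A)$ is killed by $e$ is also correct (the two Hopf algebra maps $A\to A$ agree after $\otimes_{R}K$ and $A$ is $R$-torsion free). The gap is the step you yourself flag as the ``principal obstacle'': the multiplicativity of $\phi$. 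This is never established, and the citation you give for it is wrong: Proposition \ref{expli} only asserts that $r(\lambda^{-1}b_{\mathfrak{p}})$ is a unit of $U(B_{\mathfrak{p}}\otimes A^{D}_{\mathfrak{p}})$; it contains no statement relating the resolvent of an element of the contracted product $B_{1}\wedge^{\mathcal{G}}B_{2}$ to $r(b_{1,\mathfrak{p}})r(b_{2,\mathfrak{p}})$. To make your argument work you would have to prove (i) that the image of $b_{1,\mathfrak{p}}\otimes b_{2,\mathfrak{p}}$ in the product torsor generates it locally over $A^{D}_{\mathfrak{p}}$ and satisfies a resolvend identity of the shape $r(b_{1,\mathfrak{p}}\cdot b_{2,\mathfrak{p}})=\lambda^{-1}r(b_{1,\mathfrak{p}})r(b_{2,\mathfrak{p}})$ (note the normalisation by $\lambda$, without which the unitarity check $r\bar r=\lambda^{2}$ fails), (ii) that this element defines a local isometry for the trace form $Tr'$ of the product torsor, so that it actually computes the unitary class and not merely the locally free class, and (iii) that the resulting adelic determinant class in $\mathrm{CU}(A^{D})$ is the product of the two given classes. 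Waterhouse's theorem only gives the corresponding statement for $\psi$ with values in $\mathrm{Cl}(A^{D})$; the promotion to the unitary refinement is precisely the content that is missing, so as written the proof is incomplete.

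It is worth contrasting this with how the paper avoids the homomorphism question altogether. The unitary class $\phi(B)$ is represented by $\mathrm{Det}(\lambda^{-1}r(b_{0}))^{-1}\prod_{\mathfrak{p}}\mathrm{Det}(\lambda^{-1}r(b_{\mathfrak{p}}))$; since $\mathrm{Det}(g)^{e}=1$ and $r(hx)=hr(x)$, the $e$-th powers of these determinants are $G$-fixed, giving (\ref{clecle}). For abelian $G$ one then uses that $\mathrm{Det}$ is an isomorphism (\ref{isab}) together with the fixed point statement of Theorem 2.17 and the unitary determinant equalities of Theorems \ref{fglob} and \ref{fund} to identify the $G$-fixed groups $\mathrm{Det}(U(A^{D}_{K}\otimes B_{K}))^{G}$ and $\mathrm{Det}(U(A^{D}_{\mathfrak{p}}\otimes B_{\mathfrak{p}}))^{G}$ with $\mathrm{Det}(U(A^{D}_{K}))$ and $\mathrm{Det}(U(A^{D}_{\mathfrak{p}}))$, whence $\phi(B)^{e}=1$ directly, with no need for a group structure on $\mathrm{PH}'(A)$ or for $\phi$ to respect it. If you want to salvage your route, you must supply the resolvend multiplicativity and the isometry compatibility for the contracted product in the Hopf order setting; otherwise the argument should be replaced by the fixed-point computation above.
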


\begin{thm}\label{gen}
Suppose that $G$ is a group of  odd order and $K$  is a  non-totally real number field.   If  $\psi(B)^m=1$,   then 
$(B,Tr_{B}^{\prime }) ^{\perp 4m}$ and $( A,Tr_{A}^{\prime })
^{\perp 4m} $ are isomorphic  $G$-quadratic spaces. 
\end{thm}

\begin{thm}\label{const} Suppose that$\ G$ has odd order and that  $ \mathrm{Spec}( A) $ is a constant group
scheme. If $K/\mathbb{Q\,}\ $ is a non-totally real number field,  unramified at the primes dividing the order
of $G$
 then $( B,Tr_B^{\prime }) ^{\perp 2e^{ab}}$ and $
( A,Tr_{A}^{\prime }) ^{\perp 2e^{ab}} $ are isomorphic $G$-quadratic spaces, where $e^{ab}=e( G^{ab}) $
denotes the exponent of $G^{ab}$. 
\end{thm}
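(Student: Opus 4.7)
The plan is to combine the class annihilation from Theorem \ref{CPTA} with the triangle (\ref{didi}) and the strong approximation machinery underlying Theorem \ref{choc}. First I would specialize to the constant case: since $\mathrm{Spec}(A)$ is the constant group scheme, $A=\mathrm{Map}(G,R)$ and $A^D=R[G]$, so any principal homogeneous space $B$ of $A$ is the ring of integers $O_N$ of an unramified $G$-Galois $K$-algebra $N$. The hypothesis that $K/\mathbb{Q}$ is unramified at the primes dividing $|G|$ is needed precisely to place us in the setting of Theorem \ref{CPTA}, whose conclusion is
\[
\psi(B)^{e^{ab}}=1 \quad\text{in } \mathrm{Cl}(R[G]).
\]

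Next I would invoke the commutative triangle (\ref{didi}): since $\psi=\xi\circ\phi$, the element $\phi(B)^{e^{ab}}\in\mathrm{CU}(R[G])$ lies in $\ker\xi$, and by Proposition \ref{exp} the group $\ker\xi$ is an elementary $2$-group. Combining these, $\phi(B)^{2e^{ab}}=1$ in $\mathrm{CU}(R[G])$.

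Finally I would reduce the isometry question at rank $m=2e^{ab}$ to strong approximation. Using the double coset description of Section 6.2, the form $(B,Tr'_B)^{\perp 2 e^{ab}}$ is represented in $c(R, U_{2e^{ab}, R[G]})$ by $\theta^{\perp 2e^{ab}}$ as in (\ref{rep}), after normalizing the $\theta_\mathfrak{p}$'s so that $\varepsilon(\theta_\mathfrak{p})=1$ (possible because $G$ has odd order). Its image in $\mathrm{CU}(R[G])$ is $\mathrm{Det}(\theta^{\perp 2e^{ab}})=\phi(B)^{2e^{ab}}=1$, so after normalization the double coset lands in the kernel of $c(R, U_{2e^{ab},R[G]})\to \mathrm{CU}(R[G])$. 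Since $m=2e^{ab}\ge 2$, this kernel is a double coset set for the semisimple simply connected group $SU^\varepsilon_{2e^{ab},G}$, and the hypothesis that $K$ is non-totally real allows us to apply Theorem \ref{Kimp}, which forces this set to be trivial. The conclusion $(B,Tr'_B)^{\perp 2e^{ab}}\simeq (A,Tr'_A)^{\perp 2e^{ab}}$ follows.

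The main obstacle I expect is the bookkeeping at the precise rank $2e^{ab}$. A naive chain through Theorem \ref{gen} with $m=e^{ab}$ would yield only $4e^{ab}$ copies; the factor-of-two improvement asserted here relies on the fact that the sharp input $\psi(B)^{e^{ab}}=1$ from Theorem \ref{CPTA}, once combined with the $2$-torsion structure of $\ker\xi$, already exhibits $\phi(B)^{2e^{ab}}=1$ at exactly the rank where strong approximation for $SU^\varepsilon_{2e^{ab},G}$ can be applied, with no further doubling needed. Verifying that the normalization of the $\varepsilon$-component and the application of Theorem \ref{Kimp} go through cleanly at this rank --- in particular that the simply connected semisimple pieces of the decomposition (\ref{decdec}) cover all non-$\varepsilon$ factors --- is where I expect the delicate work to sit.
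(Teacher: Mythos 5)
There is a genuine gap at the final step, and it sits exactly where you yourself flagged the ``delicate work''. Your chain produces only $\phi(B)^{2e^{ab}}=1$ (from $\psi(B)^{e^{ab}}=1$ via Theorem \ref{CPTA} and the $2$-torsion of $\ker\xi$), and you then assert that strong approximation closes the argument at rank $m=2e^{ab}$ ``with no further doubling needed''. But the mechanism of Theorem \ref{choc} converts the relation $\phi(B)^n=1$ into an isometry at rank $2n$, not $n$: the doubling is not bookkeeping, it is what makes the determinant data at the trivial character a square. Concretely, from $\phi(B)^{2e^{ab}}=1$ you only get $\mathrm{Det}(\theta_{\mathfrak{p}})^{2e^{ab}}=\mathrm{Det}(a)\mathrm{Det}(u_{\mathfrak{p}})$ with $a\in U(K[G])$, $u_{\mathfrak{p}}\in U(A^D_{\mathfrak{p}})$, and the value at the trivial character is $\varepsilon(a)=\pm1$; if $\varepsilon(a)=-1$ you cannot simply replace the orthogonal component by $1$ without changing the determinant, so you cannot land in $SU^{\varepsilon}_{2e^{ab},G}(\mathbb{A}_K)$, which is the only group to which Theorem \ref{Kimp} applies (the factor $SO_m$ is excluded precisely because it is not simply connected). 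In the paper's proof of Theorem \ref{choc} this sign problem disappears because the relation is used at exponent $n=m/2$, so the relevant global determinant is $\mathrm{Det}(a)^2$ and $\varepsilon(a)^2=1$. With your input, running that argument honestly yields rank $4e^{ab}$ --- i.e.\ exactly Theorem \ref{gen} with $m=e^{ab}$ --- not the asserted $2e^{ab}$. You would need an extra argument (e.g.\ adjusting $a$ and $u$ by the unitary element $-1$) to fix the $\varepsilon$-component, and you neither supply it nor signal that the paper's Theorem \ref{choc} does not cover your situation as stated.

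The paper's actual route is different and avoids this entirely: it proves the sharper statement $\phi(B)^{e^{ab}}=1$, not via Theorem \ref{CPTA} and $\ker\xi$, but by representing $\phi(B)$ through resolvents (Proposition \ref{expli} and formula (\ref{clecle})), observing that the $e^{ab}$-th powers of these determinants are $G$-fixed, and then invoking the Fixed Point Theorems (Theorem \ref{fix} together with Theorem \ref{basieg}) to identify $\mathrm{Det}(U(A^D_{\mathfrak{p}}\otimes B_{\mathfrak{p}}))^G$ with $\mathrm{Det}(U(A^D_{\mathfrak{p}}))$; Theorem \ref{choc} with $n=e^{ab}$ then gives rank $2e^{ab}$. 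Note also a misattribution in your setup: the hypothesis that $K/\mathbb{Q}$ is unramified at the primes dividing $|G|$ is not needed for Theorem \ref{CPTA} (which holds for any unramified $G$-Galois algebra over a number field); it is needed for Taylor's fixed point theorem at the finite places, where one requires $A^D_{\mathfrak{p}}=R_{\mathfrak{p}}[G]$ with $B_{\mathfrak{p}}$ an unramified extension of a field unramified over $\mathbb{Q}_p$ --- this, together with the constancy of $\mathrm{Spec}(A)$, is exactly where those hypotheses enter the paper's proof, and in your proposal they end up playing no role at all.
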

 
 In the above  theorems,  since $G$ is odd, it follows from  Corollary \ref{locod}   that  $( B_{\mathfrak{p}},Tr_{B_{\mathfrak{p}
}}^{\prime }) $ and $( A_{\mathfrak{p}},Tr_{A_{\mathfrak{p}
}}^{\prime })$ are locally isomorphic $G$-quadratic spaces for all
prime ideals of $R$. We recall some notation and results from Section 2.4. We fix
local isomorphisms of quadratic $G$-spaces $\phi _{\mathfrak{p}}:( A_{
\mathfrak{p}},Tr_{A_{\mathfrak{p}}}^{\prime }) \cong ( B_{
\mathfrak{p}},Tr_{B_{\mathfrak{p}}}^{\prime }) \ $ and we set $\phi_{\mathfrak{p}}( \lambda l) =b_{\mathfrak{p}}$  which generates $
B_{\mathfrak{p}} $ over $A_{\mathfrak{p}}^{D}$;  then there exists $\theta_{\mathfrak{p}} \in U(A^D_{K_{\mathfrak{p}}})$ such that 
\begin{equation*}
\phi _{\mathfrak{p}}( \lambda l)=b_{\mathfrak{p}}=\theta_{\mathfrak{p}}b_0=\theta_{\mathfrak{p}}\phi_0(\lambda l).
\end{equation*}  This then immediately gives $\phi_{\mathfrak{p}}=\theta_{\mathfrak{p}}\phi_0$  and whence  $\phi_0^{-1}\circ \phi _{\mathfrak{p}}=\theta_{\mathfrak{p}}$.
 Since $b_{\mathfrak{p}}=\theta_{\mathfrak{p}}b_0$ 
it follows from the resolvend formula that 
$$r(b_{\mathfrak{p}})=\theta_{\mathfrak{p}}r(b_0)$$
and so that  $\lambda^{-1}r(b_{\mathfrak{p}})=\theta_{\mathfrak{p}}\lambda^{-1}r(b_0)$. 
By Proposition \ref{expli} we know that $\lambda^{-1}r(b_{\mathfrak{p}})\in U(A^D_{\mathfrak{p}}\otimes B_{{\mathfrak{p}}})$ and $\lambda^{-1}r(b_0)\in U(A^D_K\otimes B_K)$. We conclude  
\begin{equation*}
\phi _{0}^{-1}\circ \phi _{\mathfrak{p}}=\theta _{\mathfrak{p}}=(\lambda^{-1}r( b_{\mathfrak{p}}))(\lambda r(
b_{0}) ^{-1})
\end{equation*}
and therefore that the unitary class $\phi(B)$ of $B$    is represented in $\mathrm{CU}( A^{D})  $ by
\begin{equation*}
\mathrm{Det}( \lambda ^{-1}r( b_{0}))
^{-1}\prod_{\mathfrak{p}}\mathrm{Det}( \lambda ^{-1}r( b_{
\mathfrak{p}})).
\end{equation*}

Writing $e$ for the exponent of $G^{ab}$, then $\mathrm{Det}(
g) ^{e}=1,$ and so by the Galois action formula (\ref{act}) we know that $
\mathrm{Det}( \lambda ^{-1}r( b_{p})) ^{e}$ and $
\mathrm{Det}( \lambda ^{-1}r( b_{0})) ^{e}$ are both $
G$-fixed and therefore,  since

$$\prod_{\mathfrak{p}}\mathrm{Det}(\phi _{0}^{-1}\circ \phi _{
\mathfrak{p}}) ^{e} =\mathrm{Det}(( \lambda ^{-1}r(
b_{0})) ^{e}) ^{-1}\prod_{\mathfrak{p}}\mathrm{
Det}(( \lambda ^{-1}r( b_{p}) ^{e})),  
$$ we conclude that 
\begin{equation}\label{clecle}
\prod_{\mathfrak{p}}\mathrm{Det}(\phi _{0}^{-1}\circ \phi _{
\mathfrak{p}}) ^{e}\in  \mathrm{Det}( U( A_{K}^{D}\otimes B_{K})
) ^{G} \prod_{\mathfrak{p}}\mathrm{Det}(
 U( A_{\mathfrak{p}}^{D}\otimes B_{\mathfrak{p}})
 )^{G}.
\end{equation}

\noindent{\bf  Proof of Theorem 6.5.}
 Indeed,  if the $G$-forms $(B, Tr'_B)$ and $(A, Tr'_A)$ are isomorphic, then $B$ is isomorphic to $A$ as an $A^D$-module and thus  is a free $A^D$-module. We now assume that $B$ is a free $A^D$-module. Then $B$ and $A$ are isomorphic $A^D$-modules and thus $\psi(B)=1$. Using the commutativity of the diagram (\ref{didi}) we deduce that $\phi(B)\in \mathrm{Ker}(\xi_A)$. Since $G$ is  of odd order,   the exponent of $\mathrm{Ker}(\xi_A)$ divides $2$ by Proposition 5.11 and so $\phi(B)^2=1$.

Since $G$ is abelian $\mathrm{Det}$ is
an isomorphism (see (\ref{isab}))  and so 
\begin{eqnarray*}
\mathrm{Det}(U( A_{K}^{D}\otimes B_{K})) ^{ G}
&=&U( A_{K}^{D}\otimes B_{K}) ^{G}=U(A_{K}^{D})
=\mathrm{Det}(U( A_{K}^{D}) ) \\
\mathrm{Det}(U( A_{\mathfrak{p}}^{D}\otimes B_{\mathfrak{p}
}) )^{ G} &=&U( A_{\mathfrak{p}}^{D}\otimes B_{
\mathfrak{p}})^{ G} =U( A_{\mathfrak{p}}^{D})
=\mathrm{Det}( U( A_{\mathfrak{p}}^{D})) .
\end{eqnarray*}
Hence  it follows from (\ref {clecle}) that $\phi(B)^e=1$. Since $e$ is odd we deduce that $\phi(B)=1$. It follows from Section 6.2 that $\phi(B)=\mathrm{Det} (\alpha_1(B))=1$. Since $\mathrm{Det}$ is an isomorphism we conclude that $\alpha_1(B)=1$ and therefore that the $G$-forms $(B, Tr'_B)$ and $(A, Tr'_A)$ are isomorphic. \qed

\vskip 0.2 truecm

\noindent {\bf  Proof of Theorem 6.6.}  
Since $\alpha_m$ is a bijection for every $m$ (see Section 6.2.),  it suffices  to show  that if $\phi(B)^n=1$ then $\alpha_{m_n} (B)$  is the trivial class of $c(R, U_{m_n})$. 

We start by treating the case where $n=1$.  We write $U_{A^D}$ for $U_{1, A^D}$ and  $\alpha$ for $\alpha_1$.  
Since 
$$\phi(B)=\mathrm{Det} (\alpha(B))=1$$
 it suffices to check that 
$$Det: c(R, U_{ A^D})\rightarrow CU(A^D)$$ is a bijection of pointed sets, respectively pointed by the trivial class and the unit element.
The surjectivity is evident; it remains to prove the injectivity. Given $$
\gamma =\prod_{\mathfrak{p}}\gamma _{\mathfrak{p}}\in U( 
\mathbb{A}_{K}[ G])$$ such that  
\begin{equation*}
\mathrm{Det}( \gamma) =\prod_{\mathfrak{p}}\mathrm{Det}
( \gamma _{\mathfrak{p}}) =\mathrm{Det}( \beta
_{0}^{-1}\prod_{\mathfrak{p}} \delta _{
\mathfrak{p}})
\end{equation*}
with $\beta _{0}\in U( K[ G]) $ and $\delta _{
\mathfrak{p}}\in U( A_{\mathfrak{p}}^{D}),$ then 
\begin{equation*}
\beta _{0}\gamma \prod_{\mathfrak{p}}\delta _{
\mathfrak{p}}^{-1}\in SU( \mathbb{A}_{K}[ G]).
\end{equation*}

Since by Proposition \ref{Kimp} we know that $SU_G$ has Kneser Strong Approximation, so, for any non empty open subset  $V$ of $SU_G(\mathbb A_K)$,  one has the equalities:
\begin{equation}\label{kn1} SU(\mathbb A_K[G])=SU_G(\mathbb A_K)=SU_G(K)V=SU(K[G])V.\end{equation} 
The group  $U_G$ is the generic fiber of $U_{A^D}$ and thus we know that $\prod_{\frak{p}}U_{A^D}(R_{\frak{p}})$ is an open subgroup of $U_G(\mathbb A_K)$. Since $SU_G$ is a closed subgroup of $U_G$, then the topology of $SU_G(\mathbb A_K)$ is induced from the topology of $U_G(\mathbb A_K)$. Therefore 
$$V:=SU_G(\mathbb A_K)\cap \prod_{\frak{p}}U_{A^D}(R_{\frak{p}})$$ is an open subgroup of $SU_G(\mathbb A_K)$.

It follows from (\ref{kn1})  that   we may write 
\begin{equation}
\beta _{0}\gamma \prod_{\mathfrak{p}}\delta _{\mathfrak{p}%
}^{-1}=\sigma _{0}\prod_{\mathfrak{p}}\sigma _{\mathfrak{p}}, 
\end{equation} with $\sigma_0\in SU(K[G])$ and $\sigma_{\mathfrak{p}}\in SU(A_{\mathfrak{p}}^{D}):=SU(K_{\frak{p}})\cap U(A^D_{\frak{p}})$. 
We conclude that  
\begin{equation*}
\gamma =\beta _{0}^{-1}\sigma _{0}\prod_{\mathfrak{p}}\sigma _{
\mathfrak{p}}\delta _{\mathfrak{p}}
\end{equation*}
and hence $\gamma $ lies in the trivial double coset. Therefore  the injectivity is established and the proof of Theorem 6.6 for $n$=1 is now complete. 
\vskip 0.1 truecm

We now consider the case $n>1$ and $m=2n$.  First we note that $SU_{m, G}$ does not have Kneser's Strong Approximation since $SO_{m, K}$ is not a simply connected algebraic group for $m>1$. 

   It follows from (\ref{rep})  that $\alpha_m(B)$ is represented by 
  $\theta^{\perp m}\in U(M_{m}(\mathbb A_K[G]) $. 
  More precisely $\theta^{\perp m}=\prod_{\frak{p}}\theta_{\frak{p}}^{\perp m}\in \prod_{\frak{p}}U(M_{m}(K_{\frak{p}}[G])$, where $\theta_{\frak{p}}^{\perp m}$ denotes the diagonal matrix 
  $m\times m$ with diagonal entries all equal to   $\theta_{\frak{p}}$.  By choosing each $\theta_{\frak{p}} $ normalized (see (115) Section 6.2) we note that $\theta^{\perp m} \in SU^\varepsilon_{m, G}(\mathbb A_K)$.    
   We consider the map  
$$\mathrm{Det} : c(R, U_{m, A^D})\rightarrow CU(A^D).$$ It follows from the above that $\mathrm{Det} (\alpha_m(B))$ is represented  in  $ Det(U(M_{2n}(\mathbb A_K[G]))=Det(U(\mathbb A_K[G])$    by $Det(\theta^{\perp  m})$. Since $\phi(B)^n=1$, there exists $a\in U(K[G])$ and $u\in \prod_{\mathfrak{p}}U(A^D_{\mathfrak{p}})$ such that  
$$\mathrm{Det}(\theta)^n =\mathrm{Det}(a) \mathrm{Det}(u)$$ and thus 
  $$Det(\theta^{\perp m})=Det(\theta)^{2n}=Det(a)^2Det(u)^2 .$$ 
  
  Since  by Lemma \ref{detred} we know that 
  $\mathrm{Det}(U(M_{m}(A^D_{
\mathfrak{p}}))=\mathrm{Det}(U(A^D_{
\mathfrak{p}}))$ and  $\mathrm{Det}(U(M_{m}(K[G]))=\mathrm{Det}(U(K[G]))$, 
 we conclude that there exist $x\in U(M_{m}(K[G])$ and  $v_{\frak{p}} \in  U_{m}(A^D_{\frak {p}}))$, for every $\frak{p}$, such that
  $$Det(a^2)=Det(x)\ \mathrm{and}\ \ Det(u_{\frak{p}}^2)=Det(v_{\frak{p}}).$$
 
   Using  (\ref{decu})  we know that

  \begin{equation} U_{m,G}(K)=O_{m}(K)\times \prod_{I\in I}U_{A_{m,i}}(K)\times  \prod_{j\in J}U_{B_{m,j}}(K).\end{equation} 
  Hence the element $x$ can be written
  $$x=(x_0  \times \prod_{i \in I}y_i \times \prod_{j\in J} z_j)$$ where $\mathrm{det}(\varepsilon (x_0))=\varepsilon (a)^2$.
Since $a\in U(K[G])$ then $\varepsilon (a)=\pm 1$ and so $\varepsilon (a^2) =1$.
Therefore we deduce that 
$$Det(x)=Det(x_0\times \prod_{i\in I } y_i\times \prod_{j\in J} z_j)=Det(1\times \prod_{i\in I } y_i\times \prod_{j\in J} z_j)=Det(x')$$ with $x'=(1\times \prod_{i\in I } y_i\times \prod_{j\in J} z_j) \in U^{\varepsilon}_{m ,G}(K)$.

We write $v=(v_{\frak{p}}) \in \prod_{\frak{p}}U(M_{m}(A^D_{\frak{p}})$. Since $\theta$ is normalized we obtain that  
$$Det_{\varepsilon}(\theta^{\perp m})=Det_{\varepsilon}(x')Det_{\varepsilon}(v)=1.$$ Then we deduce that $Det_{\varepsilon}(v)=1$ and so $Det_{\varepsilon}(v_\frak{p})=1$  for all $\frak{p}$. More precisely $\varepsilon (v_{\frak{p}})\in U(M_{m}(R_{\frak{p}}))$ and $det (\varepsilon (v_{\frak{p}}))=1$. Since $R_{\frak{p}}\subset A^D_{\frak{p}}$ then $\varepsilon (v_{\frak{p}})\in U(M_{m}(A^D_{\frak{p}}))$. We set $v'_{\frak{p}}=v_{\frak{p}}(\varepsilon (v_{\frak{p}})^{-1})$. Then $v'_{\frak{p}}\in U(M_{m}(A^D_{\frak{p}}), 
\varepsilon (v'_{\frak{p}})=1$ and $Det(v'_{\frak{p}})=Det(v_{\frak{p}})$. We write $v'=(v'_{\frak{p}})$ and we  conclude that 
$$Det(\theta^{\perp m})=Det(x')Det(v').$$
Therefore $z:=x'^{-1}\theta^{\perp m}v'^{-1}\in U_{m, G}^{\varepsilon}(\mathbb A_K)$ and $Det(z)=1$ and so   $z\in SU_{m, G}^{\varepsilon}(\mathbb A_K)$. We may now  
conclude as in the case $m=1$.  Since $SU_{m, G}^{\varepsilon}$ is a closed subgroup of $U_{m, G}$, then $V:=SU_{m, G}^{\varepsilon}\cap \prod_{\frak{p}}U_{m, A^D}(R_{\frak{p}})$  is an open subgroup of $SU_{m, G}^{\varepsilon}(\mathbb A_K)$ and therefore Kneser's Strong Approximation Theorem provides us with the equality
$$SU_{m, G}^{\varepsilon}(\mathbb A_K)=SU_{m, G}^{\varepsilon}(K)V.$$ Therefore one can write $z=t.r$ with 
 $t\in SU^{\varepsilon}_{m, A^D}(K)$ and $r \in V$. We conclude that 
$$\theta^{\perp m}=x't.rv'\in U_{m, A^D}(K) U_{m, A^D}(\tilde R).$$
Therefore the double coset defined  by $\theta^{\perp m}$ is the trivial coset   and thus 
$$(B, Tr')^{\perp m}\simeq (A, Tr')^{\perp m}.$$ This completes the proof of  Theorem 6.6.
\qed 
\vskip 0.2 truecm

\noindent{ \bf  Proof of Theorem 6.8.}
Since   $\psi ( B) ^{m}=1$, then using that  $\psi =\xi \circ \phi$
and the fact that, as shown in Section 5, $\ker \xi_A $ is killed by 2 since $
G$ has odd order,  we  deduce that $\phi(B)^{2m}=1$. Therefore it follows  from Theorem 6.6  that  $(B, Tr_B')^{\perp{4m}}$ and $(A, Tr'_A)^{\perp{4m}}$ are isomorphic $G$-forms. 
\qed
\vskip 0.2 truecm

\noindent{\bf Proof of Theorem 6.7 and Theorem 6.9.}
 The proof of these theorems are similar. They both use the fixed points Theorems of Section 2.5 . 
 
 Let $m$ be an integer, $m\geq 1$. We know that  the unitary class $\phi(B)$ of $B$ is  represented in the  
 group $\mathrm{CU}(A^D)$ by $\prod_{\frak p}\mathrm{Det}(\phi_0^{-1}\circ \phi_{\frak p})$. Suppose now that the exponent $e$ of $G^{ab}$ divides $m$. Then it follows from 
 (\ref{clecle}) that 
\begin{equation} \prod_{\mathfrak{p}}\mathrm{Det}(\phi _{0}^{-1}\circ \phi _{
\mathfrak{p}}) ^{m}\in  \mathrm{Det}( U( A_{K}^{D}\otimes B_{K})
) ^{G} \prod_{\mathfrak{p}}\mathrm{Det}(
 U( A_{\mathfrak{p}}^{D}\otimes B_{\mathfrak{p}})
 )^{G}.\end{equation}
 Therefore,   if  the following equalities hold, 
$$ \mathrm{Det}( U( A_{K}^{D}\otimes B_{K}))^{G}=\mathrm{Det}(U(A_K^D))\ \mathrm{and}\ \mathrm{Det}(U( A_{\mathfrak{p}}^{D}\otimes B_{\mathfrak{p}
}) )^{ G} =\mathrm{Det}( U( A_{\mathfrak{p}}^{D})) $$
then we conclude that $\phi(B)^m=1$.

From Theorem 2.16 (i) we obtain that 
\begin{equation}\label{k_1}\mathrm{Det}((A^D_K\otimes_KB_K)^\times)^G=\mathrm{Det}((Z_\chi\otimes_KB_K)^\times)^G=\prod_{\chi}Z_{\chi}^\times=\mathrm{Det}((A^D_K)^\times).\end{equation} Therefore, using Theorem 5.2, we deduce from (\ref{k_1}) that
 $$\mathrm{Det}(U(A^D_K\otimes_KB_K))^G=\mathrm{Det}((A^D_K\otimes_KB_K)^\times)_-^G=\mathrm{Det}((A^D_K)^\times)_-=\mathrm{Det}(U(A^D_K)). $$
Suppose now that for any maximal ideal $\frak{p}$ of $R$ the fixed point theorem  holds and so that 
$$\mathrm{Det}((A^D_\frak{p}\otimes_RB_\frak{p})^\times)^G=\mathrm{Det}((A^D_\frak{p})^\times)). $$
Since by  Theorem 5.3 we have the equalities 
$$\mathrm{Det}(U(A^D_\frak{p}\otimes_{R_\frak{p}}B_\frak{p}))=\mathrm{Det}((A^D_\frak{p}\otimes_{R_\frak{p}}B_\frak{p})^\times)_- \ \ \mathrm{and}\ \   
 \mathrm{Det}(U(A^D_\frak{p}))=\mathrm{Det}({A^D_\frak{p}}^\times)_-$$ we conclude that
$$ \mathrm{Det}(U( A_{\mathfrak{p}}^{D}\otimes B_{\mathfrak{p}
})) ^{G}=\mathrm{Det}( (A_{\mathfrak{p}}^{D}\otimes B_{
\mathfrak{p}})^\times) _{-}^{G}=\mathrm{Det}( A_{\mathfrak{p}}^{D \times})
_{-}=\mathrm{Det}(U( A_{\mathfrak{p}}^{D})) .$$
It follows from the equalities above that   whenever the fixed point theorems hold then  $\Phi(B)^m=1$ and therefore,  using Theorem 6.6,  that we have an isomorphism of $G$-quadratic spaces 
\begin{equation}(B, Tr')^{\perp 2m}\simeq (A, Tr')^{\perp 2m}.\end{equation}
Since we know by Theorem 2.17 and Theorem 2.18 that under the hypotheses of Theorem 6.7 and Theorem 6.9 we have  fixed point theorems we deduce from (123) that in both cases we have the equality 
$$(B, Tr')^{\perp 2e}\simeq (A, Tr')^{\perp 2e}$$
as required.

 \qed
\vskip0.2 truecm

 \section{Appendix.  Lang-Steinberg Theorem for algebraic connected group schemes (by Dajano Tossici)}

In this appendix we prove the following result, which generalizes the classical Lang-Steinberg Theorem to the case of non-smooth group schemes.

\begin{thm}\label{SLa} 
Let $k$ be a perfect field of characteristic $p$ and let $G$ be an algebraic group scheme. We assume that one of the following conditions holds:
\begin{enumerate}
\item $G$ is  finite and connected.
% group scheme over $k$. Then $H^1(   \spec k, G)=0$. 

\item  $G$ is affine and connected and the cohomological dimension of $k$ is $\le 1$, 
\item 
$k$ is finite and $G$ connected.
\end{enumerate}

Then $H^1(\mathrm{Spec}(k), G)=0$.
\end{thm}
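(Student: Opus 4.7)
My plan is to reduce all three cases to a single devissage based on the canonical exact sequence
\begin{equation*}
1 \to G_{\mathrm{red}} \to G \to G/G_{\mathrm{red}} \to 1,
\end{equation*}
which is available since $k$ is perfect: then $G_{\mathrm{red}}$ is a closed subgroup scheme, and it is smooth because a reduced $k$-scheme of finite type over a perfect field is generically smooth, and smoothness at one point of a group scheme propagates everywhere by translation. Moreover $G_{\mathrm{red}}$ has the same underlying topological space as $G$, so the quotient $G/G_{\mathrm{red}}$ is finite, and it is connected since $G$ is.

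First I would establish Case (1) directly, because it will be the input for the quotient in the other two cases. Let $G$ be finite and connected and let $P$ be a $G$-torsor, so that $P$ is finite flat over $k$ with $P_{\bar k}\simeq G_{\bar k}$ as $G_{\bar k}$-torsors. The identity component of any $k$-group scheme is geometrically connected, so $G_{\bar k}$ is connected, hence (being finite) local, and therefore $P_{\bar k}$ is local. Writing $P=\mathrm{Spec}(B)$, this forces $B$ to be local with residue field $L$ satisfying $L\otimes_k \bar k$ local; since $L/k$ is separable by perfectness of $k$, one must have $L=k$, so $P$ has a $k$-rational point and is therefore trivial.

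For Cases (2) and (3), I would invoke Giraud's exact sequence of pointed sets in fppf cohomology
\begin{equation*}
H^1(k, G_{\mathrm{red}}) \longrightarrow H^1(k, G) \longrightarrow H^1(k, G/G_{\mathrm{red}}).
\end{equation*}
The right-hand term vanishes by Case (1). For the left-hand term: in Case (2), $G_{\mathrm{red}}$ is smooth, connected and affine (as a closed subgroup of the affine $G$), so Steinberg's theorem gives its vanishing under the hypothesis $\mathrm{cd}(k)\le 1$; in Case (3), $G_{\mathrm{red}}$ is a smooth connected algebraic group over a finite field, so the classical theorem of Lang applies (and works equally well for non-affine $G_{\mathrm{red}}$, e.g.\ when $G$ is an extension of an abelian variety by an infinitesimal group). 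By the pointed-set exactness at $H^1(k, G)$, the triviality of both the source and the target forces $H^1(k, G) = 0$.

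The main technical obstacle will be ensuring that Giraud's non-abelian fppf exact sequence is applicable with the precise pointed-set exactness I need (which it is, but it must be cited carefully since $G_{\mathrm{red}}$ need not be central in $G$), and verifying that the classical Lang--Steinberg theorems can be invoked in the smooth case in the precise generality required, in particular not assuming affineness in Case (3). Once these two citations are in place, the argument reduces to the direct topological analysis in Case (1) above.
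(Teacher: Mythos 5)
Your proof of case (1) is correct, and it is a genuinely different (and perfectly acceptable) argument from the paper's: the appendix proves case (1) by showing a power of Frobenius of the torsor factors through $\mathrm{Spec}(k)$ and descending along the isomorphism $\mathrm{Fr}_k$, whereas you use that the torsor acquires a point over $\bar k$, hence is finite local, and then the residue-field/perfectness argument. The problem lies in how you treat cases (2) and (3). Your devissage presupposes that $G_{\mathrm{red}}$ is a \emph{normal} subgroup scheme of $G$, so that $G/G_{\mathrm{red}}$ is a group scheme and one has an exact sequence of pointed sets ending in $H^1(k, G/G_{\mathrm{red}})$, to which you then apply case (1). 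Over a perfect field $G_{\mathrm{red}}$ is indeed a closed (smooth) subgroup scheme, but it need not be normal: in characteristic $p$ take $G=\alpha_p\rtimes \mathbb{G}_m$ with $\mathbb{G}_m$ acting by scaling, so $G_{\mathrm{red}}=\mathbb{G}_m$, and conjugating a point $(0,t)$ of $\mathbb{G}_m$ by a point $(a,1)$ of $\alpha_p$ gives $(a(1-t),t)\notin \mathbb{G}_m$. This $G$ is affine and connected, so the failure occurs precisely in the situations covered by your cases (2) and (3). You flag centrality as the delicate point, but Giraud's formalism copes with non-central normal subgroups; what actually fails is normality, and without it your exact sequence of group schemes does not exist and $H^1(k, G/G_{\mathrm{red}})$ is not even defined as the cohomology of a group scheme, so case (1) cannot be fed into it. This is a genuine gap.

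This is exactly the difficulty the appendix is designed to circumvent: Tossici's Lemma 7.2 runs the devissage in the opposite direction, with the infinitesimal piece as the \emph{kernel}. Since the quotient scheme $G/G_{\mathrm{red}}$ (used there only as a finite local scheme, with no group structure required) is killed by a power of Frobenius, $\mathrm{Fr}^n_G$ factors through $A:=(G_{\mathrm{red}})^{(p^n)}$, giving a faithfully flat homomorphism $\phi\colon G\to A$ with $A$ smooth connected and $H:=\ker\phi$ finite connected and automatically normal; then $H^1(k,H)=0$ by case (1) and $H^1(k,A)=0$ by Steinberg (resp.\ Lang), and the legitimate sequence $H^1(k,H)\to H^1(k,G)\to H^1(k,A)$ concludes. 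Alternatively, your route can be repaired without normality by using reduction of structure group along the non-normal inclusion $G_{\mathrm{red}}\subset G$: a class $[P]\in H^1(k,G)$ comes from $H^1(k,G_{\mathrm{red}})$ if and only if the twisted homogeneous space $P\times^{G}(G/G_{\mathrm{red}})$ has a $k$-point, and since that twist is a form of an infinitesimal scheme (trivialize $P$ over $\bar k$ using case (1)), your residue-field argument over the perfect field $k$ produces such a point. Either way an additional idea is needed beyond what you wrote.
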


\begin{proof}
For a proof of  Theorem 7.1. in the linear smooth case see \cite{Serre} III, Theorem 1, which reproduces the original proof due to Steinberg \cite{St65} Theorem 1.9. The entire article of Steinberg is also reproduced as an appendix of Serre's book.
The non-linear smooth case over a finite field was proved previously by Lang \cite{La56} Theorem 2.  
We stress that for finite and connected group schemes the result is stronger since it is  true for any perfect field. %In fact, we  prove more since for finite connected group schemes the Theorem is true over any perfect field. 

We firstly suppose that $G$ is finite and connected.
%\begin{lemm}\label{lemma:finite case}
%Let $k$ be a perfect field of characteristic $p$ and let $G$ be a finite connected group scheme over $k$. Then $H^1(   \spec k, G)=0$. 
%\end{lemm}
%\begin{proof}

Since $G$ is finite and connected then there is some power  $\mathrm{F}^n_G:G\rightarrow G^{(p^n)}$ of Frobenius which is trivial, where for any scheme over $k$, we set $X^{(p)}$ as the base change through the absolute Frobenius of $k$ and this procedure can be iterated. 
Let $T\rightarrow \mathrm{Spec} (k)$ be any $G$-torsor. We remark that $T^{(p^n)}\rightarrow   \mathrm{Spec }(k)$, obtained by base change, is again a $G$-torsor. Since  Frobenius is functorial we have that the diagram
   \[
   \xymatrix{
   G\times_k T\ar[d]^{\mathrm{Fr}_G^n\times \mathrm{Fr}_T^n}\ar[r]^{\mu} &
  T\ar^{\mathrm{Fr}^n_T}[d]\\
  G^{(p^n)}\times_k T^{(p^n)}\ar[r]^{\mu}& T^{(p^n)}}
   \]
   commutes.
   
   Since  $\mathrm{Fr}^n_G:G\rightarrow  G^{(p^{n})}$ is trivial then the map $\mathrm{Fr}_T^n$ factorizes through $ T/G=  \mathrm{Spec }(k)$. This means that the $G$-torsor  $T^{(p^n)}\rightarrow    \mathrm{Spec} (k)$ is trivial.  Since $k$ is perfect then $\mathrm{Fr}_k$ is an isomorphism and then we can obtain a section 
  of $T \rightarrow \mathrm{Spec}(k)$. This proves that $H^1(\mathrm{Spec}( k), G)=0$ if $G$ is finite and connected.

The following lemma allows us to combine Steinberg's Theorem with the finite and connected case to obtain the proof of  Theorem \ref{SLa} in the remaining cases. 

\begin{lem}\label{lemma: smooth finite}
Let $k$ be a perfect field. Then any connected algebraic group scheme $G$ over $k$ sits in a short exact sequence of  $k$-algebraic group schemes
$$
0\rightarrow H \rightarrow G \rightarrow A\rightarrow 0
$$
where $H$ is connected and finite  over $k$ and $A$ is connected and  smooth over $k$.
\end{lem}
%
%So we know the result for smooth linear connected group schemes and finite connected group schemes. The following Lemma let us to obtain the general case.

\begin{proof}
First of all we consider the induced reduced sub-scheme $G_{red}$ which is a closed subgroup scheme of $G$ since $k$ is perfect. Since it is reduced it is therefore smooth. Obviously it is also connected. Now, since $G/G_{red}$ is a geometric quotient then topologically it is clearly a point so it is a finite local scheme over $k$. Then there exists some positive integer $n$ such that
the Frobenius $\mathrm{Fr}^n: G/G_{red}\to (G/G_{red})^{(p^n)}$ factorizes through $\mathrm{Spec}( k)$. 

Now by the functoriality of Frobenius   we have the following commutative diagram
$$
   \xymatrix{G_{red} \ar[d]\ar[r]^{\mathrm{Fr}_{G_{red}}^n}& (G_{red})^{(p^n)}\ar[d]\\G \ar[d]\ar[r]^{\mathrm{Fr}_{G}^n}& G^{(p^n)}\ar[d]\\
   G/G_{red}\ar[r]^(0.4){\mathrm{Fr}_{G/G_{red}}^n} &
  (G/G_{red})^{(p^n)}}
  $$
so the induced map $G\rightarrow (G/G_{red})^{(p^n)}$ factorizes through $\mathrm{Spec} (k)$. Therefore, since  $(G/G_{red})^{(p^n)}$ is canonically isomorphic to $G^{(p^n)}/(G_{red})^{(p^n)} $,  $\mathrm{Fr}^n_G$ factorizes through $(G_{red})^{(p^n)}$. We set $A=(G_{red})^{(p^n)}$ and we observe that the  morphism
$
\phi:G\rightarrow A
$
  induced by $\mathrm{Fr}^n_G$ is a quotient map since $\mathrm{Fr}^n_{G_{red}}$ is a faithfully flat map, because $G_{red}$ is smooth. So now we take $H=\mathrm{Ker} \phi$, which is clearly finite and connected since it is the kernel of a power of the Frobenius.

 \end{proof}

Now we complete the proof of the Theorem. Let $G$ be a connected algebraic group scheme over a perfect field $k$ of cohomological dimension $\le 1$. By Lemma \ref{lemma: smooth finite} we have an exact sequence
\begin{equation}\label{eq:smooth finite}
0\rightarrow H\rightarrow G\rightarrow A\rightarrow 0
\end{equation}
with $H$ connected and finite over $k$ and $A$ smooth over $k$. Moreover,  $A$ is connected since 
$G$ is connected.
Now, by the finite and connected case, we have that $H^1(\mathrm{Spec} k,H)=0$. We observe that if $G$ is also affine then $A$ is affine. So, by Steinberg's Theorem, we have that $H^1(\mathrm{Spec} (k), A)=0$ if $A$ is affine. So by the long exact sequence of pointed sets associated to \eqref{eq:smooth finite} we conclude that $H^1(\mathrm{spec}( k),G)=0$, if $G$ is finite. If $k$ is finite and $G$ is not necessarily affine, then we use Lang's result instead of  Steinberg's
result.

\end{proof}

\end{document}